\colorlet{linkequation}{blue}
\definecolor{dgreen}{rgb}{0,0.5,0}
\definecolor{violet}{rgb}{0.5,0,0.5}
\definecolor{dred}{rgb}{0.7,0,0}
\definecolor{ddred}{rgb}{0.5,0,0}
\definecolor{dblue}{rgb}{0,0,0.5}
\definecolor{ddblue}{rgb}{0,0,0.3}
\definecolor{llgray}{rgb}{0.9,0.9,0.9}
\definecolor{lgray}{rgb}{0.7,0.7,0.7}
\newtheorem{defn}{Definition}[section]
\newtheorem{lemma}[defn]{Lemma}
\newtheorem{proposition}[defn]{Proposition}
\newtheorem{theorem}[defn]{Theorem}
\newtheorem{remark}[defn]{Remark}
\numberwithin{equation}{section}
\newcommand{\bq}{\begin{equation}}
\newcommand{\eq}{\end{equation}}
\newcommand{\R}{{ \mathbb{R}  }}
\newcommand{\bbr}{{ \mathbb{R}  }}
\newcommand{\calC}{{ \mathcal C  }}
\newcommand{\bke}[1]{\left( #1 \right)}
\newcommand{\norm}[1]{\left\Vert #1 \right\Vert}
\newcommand{\abs}[1]{\left| #1 \right|}
\DeclareMathOperator*{\esssup}{ess \ sup}
\DeclareMathOperator{\loc}{loc}
   \def\MR#1{}
\begin{document}

   
\title[Existence of FDE with drifts]{Existence of weak solutions for fast diffusion equation \\
   with a drift term and its application}

\author[S. Hwang]{Sukjung Hwang}
\address{S. Hwang: Department of Mathematics Education, Chungbuk National University, Cheongju 28644, Republic of Korea}
\email{sukjungh@cbnu.ac.kr}

\author[K. Kang]{Kyungkeun Kang}
\address{K. Kang: Department of Mathematics, Yonsei University,  Seoul 03722, Republic of Korea}
\email{kkang@yonsei.ac.kr}

\author[H.K. Kim]{Hwa Kil Kim}
\address{H.K. Kim: Department of Mathematics Education, Hannam University, Daejeon 34430, Republic of Korea}
\email{hwakil@hnu.kr}

\thanks{
S. Hwang's work is partially supported by the research grant of the Chungbuk National University in 2022 and NRF-2022R1F1A1073199. K. Kang is supported by NRF grant nos. RS-2024-00336346 and RS-2024-00406821. H. Kim's work is supported by NRF-2021R1F1A1048231.}

\date{}

\makeatletter
\@namedef{subjclassname@2020}{%
  \textup{2020} Mathematics Subject Classification}
\makeatother
\subjclass[2020]{35A01, 35K55, 35K67}
 
\keywords{Fast diffusion equation, $L^q$-weak solution, Wasserstein space, divergence-free structure}



\begin{abstract}
 We construct non-negative weak solutions of fast diffusion equations with a divergence type of drift term satisfying the $L^q$-energy inequality and speed estimate in Wasserstein spaces under some integrability conditions on the drift term. Furthermore, in the case that the drift term has a divergence-free structure, it turns out that its integrability conditions can be relaxed, which is also applicable to porous medium equations, thereby improving previous results. As an application, the existence of weak solutions is also discussed for a viscous Boussinesq system of the fast diffusion type.
\end{abstract}

\maketitle


\hypersetup{linkcolor=black}
\setcounter{tocdepth}{2}
\tableofcontents



\section{Introduction}

In this paper, we consider the fast diffusion equation (FDE) with a drift term in divergence form, given by
\begin{equation}\label{FDE}
  \partial_t \rho \;=\; \nabla \cdot\bigl(\nabla \rho^m \;-\; V\,\rho\bigr),
  \quad 0 < m < 1,
\end{equation}
in the cylinder \(\Omega_T := \Omega \times (0, T)\), where 
\(\Omega \Subset \mathbb{R}^d\) with \(d \ge 2\) is a bounded domain with smooth boundaries, and \(0 < T < \infty\). The vector field \(V : \Omega_T \to \mathbb{R}^d\) is given and will be specified later.
We impose no-flux boundary conditions on the lateral boundary and assume nonnegative initial data:
\begin{equation}\label{FDE_bc_ic}
\begin{cases}
(\nabla \rho^m - V\,\rho)\cdot \mathbf{n} = 0 
  &\text{on } \partial\Omega \times (0, T), \vspace{1mm} \\
\rho(\cdot, 0) = \rho_0(\cdot) \in L^q(\Omega)
  &\text{for } q \ge 1,
\end{cases}
\end{equation}
where \(\mathbf{n}\) denotes the outward normal to \(\partial\Omega\).

The fast diffusion equation (FDE) is a fundamental singular nonlinear diffusion model that arises in a wide range of applications. In particular, for the homogeneous fast diffusion equation, i.e., \eqref{FDE} with \( V=0 \), extensive research has been conducted on the local and global properties of solutions.
For instance, if \( \frac{(d-2)_{+}}{d+2} < m < 1 \), then local solutions remain locally bounded. However, when \( 0 < m \leq \frac{(d-2)_{+}}{d+2} \), the concept of weak solutions with sufficient integrability ensures local boundedness. Furthermore, H\"{o}lder continuity and the Harnack inequality are not guaranteed when \( 0 < m \leq (1 - \frac{2}{d})_{+} \); see \cite{Chen_DiBenedetto, DiBenedetto_Kwong, DiBenedetto_Kwong_Vespri} for further details.
For a systematic study of homogeneous FDE, we refer the reader to comprehensive books and papers such as \cite{Vaz06, Vaz07, DGV12, BonFig2024, Bonforte_Vazquez_2006, Bonforte_Vazquez_2007} and the references therein.

Fast diffusion equations with potential drift or aggregation terms arise naturally in the Wasserstein-gradient-flow framework, in particular when $V=\nabla\Phi$ for some potential function $\Phi$; see, for instance, \cite{Santambrosio15}. Recent works on related potential and aggregation models in the fast-diffusion range have revealed concentration phenomena, such as infinite-time or partial mass concentration and possible Dirac components in the large-time limit \cite{CarGVaz2022, CarDFL2022, CarGG2024}. In contrast, the present paper considers general drift fields $V=V(x,t)$ and constructs global weak solutions with $L^\infty(0,T;L^q(\Omega))$ bounds for $q>1$ and entropy bounds for $q=1$.

Drift terms play a key role in modeling various drift-diffusion phenomena, including the Keller--Segel model, fluid dynamics, and related systems~\cite{SSSZ, Freitag, W18, Z11}. More recently, nonlinear drift-diffusion equations have been studied to understand the well-posedness and regularity of their solutions~\cite{KZ18, HZ21, CCY2019}. 

Motivated by the existence theory for porous medium equations with drift developed in \cite{HKK01,HKK02}, the present paper develops a fast-diffusion analogue for general space-time dependent drift fields. However, the transition from the porous medium range $m>1$ to the fast-diffusion range $0<m<1$ is not straightforward.
In the fast-diffusion case, it provides weaker coercive control than in the porous medium case, and the resulting estimates are not strong enough to handle a drift term in the scaling-invariant class. This leads to the restriction \(1-\frac{1}{d}<m<1\) and to a more restrictive integrability condition on \(V\), which is subcritical relative to the scaling-invariant class. More precisely, the admissible drift class \(\mathsf{S}_{m,q}^{(q_1,q_2)}\) in Definition~\ref{D:Serrin} obtained from the energy estimate, for \(q>1\),
\[
V \in L_{x,t}^{q_1, q_2}\qquad
\frac{d}{q_1} + \frac{2 + d(q+m-2)}{q_2} \leq 1 + d(m-1),
\quad 1-\frac{1}{d}<m<1,
\]
is more restrictive than the scaling-invariant class $\mathcal{S}_{m,q}^{(q_1,q_2)}$ in Remark~\ref{R:Scaling} used in the porous medium case,
\[
V \in L_{x,t}^{q_1, q_2}, \qquad
\frac{d}{q_1} + \frac{2 + q_{m,d}}{q_2} \leq 1 + q_{m,d},
\quad q_{m,d}=\frac{d(m-1)}{q},
\quad  m>1.
\]
The two classes coincide in the \(L^1\)-based case \(q=1\).

In the divergence-free case, the drift term does not contribute to the $L^q$-energy estimate, and the admissible conditions on $V$ are determined instead by the compactness argument used to pass to the limit in the weak formulation and by the speed estimate needed to obtain absolute continuity in the Wasserstein space. This allows us to introduce the classes $\mathcal{D}_{m,q}^{(q_1,q_2)}$ and $\mathcal{D}_{m,q,s}^{(q_1,q_2)}$, which contain the corresponding scaling-type region and extend into supercritical regimes. They also allow an enlarged range of \(m\). Under the condition \(\nabla\cdot V=0\), this is expressed as
\[
V \in L_{x,t}^{q_1,q_2}, \qquad
\frac{d}{q_1}+\frac{2+q_{m,d}}{q_2}
\leq 2+\frac{d(q+m-2)}{q},
\quad
\left(1-\frac{2q}{d}\right)_{+}\leq m<1.
\]
We refer to Definitions~\ref{D:Serrin} and \ref{D:Serrin_divfree}, Remark~\ref{R:Scaling}, and Fig.~\ref{F:D} for the precise admissible conditions and for a comparison with the scaling-invariant class.
The analogous improvement for the porous medium equation is based on the improved compactness argument and is discussed in Remark~\ref{R:PME}; the precise statements are given in Appendix~\ref{Appendix:PME} and illustrated in Fig.~\ref{F:divfree:PME}.

The main existence results are summarized in Table~\ref{Table1}, categorized by the structure of \(V\), the corresponding ranges of \(m\) and \(q\), and the type of weak solution obtained. Throughout the paper, we consider three distinct types of drift fields: either \(V\) belongs to the class \(\mathsf{S}_{m,q}^{(q_1,q_2)}\), or \(\nabla V\) belongs to the class \(\tilde{\mathsf{S}}_{m,q}^{(\tilde q_1,\tilde q_2)}\), or \(V\) satisfies the divergence-free condition. The figures are included to make the comparison between the admissible classes more transparent. In particular, Fig.~\ref{F:S} and Fig.~\ref{F:tilde_S:weak} describe the admissible regions for the first two cases, while Fig.~\ref{F:divfree} shows that, in the divergence-free case, the newly introduced classes \(\mathcal{D}_{m,q}^{(q_1,q_2)}\) and \(\mathcal{D}_{m,q,s}^{(q_1,q_2)}\) contain the corresponding scaling-type region and extend into supercritical regimes.

	\begin{table}[hbt!]
\begin{center}
\caption{\footnotesize Summary of existence results of (AC) $L^q$-weak solutions for $d\geq 2$}
\smallskip
{\scriptsize
\begin{tabular}{| c  || c | c | c |}\hline
\rule[-8pt]{0pt}{22pt}
 \textbf{Range of $0<m<1$ (FDE) and $q \geq 1$} &\textbf{Structure of $V$} & \textbf{Solutions}  
 & \textbf{References}\\ \hline \hline

\rule[-8pt]{0pt}{22pt}
\multirow{3}{*}{$1-\frac{1}{d} < m < 1$ \text{and} $q\geq 1$}
& $V\in \mathsf{S}_{m,q}^{(q_1,q_2)}$ in \eqref{T:ACweakSol:V_q}
& AC $L^q$-weak solution 
& Theorem~\ref{T:ACweakSol}, Fig.~\ref{F:S}
\\
\cline{2-4}

\rule[-8pt]{0pt}{22pt}
& $V\in \tilde{\mathsf{S}}_{m,q}^{(\tilde{q}_1,\tilde{q}_2)}$ in \eqref{T:ACweakSol_tilde:V_q}
& AC $L^q$-weak solution
& Theorem~\ref{T:ACweakSol_tilde}, Fig.~\ref{F:tilde_S:weak}
\\
\cline{1-4}

\rule[-8pt]{0pt}{22pt}
\multirow{3}{*}{$(1-\frac{2q}{d})_{+} \leq m < 1$ \text{and} $q \geq 1$}
&  $V\in \mathcal{D}_{m,q}^{(q_1,q_2)}$ in \eqref{T:weakSol:divfree:Vq}
& $L^q$-weak solution
& Theorem~\ref{T:weakSol:divfree} , Fig.~\ref{F:divfree}
\\
\cline{2-4}

\rule[-8pt]{0pt}{22pt}
& $V\in \mathcal{D}_{m,q,s}^{(q_1,q_2)}$ in \eqref{T:ACweakSol:divfree:Vq}
& AC $L^q$-weak solution
& Theorem~\ref{T:ACweakSol:divfree} , Fig.~\ref{F:divfree}
\\
\hline \hline

\rule[-8pt]{0pt}{22pt}
 \textbf{Range of $m>1$ (PME) and $q\geq 1$} &\textbf{Structure of $V$} & \textbf{Solutions}  
 & \textbf{References}\\ \hline \hline

\rule[-8pt]{0pt}{22pt}
$m>1$ and $q\geq 1$
&  $V\in \mathcal{D}_{m,q}^{(q_1,q_2)}$ in \eqref{T:weakSol:divfree:V:PME}
& $L^q$-weak solution 
& Theorem~\ref{T:weakSol:divfree:PME}, Fig.~\ref{F:divfree:PME}
\\
\cline{1-4}
\rule[-8pt]{0pt}{22pt}
$m>1$ and $q\geq m$ 
& $V\in \mathcal{D}_{m,q,s}^{(q_1,q_2)}$ in \eqref{T:ACweakSol:divfree:V:PME}
& AC $L^q$-weak solution
& Theorem~\ref{T:ACweakSol:divfree:PME}, Fig.~\ref{F:divfree:PME}
\\
\cline{1-4}

\end{tabular}
\label{Table1}
}
\end{center}
\end{table}


We remark a recent work by Park and the same authors~\cite{HKKP}, which constructs non-negative weak solutions of PME and FDE with both drift and measure-type forcing terms. In that case, due to the presence of a measure-type force that does not belong to \( L^1(\Omega_T) \), the condition on \( V \) is even more restrictive than the \( L^1 \)-scaling invariant class, namely strict subscaling class, compared to the case without a forcing term.
In our setting, the drift term \( V \) is allowed to belong to the \( L^1 \)-scaling invariant class when considering \( L^1 \)-initial data. Furthermore, unlike the case with measure-type forcing term, we can also handle \( L^q \)-initial data for \( q > 1 \) when \( V \) belongs to a suitable class \( \mathsf{S}_{m,q}^{(q_1,q_2)} \); see Definition~\ref{D:Serrin}.

Our strategy for constructing weak solutions begins with establishing the existence of regular solutions under sufficiently smooth boundary data and \( V \), using the splitting method. Subsequently, we show that a sequence of regular solutions converges to a weak solution via compactness arguments. The a priori estimates presented in Proposition~\ref{P:energy} play a fundamental role throughout the paper. Additionally, the speed estimates provided in Lemma~\ref{L:speed} are crucial for constructing weak solutions which are absolutely continuous curves in the Wasserstein space. Finally, as an application, we study the existence of solutions for a viscous Boussinesq system of the fast diffusive type, demonstrating the broader applicability of our methods.
   
In Section~\ref{SS:Existence}, we present the main results concerning the existence of weak solutions, along with applications, remarks, and figures. The preliminaries and a priori estimates are outlined in Section~\ref{SS:Preliminaries} and Section~\ref{SS:priori}, respectively. The construction of weak solutions is detailed in Section~\ref{splitting method} and Section~\ref{Exist-weak}. Additionally, the appendices provide supplementary material: Appendix~\ref{A:Exist-regular} contains the detailed proof of the existence of regular solutions. Appendix~\ref{Appendix:PME} presents improved compactness arguments for the porous medium equation (PME) in the case $\nabla \cdot V =0$, derived under weaker conditions on $V$ than those in \cite{HKK02}.



\section{Main results}\label{S: Main}

In this section, we present the existence results for \eqref{FDE} under \eqref{FDE_bc_ic}, depending on the structure of \( V \) and the initial data. 
Before stating the main theorems, we introduce some notations for convenience.

\begin{itemize}
\item Let us denote by $\mathcal{P}(\Omega)$ the set of all Borel probability measures on $\Omega$ and by $\mathcal{P}_p(\Omega)$ the set of all Borel probability measures with finite $p$-th moment. For a bounded domain $\Omega$, we note that $\mathcal{P}(\Omega)= \mathcal{P}_p(\Omega)$ for any $p \geq 1$. We refer to Section~\ref{SS:Wasserstein} for definitions and related properties of the Wasserstein distance, denoted by \( W_p \), as well as the Wasserstein space and absolutely continuous (AC) curves.

\item The letters \( c \) and \( C \) are used to represent generic constants. Additionally, the letter \( \theta \) denotes a generic constant that varies depending on interpolation arguments.
    Throughout the paper, we omit explicit dependence on \( m \), \( q \), \( d \), \( T \), and 
    \( \|\rho_0\|_{L^{1}(\Omega)} \), as we assume that \( 0 < m < 1 \), \( q \geq 1 \), \( d \geq 2 \), and \( T > 0 \) are given constants, and that \( \|\rho_0\|_{L^{1}(\Omega)} = 1 \) in \( \mathcal{P}(\Omega) \).

\item Let us denote $(s)_{+} = \max\{ s, 0\}$.

\end{itemize}

We now introduce classes of $V$ that are suitable for constructing $L^q$-weak solutions.  
  
\begin{defn}\label{D:Serrin}
Let $0<m<1$ and $q\geq 1$. For $q_1, \ q_2, \ \tilde{q}_1, \ \tilde{q}_2 \in (1, \infty]$, we define following spaces.

\begin{itemize}
\item[(i)]Define
\[
\mathcal{L}_{x,t}^{q_1, q_2}:= \overline{\{V : V \in C_c^\infty(\overline{\Omega} \times[0, T)) ~~\mbox{and}~~ V\cdot \textbf{n} = 0 ~ \mbox{on} ~ \partial \Omega\}}^{L_{x,t}^{q_1, q_2}},
\]
where $\textbf{n}$ is the outward unit normal vector to the boundary of $\Omega$.
That is, the vector field $ V \in \mathcal{L}_{x,t}^{q_1, q_2}$ means that there exists a sequence $V_n  \in C_c^\infty(\overline{\Omega} \times[0, T)) $ such that
$ V_n \cdot \textbf{n} = 0$ on $\partial \Omega$ and
\[
\|V_n-V \|_{L_{x,t}^{q_1, q_2}} \rightarrow 0 \quad \mbox{as} ~~ n \rightarrow \infty.
\]

\item[(ii)] Let $1 - \frac{1}{d}  \leq m < 1$. The following classes of $V$ are defined as:
\begin{equation*}\label{Vclass:SS}
 \mathsf{S}_{m,q}^{(q_1, q_2)}:= \left\{ V: \ \|V\|_{\mathcal{L}^{q_1, q_2}_{x,t}} < \infty, \ \text{ where } \ \frac{d}{q_1} + \frac{2 + d(q+m-2)}{q_2} \leq 1 + d(m-1) \right\}.
\end{equation*}

\item[(iii)] Let $\left( 1 - \frac{2}{d} \right)_{+} \leq m < 1$. The following classes of $V$ are defined as:
\begin{equation*}\label{Vclass:tildeSS}
 \tilde{\mathsf{S}}_{m,q}^{(\tilde{q}_1, \tilde{q}_2)}:= \left\{ V: \ \|\nabla V\|_{\mathcal{L}^{\tilde{q}_1, \tilde{q}_2}_{x,t}} < \infty, \ \text{ where } \ \frac{d}{\tilde{q}_1} + \frac{2 + d(q+m-2)}{\tilde{q}_2} \leq 2 + d(m-1) \right\}.
\end{equation*}
\end{itemize}
Also, let us denote $\|V\|_{\mathsf{S}_{m,q}^{(q_1, q_2)}}$ and $\|V\|_{\tilde{\mathsf{S}}_{m,q}^{(\tilde{q}_1, \tilde{q}_2)}}$ as the norms corresponding to each spaces.
\end{defn}

\begin{remark}\label{R:Scaling}
The scaling invariant classes are introduced in \cite{HKK01, HKK02} by the same authors to construct weak solutions of PME $(m>1)$, that is, for $q_{m,d} = \frac{d(m-1)}{q}$,
    \begin{equation}\label{Scaling_invariant_class}
 \mathcal{S}_{m,q}^{(q_1, q_2)}:= \left\{ V: \ \|V\|_{\mathcal{L}^{q_1, q_2}_{x,t}} < \infty, \ \text{ where } \ \frac{d}{q_1} + \frac{2 + q_{m,d}}{q_2} \leq 1 + q_{m,d}\right\}.
\end{equation}  
    The class $\mathsf{S}_{m,q}^{(q_1,q_2)}$ and $\mathcal{S}_{m,q}^{(q_1,q_2)}$ coincides only when $q=1$. For $q>1$, $\mathsf{S}_{m,q}^{(q_1,q_2)}$ lies in sub-critical region of $\mathcal{S}_{m,q}^{(q_1,q_2)}$. For FDE, we are not able to construct $L^q$-weak solutions under $V$ belonging to $\mathcal{S}_{m,q}^{(q_1, q_2)}$ but $\mathsf{S}_{m,q}^{(q_1, q_2)}$ because of technical issues getting a priori estimates, Proposition~\ref{P:energy}.
\end{remark}	

\begin{figure}
\centering

\begin{tikzpicture}[domain=0:16]

\fill[fill= lgray] (0,0) -- (2.6,0) -- (2.6,2.6) -- (0, 2.6) ;

\draw[->] (0,0) -- (6,0) node[right] {\scriptsize $\frac{1}{q_1}$};
\draw[->] (0,0) -- (0,7) node[left] { \scriptsize $\frac{1}{q_2}$};

\draw (0,0)  node[left] {\scriptsize $O$};

\draw (0,2.6) node{\scriptsize $+$} node[left] {\scriptsize $1$} 
-- (1.7,0) node{\scriptsize $+$} node[below] {\scriptsize $\frac{2+d(m-1)}{d}$};
\draw (1.3,0.5) node[right]{\scriptsize $\mathcal{D}_{m,1}^{(q_1,q_2)}$};

\draw[thick] (0,5) node{\scriptsize $+$} node[left] {\scriptsize $\frac{d+2}{2}- \frac{d[2+d(m-1)]}{2[2q+d(m-1)]}$} -- (3.2,0) node{\scriptsize $+$} node[below] {\scriptsize $\frac{d+2}{d} + \frac{m-2}{q}$};
\draw (2.8,0.5) node[right]{\scriptsize $\mathcal{D}_{m,q}^{(q_1,q_2)}$};
\draw (2.6, 0.95) node{\scriptsize $\bullet$} node[right]{\scriptsize $A$} ;
\draw (1.55, 2.6) node{\scriptsize $\bullet$} node[above]{\scriptsize $B$} ;

\draw[gray] (0,6.5) node{\scriptsize $+$} node[left] {\scriptsize $\frac{d+2}{2}$} 
-- (4.35,0) node{\scriptsize $+$} node[below] {\scriptsize $\frac{d+2}{d}$};
\draw[gray] (4,0.5) node[right]{\scriptsize $\mathcal{D}_{m,\infty}^{(q_1,q_2)}$};

\draw (0,1.3) node{\scriptsize $+$} node[left] {\scriptsize $\frac{1}{2}$} 
-- (0.87,0) node{\scriptsize $+$} node[below] {\scriptsize $\frac{1}{d}$};
\draw (0.1,1) node[right] {\scriptsize $\mathcal{S}_{m,q}^{(q_1,q_2)}$};

\draw[thick] (0,0.9) node{\scriptsize $+$} node[left]{\scriptsize $D$} -- (0.5,0) node{\scriptsize $+$}node[below]{\scriptsize $C$};
\draw (0.25,0.3) node[right] {\scriptsize $\mathsf{S}_{m,q}^{(q_1,q_2)}$};

\draw[thick, gray] (2.6, 2.6) circle(0.05);
\draw[very thin] (0,2.6) -- (3.3,2.6) node[right] {\scriptsize $q_2 = 1$};
\draw[very thin] (2.6,0) -- (2.6,3.3) node[above] {\scriptsize $q_1 = 1$};

\draw[thick, ->] (1.7,2) -- (1,1.5);
\draw[thick] (0.9,1.3) node[right]{\scriptsize As $q \to 1$};

\draw[thick, ->] (2,2.2) -- (2.4,2.5);
\draw[thick] (1.9,2) node[right]{\scriptsize As $q \to \infty$};

\draw(6, 5.5) node[right]{\scriptsize $\bullet$ $\mathcal{D}_{m,q}^{(q_1, q_2)}$: $\frac{d}{q_1} + \frac{2 +q_{m,d}}{q_2} \leq  2+ \frac{d(q+m-2)}{q}$};
\draw(6,5) node[right]{\scriptsize $\bullet$ the shaded region: valid range of $q_1, q_2 \geq 1$ satisfying $\mathcal{D}_{m,q}^{(q_1, q_2)}$} ; 
\draw(6, 4.5) node[right]{\scriptsize $\bullet$ $A=(1, 1-\frac{d}{2q+d(m-1)})$, $B=(1-\frac{1}{q}, 1)$}; 
\draw(6, 3.5) node[right]{\scriptsize $\bullet$ $\mathcal{D}_{m,1}^{(q_1, q_2)}$: $\frac{d}{q_1} + \frac{2 + d(m-1)}{q_2} \leq 2 + d(m-1)$};
\draw(6, 3) node[right]{\scriptsize $\bullet$ $\mathcal{D}_{m,\infty}^{(q_1, q_2)}$: $\frac{d}{q_1} + \frac{2}{q_2} \leq 2+d$}; 
\draw(6, 2.5) node[right]{\scriptsize $\bullet$ $\mathcal{S}$ is same as $\mathcal{S}_{m,\infty}^{(q_1,q_2)}$ or $\mathcal{S}_{1,q}^{(q_1,q_2)}$: $\frac{d}{q_1} + \frac{2}{q_2} \leq 1$};
\draw(6, 2) node[right]{\scriptsize $\bullet$ $\overline{CD} = \mathsf{S}$: $\frac{d}{q_1} + \frac{2+d(q+m-2)}{q_2} = 1+d(m-1)$};
\draw(6, 1.5) node[right]{\scriptsize $\bullet$ $C=(\frac{1+d(m-1)}{d}, 0)$, $D=(0,\frac{1+d(m-1)}{2+d(q+m-2)})$};
\end{tikzpicture}
\caption{\footnotesize The class $\mathcal{D}_{m,q}^{(q_1,q_2)}$.}
\label{F:D}
\end{figure}

Furthermore, we introduce two new classes for \( V \) in the divergence-free case. The first class, \( \mathcal{D}_{m,q}^{(q_1,q_2)} \), is derived from the compactness argument in Proposition~\ref{Proposition : compactness}, while the second class, \( \mathcal{D}_{m,q,s}^{(q_1,q_2)} \), is established through the speed estimate in Lemma~\ref{L:speed}.

\begin{defn}\label{D:Serrin_divfree} 
Let $m>0$ and $q\geq 1$. For $q_1, q_2 \in (1, \infty]$, we define following spaces. 
\begin{itemize}

\item[(i)] Define 
\[
\mathfrak{L}_{x,t}^{q_1, q_2}:= \overline{\{V : V \in C_c^\infty(\overline{\Omega} \times[0, T))~~\mbox{and}~~ \nabla\cdot V = 0 ~~\mbox{and}~~ V\cdot \textbf{n} = 0 ~ \mbox{on} ~ \partial \Omega\}}^{L_{x,t}^{q_1, q_2}},
\]
where $\textbf{n}$ is the outward unit normal vector to the boundary of $\Omega$.
That is, the vector field $ V \in \mathfrak{L}_{x,t}^{q_1, q_2}$ means that there exists a sequence $V_n  \in C_c^\infty(\overline{\Omega} \times[0, T)) $ such that
$ V_n\cdot \textbf{n} = 0$ on $\partial \Omega$, $\nabla \cdot V_n =0$, and
\[
\|V_n-V \|_{L_{x,t}^{q_1, q_2}} \rightarrow 0 \quad \mbox{as} ~~ n \rightarrow \infty.
\]

\item[(ii)] Let $(1-\frac{2q}{d})_{+} \leq m < 1$ and $q_{m,d} := \frac{d(m-1)}{q}$. We define the classes of $V$ that 
\begin{equation*}\label{Vclass:compact}
	 \mathcal{D}_{m,q}^{(q_1,q_2)}:= \left\{V: \ \nabla \cdot V = 0 \ \text{ and } \ \|V\|_{\mathfrak{L}_{x,t}^{q_1,q_2}} < \infty, \ \text{ where }\ \frac{d}{q_1} + \frac{2 + q_{m,d}}{q_2} \leq 2+ \frac{d(q+m-2)}{q}\right\},
\end{equation*}
and
\begin{equation*}\label{Vsubclass:compact+}
	 \mathcal{D}_{m,q,+}^{(q_1,q_2)}:= \left\{V: \ \nabla \cdot V = 0 \ \text{ and } \ \|V\|_{\mathfrak{L}_{x,t}^{q_1,q_2}} < \infty, \ \text{ where }\ \frac{d}{q_1} + \frac{2 + q_{m,d}}{q_2} <  2+ \frac{d(q+m-2)}{q}\right\}.
\end{equation*}

\item[(iii)] Let $(1-\frac{2q}{d})_{+} \leq m < 1$ and $q_{m,d} := \frac{d(m-1)}{q}$. We define the classes of $V$ that
\begin{equation*}\label{Vclass:Speed}
	 \mathcal{D}_{m,q,s}^{(q_1,q_2)}:= \left\{V: \ \nabla \cdot V = 0 \ \text{ and } \ \|V\|_{\mathfrak{L}_{x,t}^{q_1,q_2}} < \infty, \ \text{ where }\ \frac{d}{q_1} + \frac{2 + q_{m,d}}{q_2} \leq  1+ \frac{d(q+m-2)}{q}\right\}.
\end{equation*}
\end{itemize}

Also, let us denote  $\|V\|_{\mathcal{D}_{m,q}^{(q_1, q_2)}}$, $\|V\|_{\mathcal{D}_{m,q,+}^{(q_1, q_2)}}$, and $\|V\|_{\mathcal{D}_{m,q, s}^{(q_1, q_2)}}$ as the norms corresponding to each spaces.
\end{defn}

\begin{remark}
When $0<m<1$, both classes $\mathcal{D}_{m,q}^{(q_1,q_2)}$ and $\mathcal{D}_{m,q,s}^{(q_1,q_2)}$ belong to the super-critical regime of $\mathcal{S}_{m,q}^{(q_1,q_2)}$ and $\mathsf{S}_{m,q}^{(q_1,q_2)}$. These classes are important only in case $\nabla \cdot V = 0$ for both FDE and PME. In Fig.~\ref{F:D}, we illustrate the valid range of $(q_1,q_2)$ of $\mathcal{D}_{m,q}^{(q_1,q_2)}$ and see Appendix~\ref{Appendix:PME} for extended existence results for PME in case $\nabla \cdot V = 0$.  
\end{remark}

\subsection{Existence results}\label{SS:Existence}

Here we introduce the notion of weak solutions of \eqref{FDE} under \eqref{FDE_bc_ic}.

\begin{defn}\label{D:weak-sol}
Let $q\geq 1$ and $V$ be a measurable vector field.
We say that a nonnegative measurable function $\rho$ is
a nonnegative \textbf{$L^q$-weak solution} of \eqref{FDE} under \eqref{FDE_bc_ic} with a nonnegative initial data $\rho_0 \in L^{q}(\Omega)$ if the followings are satisfied:
\begin{itemize}
\item[(i)] It holds that
\[
\rho \in L^{\infty}\left(0, T; L^q (\Omega)\right),
\ \nabla \rho^m \in L^{1}(\Omega_T), 
\ \nabla \rho^{\frac{q+m-1}{2}} \in L^{2}(\Omega_T), 
\ \text{ and } \ \rho V \in L^{1}(\Omega_T).
\]
\item[(ii)] For any function $\varphi \in {\mathcal{C}}^\infty_c (\overline{\Omega} \times [0,T))$, it holds that
\begin{equation*}\label{KK-May7-40}
\iint_{\Omega_T} \left\{ \rho \varphi_t - \nabla \rho^m \cdot \nabla \varphi + \rho V \cdot \nabla \varphi \right\} \,dx dt = -\int_{\Omega} \rho_{0} (\cdot) \varphi(\cdot, 0) \,dx.
\end{equation*}
\end{itemize}
\end{defn}

This section is divided into three parts based on the structure of \( V \), which determines the existence results. Specifically, we consider the following cases: \( V \in \mathsf{S}_{m,q}^{(q_1, q_2)} \), where \( V \) satisfies certain integrability conditions; \( V \in \tilde{\mathsf{S}}_{m,q}^{(q_1, q_2)} \), where \( \nabla V \) satisfies certain integrability conditions; and the divergence-free condition \( \nabla \cdot V = 0 \).

\subsubsection{Existence for case: $V \in \mathsf{S}_{m,q}^{(q_1,q_2)}$}\label{SS:energy-sol}

We provide the absolutely continuous $L^q$-weak solutions. 

\begin{theorem}\label{T:ACweakSol}
Let $d\geq 2$ and $ 1-\frac{1}{d} < m <1$. Let $q\geq 1$ and suppose that 
\begin{equation}\label{T:ACweakSol:V_q}
V \in \mathsf{S}_{m,q}^{(q_1,q_2)}  \ \text{ for }  \ 
    0 \leq \frac{1}{q_1} < \frac{1+d(m-1)}{d}, \quad  0\leq \frac{1}{q_2} \leq \frac{1+d(m-1)}{2+d(q+m-2)}.
\end{equation}

\begin{itemize}
	\item[(i)] For $q=1$, assume that $\rho_0 \in  \mathcal{P}_2(\Omega)$ and $\int_{\Omega} \rho_0 \log \rho_0 \,dx < \infty$.  
Then, there exists a nonnegative $L^1$-weak solution of \eqref{FDE}-\eqref{FDE_bc_ic} in Definition~\ref{D:weak-sol} such that  $\rho \in AC(0,T; \mathcal{P}_2 (\Omega))$ with $\rho(\cdot, 0)=\rho_0$.
Furthermore, $\rho$ satisfies
\begin{equation}\label{T:ACweakSol:E_1}
\sup_{0\leq t \leq T} \int_{\Omega} \rho \abs{\log \rho} (\cdot, t)  \,dx
+ \iint_{\Omega_T} \{ \abs{\nabla \rho^{\frac m2}}^2 + \left(\abs{\frac{\nabla
\rho^m}{\rho}}^{2}+|V|^{2}\right ) \rho  \}\,dx\,dt
 \leq  C,
\end{equation}
and
\begin{equation}\label{T:ACweakSol:W_1} 
 W_{2}(\rho(t),\rho(s))\leq C (t-s)^{\frac{1}{2}},\qquad
\forall ~~0\leq s\leq t\leq T,
\end{equation}
where the constant $C= C ( \|V\|_{\mathsf{S}_{m,1}^{(q_1,q_2)}},\,
\int_{\Omega} \rho_0 \log \rho_0 \,dx )$.

\smallskip

\item[(ii)] For $q>1$, assume that $\rho_0 \in  \mathcal{P}_2(\Omega) \cap L^q (\Omega)$.
Then, there exists a nonnegative $L^q$-weak solution of \eqref{FDE}-\eqref{FDE_bc_ic} in Definition~\ref{D:weak-sol} such that  $\rho \in AC(0,T; \mathcal{P}_2 (\Omega))$ with $\rho(\cdot, 0)=\rho_0$.
Furthermore, $\rho$ satisfies
 \begin{equation}\label{T:ACweakSol:E_q}
 \sup_{0\leq t \leq T} \int_{\Omega} \rho^q(\cdot, t) \,dx
 + \iint_{\Omega_T} \{ \abs{\nabla \rho^{\frac{q+m-1}{2}}}^2  + \left(\abs{\frac{\nabla
\rho^m}{\rho}}^{2}+|V|^{2}\right ) \rho  \}\,dx\,dt
\leq C,
\end{equation}
and \eqref{T:ACweakSol:W_1} where the constant
with $C = C ( \|V\|_{\mathsf{S}_{m,q}^{(q_1, q_2)}}, \| \rho_0 \|_{L^{q}(\Omega)} )$.

\end{itemize}
\end{theorem}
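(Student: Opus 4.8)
The plan is to obtain $\rho$ as a limit of regular solutions of suitably regularized problems, following the splitting-method scheme of Appendix~\ref{A:Exist-regular}, and then to pass to the limit using the a priori bounds of Proposition~\ref{P:energy} together with the speed estimate of Lemma~\ref{L:speed}. First I would set up the approximation: mollify the drift to smooth fields $V_\epsilon$ with $V_\epsilon\cdot\mathbf{n}=0$ and $\|V_\epsilon-V\|_{\mathsf{S}_{m,q}^{(q_1,q_2)}}\to 0$ (possible by the very definition of $\mathcal{L}_{x,t}^{q_1,q_2}$ in Definition~\ref{D:Serrin}), approximate $\rho_0$ by smooth data $\rho_{0,\epsilon}$ bounded away from $0$ and $\infty$ with $\rho_{0,\epsilon}\to\rho_0$ in $L^q(\Omega)$ (and with convergent entropy when $q=1$), and regularize the singular nonlinearity, e.g.\ replacing $\rho^m$ by $(\rho+\epsilon)^m-\epsilon^m$ or adding a small uniformly parabolic term. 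Appendix~\ref{A:Exist-regular} then yields nonnegative regular solutions $\rho_\epsilon$ of the regularized equation satisfying the no-flux condition.

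The heart of the argument is the uniform control. Testing the regularized equation with $\rho_\epsilon^{q-1}$ (for $q>1$) or with $1+\log\rho_\epsilon$ (for $q=1$) and invoking Proposition~\ref{P:energy} gives bounds, uniform in $\epsilon$, for $\|\rho_\epsilon\|_{L^\infty(0,T;L^q(\Omega))}$, for $\|\nabla\rho_\epsilon^{(q+m-1)/2}\|_{L^2(\Omega_T)}$, and for the two dissipation terms $\iint_{\Omega_T}\frac{|\nabla\rho_\epsilon^m|^2}{\rho_\epsilon}$ and $\iint_{\Omega_T}\rho_\epsilon|V_\epsilon|^2$. Here the restriction $1-\frac1d<m<1$ and the scaling conditions \eqref{T:ACweakSol:V_q} on $(q_1,q_2)$ are exactly what is needed to absorb the drift contribution into the diffusion dissipation, via H\"older's inequality followed by a Gagliardo--Nirenberg--Sobolev interpolation of $\rho_\epsilon^{(q+m-1)/2}$; this is the step that dictates the admissible class of $V$. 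Writing the equation as a continuity equation $\partial_t\rho_\epsilon+\nabla\cdot(\rho_\epsilon v_\epsilon)=0$ with velocity $v_\epsilon=-\nabla\rho_\epsilon^m/\rho_\epsilon+V_\epsilon$, the above bounds give $\iint_{\Omega_T}\rho_\epsilon|v_\epsilon|^2\le C$, since $\rho_\epsilon|v_\epsilon|^2\le 2|\nabla\rho_\epsilon^m|^2/\rho_\epsilon+2\rho_\epsilon|V_\epsilon|^2$; Lemma~\ref{L:speed} then converts this into the uniform H\"older-in-time bound $W_2(\rho_\epsilon(t),\rho_\epsilon(s))\le C(t-s)^{1/2}$.

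With these estimates in hand I would extract a convergent subsequence. The $W_2$-equicontinuity just obtained, combined with the compactness of $(\mathcal{P}(\Omega),W_2)$ for the bounded domain $\Omega$, gives via a refined Arzel\`a--Ascoli theorem a limit curve $\rho\in AC(0,T;\mathcal{P}_2(\Omega))$ with $\rho_\epsilon\to\rho$ uniformly in $W_2$ and $\rho(\cdot,0)=\rho_0$; the estimate \eqref{T:ACweakSol:W_1} then passes to the limit directly. Independently, the spatial bound on $\nabla\rho_\epsilon^{(q+m-1)/2}$ together with a bound on $\partial_t\rho_\epsilon$ in a negative Sobolev norm (read off from the weak formulation) yields, by an Aubin--Lions argument, strong convergence $\rho_\epsilon\to\rho$ in $L^p(\Omega_T)$ and a.e. Passing to the limit in the weak formulation of Definition~\ref{D:weak-sol} is then routine for the time-derivative term; the drift term $\rho_\epsilon V_\epsilon\to\rho V$ in $L^1(\Omega_T)$ follows from the strong convergence of $\rho_\epsilon$, the convergence $V_\epsilon\to V$, and the uniform bound on $\iint_{\Omega_T}\rho_\epsilon|V_\epsilon|^2$. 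Finally, the energy inequalities \eqref{T:ACweakSol:E_1} and \eqref{T:ACweakSol:E_q} are recovered by weak lower semicontinuity of the relevant norms under these convergences.

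I expect the main obstacle to be the passage to the limit in the singular diffusion term $\nabla\rho_\epsilon^m$. Because $0<m<1$, the factor $\rho_\epsilon^{m-1}$ blows up where $\rho_\epsilon$ is small, so the a.e.\ convergence of $\rho_\epsilon$ does not immediately transfer to $\nabla\rho_\epsilon^m$. I would handle this through the good variable $w_\epsilon:=\rho_\epsilon^{(q+m-1)/2}$ (which reduces to $\rho_\epsilon^{m/2}$ when $q=1$, matching \eqref{T:ACweakSol:E_1}), bounded in $L^2(0,T;H^1)$ and strongly convergent in $L^2$: expressing $\nabla\rho_\epsilon^m=\frac{2m}{q+m-1}\,w_\epsilon^{(m-q+1)/(q+m-1)}\nabla w_\epsilon$ and combining the weak $L^2$ limit of $\nabla w_\epsilon$ with the a.e.\ limit of the algebraic prefactor, then checking uniform integrability via the $L^\infty_tL^q_x$ bound and the dissipation bound to identify the limit as $\nabla\rho^m$ in $L^1(\Omega_T)$. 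Controlling this identification uniformly on the singular set $\{\rho_\epsilon\approx0\}$, while sending the regularization parameters to zero in a compatible order, is the delicate point on which the whole construction rests.
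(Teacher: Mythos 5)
Your proposal is correct and follows essentially the same route as the paper: approximate $V$ and $\rho_0$ by smooth data, invoke the splitting-method regular solutions of Proposition~\ref{proposition : regular existence}, combine the energy and speed bounds (Proposition~\ref{P:energy}, Lemma~\ref{L:speed}) to get $W_2$-equicontinuity via the continuity-equation representation, extract limits by Arzel\`a--Ascoli in $\mathcal{P}_2(\Omega)$ plus Aubin--Lions compactness (Proposition~\ref{Proposition : compactness}), and identify $\nabla\rho^m$ through the factorization $\nabla\rho^m=\rho^{\frac m2}\nabla\rho^{\frac m2}$ (your ``good variable'' $w_\epsilon$), exactly as in the paper's treatment of \eqref{eq40 : Theorem-1}--\eqref{eq3 : Theorem-1}. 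The only cosmetic slip is attributing the conversion of the dissipation bound into the $W_2$-H\"older estimate to Lemma~\ref{L:speed} itself, whereas that step uses Lemma~\ref{representation of AC curves}; this does not affect the argument.
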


\begin{figure}
\centering

\begin{tikzpicture}[domain=0:16]

\fill[fill= lgray] (0,0) -- (0,3.8) -- (3,0);
\fill[fill= gray] (0,0) -- (0,2) -- (3,0);

\draw[->] (0,0) -- (5,0) node[right] {\scriptsize $\frac{1}{q_1}$};
\draw[->] (0,0) -- (0,5) node[left] { \scriptsize $\frac{1}{q_2}$};

\draw (0,0)  node[left] {\scriptsize $O$};

\draw[gray] (0,4.5) node{\scriptsize $+$} node[left] {\scriptsize $\frac{1}{2}$} 
-- (4, 0) node {\scriptsize $+$} node[below] {\scriptsize $\frac{1}{d}$} ;
\draw[gray] (0.5, 4) node[right] {\scriptsize $\frac{d}{q_1}+\frac{2}{q_2}=1$} ;

\draw[thick] (3, 0) circle(0.05) node[below]{\scriptsize $B$};
\draw (0,3.8) node {\scriptsize $\bullet$} node[left] {\scriptsize $A$}  -- (3, 0)  ;
\draw (0.5, 2) node[right]{\scriptsize $\mathsf{S}_{m,1}^{(q_1, q_2)}$};

\draw (0,2) node {\scriptsize $\bullet$} node[left] {\scriptsize $C$}  -- (3, 0)  ;
\draw (0.5, 0.7) node[right]{\scriptsize $\mathsf{S}_{m,q}^{(q_1, q_2)}$};

\draw(6,4) node[right]{\scriptsize $\bullet$ $\mathcal{R}(OAB)$: $(q_1,q_2)$ satisfying  \eqref{T:ACweakSol:V_q} for $q=1$.};
\draw(6,3.5) node[right]{\scriptsize  $\bullet$ $\mathcal{R}(OCB)$: $(q_1,q_2)$ satisfying  \eqref{T:ACweakSol:V_q} for $q>1$.};
\draw(6,3) node[right]{\scriptsize  $\bullet$ The end point $B$ where $q_2 = \infty$ is not included.};

\draw(6, 2) node[right]{\scriptsize $\bullet$ $\overline{AB}$: $\frac{d}{q_1} + \frac{2 + d(m-1)}{q_2} = 1+ d(m-1)$.}; 
\draw(6, 1.5) node[right]{\scriptsize $\bullet$ $A = (0, \frac{1 + d(m-1)}{2 + d(m-1)})$, \ $B=(\frac{1 + d(m-1)}{d},0)$}; 
\draw(6, 1) node[right]{\scriptsize $\bullet$ $\overline{CB}$: $\frac{d}{q_1} + \frac{2+d(q+m-2)}{q_2} = 1+ d(m-1)$.}; 
\draw(6, 0.5) node[right]{\scriptsize $\bullet$ $C= (0, \frac{1+d(m-1)}{2+d(q+m-2)})$ which approach to $A$ as $q\to 1$ and to $O$ as $q\to \infty$ };

\end{tikzpicture}
\caption{\footnotesize AC $L^q$-weak solutions of FDE in case $V \in \mathsf{S}_{m,q}^{(q_1,q_2)}$}
\label{F:S}
\end{figure}

\begin{remark}\label{R:1}
	In Fig.~\ref{F:S}, we plot the range of $(q_1,q_2)$ satisfying \eqref{T:ACweakSol:V_q}. When $q=1$, the region $\mathcal{R}(OAB)$ except the point $B$ is the valid region. When $q>1$, the darkly-shaded region $\mathcal{R}(OCB)$ except the point $B$ is the valid region. The point $B$ when $q_2 = \infty$ is not covered because the absorbing technique does not hold to get an energy estimate in Proposition~\ref{P:energy} in general except the case when the corresponding norm of $V$ is small enough. 
\end{remark}

\subsubsection{Existence for case: $V \in \tilde{\mathsf{S}}_{m,q}^{(q_1,q_2)}$ }\label{SS:tilde Serrin}

Now, we construct weak solutions assuming suitable class for $\nabla V$. 

\begin{theorem}\label{T:ACweakSol_tilde}
Let $d\geq 2$ and $ 1-\frac{1}{d} < m <1$. Let $q\geq 1$ and suppose that 
 \begin{equation}\label{T:ACweakSol_tilde:V_q}
V\in \tilde{\mathsf{S}}_{m,q}^{(\tilde{q}_1, \tilde{q}_2)}\cap L_{x,t}^{1,\tilde{q}_2}    \ \text{ for }  \ 
\frac{1}{d} < \frac{1}{\tilde{q}_1} < \frac{2+d(m-1)}{d}, \quad  0\leq \frac{1}{\tilde{q}_2} < \frac{1+d(m-1)}{2+d(q+m-2)}.
\end{equation}

\begin{itemize}
	\item[(i)]  For $q=1$, suppose that $\rho_0 \in  \mathcal{P}_2(\Omega)$, and $\int_{\Omega} \rho_0 \log \rho_0 \,dx < \infty$. 
Then, there exists a nonnegative $L^1$-weak solution of \eqref{FDE}-\eqref{FDE_bc_ic} in Definition~\ref{D:weak-sol} such that  $\rho \in AC(0,T; \mathcal{P}_2 (\Omega))$ with $\rho(\cdot, 0)=\rho_0$.
Furthermore, $\rho$ satisfies 
\eqref{T:ACweakSol:E_1} and \eqref{T:ACweakSol:W_1} with $C=C
 (\left\| V\right\|_{\tilde{\mathsf{S}}_{m,1}^{(\tilde{q}_1,
\tilde{q}_2)}}, \int_{\Omega} \rho_0 \log \rho_0 \,dx )$.
\smallskip 

   \item[(ii)] For $q>1$, suppose that $\rho_0 \in  \mathcal{P}_2(\Omega) \cap L^{q}(\Omega)$. 
Then, there exists a nonnegative $L^q$-weak solution of \eqref{FDE}-\eqref{FDE_bc_ic} in Definition~\ref{D:weak-sol} such that  $\rho \in AC(0,T; \mathcal{P}_2 (\Omega))$ with $\rho(\cdot, 0)=\rho_0$.
Furthermore, $\rho$ satisfies \eqref{T:ACweakSol:E_q} and \eqref{T:ACweakSol:W_1} 
$C = C (\left\| V\right\|_{\tilde{\mathsf{S}}_{m,q}^{(\tilde{q}_1, \tilde{q}_2)}}, \|\rho_0\|_{L^{q}(\Omega)} )$.
\end{itemize}
\end{theorem}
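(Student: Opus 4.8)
The plan is to reduce Theorem~\ref{T:ACweakSol_tilde} to Theorem~\ref{T:ACweakSol} by converting the control on $\nabla V$ into control on $V$ itself through a Sobolev inequality on the bounded domain $\Omega$. Once $V$ is placed in a suitable class $\mathsf{S}_{m,q}^{(q_1,q_2)}$, the whole construction---regular solutions via the splitting method, passage to the limit by the compactness arguments, the a priori bounds of Proposition~\ref{P:energy}, and the Wasserstein speed estimate of Lemma~\ref{L:speed}---carries over verbatim, and the estimates \eqref{T:ACweakSol:E_1}/\eqref{T:ACweakSol:E_q} together with \eqref{T:ACweakSol:W_1} follow directly.

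For the reduction I would fix $q_1$ by the optimal Sobolev exponent $\frac{1}{q_1} = \frac{1}{\tilde{q}_1} - \frac{1}{d}$, which is admissible precisely because the hypothesis $\frac{1}{\tilde{q}_1} > \frac{1}{d}$ in \eqref{T:ACweakSol_tilde:V_q} forces $\tilde{q}_1 < d$, i.e.\ the subcritical Sobolev regime. On a bounded smooth domain one has the Gagliardo--Nirenberg--Sobolev inequality with a lower-order term,
\[
\|V(\cdot,t)\|_{L^{q_1}(\Omega)} \leq C\bigl(\|\nabla V(\cdot,t)\|_{L^{\tilde{q}_1}(\Omega)} + \|V(\cdot,t)\|_{L^1(\Omega)}\bigr),
\]
obtained from $W^{1,\tilde{q}_1}\hookrightarrow L^{q_1}$ by interpolating $\|V\|_{L^{\tilde{q}_1}}$ between $L^{q_1}$ and $L^1$ and absorbing the $L^{q_1}$-part via Young's inequality. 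Taking the $L^{\tilde{q}_2}$-norm in time gives $V \in L^{q_1,\tilde{q}_2}_{x,t}$ with $\|V\|_{L^{q_1,\tilde{q}_2}_{x,t}} \leq C(\|V\|_{\tilde{\mathsf{S}}_{m,q}^{(\tilde{q}_1,\tilde{q}_2)}} + \|V\|_{L^{1,\tilde{q}_2}_{x,t}})$. This is exactly where the assumption $V\in L^{1,\tilde{q}_2}_{x,t}$ is used: on a bounded domain the lower-order term cannot be dropped.

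It then remains to verify that the exponents $(q_1,\tilde{q}_2)$ satisfy the defining inequality of $\mathsf{S}_{m,q}^{(q_1,q_2)}$ and the admissible range \eqref{T:ACweakSol:V_q}. Substituting $\frac{d}{q_1} = \frac{d}{\tilde{q}_1} - 1$ into $\frac{d}{q_1} + \frac{2+d(q+m-2)}{\tilde{q}_2} \leq 1 + d(m-1)$ reproduces precisely the $\tilde{\mathsf{S}}$-condition $\frac{d}{\tilde{q}_1} + \frac{2+d(q+m-2)}{\tilde{q}_2} \leq 2 + d(m-1)$; likewise the range $\frac{1}{d} < \frac{1}{\tilde{q}_1} < \frac{2+d(m-1)}{d}$ maps onto $0 < \frac{1}{q_1} < \frac{1+d(m-1)}{d}$, while $\frac{1}{q_2}=\frac{1}{\tilde{q}_2}$ is unchanged and the strict bound on $\frac{1}{\tilde{q}_2}$ keeps us inside \eqref{T:ACweakSol:V_q}. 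Hence $V \in \mathsf{S}_{m,q}^{(q_1,\tilde{q}_2)}$, and both statements (i) and (ii) follow from the corresponding parts of Theorem~\ref{T:ACweakSol}, with the constant depending on $\|V\|_{\tilde{\mathsf{S}}_{m,q}^{(\tilde{q}_1,\tilde{q}_2)}}$ (and on $\|V\|_{L^{1,\tilde{q}_2}_{x,t}}$).

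The points requiring genuine care---and the main obstacle---lie in making this reduction uniform through the approximation. In the splitting scheme $V$ is regularized by smooth fields $V_n$ with $V_n\cdot\mathbf{n}=0$; here these must approximate $V$ simultaneously in $\nabla V$ (in $L^{\tilde{q}_1,\tilde{q}_2}$) and in $L^{1,\tilde{q}_2}$, so that the Sobolev inequality yields $V_n\to V$ in $L^{q_1,\tilde{q}_2}$ and hence a bound on $\|V_n\|_{\mathsf{S}_{m,q}^{(q_1,\tilde{q}_2)}}$ uniform in $n$. I would also remark that, in Proposition~\ref{P:energy}, the drift contribution to the pure energy part can alternatively be treated by integrating by parts to expose only $\nabla\cdot V$, since $V\cdot\mathbf{n}=0$ on $\partial\Omega$; the full gradient $\nabla V$ together with the lower-order $L^1$ control is genuinely needed only to bound the velocity term $\iint_{\Omega_T}|V|^2\rho\,dx\,dt$ in \eqref{T:ACweakSol:E_q}, which is what controls the metric speed in $\mathcal{P}_2(\Omega)$ and thus underlies \eqref{T:ACweakSol:W_1}.
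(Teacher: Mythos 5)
Your proposal is correct, and it takes a genuinely more economical route than the paper. You reduce the theorem to Theorem~\ref{T:ACweakSol} in one stroke: the Sobolev inequality with lower-order $L^1$ term converts $V\in \tilde{\mathsf{S}}_{m,q}^{(\tilde{q}_1,\tilde{q}_2)}\cap L^{1,\tilde{q}_2}_{x,t}$ into $V\in \mathsf{S}_{m,q}^{(\tilde{q}_1^*,\tilde{q}_2)}$, and your exponent arithmetic (substituting $\frac{d}{q_1}=\frac{d}{\tilde{q}_1}-1$ into the defining inequality of $\mathsf{S}$, and checking that \eqref{T:ACweakSol_tilde:V_q} maps into \eqref{T:ACweakSol:V_q}) is exactly right. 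This embedding is in fact recorded by the paper itself as \eqref{V_embedding} in a remark, but the paper does \emph{not} use it to black-box the earlier theorem: instead it re-runs the whole approximation scheme, choosing smooth $V_n$ converging simultaneously in the $\tilde{\mathsf{S}}_{m,q}^{(\tilde{q}_1,\tilde{q}_2)}$ and $L^{1,\tilde{q}_2}_{x,t}$ norms, building regular solutions via Proposition~\ref{proposition : regular existence}(ii), and crucially deriving the energy estimate \emph{directly} from $\nabla V$ by integration by parts ($\mathcal{J}_q=-\int_\Omega \rho^q\,\nabla\cdot V\,dx$ in Proposition~\ref{P:energy}(ii)), reserving the embedding for the two places where $V$ itself is unavoidable: the velocity term $\iint |V|^2\rho$ in Proposition~\ref{P:Energy-speed}(ii) and the drift-convergence term $I$ in the limit passage. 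The trade-off is precisely what you identified: the paper's hybrid proof keeps the pure energy constant depending only on $\|V\|_{\tilde{\mathsf{S}}_{m,q}^{(\tilde{q}_1,\tilde{q}_2)}}$ (and its integration-by-parts estimate is valid in the wider range $(1-\frac{2}{d})_+<m<1$, even though the embedding, hence the theorem, forces $1-\frac{1}{d}<m<1$), while your reduction is shorter but makes all constants depend on $\|V\|_{\tilde{\mathsf{S}}_{m,q}^{(\tilde{q}_1,\tilde{q}_2)}}+\|V\|_{L^{1,\tilde{q}_2}_{x,t}}$; since the paper's own speed estimate also passes through \eqref{V_embedding}, this extra dependence is already implicit in the paper's constants, so nothing is lost. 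Finally, your flagged technical point about approximating $V$ by smooth fields $V_n$ (with $V_n\cdot\mathbf{n}=0$) simultaneously in the gradient norm and in $L^{1,\tilde{q}_2}_{x,t}$ is the same assumption the paper makes in the first display of its proof, so your argument is at the same level of rigor.
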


\begin{figure}
\centering

\begin{tikzpicture}[domain=0:16]

\fill[fill= llgray] (2.3,0) -- (5,0) -- (5, 3) -- (2.3,3) ;
\fill[fill= lgray] (2.3, 1.9) -- (2.3, 0) -- (4,0);
\fill[fill= gray] (2.3, 1.27) -- (2.3, 0) -- (4,0);

\draw[->] (0,0) -- (5.5,0) node[right] {\scriptsize $\frac{1}{\tilde{q}_1}$};
\draw[->] (0,0) -- (0,5.5) node[left] { \scriptsize $\frac{1}{\tilde{q}_2}$};

\draw (0,0)  node[left] {\scriptsize $O$};

\draw[gray] (0,4.5) node{\scriptsize $+$} node[left] {\scriptsize $1$} 
-- (4.8, 0) node {\scriptsize $+$} node[below] {\scriptsize $\frac{2}{d}$} ;
\draw[gray] (4.3, 0.5) node[right]{\scriptsize $\tilde{\mathcal{S}}$} ;
\draw[gray, thick](2.3, 2.35) circle(0.05);
\draw[dotted] (0,2.35) node{\scriptsize $*$} node[left]{\scriptsize $\frac12$} -- (2.3, 2.35);

\draw[thick] (4,0) circle(0.05) node[below]{\scriptsize $B$};
\draw (0,4.5)node{\scriptsize $+$} node[left] {\scriptsize $1$}   -- (4, 0) ; 

\draw[thick] (2.3, 1.9) circle(0.05) node[left]{\scriptsize $A$};
\draw[dotted] (2.3,1.9)-- (0,1.9) node {\scriptsize $*$} node[left]{\scriptsize $\frac{1 + d(m-1)}{2 + d(m-1)}$} ;

\draw (3.3, 0.7) node[right]{\scriptsize $\tilde{\mathsf{S}}_{m,1}^{(\tilde{q}_1, \tilde{q}_2)}$};

\draw (0,3) node{\scriptsize $+$} node[left]{\scriptsize $\frac{2+d(m-1)}{2+d(q+m-2)}$} -- (4,0);
\draw[thick] (2.3, 1.27) circle(0.05) node[left]{\scriptsize $C$};
\draw[dotted] (2.3,1.27)-- (0,1.27) node {\scriptsize $*$} node[left]{\scriptsize $\frac{1 + d(m-1)}{2 + d(q+m-2)}$} ;

\draw (2.3, 0.7) node[right]{\scriptsize $\tilde{\mathsf{S}}_{m,q}^{(\tilde{q}_1, \tilde{q}_2)}$};

\draw[dotted] (2.3,0) -- (2.3,3.3) node[above] {\scriptsize $\tilde{q}_1 = d$};
\draw[dotted] (5,0) -- (5,3.3) node[above] {\scriptsize $\tilde{q}_1 = 1$};
\draw[thick] (2.3, 0) circle(0.05) node[below]{\scriptsize $\tilde{O}$};


\draw(6,5) node[right]{\scriptsize $\bullet$ the lightly-shaded region: embedding region holding \eqref{V_embedding}.};
\draw(6,4.5) node[right]{\scriptsize $\bullet$ $\mathcal{R}(\tilde{O}AB)$: $(\tilde{q}_1, \tilde{q}_2)$ satisfying  \eqref{T:ACweakSol_tilde:V_q} for $q=1$.}; 
\draw(6,4) node[right]{\scriptsize $\bullet$ $\mathcal{R}(\tilde{O}CB)$: $(\tilde{q}_1, \tilde{q}_2)$ satisfying  \eqref{T:ACweakSol_tilde:V_q} for $q>1 $.};

\draw(6,3) node[right]{\scriptsize $\bullet$ Let $d> 2$ and $1-\frac{1}{d} < m < 1$. };
\draw(6,2.5) node[right]{\scriptsize $\bullet$ $\tilde{\mathsf{S}}_{m,1}^{(\tilde{q}_1,\tilde{q}_2)}$: $\frac{d}{\tilde{q}_1} + \frac{2 + d(m-1)}{\tilde{q}_2} \leq 2 + d(m-1)$.} ;
\draw(6,2) node[right]{\scriptsize $\bullet$ $A=(\frac{1}{d}, \frac{1+d(m-1)}{2+d(m-1)})$, $B=(\frac{2+d(m-1)}{d},0)$, $\tilde{O}=(\frac{1}{d},0)$} ;
\draw(6,1.5) node[right]{\scriptsize $\bullet$ $\tilde{\mathsf{S}}_{m,q}^{(\tilde{q}_1,\tilde{q}_2)}$: $\frac{d}{\tilde{q}_1} + \frac{2 + d(q+m-2)}{\tilde{q}_2} \leq 2 + d(m-1)$.} ;
\draw(6,1) node[right]{\scriptsize $\bullet$ $C=(\frac{1}{d}, \frac{1+d(m-1)}{2+d(q+m-2)})$ which approach to $A$ as $q\to 1$ and to $\tilde{O}$ as $q\to \infty$.} ;

\end{tikzpicture}
\caption{\footnotesize  AC $L^q$-weak solutions of FDE in case $V\in \tilde{\mathsf{S}}_{m,q}^{(\tilde{q}_1, \tilde{q}_2)}$}
\label{F:tilde_S:weak}
\end{figure}

We provide a couple of remarks. 
\begin{remark}
	\begin{itemize}
		\item [(i)] The condition $V \in \tilde{\mathsf{S}}_{m,q}^{(\tilde{q}_1, \tilde{q}_2)} \cap L_{x,t}^{1, \tilde{q}_2}$ implies that $V\in \mathsf{S}_{m,q}^{(\tilde{q}_{1}^{\ast}, \tilde{q}_2)}$ because of the following embedding relation between $V$ and $\nabla V$ such that 
			\begin{equation}\label{V_embedding}
			\|V\|_{L_{x,t}^{\tilde{q}_{1}^{\ast},\tilde{q}_2}} \leq c \left(\|\nabla V\|_{L_{x,t}^{\tilde{q}_1,\tilde{q}_2}} + \|V\|_{L_{x,t}^{1, \tilde{q}_2}}\right), \quad \text{ for } \ \tilde{q}_{1}^{\ast} = \frac{d \tilde{q}_1}{ d - \tilde{q}_1} \in (1,d).
			\end{equation}
			The embedding possible region is shaded lightly in Fig.~\ref{F:tilde_S:weak}. 
			Although the class $\tilde{\mathsf{S}}_{m,q}^{(\tilde{q}_1, \tilde{q}_2)}$ allows $(1-\frac{2}{d})_{+} \leq m<1$, the suitable ranges for embedding relation \eqref{V_embedding} allows only $ 1-\frac{1}{d} < m<1$.
			
		\item[(ii)] When $q=1$, the class $\tilde{\mathsf{S}}_{m,1}^{(\tilde{q}_1, \tilde{q}_2)}$ coincides with the scaling invariant class for $\nabla V$ in \cite{HKK01,HKK02}. In Fig.~\ref{F:tilde_S:weak}, we plot admissible region of \eqref{T:ACweakSol_tilde:V_q}. The region $\mathcal{R}(\tilde{O}AB)$ except the line $\overline{\tilde{O}A}$ and the point $B$ is where we can construct AC $L^1$-weak solutions. Moreover, the region $\mathcal{R}(\tilde{O}CB)$ except the line $\overline{\tilde{O} C}$ and the point $B$ is where we can construct AC $L^q$-weak solutions.  
	\end{itemize}
\end{remark}

\subsubsection{Existence for case: $\nabla \cdot V =0$}

In the divergence-free case, we construct weak solutions under weaker restrictions on both $V$ and $m$. The suitable condition on $V$ is determined by the compactness arguments in Proposition~\ref{Proposition : compactness} or the speed estimates in Proposition~\ref{L:speed} because the energy estimates are obtained independently of $V$. Concerning the same arguments for PME in \cite{HKK02}, here we improve them to address certain super-critical regimes of $\mathsf{S}_{m,q}^{(q_1,q_2)}$, which are introduced in Appendix~\ref{Appendix:PME}.
Also, now we construct weak solutions under the range of $m$ such that 
\begin{equation*}\label{m:divfree}
	\begin{cases}
		1-\frac{2q}{d} \leq m < 1, & \text{ for } 1\leq q < \frac{d}{2} \text{ and } d>2,  \vspace{1 mm}\\
		0<m<1, & \text{ for } q\geq 1 \text{ and } d=2, \text{ or, } q\geq \frac{d}{2} \text{ and } d>2. 
	\end{cases}
\end{equation*}

First, we present the \( L^q \)-weak solutions that satisfy an energy estimate. If the critical conditions on \( V \) are not imposed, then we can estimate the \( \delta \)-distance.

\begin{theorem}\label{T:weakSol:divfree}
Let $d\geq 2$ and $q\geq 1$.Also, let $0<m<1$ and $ (1-\frac{2q}{d})_{+} \leq m < 1$. Suppose that 
\begin{equation}\label{T:weakSol:divfree:Vq}
  V\in \mathcal{D}_{m,q}^{(q_1, q_2)} \ \text{ for } \
\begin{cases}
0 \leq \frac{1}{q_1} \leq \frac{q-1}{q} + \frac{2+q_{m,d}}{d}, \ 0 \leq \frac{1}{q_2} \leq 1, &\text{if }  d\geq 2 \text{ and } 1 \leq q < 2-m \vspace{1mm}\\
	0 \leq \frac{1}{q_1} \leq \frac{q-1}{q} + \frac{2+q_{m,d}}{d(m+q-1)}, \ 0 \leq \frac{1}{q_2} \leq 1, &\text{if } d>2 \text{ and } q\geq 2-m ,  \vspace{1 mm} \\
	 0 \leq \frac{1}{q_1} < \frac{q-1}{q} + \frac{2+q_{m,d}}{d(m+q-1)}, \ 0 \leq \frac{1}{q_2} \leq 1, &\text{if }  d=2 \text{ and } q \geq 2-m.
\end{cases}
 \end{equation}
\begin{itemize}
\item[(i)] For $q=1$, assume that $\rho_0 \in  \mathcal{P}_2(\Omega)$ and $\int_{\Omega} \rho_0 \log \rho_0 \,dx < \infty$.
Then, there exists a nonnegative $L^1$-weak solution of \eqref{FDE}-\eqref{FDE_bc_ic} in Definition~\ref{D:weak-sol}
that holds 
 \begin{equation}\label{T:weakSol:E_1}
 \esssup_{0\leq t \leq T} \int_{\Omega} \rho \abs{\log \rho  }(\cdot, t) \,dx
 + \iint_{\Omega_T} \abs{\nabla \rho^{\frac{m}{2}}}^2 \,dx\,dt
\leq C,
\end{equation}
with $C = C ( \int_{\Omega} \rho_0 \log \rho_0 \,dx )$.

\item[(ii)] For $q>1$, assume that $\rho_0 \in  \mathcal{P}_2(\Omega) \cap L^{q}(\Omega)$.
Then, there exists a nonnegative $L^q$-weak solution of \eqref{FDE}-\eqref{FDE_bc_ic} in Definition~\ref{D:weak-sol} that holds
 \begin{equation}\label{T:weakSol:E_q}
 \esssup_{0\leq t \leq T} \int_{\Omega} \rho^q(\cdot, t) \,dx
 + \iint_{\Omega_T} \abs{\nabla \rho^{\frac{q+m-1}{2}}}^2 \,dx\,dt
\leq C,
\end{equation}
 with $C = C (\|\rho_{0}\|_{L^{q} (\Omega)} )$.
 
\item[(iii)] Furthermore, suppose that 
\begin{equation}\label{T:V:divfree_delta}
	V\in \mathcal{D}_{m,q,+}^{(q_1,q_2)}  
	\ \text{ for } \  0 \leq \frac{1}{q_1} < \frac{q-1}{q} + \frac{2+q_{m,d}}{d}, \quad  0 \leq \frac{1}{q_2} < 1.
\end{equation}
Then, we have 
\begin{equation*}\label{T:weakSol:Narrow_2}
\delta(\rho(t),\rho(s))\leq C(t-s)^{a},   \quad  \forall ~ 0\leq s<t\leq T 
\end{equation*}
with $a= \min \left\{ \frac{1}{2}, \frac{2+\frac{d(q+m-2)}{q} - \left(\frac{d}{q_1} + \frac{2+q_{m,d}}{q_2}\right)}{2+q_{m,d}} \right\}$ 
and $C= \begin{cases}
	C ( \|V\|_{\mathcal{D}_{m,1,+}^{(q_1,q_2)}}, \int_{\Omega} \rho_0 \log \rho_0 \,dx ), & \text{if } q=1 , \\
	C (\|V\|_{\mathcal{D}_{m,q,+}^{(q_1,q_2)}}, \|\rho_{0}\|_{L^{q} (\Omega)} ), &\text{if } q>1.
\end{cases}$
Here, the definition of  $\delta$ is given in \eqref{Narrow-distance}.
\end{itemize}
\end{theorem}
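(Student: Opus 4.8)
The plan is to realize $\rho$ as a limit of regular solutions and to exploit the divergence-free structure so that the energy/entropy bounds are entirely free of $V$. First I would regularize: choose smooth, strictly positive $\rho_{0,\epsilon}$ approximating $\rho_0$ in $L^q(\Omega)$ (respectively in the entropy $\int_\Omega\rho_0\log\rho_0$ when $q=1$), and divergence-free $V_\epsilon\in C_c^\infty$ approximating $V$ in $\mathfrak{L}_{x,t}^{q_1,q_2}$ as furnished by Definition~\ref{D:Serrin_divfree}. The construction in Appendix~\ref{A:Exist-regular} (splitting method) then produces regular solutions $\rho_\epsilon$ of the associated nondegenerate problem. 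The crucial point is that, upon testing the equation by $q\rho_\epsilon^{q-1}$ (respectively by $1+\log\rho_\epsilon$), the drift contribution reduces to
\[
\int_\Omega V_\epsilon\cdot\nabla(\rho_\epsilon^{q})\,dx=-\int_\Omega \rho_\epsilon^{q}\,(\nabla\cdot V_\epsilon)\,dx=0,
\]
using $\nabla\cdot V_\epsilon=0$ and $V_\epsilon\cdot\mathbf{n}=0$. Hence Proposition~\ref{P:energy} delivers \eqref{T:weakSol:E_q} (respectively \eqref{T:weakSol:E_1}) uniformly in $\epsilon$ and \emph{independently of} $\|V\|$, which is exactly why the admissible range of $m$ and the class of $V$ are dictated by compactness rather than by the energy.

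Next I would pass to the limit. The uniform bounds give $\rho_\epsilon\rightharpoonup\rho$ in $L^\infty(0,T;L^q(\Omega))$ and a uniform bound on $\nabla\rho_\epsilon^{(q+m-1)/2}$ in $L^2(\Omega_T)$; the range $(1-\frac{2q}{d})_+\le m<1$ is precisely what makes the Gagliardo--Nirenberg interpolation from these two bounds valid, yielding parabolic integrability of $\rho_\epsilon$ on the scaling line $\frac{d}{q_1}+\frac{2+q_{m,d}}{q_2}=2+\frac{d(q+m-2)}{q}$. Combined with $\nabla\cdot V_\epsilon=0$, the equation bounds $\partial_t\rho_\epsilon$ in a negative-order space, so an Aubin--Lions type argument (Proposition~\ref{Proposition : compactness}) gives strong convergence $\rho_\epsilon\to\rho$ in some $L^p(\Omega_T)$, whence $\rho_\epsilon^m\to\rho^m$ and $\nabla\rho_\epsilon^{(q+m-1)/2}\rightharpoonup\nabla\rho^{(q+m-1)/2}$. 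The class $\mathcal{D}_{m,q}^{(q_1,q_2)}$ is designed so that the Hölder pairing of the parabolic integrability of $\rho_\epsilon$ against $V_\epsilon\in\mathfrak{L}_{x,t}^{q_1,q_2}$ closes with exponents matched exactly by the defining scaling identity, permitting the identification $\rho_\epsilon V_\epsilon\to\rho V$ in $L^1(\Omega_T)$ and passage to the limit in the weak formulation of Definition~\ref{D:weak-sol}. This establishes (i) and (ii).

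For (iii) I would test the equation against a time-independent $\varphi$ in the unit ball of the (bounded-Lipschitz type) norm dual to $\delta$ in \eqref{Narrow-distance}, which yields
\[
\int_\Omega(\rho(t)-\rho(s))\,\varphi\,dx=\int_s^t\int_\Omega\bigl(-\nabla\rho^m\cdot\nabla\varphi+\rho\,V\cdot\nabla\varphi\bigr)\,dx\,d\tau .
\]
The diffusion term I would control through
\[
\nabla\rho^m=\frac{2m}{q+m-1}\,\rho^{\frac{m-q+1}{2}}\,\nabla\rho^{\frac{q+m-1}{2}},
\]
pairing $\nabla\rho^{(q+m-1)/2}\in L^2$ against the remaining $\rho$-power estimated by $\|\rho\|_{L^\infty L^q}$ and $|\Omega|<\infty$, then applying Cauchy--Schwarz in time to extract the factor $(t-s)^{1/2}$. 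The drift term $\int_s^t\int_\Omega\rho\,V\cdot\nabla\varphi$ I would bound by Hölder using the parabolic integrability of $\rho$ and $V\in\mathcal{D}_{m,q,+}^{(q_1,q_2)}$; here the \emph{strict} inequality in \eqref{T:V:divfree_delta} leaves a positive gap, producing a genuine power $(t-s)^{a}$ with the stated exponent $a=\min\{\tfrac12,\cdots\}$ rather than a mere bound. Taking the supremum over admissible $\varphi$ gives the $\delta$-distance estimate.

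The main obstacle is the limit passage in the nonlinear drift $\rho_\epsilon V_\epsilon$: because the fast-diffusion regime $0<m<1$ allows $\rho$ to vanish and forces $\nabla\rho^m$ to lie only in $L^1$, the strong compactness of $\rho_\epsilon$ must be upgraded to exactly the borderline integrability encoded in $\mathcal{D}_{m,q}^{(q_1,q_2)}$ in order to control the product against $V_\epsilon$. Securing this sharp interpolation \emph{uniformly in} $\epsilon$, while keeping all estimates independent of $\|V\|$ via the divergence-free cancellation, is the technical heart of the argument, and it is also what forces the strict subcriticality of $\mathcal{D}_{m,q,+}^{(q_1,q_2)}$ in part (iii).
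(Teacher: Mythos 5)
Your treatment of parts (i) and (ii) follows essentially the same route as the paper: approximate $\rho_0$ and $V$ by smooth (divergence-free) data, produce regular solutions via the splitting scheme of Appendix~\ref{A:Exist-regular}, observe that $\int_\Omega V_\epsilon\cdot\nabla(\rho_\epsilon^{q})\,dx=-\int_\Omega \rho_\epsilon^{q}\,\nabla\cdot V_\epsilon\,dx=0$ so the energy/entropy bounds are independent of $V$, and then close the limit passage by the Aubin--Lions compactness of Proposition~\ref{Proposition : compactness} together with the H\"older pairing $\|V_n\rho_n\|_{L^1_{x,t}}\le\|V_n\|_{\mathcal{D}_{m,q}^{(q_1,q_2)}}\|\rho_n\|_{L^{r_1,r_2}_{x,t}}$, whose exponents are matched exactly by the defining scaling identity of $\mathcal{D}_{m,q}^{(q_1,q_2)}$. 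This is the paper's argument.

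In part (iii), however, your estimate of the diffusion term has a genuine gap. You factor
\[
\nabla\rho^m=\frac{2m}{q+m-1}\,\rho^{\frac{m-q+1}{2}}\,\nabla\rho^{\frac{q+m-1}{2}}
\]
and claim the factor $\rho^{\frac{m-q+1}{2}}$ is controlled by $\|\rho\|_{L^\infty L^q}$ and $|\Omega|<\infty$. This works only when $q\le m+1$: for $q>m+1$ (which, since $0<m<1$, includes every $q\ge 2$) the exponent $\frac{m-q+1}{2}$ is \emph{negative}, and $\int_\Omega\rho^{\,m+1-q}\,dx$ cannot be bounded by any positive norm of $\rho$, since weak solutions of the fast diffusion equation may vanish on sets of positive measure. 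The paper avoids this entirely: because $\Omega$ is bounded, $\rho_0\in\mathcal{P}_2(\Omega)\cap L^q(\Omega)$ implies $\int_\Omega\rho_0\log\rho_0\,dx<\infty$, so the $q=1$ entropy estimate (which in the divergence-free case is independent of $V$, Proposition~\ref{P:energy}(iii)) applies for \emph{every} $q\ge 1$ and yields a uniform bound on $\|\nabla\rho^{m/2}\|_{L^2_{x,t}}$; one then writes $\nabla\rho^m=2\rho^{m/2}\nabla\rho^{m/2}$ and uses Cauchy--Schwarz in space-time together with $\int_\Omega\rho^m\,dx\le|\Omega|^{1-m}\bigl(\int_\Omega\rho\,dx\bigr)^{m}$ and mass conservation to get the factor $|t-s|^{1/2}$ (this is \eqref{eq30 : Theorem : Toy Fokker-Plank}). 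Your drift-term estimate, extracting the power $(t-s)^{a}$ from the strict subcriticality gap of $\mathcal{D}_{m,q,+}^{(q_1,q_2)}$, is the paper's computation.

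A secondary, more minor point: you derive the $\delta$-estimate by testing the limiting weak solution with time-independent $\varphi$, but such $\varphi$ are not admissible in Definition~\ref{D:weak-sol} (test functions must have compact support in $[0,T)$), and this route only gives the estimate for a.e.\ $s,t$. The paper instead proves \eqref{regular:narrow-distance:2} at the level of regular solutions (Proposition~\ref{proposition : regular existence}(iii)) and passes it to the limit via the Arzel\`a--Ascoli Lemma~\ref{Lemma : Arzela-Ascoli}, which simultaneously furnishes a curve defined for \emph{all} $t\in[0,T]$ along which the narrow convergence and hence the H\"older-in-$\delta$ bound survive. Your argument can be repaired along these lines, but as written both the diffusion bound and the everywhere-in-time statement are not established.
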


Now, using the speed estimates in Lemma~\ref{L:speed}, we construct absolutely continuous \( L^q \)-weak solutions.

\begin{theorem}\label{T:ACweakSol:divfree}
Let $d\geq 2$ and $q\geq 1$.Also, let $0<m<1$ and $ (1-\frac{2q}{d})_{+} \leq m < 1$. Suppose that 
\begin{equation}\label{T:ACweakSol:divfree:Vq}
\begin{aligned}
		 V \in \mathcal{D}_{m,q,s}^{(q_1,q_2)} 
		\  \text{ for } \ 
	\begin{cases}
0 \leq \frac{1}{q_1} \leq \frac{q-1}{2q}+\frac{2+q_{m,d}}{2d},\ 0 \leq \frac{1}{q_2} \leq \frac{1}{2}, &\text{if }  d\geq 2  \text{ and }  1 \leq q<2-m,
\vspace{1mm}\\
	0 \leq \frac{1}{q_1} \leq \frac{q-1}{2q}+\frac{2+q_{m,d}}{2d(m+q-1)}, \ 0 \leq \frac{1}{q_2} \leq \frac{1}{2}, &\text{if } d>2 \text{ and } q \geq 2-m,  \vspace{1 mm} \\
	 0 \leq \frac{1}{q_1} < \frac{q-1}{2q}+\frac{2+q_{m,d}}{2d(m+q-1)}, \ 0 \leq \frac{1}{q_2} \leq \frac{1}{2}, &\text{if }  d=2 \text{ and } q \geq 2-m.  
\end{cases}
\end{aligned}
\end{equation}

\begin{itemize}
\item[(i)] For $q=1$ and $ (1-\frac{2}{d})_{+} \leq m < 1$, assume that $\rho_0 \in  \mathcal{P}_2(\Omega)$ and $\int_{\Omega} \rho_0 \log \rho_0 \,dx < \infty$. 
Then, there exists a nonnegative $L^1$-weak solution of \eqref{FDE}-\eqref{FDE_bc_ic} in Definition~\ref{D:weak-sol} such that  $\rho \in AC(0,T; \mathcal{P}_2 (\Omega))$ with $\rho(\cdot, 0)=\rho_0$.
Furthermore, $\rho$ satisfies \eqref{T:ACweakSol:E_1} and \eqref{T:ACweakSol:W_1}
with $C = C (\|V\|_{\mathcal{D}_{m,1,s}^{(q_1,q_2)}}, \int_{\Omega} \rho_0 \log \rho_0 \,dx )$.
\smallskip 

\item[(ii)]
For $q>1$ and $ (1-\frac{2q}{d})_{+} \leq m < 1$, assume that $\rho_0 \in  \mathcal{P}_2(\Omega) \cap L^{q}(\Omega)$. 
Then, there exists a nonnegative $L^q$-weak solution of \eqref{FDE}-\eqref{FDE_bc_ic} in Definition~\ref{D:weak-sol} such that  $\rho \in AC(0,T; \mathcal{P}_2 (\Omega))$ with $\rho(\cdot, 0)=\rho_0$.
Furthermore, $\rho$ satisfies \eqref{T:ACweakSol:E_q}
and \eqref{T:ACweakSol:W_1}
with $C = C (\|V\|_{\mathcal{D}_{m,q,s}^{(q_1,q_2)}}, \|\rho_{0}\|_{L^{q} (\Omega)})$.
\end{itemize}
\end{theorem}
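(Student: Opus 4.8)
The plan is to upgrade the $L^q$-weak solution already produced in Theorem~\ref{T:weakSol:divfree} to an absolutely continuous curve in $(\mathcal{P}_2(\Omega),W_2)$ by adjoining a uniform speed bound, the stronger class $\mathcal{D}_{m,q,s}^{(q_1,q_2)}$ being tailored precisely for that purpose. Since the speed-class exponent threshold $1+\frac{d(q+m-2)}{2q}$ is exactly half of the compactness threshold $2+\frac{d(q+m-2)}{q}$ defining $\mathcal{D}_{m,q}^{(q_1,q_2)}$, we have $\mathcal{D}_{m,q,s}^{(q_1,q_2)}\subset\mathcal{D}_{m,q}^{(q_1,q_2)}$ in the relevant range, so the regular solutions $\rho_\varepsilon$ built by the splitting method with smoothed drift $V_\varepsilon\to V$, together with the uniform energy bounds \eqref{T:weakSol:E_q} (resp.\ \eqref{T:weakSol:E_1} for $q=1$) from Proposition~\ref{P:energy}, are available unchanged. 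Because $\nabla\cdot V_\varepsilon=0$, the drift drops out of the basic energy identity and these bounds are independent of $V$; the new content is the velocity control that produces the AC property and the extra kinetic terms in \eqref{T:ACweakSol:E_q}.

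First I would record the kinetic formulation. Writing the equation as a continuity equation $\partial_t\rho_\varepsilon+\nabla\cdot(\rho_\varepsilon\mathbf{v}_\varepsilon)=0$ with effective velocity $\mathbf{v}_\varepsilon=V_\varepsilon-\nabla\rho_\varepsilon^m/\rho_\varepsilon$, the Benamou--Brenier bound gives the metric-derivative estimate $|\rho_\varepsilon'|(t)\le(\int_\Omega|\mathbf{v}_\varepsilon|^2\rho_\varepsilon\,dx)^{1/2}$, so integrating in time and applying Cauchy--Schwarz yields
\[
W_2(\rho_\varepsilon(t),\rho_\varepsilon(s))\le C(t-s)^{1/2}\left(\iint_{\Omega_T}\Big(\big|\tfrac{\nabla\rho_\varepsilon^m}{\rho_\varepsilon}\big|^2+|V_\varepsilon|^2\Big)\rho_\varepsilon\,dx\,dt\right)^{1/2}.
\]
The task thus reduces to a uniform-in-$\varepsilon$ bound on the kinetic energy on the right, which is precisely the additional term present in \eqref{T:ACweakSol:E_q} but absent from \eqref{T:weakSol:E_q}. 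This is the statement of the speed estimate Lemma~\ref{L:speed}, and establishing it is the step I expect to be the main obstacle.

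The heart of that estimate is the drift contribution $\iint|V_\varepsilon|^2\rho_\varepsilon$. I would control it by Hölder in $(x,t)$ against the integrability of $\rho_\varepsilon$ extracted from the energy bound: from $\rho_\varepsilon\in L^\infty_tL^q_x$ together with $\nabla\rho_\varepsilon^{(q+m-1)/2}\in L^2_{x,t}$, the Gagliardo--Nirenberg--Sobolev inequality yields $\rho_\varepsilon\in L^{p_1,p_2}_{x,t}$ on an explicit interpolation segment, and the defining inequality $\frac{d}{q_1}+\frac{2+q_{m,d}}{q_2}\le 1+\frac{d(q+m-2)}{2q}$ of $\mathcal{D}_{m,q,s}^{(q_1,q_2)}$ is exactly the scaling match making the pairing $\iint|V_\varepsilon|^2\rho_\varepsilon\le\|V_\varepsilon\|^2_{\mathcal{L}^{q_1,q_2}_{x,t}}\|\rho_\varepsilon\|_{L^{p_1,p_2}_{x,t}}$ finite and uniform; the term $\iint|\nabla\rho_\varepsilon^m/\rho_\varepsilon|^2\rho_\varepsilon$ is treated from the same energy quantities, and the endpoint behavior explains the ranges and the strict inequality in the third line of \eqref{T:ACweakSol:divfree:Vq}. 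The delicate point is that, unlike in the $\mathsf{S}$-classes, no absorption into the diffusion is available here, so the bound must be closed purely on the divergence-free energy budget; this is what forces the threshold to be \emph{half} of the compactness one.

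Finally I would pass to the limit. The uniform Hölder-$\tfrac12$ estimate in $W_2$ renders $\{\rho_\varepsilon\}$ equicontinuous in $(\mathcal{P}_2(\Omega),W_2)$, which is compact since $\Omega$ is bounded; combined with the strong $L^1_{x,t}$ (resp.\ $L^q_{x,t}$) compactness of Proposition~\ref{Proposition : compactness}, an Arzelà--Ascoli argument produces a limit $\rho\in AC(0,T;\mathcal{P}_2(\Omega))$ with $\rho(\cdot,0)=\rho_0$, and lower semicontinuity of $W_2$ transfers \eqref{T:ACweakSol:W_1} to $\rho$. That $\rho$ solves \eqref{FDE}--\eqref{FDE_bc_ic} in the sense of Definition~\ref{D:weak-sol} and satisfies the full energy inequality \eqref{T:ACweakSol:E_q} follows as in Theorem~\ref{T:weakSol:divfree}, using weak lower semicontinuity of the convex kinetic integrals to retain the newly added velocity terms. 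The case $q=1$ is identical after replacing $\int\rho^q$ by the entropy $\int\rho|\log\rho|$ and the associated logarithmic energy, yielding \eqref{T:ACweakSol:E_1}.
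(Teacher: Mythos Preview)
Your proposal is correct and follows the paper's route: the inclusion of the speed class $\mathcal{D}_{m,q,s}^{(q_1,q_2)}$ into the compactness class, the speed estimate of Lemma~\ref{L:speed} (packaged with the energy in Proposition~\ref{P:Energy-speed}) feeding the Benamou--Brenier bound via Lemma~\ref{representation of AC curves}, and the Arzel\`a--Ascoli passage to the limit with lower semicontinuity all match the paper's argument. One clarification worth noting: the term $\iint|\nabla\rho^m/\rho|^2\rho$ (which equals, up to a constant, $\|\nabla\rho^{m-1/2}\|_{L^2}^2$) is \emph{not} controlled from the $L^q$-energy quantities $\|\nabla\rho^{(q+m-1)/2}\|_{L^2}$ as you suggest, but arises from a separate identity---testing the equation with $-m(\rho+\epsilon)^{m-1}$ bounds it by $\int_\Omega\rho^m\,dx$ (hence by mass since $m<1$) plus $\iint|V|^2\rho$, and it is only this last drift term that is then estimated via H\"older against the energy-derived integrability of $\rho$.
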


\begin{figure}
\centering

\begin{tikzpicture}[domain=0:16]

\fill[fill= lgray] (0,0) -- (0,4) -- (1.2,4) -- (4.2,1.5) -- (4.2,0);
\fill[fill= gray] (0, 0) -- (0, 2) --(0.6,2) -- (2.1,0.75)-- (2.1,0);

\draw[->] (0,0) -- (7,0) node[right] {\scriptsize $\frac{1}{q_1}$};
\draw[->] (0,0) -- (0,6) node[left] { \scriptsize $\frac{1}{q_2}$};

\draw (0,0)  node[left] {\scriptsize $O$};

\draw (0,5) node{\scriptsize $+$} node[left] {\scriptsize $1+\frac{d(q-1)}{2q+d(m-1)}$} 
-- (6, 0) node {\scriptsize $+$} node[below] {\scriptsize $\frac{q-1}{q}+\frac{2+q_{m,d}}{d}$};
\draw (2.7, 2.7) node[right]{\scriptsize $\mathcal{D}_{m,q}^{(q_1,q_2)}$};

\draw (1.2,4) node{\scriptsize $\bullet$} node[right]{\scriptsize $B$};
\draw[very thin] (1.2, 4) -- (0, 4)node{\scriptsize $\bullet$}node[left]{\scriptsize $A$};
\draw[dotted] (1.2,4) -- (1.2, 0) node{\scriptsize $*$} node[below]{\scriptsize $\frac{q-1}{q}$};

\draw (4.2,1.5) node{\scriptsize $\bullet$} node[right]{\scriptsize $C$};
\draw[dotted] (4.2,1.5) -- (0, 1.4)node{\scriptsize $*$}node[left]{\scriptsize $\frac{q+m-2}{q+m-1}$};
\draw[very thin] (4.2,1.5) -- (4.2, 0) node{\scriptsize $\bullet$} node[below]{\scriptsize $D$};

\draw (0,2.5) node{\scriptsize $+$} node[left] {\scriptsize $\frac{1}{2}+\frac{d(q-1)}{2[2q+d(m-1)]}$} 
-- (3, 0) node {\scriptsize $+$} node[below] {\scriptsize $g$};
\draw (1.35, 1.35) node[right]{\scriptsize $\mathcal{D}_{m,q,s}^{(q_1,q_2)}$};

\draw (0.6,2) node{\scriptsize $\bullet$} node[right]{\scriptsize $F$};
\draw[very thin] (0.6, 2) -- (0, 2)node{\scriptsize $\bullet$}node[left]{\scriptsize $E$};
\draw[dotted] (0.6,2) -- (0.6, 0) node{\scriptsize $*$} node[below]{\scriptsize $\frac{q-1}{2q}$};

\draw (2.1,0.75) node{\scriptsize $\bullet$} node[right]{\scriptsize $G$};
\draw[dotted] (2.1,0.75) -- (0,0.75)node{\scriptsize $*$}node[left]{\scriptsize $\frac{q+m-2}{2(q+m-1)}$};
\draw[very thin] (2.1,0.75) -- (2.1, 0) node{\scriptsize $\bullet$} node[below]{\scriptsize $H$};

\draw (0,1.1) node{\scriptsize $+$} -- (1.6, 0) node {\scriptsize $+$};
\draw (0.5, 0.75) node[right]{\scriptsize \textbf{$\mathsf{S}_{m,q}^{(q_1,q_2)}$}};

\draw(6, 5.5) node[right]{\scriptsize $\bullet$ the lightly-shaded region $\mathcal{R}(OABCD)$:};
\draw(8, 5) node[right]{\scriptsize $L^q$-weak solutions holding \eqref{T:weakSol:divfree:Vq}};
\draw(6,4.5) node[right]{\scriptsize $\bullet$ on $\mathcal{D}_{m,q}^{(q_1,q_2)}$: $B=(\frac{q-1}{q},1)$, $C=(\frac{q-1}{q}+\frac{2+q_{m,d}}{d(m+q-1)},\frac{q+m-2}{q+m-1})$ };
\draw(6,4) node[right]{\scriptsize $\bullet$ $A=(0,1)$, $D=(\frac{q-1}{q}+\frac{2+q_{m,d}}{d(m+q-1)},0)$ };

\draw(6, 3.5) node[right]{\scriptsize $\bullet$ the darkly-shaded region $\mathcal{R}(OEFGH)$:};
\draw(8, 3) node[right]{\scriptsize AC $L^q$-weak solutions holding \eqref{T:ACweakSol:divfree:Vq}};
\draw(6,2.5) node[right]{\scriptsize $\bullet$ on $\mathcal{D}_{m,q,s}^{(q_1,q_2)}$: $F=(\frac{q-1}{2q},\frac{1}{2})$, $G=(\frac{q-1}{2q}+\frac{2+q_{m,d}}{2d(m+q-1)},\frac{q+m-2}{2(q+m-1)})$ };
\draw(6,2) node[right]{\scriptsize $\bullet$ $E=(0,\frac{1}{2})$, $H=(\frac{q-1}{2q}+\frac{2+q_{m,d}}{2d(m+q-1)},0)$, $g=(\frac{q-1}{2q}+\frac{2+q_{m,d}}{2d},0)$ };

\draw(6,1.5) node[right]{\scriptsize $\bullet$ $\mathsf{S}_{m,q}^{(q_1,q_2)}: \frac{d}{q_1}+\frac{2+d(q+m-2)}{q_2} \leq 1+d(m-1)$ passing through };
\draw(8,1) node[right]{\scriptsize $(\frac{1+d(m-1)}{d},0)$ and $(0,\frac{1+d(m-1)}{2+d(q+m-2)})$. };

\end{tikzpicture}
\caption{\footnotesize (AC) $L^q$-weak solutions of FDE in case $\nabla \cdot V = 0$}
\label{F:divfree}
\end{figure}

\begin{remark}\label{R:divfree}
In Fig.~\ref{F:divfree}, the darkly-shaded region $\mathcal{R}(OEFGH)$ represents the admissible range of $(q_1,q_2)$ satisfying \eqref{T:ACweakSol:divfree:Vq}, while the lightly-shaded region $\mathcal{R}(OABCD)$ corresponds to the range satisfying \eqref{T:weakSol:divfree:Vq}. The figure illustrates that $\mathsf{S}_{m,q}^{(q_1,q_2)}$ for all $q\geq 1$ belongs to both shaded regions. 
When $d=2$ and $q\geq 2-m$, the line segments $\overline{GH}$ and $\overline{CD}$ are excluded from $\mathcal{R}(OEFGH)$ and $\mathcal{R}(OABCD)$, respectively. The restriction on $V$ in \eqref{T:V:divfree_delta} for the $\delta$-distance corresponds to the region $\mathcal{R}(OABCD)$ except for the polygonal line $\overline{ABCD}$.
\end{remark}

\begin{figure}
\centering

\begin{tikzpicture}[domain=0:16]

\fill[fill= lgray] (0,0) -- (0,3) -- (2.5,3) -- (4.8,1.2) -- (4.8,0);
\fill[fill= gray] (0, 0) -- (0, 1.5) --(1.6,1.5) -- (3,0.6)-- (3,0);

\draw[->] (0,0) -- (7,0) node[right] {\scriptsize $\frac{1}{q_1}$};
\draw[->] (0,0) -- (0,6) node[left] { \scriptsize $\frac{1}{q_2}$};

\draw (0,0)  node[left] {\scriptsize $O$};

\draw (0,5) node{\scriptsize $+$} node[left] {\scriptsize $1+\frac{d(q-1)}{2q+d(m-1)}$} 
-- (6.3, 0) node {\scriptsize $+$} node[below] {\scriptsize $\frac{q-1}{q}+\frac{2+q_{m,d}}{d}$};
\draw (2.8, 2.8) node[right]{\scriptsize $\mathcal{D}_{m,q}^{(q_1,q_2)}$};

\draw (2.5,3) node{\scriptsize $\bullet$} node[right]{\scriptsize $B$};
\draw[very thin] (2.5,3) -- (0, 3)node{\scriptsize $\bullet$}node[left]{\scriptsize $A$};
\draw[dotted] (2.5,3) -- (2.5, 0) node{\scriptsize $*$} node[below]{\scriptsize $\frac{q-1}{q}$};

\draw (4.8,1.2) node{\scriptsize $\bullet$} node[right]{\scriptsize $C$};
\draw[dotted] (4.8,1.2) -- (0, 1.2)node{\scriptsize $*$}node[left]{\scriptsize $\frac{q+m-2}{q+m-1}$};
\draw[very thin] (4.8,1.2) -- (4.8, 0) node{\scriptsize $\bullet$} node[below]{\scriptsize $D$};

\draw (0,2.5) node{\scriptsize $+$} node[left] {\scriptsize $\frac{1}{2}+\frac{d(q-1)}{2[2q+d(m-1)]}$} 
-- (4, 0) node {\scriptsize $+$};
\draw (2, 1.3) node[right]{\scriptsize $\mathcal{D}_{m,q,s}^{(q_1,q_2)}$};

\draw (1.6,1.5) node{\scriptsize $\bullet$} node[right]{\scriptsize $F$};
\draw[very thin] (1.6,1.5) -- (0, 1.5)node{\scriptsize $\bullet$}node[left]{\scriptsize $E$};
\draw[dotted] (1.6,1.5) -- (1.6, 0) node{\scriptsize $*$} node[below]{\scriptsize $\frac{q-1}{2q}$};

\draw (3,0.6) node{\scriptsize $\bullet$} node[right]{\scriptsize $G$};
\draw[dotted] (3,0.6) -- (0,0.6)node{\scriptsize $*$}node[left]{\scriptsize $\frac{q+m-2}{2(q+m-1)}$};
\draw[very thin] (3,0.6) -- (3, 0) node{\scriptsize $\bullet$} node[below]{\scriptsize $H$};

\draw (0,1.9) -- (3.5, 0);
\draw (0,1.9) node{\scriptsize $\bullet$} node[left]{\scriptsize $A'$};
\draw (3.5,0) node{\scriptsize $\bullet$} node[below]{\scriptsize $D'$};
\draw (0.8, 0.8) node[right]{\scriptsize \textbf{$\mathcal{S}_{m,q}^{(q_1,q_2)}$}};
\draw (3,0.25) node{\scriptsize $\bullet$} node[right]{\scriptsize $G'$};
\draw (0.7,1.5) node{\scriptsize $\bullet$} node[above]{\scriptsize $F'$};

\draw(5.5, 6) node[right]{\scriptsize $\bullet$ the lightly-shaded region $\mathcal{R}(OABCD)$: $L^q$-weak solutions};
\draw(5.5,5.5) node[right]{\scriptsize $\bullet$ $\mathcal{D}_{m,q}^{(q_1,q_2)}$: $\frac{d}{q_1} + \frac{2+q_{m,d}}{q_2} \leq 2+\frac{d(q+m-2)}{q}$ };
\draw(5.5,5) node[right]{\scriptsize $\bullet$ $B=(\frac{q-1}{q},1)$, $C=(\frac{q-1}{q}+\frac{2+q_{m,d}}{d(m+q-1)},\frac{q+m-2}{q+m-1})$ };
\draw(5.5,4.5) node[right]{\scriptsize $\bullet$ $A=(0,1)$, $D=(\frac{q-1}{q}+\frac{2+q_{m,d}}{d(m+q-1)},0)$ };

\draw(5.5, 3.8) node[right]{\scriptsize $\bullet$ the darkly-shaded region $\mathcal{R}(OEFGH)$: AC $L^q$-weak solutions};
\draw(5.5,3.3) node[right]{\scriptsize $\bullet$ $\mathcal{D}_{m,q,s}^{(q_1,q_2)}$: $\frac{d}{q_1} + \frac{2+q_{m,d}}{q_2} \leq 1+\frac{d(q+m-2)}{2q}$ };
\draw(5.5,2.8) node[right]{\scriptsize $\bullet$ $F=(\frac{q-1}{2q},\frac{1}{2})$, $G=(\frac{q-1}{2q}+\frac{2+q_{m,d}}{2d(m+q-1)},\frac{q+m-2}{2(q+m-1)})$ };
\draw(5.5,2.3) node[right]{\scriptsize $\bullet$ $E=(0,\frac{1}{2})$, $H=(\frac{q-1}{2q}+\frac{2+q_{m,d}}{2d(m+q-1)},0)$ };

\draw(6.5,1.5) node[right]{\scriptsize $\bullet$ $\mathcal{S}_{m,q}^{(q_1,q_2)}$: $\frac{d}{q_1} + \frac{2+q_{m,d}}{q_2} \leq 1+q_{m,d}$ };
\draw(6.5,1) node[right]{\scriptsize $\bullet$ $L^q$-weak: $\mathcal{R}(OA'B')$ in \cite[Theorem~2.8 \& 2.10]{HKK02}};
\draw(6.5,0.5) node[right]{\scriptsize $\bullet$ AC $L^q$-weak: $\mathcal{R}(OEF'G'H)$ in \cite[Theorem~2.12]{HKK02}};

\end{tikzpicture}
\caption{\footnotesize (AC) $L^q$-weak solutions of PME in case $\nabla \cdot V = 0$ and $q>m$ }
\label{F:divfree:PME}
\end{figure}

\begin{remark}[Comparison with the porous medium case]\label{R:PME}
Compared with the previous divergence-free results for the porous medium equation in \cite{HKK02}, the compactness argument and speed estimate developed in the present paper allow admissible drift classes in supercritical regimes relative to the scaling-invariant class.

For $L^q$-weak solutions, the main improvement comes from the compactness argument. In \cite[Proposition~4.19]{HKK01}, the compactness argument for PME was based on estimates for $\partial_t \rho^q$ and $\nabla \rho^q$, and was therefore formulated at the level of $\rho^q$. In Proposition~\ref{Proposition : compactness}, we instead establish compactness directly for $\rho$ by estimating $\partial_t\rho$ and $\nabla\rho$ in suitable spaces, together with a truncation argument. This direct compactness framework leads to the drift class $\mathcal{D}_{m,q}^{(q_1,q_2)}$, which includes supercritical regimes relative to the scaling-invariant class $\mathcal{S}_{m,q}^{(q_1,q_2)}$. In Fig.~\ref{F:divfree:PME}, the previous $L^q$-weak-solution results in \cite[Theorems~2.8 and 2.10]{HKK02} correspond to $\mathcal{R}(OA'D')$, while Theorem~\ref{T:weakSol:divfree:PME} extends the admissible region to $\mathcal{R}(OABCD)$.

For AC $L^q$-weak solutions in the divergence-free PME case with $q>m$, the previous result \cite[Theorem~2.12]{HKK02} was obtained under the scaling-invariant drift condition $V\in\mathcal{S}_{m,q}^{(q_1,q_2)}$ and provides a $W_2$-distance estimate. In the present paper, we obtain the same type of $W_2$-distance estimate under the class $\mathcal{D}_{m,q,s}^{(q_1,q_2)}$, which lies in the supercritical regime of $\mathcal{S}_{m,q}^{(q_1,q_2)}$ when $q>m$. In Fig.~\ref{F:divfree:PME}, the previous admissible region corresponds to $\mathcal{R}(OEF'G'H)$ in \cite[Theorem~2.12]{HKK02}, while Theorem~\ref{T:ACweakSol:divfree:PME} extends it to $\mathcal{R}(OEFGH)$.

The main statements and detailed computations for these PME consequences are given in Appendix~\ref{Appendix:PME}.
\end{remark}

\subsection{Application}

In this section, as an application, we study a viscous Boussinesq system of the fast diffusive type. 
The Boussinesq system models the coupling between incompressible fluid motion and temperature variation through buoyancy forces. It is a basic model for thermal convection and buoyancy-driven flows.

Let \( \mathbf{e}_d \) denote either \( (0,1) \) or \( (0,0,1) \). 
We consider a fast-diffusive version of the viscous Boussinesq system,
\begin{equation}\label{BE-10}
\theta_t-\Delta \theta^m +u\cdot \nabla \theta=0,
\end{equation}
\begin{equation}\label{BE-20}
u_t-\Delta u+(u\cdot \nabla) u+\nabla \pi=-\theta {\bf{e}_d},\qquad {\rm div}\,u=0.
\end{equation}
in $\Omega_T=\Omega\times (0, T)$ where $\Omega\subset\R^d$, $d=2,3$ is a bounded domain with smooth boundary, whereby the following boundary conditions are assigned 
\begin{equation*}\label{BE-30}
\frac{\partial\theta}{\partial {\bf n}}=0,\quad u=0\qquad \mbox{ on }\,\,\partial\Omega\times (0,T).
\end{equation*}
Here $\theta$ and $u$ indicate the temperature and fluid velocity, respectively. 
We suppose that initial conditions satisfy
\begin{equation}\label{BE-40}
\theta_0\in L^1(\Omega),\quad \int_{\Omega} \theta_0 \log \theta_0 dx<\infty,\quad  u_0\in L^2(\Omega).
\end{equation}

The main motivation is that the temperature equation is a fast diffusion equation with drift field \(u\). Since the velocity is divergence-free, the transport term \(u\cdot\nabla\theta\) does not contribute to the energy estimate for \(\theta\). Hence the system provides a natural application of the existence theory for fast diffusion equations with divergence-free drift fields. At the same time, the drift is not prescribed but is determined by the coupled Navier--Stokes equation, which makes the system a nontrivial and physically relevant application of the theory.

Our goal is to construct weak solutions for \eqref{BE-10}--\eqref{BE-20}. 
For convenience, we introduce the notion of a weak solution.

\begin{defn}\label{Weak-ks} Let $0<m<1$ and $\Omega\subset\R^d$, $d=2,3$ be a bounded domain with smooth boundaries. We say that a pair of $(\theta, u)$ is a  weak solution of \eqref{BE-10}-\eqref{BE-20} in $\Omega_{T} := \Omega \times (0, T)$ with zero boundary data if the followings are satisfied:
\begin{itemize}
	\item [(i)] It holds that 
	\[
	\theta , \ \nabla \theta^m, \ \theta u , \ u\otimes u, \ \nabla u \in L_{x,t}^{1}.
	\]
	
	\item [(ii)] For any $\varphi \in \mathcal{C}_{c}^{\infty} \left(\overline{\Omega} \times [0, T);\R\right)$ and $\psi \in \mathcal{C}_{c}^{\infty} \left(\overline{\Omega} \times [0, T);\R^d\right)$ with $\rm{div}\,\psi=0$ both of which vanish on $\partial\Omega$, it holds that 
	\[
	\iint_{\Omega_{T}} \left\{ - \theta \varphi_t + \nabla \theta^m \nabla \varphi - \theta u\cdot \nabla \varphi \right\} \,dxdt = \int_{\Omega}\theta_0 \varphi(\cdot,0) \,dx,
	\]
	\[
	\iint_{\Omega_{T}} \left\{ - u \psi_t + \nabla u \nabla \psi - u\otimes u:\nabla \psi \right\} \,dxdt = \int_{\Omega}u_0 \psi(\cdot,0) \,dx-\iint_{\Omega_{T}}  \theta {\bf e}_d  \psi \,dxdt,
	\]
\end{itemize}	
\end{defn}

It is well known that in the case \( m = 1 \), smooth solutions exist globally in time, provided that the initial data are sufficiently regular in two dimensions; see \cite{Cannon_DiBenedetto}. On the other hand, the question of regularity in three dimensions remains open.

 In contrast, the fast-diffusive case \(0<m<1\) is less standard,
because the diffusion term \(\Delta\theta^m\) is singular near the vacuum
region. The temperature equation can be viewed as a fast diffusion equation
with drift \(u\), and the incompressibility condition
\[
\nabla\cdot u=0
\]
implies that the drift term does not contribute to the energy estimate.
This divergence-free structure allows one to combine the weak existence theory
for fast diffusion equations with divergence-free drifts and the classical
energy estimates for the velocity equation.
Our main objective in this subsection  is to establish weak solutions, satisfying an energy estimate \eqref{BS-energy} when initial data satisfy \eqref{BE-40}, and we establish the existence of weak solutions for some range of $0<m<1$.

We are now ready to state the main result for equations \eqref{BE-10}--\eqref{BE-20}. In the proof, we provide a priori estimates, as such estimates can also be derived without difficulty from the regularized system.

\begin{theorem}
Let  $\Omega\subset\R^d$, $d=2,3$ be a bounded domain with smooth boundaries. 
If $0< m <1$ when $d=2$ and if $\frac{2}{3} \le m<1$ when $d=3$, then,
there exists a pair of weak solution $(\theta, u)$ in Definition \ref{Weak-ks}. Furthermore, for any  $(\theta, u)$ satisfies 
\begin{equation}\label{BS-energy}
\sup_{t\in (0, T)} \int_{\Omega} (\theta(\cdot, t) +\abs{u(\cdot, t)}^2) \,dx
+\iint_{\Omega_T} (\left|\nabla \theta^{\frac{m}{2}}\right|^{2} +\abs{\nabla u}^2) \,dxdt\le C\bke{\int_{\Omega}\theta_0 \log \theta_0 \,dx,  \norm{\theta_0}_{L^1}, \norm{u_0}_{L^{2}_{x}}}.
\end{equation}
\end{theorem}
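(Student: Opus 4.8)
The plan is to obtain $(\theta,u)$ as the limit of solutions of a regularized coupled system and to pass to the limit by uniform a priori estimates combined with the compactness tools already developed for the temperature equation. Since $\dv u=0$ and $u=0$ on $\partial\Omega$, the drift in \eqref{BE-10} is divergence-free, so for a fixed velocity the temperature equation is an FDE with divergence-free drift; the a priori estimates below are therefore exactly of the $V$-independent type underlying Theorem~\ref{T:weakSol:divfree}. Concretely I would construct approximate solutions $(\theta_n,u_n)$ by regularizing the diffusion ($\theta^m\rightsquigarrow(\theta+\es)^m$), mollifying the transport, and running a Galerkin/fixed-point scheme for the Navier--Stokes block \eqref{BE-20} with buoyancy forcing $-\theta_n\mathbf{e}_d$. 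As the remark preceding the statement indicates, the only genuinely new content is the set of uniform bounds, which I present formally and which transfer to the regularized system.

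There are three basic estimates. Integrating \eqref{BE-10} over $\Omega$ and using $\dv u=0$ with $u=0$ on $\partial\Omega$ gives mass conservation $\int_\Omega\theta(\cdot,t)\,dx=\norm{\theta_0}_{L^1}$, hence $\theta\in L^\infty(0,T;L^1(\Omega))$. Testing \eqref{BE-10} with $1+\log\theta$ makes the drift vanish, since $\int_\Omega(1+\log\theta)\,u\cdot\nabla\theta\,dx=\int_\Omega u\cdot\nabla(\theta\log\theta)\,dx=-\int_\Omega(\dv u)\,\theta\log\theta\,dx=0$, while the diffusion term yields a multiple of $\int_\Omega|\nabla\theta^{m/2}|^2\,dx$; integrating in time produces
\[
\sup_{0\le t\le T}\int_\Omega\theta\log\theta\,(\cdot,t)\,dx+\frac{4}{m}\iint_{\Omega_T}|\nabla\theta^{\frac m2}|^2\,dx\,dt\le\int_\Omega\theta_0\log\theta_0\,dx,
\]
which together with the mass bound controls $\sup_t\int_\Omega\theta|\log\theta|$ and the dissipation $\iint_{\Omega_T}|\nabla\theta^{m/2}|^2$. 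Finally, testing \eqref{BE-20} with $u$ kills the convective and pressure terms and leaves $\tfrac12\frac{d}{dt}\norm{u}_{L^2}^2+\norm{\nabla u}_{L^2}^2=-\int_\Omega\theta\,\mathbf{e}_d\cdot u\,dx$.

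The crux is absorbing the buoyancy coupling $\int_\Omega\theta\,\mathbf{e}_d\cdot u\,dx$, and I expect this to be the main obstacle. In $d=3$, the Sobolev embedding $H^1_0\hookrightarrow L^6$ gives $\int_\Omega\theta|u|\,dx\le\norm{\theta}_{L^{6/5}}\norm{u}_{L^6}\le\tfrac12\norm{\nabla u}_{L^2}^2+C\norm{\theta}_{L^{6/5}}^2$, so the energy balance closes once $\theta\in L^2(0,T;L^{6/5}(\Omega))$. This membership is obtained by interpolating the entropy dissipation against mass conservation: from $\theta^{m/2}\in L^\infty_tL^2_x\cap L^2_tH^1_x$ one gets $\theta\in L^m(0,T;L^{3m}(\Omega))$, and interpolating $L^{6/5}$ between $L^1$ and $L^{3m}$ followed by H\"older in time yields $\theta\in L^2_tL^{6/5}_x$ \emph{precisely} when $m\ge\frac23$; the borderline $m=\frac23$ gives $3m=2$ and reproduces the threshold exactly. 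In $d=2$ the embedding $H^1\hookrightarrow L^p$ holds for all $p<\infty$, so a Gagliardo--Nirenberg interpolation of $u$ lets one absorb $\int_\Omega\theta|u|$ into $\tfrac12\norm{\nabla u}_{L^2}^2$ for the whole range $0<m<1$. This step is exactly where the dimensional restriction on $m$ is forced.

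With the uniform bounds $\theta\in L^\infty_tL^1_x$, $\nabla\theta^{m/2}\in L^2_{x,t}$, and $u\in L^\infty_tL^2_x\cap L^2_tH^1_x$, I would pass to the limit by compactness. For the temperature I would apply the compactness argument of Proposition~\ref{Proposition : compactness} (equivalently Aubin--Lions--Simon, estimating $\partial_t\theta$ in a negative Sobolev norm from the equation) to get $\theta_n\to\theta$ strongly in $L^1(\Omega_T)$ and a.e., whence $\theta_n^m\to\theta^m$ and $\nabla\theta_n^m\rightharpoonup\nabla\theta^m$; for the velocity the standard Aubin--Lions argument for Navier--Stokes gives $u_n\to u$ strongly in $L^2(\Omega_T)$, handling $u_n\otimes u_n\to u\otimes u$. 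The coupling term $\theta_n u_n\to\theta u$ then follows from the strong convergence of $\theta_n$ together with the equi-integrability supplied by $\theta\in L^2_tL^{6/5}_x$ and $u\in L^2_tL^6_x$ (a Vitali argument upgrading weak-times-strong to $L^1$ convergence). The stated energy inequality follows from weak lower semicontinuity of the $L^2$ norms of $u$, $\nabla u$, and $\nabla\theta^{m/2}$, and of the entropy. The two delicate points to monitor are the identification of the limit of the singular diffusion $\nabla\theta_n^m$, which rests on the strong $L^1$ compactness of $\theta_n$, and the limit of the coupling $\theta_n u_n$, which rests on the integrability established above.
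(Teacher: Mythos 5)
Your proposal is correct and follows essentially the same route as the paper: the entropy identity for $\theta$ (with the drift term vanishing by $\dv u=0$), the $L^2$ energy balance for $u$ with the buoyancy term estimated by $\|\theta\|_{L^{6/5}}\|u\|_{L^6}$ in $d=3$ (resp.\ Gagliardo--Nirenberg in $d=2$), and the interpolation of $\|\theta\|_{L^{6/5}}$ between the conserved mass and the dissipation $\|\nabla\theta^{m/2}\|_{L^2}$, which produces exactly the paper's threshold $m\ge\frac{2}{3}$. Your additional sketch of the regularization and compactness passage is consistent with the paper, which only records the a priori estimates and leaves that step implicit.
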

\begin{proof}
We first note that 
\begin{equation}\label{ks-pf-100}
\frac{d}{dt}\int_{\Omega} \theta(\cdot, t)\log \theta(\cdot, t) \,dx+\int_{\Omega} \left|\nabla \theta^{\frac{m}{2}}\right|^{2} \,dx= 0
\end{equation}

We treat the two and three dimensions separately.
\\
$\bullet$ \underline{(2d case)}\,\,
Let $p\in (1, \infty)$ with $\frac{2}{m}\le p$.
For the fluid equation, energy estimate gives
\[
\frac{1}{2}\frac{d}{dt}\norm{u}_{L^{2}_{x}}^2+\norm{\nabla u}_{L^{2}_{x}}^2\lesssim \int_{\Omega}\theta \abs{u}dx\lesssim \norm{\theta}_{L^{q}_{x}}\norm{u}_{L^{p}_{x}} \lesssim \norm{\theta}_{L^{q}_{x}}\norm{u}^{\frac{2}{p}}_{L^{2}_{x}} \norm{\nabla u}^{\frac{p-2}{p}}_{L^{2}_{x}},
\]
where $\frac{1}{q}+ \frac{1}{p}=1$.
Due to Young's inequality, we have
\[
\frac{d}{dt}\norm{u}_{L^{2}_{x}}^2+\norm{\nabla u}_{L^{2}_{x}}^2\lesssim \norm{\theta}^{\frac{2p}{p+2}}_{L^{q}_{x}}\norm{u}^{\frac{4}{p+2}}_{L^{2}_{x}} =\norm{\theta^{\frac{m}{2}}}^{\frac{4p}{m(p+2)}}_{L^{\frac{2q}{m}}_{x}}\norm{u}^{\frac{4}{p+2}}_{L^{2}_{x}}.
\]
Noting that $\norm{\theta(t)}_{L^1}=\norm{\theta_0}_{L^1}$ and
\[
\norm{\theta^{\frac{m}{2}}}^{\frac{4p}{m(p+2)}}_{L^{\frac{2q}{m}}_{x}}\le \bke{\norm{\theta^{\frac{m}{2}}}^{\frac{1}{q}}_{L^{\frac{2}{m}}_{x}}\norm{\nabla \theta^{\frac{m}{2}}}^{\frac{1}{p}}_{L^{2}_{x}}}^{\frac{4p}{m(p+2)}}\lesssim\norm{\nabla \theta^{\frac{m}{2}}}^{\frac{4}{m(p+2)}}_{L^{2}_{x}},
\]
we obtain
\begin{equation}\label{ks-pf-110}
\frac{d}{dt}\norm{u}_{L^{2}_{x}}^2+\norm{\nabla u}_{L^{2}_{x}}^2\lesssim \norm{\nabla \theta^{\frac{m}{2}}}^{\frac{4}{m(p+2)}}_{L^{2}_{x}}\norm{u}^{\frac{4}{p+2}}_{L^{2}_{x}}
\lesssim \epsilon\norm{\nabla \theta^{\frac{m}{2}}}^{2}_{L^{2}_{x}}+\norm{u}^{\frac{4m}{m(p+2)-2}}_{L^{2}_{x}}.
\end{equation}
Combining \eqref{ks-pf-100} and \eqref{ks-pf-110}, we obtain that 
\begin{equation*}
\frac{d}{dt}\int_{\Omega} (\theta \log \theta +\abs{u}^2)\,dx+\iint_{\Omega_T} (\left|\nabla \theta^{\frac{m}{2}}\right|^{2} +\abs{\nabla u}^2) \,dxdt\lesssim \norm{u}^{\frac{4m}{m(p+2)-2}}_{L^{2}_{x}}.
\end{equation*}
It is required that 
\[
\frac{4m}{m(p+2)-2}\le 2 \quad \Longrightarrow \quad \frac{2}{m} \le p.
\]
Since $u\in L^{a, b}_{x,t}$ where $\frac{2}{a}+\frac{2}{b}=1$ with $2\le a<\infty$, it is automatic that $u\in\mathcal{D}_{m,1}^{(q_1, q_2)}$ with $d=2$.
Indeed, in case $m\le \frac{1}{2}$, it is straightforward. If $m \in (0, \frac{1}{2})$, then we can take $q_2 \le \frac{2(1-m)}{(1-2m)}$ and then choose $q_1$ with $\frac{2}{q_1}+\frac{2}{q_2}=1$, which implies that $u\in\mathcal{D}_{m,1}^{(q_1, q_2)}$, $d=2$.
This completes the proof for 2d case.
\\
\\
$\bullet$ \underline{(3d case)}\,\, 
As similarly, we have 
\begin{equation}\label{ks-pf-120}
\frac{1}{2}\frac{d}{dt}\norm{u}_{L^{2}_{x}}^2+\norm{\nabla u}_{L^{2}_{x}}^2\lesssim \int_{\Omega}\theta \abs{u}dx\lesssim \norm{\theta}_{L^{\frac{6}{5}}_{x}}\norm{u}_{L^{6}_{x}} \lesssim \norm{\theta}^{2}_{L^{\frac{6}{5}}_{x}} +\frac{1}{2}\norm{\nabla u}^2_{L^{2}_{x}}
\end{equation}
Via interpolation, it gives
\begin{equation}\label{ks-pf-130}
\norm{\theta}^{2}_{L^{\frac{6}{5}}_{x}}=\norm{\theta}^{\frac{4}{m}}_{L^{\frac{12}{5m}}_{x}}\le \norm{\theta^{\frac{m}{2}}}^{\frac{10m-4}{m(3m-1)}}_{L^{\frac{2}{m}}_{x}}\norm{\nabla \theta^{\frac{m}{2}}}^{\frac{2}{3m-1}}_{L^{2}_{x}}\lesssim \norm{\nabla \theta^{\frac{m}{2}}}^{\frac{2}{3m-1}}_{L^{2}_{x}}.
\end{equation}
It is required that
\[
\frac{2}{3m-1}\le 2 \quad \Longrightarrow \quad  m \geq \frac{2}{3}.
\]
We note that $u\in L^{6,2}_{x,t}$, which belongs to $u\in\mathcal{D}_{m,1}^{(q_1, q_2)}$, $d=3$. Indeed, since $u\in L^{6,2}_{x,t}$, it is direct that 
\[
\frac{3}{6} + \frac{2+3(m-1)}{2} \leq 2+ 3(m-1)\quad\Longrightarrow\quad \frac{2}{3}\leq m.
\]
Combining \eqref{ks-pf-100}, \eqref{ks-pf-120} and \eqref{ks-pf-130}, we complete the proof.
\end{proof}

\section{Preliminaries}\label{SS:Preliminaries} 

\subsection{Technical lemmas}

In this section, we introduce preliminaries that are used throughout the paper.

For a function $f:\Omega \times [0,T] \to \mathbb{R}$, $\Omega \subset \mathbb{R}^d, d \geq 2$ and constants $q_1, q_2 > 1$, we define
\[
\|f\|_{L^{q_1, q_2}_{x, t}} : = \left(\int_{0}^{T}\left[\int_{\Omega} \abs{f(x,t)}^{q_1} \,dx\right]^{q_2 / q_1} \,dt\right)^{\frac{1}{q_2}}.
\]
For simplicity, let $\|f\|_{L^{q}_{x,t}} = \|f\|_{L^{q, q}_{x,t}}$ for some $q>1$. Also we denote for $\Omega \subseteq \bbr^d$ and $\Omega_T \subseteq \Omega_T$ that
\begin{equation*}
\begin{gathered}
\|f(\cdot,t)\|_{\calC^{\alpha}(\Omega)} := \sup_{x,y \in \Omega, \, x \neq y} \frac{|f(x, t) - f(y,t)|}{|x-y|^\alpha}, \\
\|f\|_{\calC^{\alpha}(\Omega_T)} := \sup_{(x, t), (y,s) \in \Omega_T, \, (x,t) \neq (y,s)} \frac{|f(x, t) - f(y,s)|}{|x-y|^\alpha + |t-s|^{\alpha/2}}.
\end{gathered}
\end{equation*}

Let us introduce a parabolic embedding theory. 
\begin{lemma}\label{T:pSobolev} \cite[Propositions~I.3.1 \& I.3.2]{DB93}
  Let $v \in L^{\infty}(0, T; L^{q}(\Omega)) \cap L^{p}(0, T; W^{1,p}(\Omega))$ for some $1 \leq p < d$, and $0 < q < \frac{dp}{d-p}$. Then there exists a constant $c=c(d,p,q, \abs{\Omega})$ such that
  \[
  \iint_{\Omega_{T}} |v(x,t)|^{\frac{p(d+q)}{d}}\,dx\,dt \leq c \left(\sup_{0\leq t \leq T}\int_{\Omega} |v(x,t)|^{q}\,dx \right)^{\frac{p}{d}} \iint_{\Omega_{T}} |\nabla v(x,t)|^{p}\,dx\,dt +  \frac{1}{|\Omega|^{\frac{p(d+q)}{d}-1}}\int_{0}^{T} \|v(\cdot, t)\|_{L^{1}(\Omega)}^{\frac{p(d+q)}{d}}\,dt.
  \]
\end{lemma}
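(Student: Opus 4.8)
The plan is to derive the parabolic estimate from a one-parameter family of stationary (time-slice) estimates and then integrate in time. Write $r:=\frac{p(d+q)}{d}$ and $p^{\ast}:=\frac{dp}{d-p}$. The two building blocks are the Sobolev--Poincar\'e embedding $W^{1,p}(\Omega)\hookrightarrow L^{p^{\ast}}(\Omega)$ and a single H\"older interpolation of $r$ between $q$ and $p^{\ast}$; since the hypothesis $0<q<\frac{dp}{d-p}=p^{\ast}$ guarantees $q<r<p^{\ast}$, this interpolation is admissible. These are exactly \cite[Propositions~I.3.1 \& I.3.2]{DB93}, and I now describe how they combine.

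\textbf{Spatial step.} First I would freeze $t\in(0,T)$ and split $v(\cdot,t)=w(\cdot,t)+\bar v(t)$, where $\bar v(t):=\abs{\Omega}^{-1}\int_{\Omega}v(\cdot,t)\,dx$ and $w:=v-\bar v$ has zero spatial mean. The constant part is handled trivially: from $\abs{\bar v(t)}\le\abs{\Omega}^{-1}\|v(\cdot,t)\|_{L^1(\Omega)}$ one obtains $\|\bar v(t)\|_{L^r(\Omega)}^{r}\le\abs{\Omega}^{1-r}\|v(\cdot,t)\|_{L^1(\Omega)}^{r}$, which after integration in $t$ reproduces precisely the additive term $\abs{\Omega}^{-(r-1)}\int_0^T\|v(\cdot,t)\|_{L^1(\Omega)}^{r}\,dt$. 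For the oscillation, since $\nabla w=\nabla v$, the Sobolev--Poincar\'e inequality of Proposition~I.3.1 gives $\|w(\cdot,t)\|_{L^{p^{\ast}}(\Omega)}\le c\,\|\nabla v(\cdot,t)\|_{L^p(\Omega)}$ with $c=c(d,p,\abs{\Omega})$, while H\"older on the finite-measure domain yields $\|w(\cdot,t)\|_{L^q(\Omega)}\le c\,\|v(\cdot,t)\|_{L^q(\Omega)}$.

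\textbf{Interpolation and integration in time.} I would then apply, at each fixed $t$, the interpolation
\[
\|w(\cdot,t)\|_{L^r(\Omega)}\le\|w(\cdot,t)\|_{L^q(\Omega)}^{\lambda}\,\|w(\cdot,t)\|_{L^{p^{\ast}}(\Omega)}^{1-\lambda},\qquad\lambda=\frac{q}{d+q},
\]
the value of $\lambda$ being forced by $\frac1r=\frac{\lambda}{q}+\frac{1-\lambda}{p^{\ast}}$. The decisive feature of this $\lambda$ is the pair of identities $(1-\lambda)r=p$ and $\lambda r=\frac{pq}{d}$. Raising to the power $r$, inserting the Sobolev bound into the $p^{\ast}$-factor and the H\"older bound into the $q$-factor, I obtain the slice estimate
\[
\|w(\cdot,t)\|_{L^r(\Omega)}^{r}\le c\,\Big(\int_{\Omega}\abs{v(\cdot,t)}^{q}\,dx\Big)^{\frac{p}{d}}\,\|\nabla v(\cdot,t)\|_{L^p(\Omega)}^{p}.
\]
Integrating over $(0,T)$ and pulling the slowly varying $L^q$-factor out as $\big(\sup_{0\le t\le T}\int_{\Omega}\abs{v}^{q}\,dx\big)^{p/d}$ produces the first term on the right, since $\int_0^T\|\nabla v\|_{L^p(\Omega)}^{p}\,dt=\iint_{\Omega_T}\abs{\nabla v}^{p}$; adding the mean contribution from the spatial step and using $\|v\|_{L^r}^{r}\lesssim\|w\|_{L^r}^{r}+\|\bar v\|_{L^r}^{r}$ closes the argument.

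The step I expect to be the crux is the bookkeeping of the lower-order term. On a bounded domain $W^{1,p}$ does not embed into $L^{p^{\ast}}$ through $\|\nabla v\|_{L^p}$ alone, since constants are annihilated by the gradient, so one must strip off the mean before invoking Sobolev--Poincar\'e and then check that the crude bound on the mean recovers exactly the $\abs{\Omega}$-weighted $L^1$ term with exponent $r-1=\frac{p(d+q)}{d}-1$. Everything else is the exponent arithmetic recorded above, namely verifying $\frac1r=\frac{\lambda}{q}+\frac{1-\lambda}{p^{\ast}}$ together with $(1-\lambda)r=p$ and $\lambda r=\frac{pq}{d}$, which is the sole source of the constant $c=c(d,p,q,\abs{\Omega})$.
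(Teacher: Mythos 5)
Your argument is, in substance, the standard proof behind the paper's citation: the paper does not prove this lemma itself but quotes it from \cite[Propositions~I.3.1 \& I.3.2]{DB93}, and the proof of those propositions is exactly your mean--oscillation split plus Gagliardo--Nirenberg-type interpolation integrated in time. Your exponent bookkeeping is correct: with $r=\frac{p(d+q)}{d}$ and $\lambda=\frac{q}{d+q}$ one checks $\frac1r=\frac{\lambda}{q}+\frac{1-\lambda}{p^{\ast}}$, $(1-\lambda)r=p$, $\lambda r=\frac{pq}{d}$, and the admissibility $q<r<p^{\ast}$ is precisely the hypothesis $q<\frac{dp}{d-p}$; the crude bound on the mean does reproduce the $\abs{\Omega}^{-(r-1)}\int_0^T\|v\|_{L^1(\Omega)}^{r}\,dt$ term exactly. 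Two cosmetic caveats: your final splitting $\|v\|_{L^r}^r\le 2^{r-1}\bigl(\|w\|_{L^r}^r+\|\bar v\|_{L^r}^r\bigr)$ leaves a constant $2^{r-1}$ in front of the $L^1$-term where the statement has coefficient $1$, and the Sobolev--Poincar\'e constant depends on the geometry of $\Omega$ rather than only on $\abs{\Omega}$ (a dependence the lemma's notation $c(d,p,q,\abs{\Omega})$ already glosses over). For $q\ge 1$ your proof is complete.

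The genuine gap is the range $0<q<1$, which the statement allows. Your step ``H\"older on the finite-measure domain yields $\|w(\cdot,t)\|_{L^q(\Omega)}\le c\,\|v(\cdot,t)\|_{L^q(\Omega)}$'' uses $q\ge1$ twice: the triangle inequality for $\|\cdot\|_{L^q}$, and the chain $\|\bar v\|_{L^q}=\abs{\bar v}\,\abs{\Omega}^{1/q}\le\abs{\Omega}^{1/q-1}\|v\|_{L^1}\le\|v\|_{L^q}$. For $0<q<1$ the space $L^q$ is only quasi-normed and, crucially, $\|v\|_{L^1}$ is not dominated by $\|v\|_{L^q}$ on a finite measure space: for $v=K\chi_E$ with $\abs{E}=\varepsilon$, $\int\abs{v}^q=K^q\varepsilon$ can stay small while $\int\abs{v}=K\varepsilon$ blows up. Concretely, take $v=c_0+K\phi(\cdot/\delta)$ with $\delta^d=K^{-s}$ and $q<s<1$: then $\int_{\Omega}\abs{v}^q$ stays bounded as $K\to\infty$, while $\bar v\to\infty$ and $\int_{\Omega}\abs{v-\bar v}^q\gtrsim\abs{\Omega}^{1-q}K^{(1-s)q}\to\infty$, so no inequality $\int\abs{w}^q\le c\int\abs{v}^q$ is available. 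The natural substitute, the subadditivity bound $\int\abs{w}^q\le\int\abs{v}^q+\abs{\Omega}^{1-q}\|v\|_{L^1}^q$, does not rescue the argument: by the power-mean inequality $\int\abs{v}^q\le\abs{\Omega}^{1-q}\|v\|_{L^1}^q$ for $q\le1$, the extra term always dominates, and the resulting cross term $\|\nabla v\|_{L^p}^{p}\,\|v\|_{L^1}^{pq/d}$ cannot be absorbed into the two terms of the asserted right-hand side by Young's inequality (the conjugate split produces $\|\nabla v\|_{L^p}^{r}$ with $r>p$, which is not present there). Note that the sub-unit range is not vacuous for this paper: Lemma~\ref{L:interpolation} feeds $v=\rho^{\frac{q+m-1}{2}}$ into this embedding with lower exponent $\frac{2p}{q+m-1}$, which drops below $1$ once $q+m-1>2p$ (e.g.\ $p=1$ and $q>3-m$). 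So your proof should either be explicitly restricted to $q\ge1$ or supplemented by a separate device in the range $0<q<1$; this is precisely where the citation to \cite{DB93} is doing work that your argument does not reproduce.
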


Now we derive the following interpolation inequality. Depending on the context, we choose $p=1$ or $p=q$.
 \begin{lemma}\label{L:interpolation}\cite[Lemma~3.4]{HKK01}
 Let $0<m<1$, $q \geq 1$, and $1\leq p \leq q$. Suppose that
 \begin{equation*}\label{rho-space}
 \rho \in L^{\infty}(0, T; L^{p}(\Omega)) \quad \text{and} \quad
 \rho^{\frac{m+q-1}{2}} \in L^{2}(0, T; W^{1,2}(\Omega)).
 \end{equation*}
  Then 
 \begin{equation*}\label{L:r1r2}
 \begin{gathered}
\rho\in L^{r_1, r_2}_{x, t} (\Omega_{T}), \ \text{ where } \ \frac{d}{r_1} + \frac{2 + \frac{d}{p}(m+q-1-p)}{r_2} = \frac{d}{p}  \\
 \text{for } \  
 \begin{cases}
  p \leq r_1 \leq \frac{d(m+q-1)}{d-2},  \quad m+q-1 \leq r_2 \leq \infty, & \text{if } \ d > 2, \\
  p \leq r_1 < \infty, \quad m+q-1 < r_2 \leq \infty, &\text{if }\ d = 2, \\
  p \leq r_1 \leq \infty, \quad m+q-1 \leq r_2 \leq \infty, &\text{if } \ d= 1.
  \end{cases}
\end{gathered}
\end{equation*}
Moreover, there exists a constant $c=c(d)$ such that
 \begin{equation*}\label{norm-q-r1r2}
\|\rho\|_{L^{r_1, r_2}_{x, t}} \leq c\left(\sup_{0\leq t \leq T} \int_{\Omega} \rho^{p}(\cdot, t) \,dx\right)^{\gamma} \,
\left\|\nabla \rho^{\frac{q+m-1}{2}} \right\|_{L^{2}_{x,t}}^{\frac{2}{r_2}}
+ \frac{1}{|\Omega|^{1-\frac{1}{r_1}}} \left(\int_{0}^{T} \|\rho(\cdot, t)\|_{L^{1}(\Omega)}^{r_1}\,dt\right)^{\frac{1}{r_2}},
\end{equation*}
where $\gamma = \frac{d p(q+m-1)}{2p+d(m+q-1-p)} \left[\frac{1}{r_1}-\frac{d-2}{d(m+q-1)}\right]$.
 \end{lemma}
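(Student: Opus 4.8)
The plan is to recognize this as a parabolic Gagliardo--Nirenberg--Sobolev interpolation and to prove it by combining a spatial interpolation inequality (slice by slice in $t$) with H\"older's inequality in time. Writing $\beta := m+q-1$ and $w := \rho^{\beta/2}$, the hypotheses say precisely that $w \in L^\infty(0,T; L^{a}(\Omega)) \cap L^2(0,T; W^{1,2}(\Omega))$ with $a = \tfrac{2p}{\beta}$, so the whole statement is an interpolation estimate for $w$ transcribed back to $\rho$. Lemma~\ref{T:pSobolev}, applied to $w$ with gradient exponent $2$ and $L^\infty_t$-exponent $a$, already yields the diagonal endpoint $r_1 = r_2 = \beta + \tfrac{2p}{d}$ of the claimed scaling line, and records the additive $|\Omega|$-dependent $\|\rho\|_{L^1}$ bookkeeping term coming from working on a bounded domain; to sweep out the entire segment of exponents one must instead vary the spatial target, which is what the argument below does.

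First, for a.e.\ $t$ I would apply the spatial Gagliardo--Nirenberg inequality to $w(\cdot,t)$ on $\Omega$, interpolating between $L^{a}(\Omega)$ and $W^{1,2}(\Omega) \hookrightarrow L^{2d/(d-2)}(\Omega)$: for each admissible target $\ell \in [a, \tfrac{2d}{d-2}]$,
\[
\|w(\cdot,t)\|_{L^{\ell}(\Omega)} \le c\,\|\nabla w(\cdot,t)\|_{L^2(\Omega)}^{\theta}\,\|w(\cdot,t)\|_{L^{a}(\Omega)}^{1-\theta} + c\,\|w(\cdot,t)\|_{L^{a}(\Omega)},
\]
where $\theta \in [0,1]$ is fixed by $\tfrac1\ell = \theta(\tfrac12 - \tfrac1d) + (1-\theta)\tfrac1a$; the additive term is the source of the $L^1$-in-space contribution. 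Translating through $w = \rho^{\beta/2}$, with $r_1 = \tfrac{\ell\beta}{2}$ so that $\|w\|_{L^\ell} = \|\rho\|_{L^{r_1}}^{\beta/2}$ and $\|w\|_{L^a} = \|\rho\|_{L^p}^{\beta/2}$, and raising to the power $2/\beta$, gives a pointwise-in-$t$ bound for $\|\rho(\cdot,t)\|_{L^{r_1}}$. I would then raise to the power $r_2$, integrate over $(0,T)$, and extract $\sup_t\|\rho\|_{L^p}$; the decisive choice is to fix $r_2$ so that the gradient factor carries exactly the power $2$ in time, i.e.\ $\theta r_2 = \beta$, which turns the remaining time integral into $\|\nabla\rho^{\beta/2}\|_{L^2_{x,t}}^2$ and produces the stated factor $\|\nabla\rho^{(q+m-1)/2}\|_{L^2_{x,t}}^{2/r_2}$.

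Substituting $\theta = \beta/r_2$ into the Gagliardo--Nirenberg relation and simplifying then collapses, after a routine computation, exactly to the scaling identity $\frac{d}{r_1} + \frac{2 + \frac{d}{p}(m+q-1-p)}{r_2} = \frac{d}{p}$, while the leftover power of $\sup_t\int_\Omega\rho^p$ gives the exponent $\gamma$ (determined by $p\gamma = 1-\theta$). The admissible ranges of $(r_1,r_2)$ correspond precisely to $\theta$ sweeping $[0,1]$ together with the Sobolev threshold: $\ell=a$ gives $r_1=p$, $r_2=\infty$, while $\ell = \tfrac{2d}{d-2}$ gives $r_1 = \tfrac{d\beta}{d-2}$, $r_2=\beta$, and the dimensional split ($d>2$, $d=2$, $d=1$) is simply the endpoint behaviour of $W^{1,2}\hookrightarrow L^{2d/(d-2)}$ (strict inequalities for $d=2$, where $L^\infty$ fails, and $\theta=1$ admissible for $d=1$). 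The one delicate point--the main obstacle--is the time bookkeeping: one must check that forcing the gradient to appear to the power $2$ is compatible with $\theta\in[0,1]$ over the whole stated range, and that the lower-order additive term integrates to the claimed $\frac{1}{|\Omega|^{1-1/r_1}}\big(\int_0^T\|\rho\|_{L^1}^{r_1}\big)^{1/r_2}$ with the correct power of $|\Omega|$; both follow by applying H\"older in time to the two summands separately.
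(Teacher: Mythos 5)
Your overall strategy---pass to $w=\rho^{(m+q-1)/2}$, interpolate slice-wise in space between $L^{a}_x$ ($a=2p/(m+q-1)$) and the Sobolev embedding of $W^{1,2}_x$, then force $\theta r_2=m+q-1$ so that H\"older in time leaves exactly $\|\nabla\rho^{(q+m-1)/2}\|_{L^2_{x,t}}^{2/r_2}$---is the standard proof of this lemma and is, in substance, the argument behind the cited \cite[Lemma~3.4]{HKK01}; the present paper offers no proof of its own (it only quotes the result, together with the DiBenedetto embedding, Lemma~\ref{T:pSobolev}). Your identification of the scaling line and of the endpoint exponents ($\theta=0$ giving $(r_1,r_2)=(p,\infty)$, $\theta=1$ giving $(\tfrac{d(m+q-1)}{d-2},\,m+q-1)$, with the $d=2$ caveat) is correct.

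Two of your bookkeeping claims, however, do not survive inspection. First, the exponent: you assert that the leftover power of $\sup_t\int_\Omega\rho^p\,dx$ is the stated $\gamma$, ``determined by $p\gamma=1-\theta$.'' Substituting the interpolation balance into the printed formula, and using $2p+d(m+q-1-p)=d(m+q-1)-p(d-2)$, shows that the printed $\gamma$ equals $1-\theta$ itself, whereas your derivation produces the exponent $(1-\theta)/p$; the two agree only when $p=1$. So as written your argument does not reproduce the stated estimate for $p>1$. (Your exponent is in fact the dimensionally consistent one: test $\rho\mapsto\lambda\rho$, or the endpoint $r_1=p$, $r_2=\infty$, where the printed inequality would read $\sup_t\|\rho\|_{L^p_x}\le c\,\sup_t\|\rho\|_{L^p_x}^{p}+\dots$; the printed $\gamma$ thus appears to carry a stray factor of $p$, harmless in this paper since the lemma is applied with $p=1$ or used only for membership $\rho\in L^{r_1,r_2}_{x,t}$ when $p=q$. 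A blind proof should flag this mismatch rather than assert the identification.) Second, the lower-order term: your slice-wise inequality carries the additive term $\|w\|_{L^a_x}$, which transcribes to $\|\rho\|_{L^p_x}$, and on a bounded domain H\"older runs the wrong way to convert $\|\rho\|_{L^p_x}$ into the claimed quantity built from $\|\rho\|_{L^1_x}$ and a negative power of $|\Omega|$. As proposed, this step yields $\bigl(\int_0^T\|\rho(\cdot,t)\|_{L^p_x}^{r_2}\,dt\bigr)^{1/r_2}$, which is not controlled by the right-hand side of the lemma; to land on the stated form you must use the Sobolev--Poincar\'e inequality whose lower-order part is already the $L^1_x$-norm (mean-value corrected Sobolev, exactly the mechanism inside Lemma~\ref{T:pSobolev}), not the $L^a_x$-version you wrote.
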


The following is the Aubin-Lions lemma.
\begin{lemma}[See \cite{Sim87}]\label{AL}
Let $X_0$, $X$, and $X_1$ be Banach spaces such that $X_0$ is compactly embedded in $X$ and $X$ is continuously embedded in $X_1$.
Let $1\leq p,r \leq \infty$.
For $T>0$, define
\[
W = \left\{ v\in L^{p}(0,T;X_0) \,:\, \partial_t v\in L^{r}(0,T;X_1)\right\},
\]
where $\partial_t v$ is understood in the sense of distributions.
If $p<\infty$, then the embedding of $W$ into $L^{p}(0,T;X)$ is compact.
If $p=\infty$ and $r>1$, then the embedding of $W$ into $C([0,T];X)$ is compact.
\end{lemma}

\subsection{Wasserstein space}\label{SS:Wasserstein}
In this subsection, we introduce the Wasserstein space and its properties. For more detail, we refer \cite{ags:book, Santambrosio15, V}.
 Let us denote by $\mathcal{P}_p (\Omega)$ the set of all Borel probability measures on $\Omega$ with a finite $p$-th moment. That is,
 $\mathcal{P}_p (\Omega):= \{\mu \in \mathcal{P} (\Omega) : \int_\Omega |x|^p \,d \mu(x) < \infty \}$.
 We note that $\mathcal{P}_p (\Omega)=\mathcal{P} (\Omega)$ if $\Omega$ is a bounded set.
For $\mu,\nu\in\mathcal{P}_p (\Omega)$, we consider
\begin{equation}\label{Wasserstein dist}
W_p(\mu,\nu):=\left(\inf_{\gamma\in\Gamma(\mu,\nu)}\int_{\Omega\times \Omega}|x-y|^p\, d\gamma(x,y)\right)^{\frac{1}{p}},
\end{equation}
where $\Gamma(\mu,\nu)$ denotes the set of all Borel probability measures on $\Omega\times \Omega$ which has $\mu$ and
$\nu$ as marginals;
$$\gamma(A\times \Omega)= \mu(A) \quad \text{and} \quad \gamma(\Omega\times A)= \nu(A) $$
for every Borel set $A\subset \Omega.$
Equation (\ref{Wasserstein dist}) defines a distance on $\mathcal{P}_p (\Omega)$ which is called the {\it Wasserstein distance} and denoted by $W_p$.
 Equipped with the Wasserstein distance,  $\mathcal{P}_p (\Omega)$ is called the {\it Wasserstein space}.
We denote by $\Gamma_o(\mu,\nu)$ the set of all $\gamma$ which minimize the expression.

We say that a sequence of Borel probability measures $\{\mu_n\} \subset \mathcal{P}(\Omega)$ is narrowly convergent to $ \mu \in \mathcal{P}(\Omega)$ as
$n \rightarrow \infty$ if
\begin{equation*}\label{D:narrowly convergent}
\lim_{n\rightarrow \infty} \int_{\Omega} \varphi(x) \,d\mu_n(x) =\int_{\Omega} \varphi(x) \,d\mu(x)
\end{equation*}
for every function $ \varphi \in \calC_b (\Omega)$, the space of continuous and bounded real functions defined on $\Omega$. It is well known
 that narrow convergence is induced by a distance on $\mathcal{P}(\Omega)$ 
 \begin{equation}\label{Narrow-distance}
 \delta(\mu, \nu):= \sum_{k=1}^\infty 2^{-k} \left | \int_\Omega f_k \, d\mu -\int_\Omega f_k \, d\nu \right |,
  \end{equation}
where $\{ f_k \}$ is a sequence of functions satisfying $\|f_k \|_{W^{1,\infty}(\Omega)} \leq 1$.  

We recall 
 \begin{equation*}
\lim_{n \rightarrow \infty} W_p(\mu_n, \mu)=0 \quad \Longleftrightarrow \quad
\begin{cases}
\mu_n ~\mbox{narrowly converges to } \mu ~\mbox{ in }  \mathcal{P}(\Omega),\\
\lim_{n\rightarrow \infty} \int_{\Omega} |x|^p\, d\mu_n(x) = \int_{\Omega} |x|^p\,d \mu(x).
\end{cases}
\end{equation*}
Hence, if $\Omega$ is compact then $\mathcal{P}_p(\Omega)$ is also a compact metric space.%


Now, we introduce the notion of absolutely continuous curve and its relation with the continuity equation.
\begin{defn}
Let $\sigma:[0,T]\mapsto \mathcal{P}_p (\Omega)$ be a curve.
We say that $\sigma$ is absolutely continuous and denote it by $\sigma \in AC(0, T;\mathcal{P}_p (\Omega))$, if there exists $l\in
L^1([0,T])$ such that
\begin{equation}\label{AC-curve}
W_p(\sigma(s),\sigma(t))\leq \int_s^t l(r)dr,\qquad \forall ~ 0\leq s\leq t\leq T.
\end{equation}
If $\sigma \in AC(0,T;\mathcal{P}_p (\Omega))$, then the limit
$$|\sigma'|(t):=\lim_{s\rightarrow t}\frac{W_p(\sigma(s),\sigma(t))}{|s-t|} ,$$
exists for $L^1$-a.e $t\in[0,T]$. Moreover, the function $|\sigma'|$ belongs to $L^1(0,T)$ and satisfies
\begin{equation*}
|\sigma'|(t)\leq l(t) \qquad \mbox{for} ~L^1\mbox{-a.e.}~t\in [0,T],
\end{equation*}
for any $l$ satisfying \eqref{AC-curve}.
We call $|\sigma'|$ by the metric derivative of $\sigma$.
\end{defn}

\begin{lemma}\label{representation of AC curves} \cite[Theorem 5.14]{Santambrosio15}
If a narrowly continuous curve $\sigma : [0,T] \mapsto \mathcal{P}_p (\Omega)$ satisfies the continuity equation
$$\partial_t\sigma +\nabla\cdot(V\sigma)=0,  $$
for some Borel vector field $V$ with $\|V(t)\|_{L^p(\sigma(t))}\in L^1(0,T)$, then $\sigma: [0,T]\mapsto \mathcal{P}_p (\Omega)$
is absolutely continuous and
$|\sigma'|(t)\leq \|V(t)\|_{L^p(\sigma(t))}$ for $L^1$-a.e $t\in [0,T)$.
\end{lemma}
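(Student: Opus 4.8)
The plan is to reduce everything to a single length estimate, namely
\begin{equation*}
W_p(\sigma(s),\sigma(t)) \;\le\; \int_s^t l(r)\,dr, \qquad l(r) := \|V(r)\|_{L^p(\sigma(r))},
\end{equation*}
for all $0\le s\le t\le T$. Since $l\in L^1(0,T)$ by hypothesis, this is exactly the definition \eqref{AC-curve} of absolute continuity, and the metric-derivative bound $|\sigma'|(t)\le l(t)$ then follows automatically by evaluating $\lim_{s\to t} W_p(\sigma(s),\sigma(t))/|s-t|$ at any Lebesgue point of $l$ (hence for $L^1$-a.e.\ $t$).

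The central device I would use is a \emph{Lagrangian representation} of the continuity equation. Because $l\in L^1(0,T)$, the superposition principle furnishes a probability measure $\boldsymbol\eta$ on the path space $\mathcal{C}([0,T];\overline\Omega)$, concentrated on absolutely continuous curves $\gamma$ solving the characteristic ODE $\gamma'(r)=V(r,\gamma(r))$ for a.e.\ $r$, and satisfying $(e_r)_\#\boldsymbol\eta=\sigma(r)$ for every $r\in[0,T]$, where $e_r(\gamma):=\gamma(r)$ is the evaluation map. Granting this, for fixed $s\le t$ the plan $(e_s,e_t)_\#\boldsymbol\eta$ is an admissible coupling between $\sigma(s)$ and $\sigma(t)$, so by the definition \eqref{Wasserstein dist} of $W_p$ together with the fundamental theorem of calculus along each curve,
\begin{equation*}
W_p(\sigma(s),\sigma(t)) \;\le\; \left(\int |\gamma(t)-\gamma(s)|^p\,d\boldsymbol\eta(\gamma)\right)^{\frac1p} \;\le\; \left(\int \left(\int_s^t |V(r,\gamma(r))|\,dr\right)^{p} d\boldsymbol\eta(\gamma)\right)^{\frac1p}.
\end{equation*}
Applying Minkowski's integral inequality (valid since $p\ge1$) to interchange the $L^p(\boldsymbol\eta)$-norm with the time integral, and then using $(e_r)_\#\boldsymbol\eta=\sigma(r)$ to rewrite $\int |V(r,\gamma(r))|^p\,d\boldsymbol\eta = \int_\Omega |V(r,x)|^p\,d\sigma(r)(x)=\|V(r)\|_{L^p(\sigma(r))}^p$, I arrive at $W_p(\sigma(s),\sigma(t))\le \int_s^t \|V(r)\|_{L^p(\sigma(r))}\,dr$, which is the desired length bound.

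The step I expect to be the genuine obstacle is the superposition principle itself: constructing $\boldsymbol\eta$ from the Eulerian datum $(\sigma,V)$ is the deep part of the argument, requiring a tightness/compactness construction of $\boldsymbol\eta$ on path space and a careful verification that $\boldsymbol\eta$-a.e.\ curve solves the characteristic ODE under only $L^p$-in-space, $L^1$-in-time control on $V$. If one wishes to avoid invoking it, the natural alternative is a mollification scheme: set $V^{\epsilon}:=V*\varrho_{\epsilon}$, solve the smooth flow $\dot X^{\epsilon}=V^{\epsilon}(\cdot,X^{\epsilon})$, use the deterministic coupling $(X^{\epsilon}(s,\cdot),X^{\epsilon}(t,\cdot))_\#\sigma(0)$ to obtain the estimate with $V^{\epsilon}$ and $\sigma^{\epsilon}(r):=X^{\epsilon}(r,\cdot)_\#\sigma(0)$, and then pass to the limit $\epsilon\to0$. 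In that route the delicate points are the uniform control of $\|V^{\epsilon}(r)\|_{L^p(\sigma^{\epsilon}(r))}$ and the identification of the limit curve with $\sigma$, which is precisely where DiPerna--Lions-type commutator estimates are needed. In the present paper, since this statement is quoted as \cite[Theorem~5.14]{Santambrosio15}, I would simply cite it and deploy the two-line coupling computation above in the applications.
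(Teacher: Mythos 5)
The paper offers no proof of this lemma; it is quoted directly from \cite{Santambrosio15}, so the comparison is with the proof in the cited reference. Your argument is correct as far as it goes: granting the superposition measure $\boldsymbol\eta$, the plan $(e_s,e_t)_{\#}\boldsymbol\eta$ is admissible, and the chain $W_p(\sigma(s),\sigma(t))\le \bigl(\int |\gamma(t)-\gamma(s)|^p\,d\boldsymbol\eta\bigr)^{1/p}\le \int_s^t \|V(r)\|_{L^p(\sigma(r))}\,dr$ via Minkowski's integral inequality and $(e_r)_{\#}\boldsymbol\eta=\sigma(r)$ is exactly right; note too that on probability measures $\|V(t)\|_{L^1(\sigma(t))}\le \|V(t)\|_{L^p(\sigma(t))}$, so the $L^1$-in-time hypothesis needed for superposition is indeed available, and the metric-derivative bound at Lebesgue points of $l(r):=\|V(r)\|_{L^p(\sigma(r))}$ is the standard conclusion. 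This is, however, a genuinely different route from the one in the cited source (and in \cite{ags:book}, Theorem~8.3.1): there superposition is avoided entirely by mollifying the \emph{pair} $(\sigma, V\sigma)$, i.e.\ setting $\sigma^{\epsilon}:=\sigma\ast\varrho_{\epsilon}$ and $V^{\epsilon}:=\bigl((V\sigma)\ast\varrho_{\epsilon}\bigr)/\bigl(\sigma\ast\varrho_{\epsilon}\bigr)$, so that $(\sigma^{\epsilon},V^{\epsilon})$ solves the continuity equation exactly, $V^{\epsilon}$ is regular enough to run the flow, Jensen's inequality yields the uniform bound $\|V^{\epsilon}(r)\|_{L^p(\sigma^{\epsilon}(r))}\le \|V(r)\|_{L^p(\sigma(r))}$, and $\sigma^{\epsilon}\to\sigma$ narrowly by construction. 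Your route buys the Lagrangian structure as a by-product but leans on a deeper theorem; the mollification route is self-contained at the level of this statement. One caution about your fallback sketch: mollifying $V$ alone ($V^{\epsilon}=V\ast\varrho_{\epsilon}$) and pushing $\sigma(0)$ through its flow does not produce a curve that can be identified with $\sigma$ --- that identification is precisely the DiPerna--Lions uniqueness question you name, and it is unavailable under the stated hypotheses; mollifying the pair as above sidesteps both the identification problem and any commutator estimates, which is why the reference proceeds that way.
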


\begin{lemma}\label{Lemma : Arzela-Ascoli} \cite[Proposition 3.3.1]{ags:book}
Let $K  \subset \mathcal{P}_p(\Omega)$ be a sequentially compact set w.r.t the narrow topology.
Let $\sigma_n : [0, T] \rightarrow \mathcal{P}_p (\Omega)$ be curves such that 
\begin{equation*}\label{equi-continuity}
\begin{aligned}
&\sigma_n(t) \in K,  &&\forall ~ n \in \mathbb{N}, ~ t \in [0,T],\\
&\delta (\sigma_n(s),\sigma_n(t))\leq \omega(s,t), &&\forall ~
0\leq s\leq t\leq T, ~ n \in \mathbb{N},
\end{aligned}
\end{equation*}
for a continuous function $\omega : [0, T] \times [0, T] \rightarrow [0, \infty)$ such that
$$\omega(t,t)=0, ~~ \forall ~ t \in [0,T].$$
Then there exists a subsequence $\sigma_{n_k}$ and a limit curve $\sigma : [0,T] \rightarrow {P}_p(\Omega)$ such that
\begin{equation*}\label{eq1 : Lemma : Arzela-Ascoli}
\sigma_{n_k}(t) \mbox{ narrowly converges to} ~~\sigma(t), \qquad
\text{for all} ~~t\in[0, T].
\end{equation*}
\end{lemma}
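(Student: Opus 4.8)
The statement is the Arzel\`a--Ascoli theorem transplanted to the metric space $(\mathcal{P}_p(\Omega),\delta)$, so my plan is to reproduce the classical diagonal-extraction argument, exploiting that $\delta$ from \eqref{Narrow-distance} metrizes the narrow topology. First I would fix a countable dense subset $D=\{t_j\}_{j\in\mathbb{N}}\subset[0,T]$ (say the rationals in $[0,T]$ together with the endpoints). Since $\sigma_n(t_1)\in K$ for every $n$ and $K$ is sequentially compact for the narrow topology, I can extract a subsequence along which $\sigma_n(t_1)$ converges narrowly; refining successively at $t_2,t_3,\dots$ and passing to the diagonal subsequence, I obtain a single subsequence---still denoted $\sigma_{n_k}$---for which $\sigma_{n_k}(t_j)$ converges narrowly as $k\to\infty$, for every $j\in\mathbb{N}$. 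Note this step uses only the values-in-$K$ hypothesis, not any continuity of the $\sigma_n$.

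Next I would upgrade pointwise convergence from the dense set $D$ to all of $[0,T]$ using the equicontinuity encoded by $\omega$. Fix $t\in[0,T]$ and $\ep>0$. Because $\omega$ is continuous on the compact square $[0,T]^2$ and vanishes on the diagonal, there is $\eta>0$ with $\omega(s,t)<\ep$ and $\omega(t,s)<\ep$ whenever $|s-t|<\eta$; choose $t_j\in D$ with $|t_j-t|<\eta$. Applying the triangle inequality for the metric $\delta$,
\[
\delta(\sigma_{n_k}(t),\sigma_{n_l}(t))\le \delta(\sigma_{n_k}(t),\sigma_{n_k}(t_j))+\delta(\sigma_{n_k}(t_j),\sigma_{n_l}(t_j))+\delta(\sigma_{n_l}(t_j),\sigma_{n_l}(t)),
\]
the two outer terms are each bounded by $\omega$ evaluated at the appropriately ordered pair (recall the hypothesis is stated only for $s\le t$, so one reads off $\omega(t_j,t)$ or $\omega(t,t_j)$ according to the sign of $t_j-t$), hence by $\ep$, while the middle term tends to $0$ as $k,l\to\infty$ since $\{\sigma_{n_k}(t_j)\}_k$ is $\delta$-convergent and therefore $\delta$-Cauchy. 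Letting $\ep\downarrow0$ shows that $\{\sigma_{n_k}(t)\}_k$ is $\delta$-Cauchy for every $t\in[0,T]$.

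Finally I would produce the limit curve. Since $K$ is narrowly sequentially compact and $\delta$ metrizes the narrow topology, $(K,\delta)$ is a compact, in particular complete, metric space; hence each $\delta$-Cauchy sequence $\{\sigma_{n_k}(t)\}_k$ converges to some $\sigma(t)\in K$, and $\delta$-convergence is the same as narrow convergence. This defines $\sigma:[0,T]\to\mathcal{P}_p(\Omega)$ and yields the claimed pointwise narrow convergence $\sigma_{n_k}(t)\to\sigma(t)$ for all $t$. As a byproduct, passing to the limit in $\delta(\sigma_{n_k}(s),\sigma_{n_k}(t))\le\omega(s,t)$ shows that $\sigma$ inherits the same modulus $\omega$ and is therefore narrowly continuous.

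The main obstacle is not a single hard estimate but the bookkeeping that makes the diagonal argument rigorous: one must use that $\delta$ genuinely metrizes the narrow topology on the bounded domain $\Omega$, so that sequential compactness of $K$, completeness of $(K,\delta)$, and the identification of $\delta$-limits with narrow limits are all available at once; and one must handle $\omega$ with care, since its defining bound is only assumed for $s\le t$, requiring symmetrization together with uniform continuity on $[0,T]^2$ to control both outer terms uniformly in $k$.
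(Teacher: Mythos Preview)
Your argument is the standard Arzel\`a--Ascoli proof in the metric space $(\mathcal{P}_p(\Omega),\delta)$ and is correct; the paper itself does not prove this lemma but simply cites it from \cite[Proposition~3.3.1]{ags:book}, where essentially the same diagonal-extraction-plus-equicontinuity argument is carried out.
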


 \subsection{Flows on $\mathcal{P}_p(\Omega)$ generated by vector fields}
For a given $T>0$, let $V\in L^1(0,T; W^{1,\infty}(\Omega;\mathbb{R}^d))$ be such that $V\cdot \mathbf{n}=0$ on $\partial \Omega$ where $\mathbf{n}$ is the outward unit normal to the
boundary of $\Omega$. For any $s, ~t\in [0,T]$, let
$\psi:[0,T]\times [0,T]\times\Omega\mapsto \Omega$ be the flow map
of the vector field $V$.
 More precisely, $\psi$ solves the following ODE
 \begin{equation}\label{ODE}
\begin{cases}
\frac{d}{dt}\psi(t;s,x)= V(\psi(t;s,x),t),  &\text{for } s, t\in[0,T] \vspace{1 mm}\\
\psi(s;s,x)=x,  &\text{for }  x\in\Omega.
\end{cases}
\end{equation}
Using the flow map $\psi$, we define a flow $\Psi: [0,T]\times [0,T]\times \mathcal{P}_p(\Omega)\mapsto \mathcal{P}_p(\Omega)$ through the push forward operation as follows
\begin{equation}\label{Flow on Wasserstein}
\Psi(t;s,\mu):={\psi}(t;s,\cdot)_\# \mu, \qquad \forall ~ \mu \in \mathcal{P}_p(\Omega).
\end{equation}

In this subsection, we remind two basic results on the flow map $\psi$.

\begin{lemma}\label{Lemma : Lipschitz of Jacobian}\cite[Lemma~3.2]{KK-SIMA}
Let $s\in [0,T]$ and $\psi$ be defined as in \eqref{ODE}. For any $t\in [s,T]$, let $J_{s,t}$ be the Jacobian corresponding to the map
 $\psi(t;s,\cdot):\Omega\mapsto \Omega$ . That is,
\begin{equation}\label{Jacobian}
\int_{\Omega} \zeta(y) dy:= \int_{\Omega} \zeta(\psi(t;s,x))
J_{s,t}(x)\,dx,\qquad \forall ~\zeta\in C(\Omega) .
\end{equation}
Then, the Jacobian  $ J_{s,t}$ is given as
\begin{equation*}\label{Jacobian - formular}
J_{s,t}(x)=e^{\int_s^t \nabla \cdot V (\psi(\tau;s,x),\tau) \,d\tau}, \qquad \forall ~ x\in \Omega.
\end{equation*}
\end{lemma}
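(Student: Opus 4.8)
The plan is to recognize $J_{s,t}(x)$ as the Jacobian determinant of the flow map and to evolve it via the classical Liouville (Jacobi) formula. First I would fix $s$ and $x$ and perform the change of variables $y=\psi(t;s,x)$ in the defining identity $\int_{\Omega}\zeta(y)\,dy=\int_{\Omega}\zeta(\psi(t;s,x))J_{s,t}(x)\,dx$. Since this holds for every $\zeta\in C(\Omega)$ and the standard change-of-variables formula gives $dy=\lvert\det D_x\psi(t;s,x)\rvert\,dx$, one identifies $J_{s,t}(x)=\lvert\det D_x\psi(t;s,x)\rvert$, where $D_x\psi$ denotes the spatial Jacobian matrix of the flow map. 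At $t=s$ the map $\psi(s;s,\cdot)$ is the identity, so $\det D_x\psi(s;s,x)=1$; because $\psi(t;s,\cdot)$ is a diffeomorphism the determinant never vanishes, hence stays positive by continuity in $t$, and the absolute value may be dropped: $J_{s,t}(x)=\det D_x\psi(t;s,x)$.

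Next I would derive the evolution of the matrix $A(t):=D_x\psi(t;s,x)$. Differentiating the ODE \eqref{ODE} in $x$ yields the variational (first-order) system
\[
\frac{d}{dt}A(t)=D_yV(\psi(t;s,x),t)\,A(t),\qquad A(s)=I,
\]
where $D_yV$ is the spatial Jacobian of $V$. Applying Jacobi's formula for the derivative of a determinant and using $\det A(t)\neq 0$, I obtain the scalar identity
\[
\frac{d}{dt}\det A(t)=\det A(t)\,\operatorname{tr}\!\bigl(A(t)^{-1}\tfrac{d}{dt}A(t)\bigr)=\det A(t)\,\operatorname{tr}\!\bigl(D_yV(\psi(t;s,x),t)\bigr).
\]
Since $\operatorname{tr}(D_yV)=\nabla\cdot V$, the quantity $J_{s,t}(x)=\det A(t)$ solves the linear scalar ODE $\frac{d}{dt}J_{s,t}(x)=(\nabla\cdot V)(\psi(t;s,x),t)\,J_{s,t}(x)$ with $J_{s,s}(x)=1$. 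Integrating in $t$ gives $J_{s,t}(x)=\exp\bigl(\int_s^t(\nabla\cdot V)(\psi(\tau;s,x),\tau)\,d\tau\bigr)$, which is precisely the claimed formula.

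The one genuine obstacle is the regularity of $V$: the hypothesis $V\in L^1(0,T;W^{1,\infty}(\Omega;\mathbb{R}^d))$ only guarantees that $V$ is Lipschitz in $x$, so $\psi$ is Lipschitz (and invertible) in $x$ by Gr\"onwall, but the differentiation of the flow in $x$ and the existence of the classical variational system above require $V$ to be $C^1$ in the space variable. To handle this I would first prove the identity for smooth $V$, where every step is rigorous, and then remove the extra regularity by a mollification argument: take smooth $V_{\ep}\to V$ with $\nabla\cdot V_{\ep}\to\nabla\cdot V$ in $L^1(0,T;L^{\infty})$ (or a.e.), let $\psi_{\ep}$ be the associated flows with Jacobians $J^{\ep}_{s,t}(x)=\exp(\int_s^t(\nabla\cdot V_{\ep})(\psi_{\ep}(\tau;s,x),\tau)\,d\tau)$, and pass to the limit. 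Stability of ODE flows under uniform-in-$x$ convergence of the vector fields yields $\psi_{\ep}\to\psi$ uniformly, which together with the convergence of $\nabla\cdot V_{\ep}$ lets me pass to the limit both in the exponential formula and in the defining change-of-variables identity, thereby establishing the result for the original $V$.
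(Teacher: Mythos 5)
Your proposal is correct and follows the classical Liouville/Jacobi-formula argument --- identify $J_{s,t}(x)=\det D_x\psi(t;s,x)$ via change of variables, derive the variational system $\frac{d}{dt}D_x\psi = D_yV(\psi,t)\,D_x\psi$, and integrate the resulting scalar linear ODE $\frac{d}{dt}\det A = (\nabla\cdot V)(\psi,t)\det A$ --- which is precisely the proof of the cited result \cite[Lemma~3.2]{KK-SIMA}; the paper itself offers no independent proof, simply quoting that lemma. The mollification step you add to cover $V\in L^1(0,T;W^{1,\infty}(\Omega;\mathbb{R}^d))$ is a sensible safeguard (with the minor caveat that $\nabla\cdot V_\ep\to\nabla\cdot V$ can only be arranged a.e.\ or in $L^p$, $p<\infty$, not in $L^\infty_x$, so one should use that the flows preserve null sets when passing to the limit in the composition), but in every application in the paper the lemma is invoked for smooth $V$, where your first, fully rigorous argument already suffices.
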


\begin{lemma}\label{Lemma : density relation on the flow}\cite[Lemma~3.4]{KK-SIMA}
Let $\psi$ and $\Psi$ be defined as in \eqref{ODE} and \eqref{Flow on Wasserstein}, respectively. If $\mu \in \mathcal{P}_p^{ac}(\Omega)$ then
$\Psi(t;s,\mu)\in \mathcal{P}_p^{ac}(\Omega)$. Here, $ \mathcal{P}_p^{ac}(\Omega)$ is the set of all probability measures in $\mathcal{P}_p(\Omega)$ which are absolutely
continuous with respect to the {\it Lebesgue measure} in $\Omega$. Furthermore, suppose $\mu=\varrho \,dx$ and $\Psi(t;s,\mu)=\rho \,dx$ then
\begin{equation*}\label{Density relation}
\rho(\psi(t;s,x))J_{s,t}(x)=\varrho(x), \qquad a.e \quad x\in \Omega,
\end{equation*}
where $J_{s,t}$ is the Jacobian of the map $\psi(t;s,\cdot)$ as in \eqref{Jacobian}.
We also have
\begin{equation*}\label{Entropy relation} \int_{\Omega} \rho \log
\rho \,dx = \int_{\Omega}\varrho \log \varrho \,dx -
\int_{\Omega}\varrho \log J_{s,t}  \,dx.
\end{equation*}
Moreover, if $\varrho \in L^{q}(\Omega)$ for $q\in[1,\infty]$, then
${\rho}\in L^{q}(\Omega)$ and we have
\begin{equation*}\label{L^p relation}
\|{\rho}\|_{L^{q}(\Omega)} \leq
\|\varrho\|_{L^{q}(\Omega)}e^{\frac{q-1}{q}\int_s^t\|\nabla\cdot
V\|_{L^\infty_x} \,d\tau},
\end{equation*}
where $\frac{q-1}{q}=1$ if $q=\infty$.
\end{lemma}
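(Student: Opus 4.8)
The plan is to reduce every assertion to a single change of variables governed by the Jacobian $J_{s,t}$ already computed in Lemma~\ref{Lemma : Lipschitz of Jacobian}, after first establishing that the flow map $\psi(t;s,\cdot)$ is a bi-Lipschitz homeomorphism of $\Omega$. Since $V\in L^1(0,T;W^{1,\infty}(\Omega;\mathbb{R}^d))$ with $V\cdot\mathbf{n}=0$ on $\partial\Omega$, Carathéodory theory gives a unique absolutely continuous solution of \eqref{ODE}, the no-flux condition keeps trajectories inside $\Omega$, and $\psi(t;s,\cdot)$ has Lipschitz inverse $\psi(s;t,\cdot)$ with strictly positive Jacobian $J_{s,t}(x)=e^{\int_s^t\nabla\cdot V(\psi(\tau;s,x),\tau)\,d\tau}$. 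First I would record that a Lipschitz map sends Lebesgue-null sets to Lebesgue-null sets. Then, for any Borel null set $A$, the identity $\Psi(t;s,\mu)(A)=\mu\bigl(\psi(s;t,\cdot)(A)\bigr)$ together with $|\psi(s;t,\cdot)(A)|=0$ and $\mu\ll dx$ yields $\Psi(t;s,\mu)(A)=0$; finiteness of the $p$-th moment is automatic because $\Omega$ is bounded, so $\Psi(t;s,\mu)\in\mathcal{P}_p^{ac}(\Omega)$.

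For the density relation, I would test $\Psi(t;s,\mu)=\rho\,dx$ against an arbitrary $\zeta\in C(\Omega)$ in two ways. The push-forward definition \eqref{Flow on Wasserstein} gives $\int_\Omega\zeta\,d\Psi(t;s,\mu)=\int_\Omega\zeta(\psi(t;s,x))\varrho(x)\,dx$, while applying the change-of-variables identity \eqref{Jacobian} to the integrand $\zeta\rho$ gives $\int_\Omega\zeta(y)\rho(y)\,dy=\int_\Omega\zeta(\psi(t;s,x))\rho(\psi(t;s,x))J_{s,t}(x)\,dx$. Equating the two and using that $\zeta\circ\psi(t;s,\cdot)$ exhausts $C(\Omega)$, because $\psi(t;s,\cdot)$ is a homeomorphism, forces $\rho(\psi(t;s,x))J_{s,t}(x)=\varrho(x)$ for a.e.\ $x\in\Omega$.

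The entropy and $L^q$ identities then follow by substituting this density relation back into \eqref{Jacobian}. For the entropy, take logarithms on $\{\varrho>0\}$ (covering $\{\varrho=0\}$ by $0\log 0=0$ together with $J_{s,t}>0$, which forces $\rho\circ\psi=0$ there), multiply by $\varrho$, and integrate; the term $\int_\Omega\varrho\,\log(\rho\circ\psi)\,dx$ equals $\int_\Omega(\rho\log\rho)(\psi)\,J_{s,t}\,dx=\int_\Omega\rho\log\rho\,dy$ by \eqref{Jacobian}, and rearranging produces the stated formula. For $q\in[1,\infty)$, \eqref{Jacobian} with integrand $\rho^q$ and the substitution $\rho(\psi)=\varrho/J_{s,t}$ give $\int_\Omega\rho^q\,dy=\int_\Omega\varrho^q J_{s,t}^{1-q}\,dx$; since $q\ge 1$ and $J_{s,t}^{1-q}=e^{(1-q)\int_s^t\nabla\cdot V\,d\tau}\le e^{(q-1)\int_s^t\|\nabla\cdot V\|_{L^\infty_x}\,d\tau}$, taking the $q$-th root gives the bound, and the case $q=\infty$ is read directly from $\rho(\psi)=\varrho/J_{s,t}$ and the lower bound on $J_{s,t}$.

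The main technical point to handle carefully is the justification of the change-of-variables identity \eqref{Jacobian} for the merely Lipschitz (not $C^1$) flow map, and the passage from the weak equality to the pointwise a.e.\ density relation; Rademacher's theorem and the area formula for bi-Lipschitz maps resolve this. The only other delicate bookkeeping is the treatment of the degeneracy set $\{\varrho=0\}$ in the entropy identity, which is controlled by the strict positivity of $J_{s,t}$.
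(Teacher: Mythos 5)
Your proof is correct and takes essentially the same route as the source: the paper states this lemma without proof, citing \cite{KK-SIMA}*{Lemma~3.4}, and the argument there is precisely your combination of the bi-Lipschitz flow, the change-of-variables identity \eqref{Jacobian} with the Jacobian formula of Lemma~\ref{Lemma : Lipschitz of Jacobian}, the resulting a.e.\ density relation $\rho(\psi(t;s,x))J_{s,t}(x)=\varrho(x)$, and the entropy and $L^q$ computations obtained by substituting $\rho\circ\psi=\varrho/J_{s,t}$ back into the change of variables. Your explicit handling of the technical points (area formula for the merely Lipschitz flow, measurability of images of null sets, and the degeneracy set $\{\varrho=0\}$ via $J_{s,t}>0$) is sound and, if anything, more careful than what the citation records.
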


\begin{lemma}\label{Lemma : Holder regularity on the flow}\cite[Lemma~3.19]{HKK01}
Let $\psi$ and $\Psi$ be defined as in \eqref{ODE} and \eqref{Flow on Wasserstein}, respectively. 
\begin{itemize}
\item[(i)] If $V \in L^1(0,T;  \calC^{1, \alpha}(\Omega)) $ and $\varrho\in
\mathcal{P}_p(\Omega) \cap \calC^\alpha (\Omega)$ for some $\alpha \in (0,1)$,
then $\rho:=\Psi(t;s,\varrho)$ is also H\"{o}lder continuous. More
precisely, we have $\| \rho\|_{\calC^\alpha (\Omega)} \leq C $ where $C=C(\| \varrho\|_{\calC^\alpha (\Omega)},\, \int_s^t \|\nabla V \|_{\calC^\alpha (\Omega)} d\tau )$.
\item[(ii)] If $V \in L^1(0,T;  W^{2, \infty}(\Omega)) $, then, for any  $a>0,~ q\geq 1$, we have
\begin{equation*}\label{eq5 : Sobolev}
\begin{aligned}
 \|\nabla \rho^a\|_{L^q (\Omega)}&\leq e^{(a+2)\int_s^t \|\nabla V\|_{L^\infty_x}\, d\tau}\left \{ \|\nabla \varrho^a\|_{L^q (\Omega)} + \| \varrho^{a}\|_{L^q (\Omega)}
  \left ( a\int_s^t \| \nabla^2 V\|_{L^\infty(\Omega)} \,d\tau\right ) \right \}.
\end{aligned}
\end{equation*}
\end{itemize}
\end{lemma}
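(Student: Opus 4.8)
The plan is to reduce both parts of the lemma to the explicit Lagrangian representation of $\rho$ and then to propagate regularity along the flow. Writing $Y:=\psi(t;s,\cdot)$ for the forward map and $X:=Y^{-1}=\psi(s;t,\cdot)$ for its inverse, Lemma~\ref{Lemma : density relation on the flow} gives the pointwise identity $\rho(Y(x))\,J_{s,t}(x)=\varrho(x)$, that is,
\[
\rho(y)=\frac{\varrho(X(y))}{J_{s,t}(X(y))},\qquad
J_{s,t}(x)=\exp\Bigl(\int_s^t (\nabla\cdot V)(\psi(\tau;s,x),\tau)\,d\tau\Bigr),
\]
where the formula for $J_{s,t}$ is taken from Lemma~\ref{Lemma : Lipschitz of Jacobian}. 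The first step is to record flow estimates: differentiating the ODE \eqref{ODE} in $x$ yields the variational equation $\frac{d}{dt}\nabla_x\psi=\nabla V(\psi,t)\,\nabla_x\psi$ with $\nabla_x\psi(s;s,\cdot)=\mathrm{Id}$, so Gronwall's inequality gives $\|\nabla_x\psi(t;s,\cdot)\|_{L^\infty}\le e^{\int_s^t\|\nabla V\|_{L^\infty}d\tau}$, and the analogous bound for $X$. In particular $Y$ and $X$ are bi-Lipschitz, $\|(\nabla Y)^{-1}\|_{L^\infty}\le e^{\int_s^t\|\nabla V\|_{L^\infty}}$, and $e^{-\int_s^t\|\nabla\cdot V\|_{L^\infty}}\le J_{s,t}\le e^{\int_s^t\|\nabla\cdot V\|_{L^\infty}}$.

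For part (i) I would view $\rho=(\varrho\circ X)\,(J_{s,t}\circ X)^{-1}$ as a product of two bounded Hölder functions. Since $\varrho\in\calC^\alpha$ and $X$ is Lipschitz, $\varrho\circ X\in\calC^\alpha$ with norm $\le\|\varrho\|_{\calC^\alpha}\,\mathrm{Lip}(X)^\alpha$. The exponent of $J_{s,t}$ is a time integral of the $\calC^\alpha$ field $\nabla\cdot V$ (using $V\in\calC^{1,\alpha}$) composed with the Lipschitz map $\psi(\tau;s,\cdot)$, hence bounded and $\calC^\alpha$ with norm controlled by $\int_s^t\|\nabla V\|_{\calC^\alpha}d\tau$; exponentiating a bounded $\calC^\alpha$ function preserves $\calC^\alpha$, and since $J_{s,t}$ is bounded below its reciprocal composed with the Lipschitz $X$ is again $\calC^\alpha$. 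The product rule for Hölder norms then yields $\|\rho\|_{\calC^\alpha}\le C(\|\varrho\|_{\calC^\alpha},\int_s^t\|\nabla V\|_{\calC^\alpha}d\tau)$.

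For part (ii) I would differentiate the identity $\rho^a(Y(x))=\varrho^a(x)\,J_{s,t}(x)^{-a}=:g(x)$. The chain rule gives $(\nabla\rho^a)(Y(x))=(\nabla_x Y(x))^{-T}\nabla_x g(x)$ with $\nabla_x g=J_{s,t}^{-a}\nabla\varrho^a-a\,\varrho^a J_{s,t}^{-a-1}\nabla_x J_{s,t}$, and the change of variables $y=Y(x)$ (whose Jacobian is exactly $J_{s,t}$) rewrites $\|\nabla\rho^a\|_{L^q(\Omega)}$ as a weighted integral in $x$. The factor $\nabla_x J_{s,t}$ is where $\nabla^2 V$ enters: differentiating the exponential formula gives $\nabla_x J_{s,t}=J_{s,t}\int_s^t\nabla(\nabla\cdot V)(\psi)\,\nabla_x\psi\,d\tau$, hence $|\nabla_x J_{s,t}|\le J_{s,t}\,e^{\int_s^t\|\nabla V\|_{L^\infty}}\int_s^t\|\nabla^2 V\|_{L^\infty}d\tau$. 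Inserting this together with the bounds on $(\nabla Y)^{-1}$ and the two-sided bounds on $J_{s,t}$, and collecting the exponential prefactors, produces exactly the claimed constant $e^{(a+2)\int_s^t\|\nabla V\|_{L^\infty}}$ and the additive $a\int_s^t\|\nabla^2 V\|_{L^\infty}$ term.

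The main obstacle I anticipate is the careful bookkeeping of the exponential constants in part (ii): one must track the contributions of $(\nabla Y)^{-1}$, of the powers $J_{s,t}^{-a}$ and $J_{s,t}^{-a-1}$, of the change-of-variables weight $J_{s,t}$ itself, and of the extra $e^{\int_s^t\|\nabla V\|}$ hidden inside $\nabla_x J_{s,t}$, and then bound $\|\nabla\cdot V\|_{L^\infty}\le\|\nabla V\|_{L^\infty}$ to collapse everything into the single exponent $a+2$. A secondary technical point is justifying that the flow is a genuine $C^1$ diffeomorphism and that the change of variables and differentiation under the time integral are legitimate, which is exactly what the regularity hypotheses $V\in L^1(0,T;\calC^{1,\alpha})$ and $V\in L^1(0,T;W^{2,\infty})$ are there to provide.
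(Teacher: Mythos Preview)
The paper does not provide its own proof of this lemma: it is stated as a preliminary and cited from \cite[Lemma~3.19]{HKK01}, so there is no in-paper argument to compare against. That said, your approach is exactly the standard one and is what one would expect the cited proof to do: write $\rho$ via the Lagrangian representation $\rho(y)=\varrho(X(y))/J_{s,t}(X(y))$, control $\nabla_x\psi$ by Gronwall on the variational equation, and then propagate the $\calC^\alpha$ (resp.\ $W^{1,q}$) regularity through composition with the bi-Lipschitz flow and through the explicit exponential formula for $J_{s,t}$.

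One minor caution on the bookkeeping you flag in part~(ii): when you collect the contributions of $(\nabla Y)^{-1}$, the change-of-variables weight $J_{s,t}^{1/q}$, the factor $J_{s,t}^{-a}$, and the extra $e^{\int_s^t\|\nabla V\|}$ inside $\nabla_x J_{s,t}$, the naive count on the second term gives an exponent closer to $a+2+\tfrac{1}{q}$ rather than exactly $a+2$. Reaching the stated constant requires either absorbing the $J_{s,t}^{1/q}$ factor more carefully (e.g.\ pairing it with $J_{s,t}^{-a}$ before taking sup bounds, or using that $\|\nabla\cdot V\|_{L^\infty}$ appears with opposite signs in different factors) or accepting a convention on $\|\nabla V\|_{L^\infty}$; this is cosmetic and does not affect how the lemma is used downstream.
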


\section{A priori estimates}\label{SS:priori}

In this section, we provide estimates of energy and speed, and energy-speed combined estimates, and compactness arguments of \eqref{FDE}-\eqref{FDE_bc_ic} given as following notion of solution.

\begin{defn}\label{D:regular-sol}
Let $0< m < 1$, $q\geq 1$ and $V$ be a measurable vector field.
We say that a nonnegative measurable function $\rho$ is
a \textbf{regular solution} of \eqref{FDE}-\eqref{FDE_bc_ic} with nonnegative initial data $\rho_0 \in \calC^{\alpha}(\overline{\Omega})$ for some $\alpha \in (0,1)$ if the followings are satisfied:
\begin{itemize}
\item[(i)] It holds that
\[
\rho \in L^{\infty}(0,T;\mathcal{C}^{\alpha}(\overline{\Omega})), 
\ \nabla \rho^m \in L^1(\Omega_T), 
\ \nabla \rho^{\frac{q+m-1}{2}} \in L^{2} (\Omega_T),
\ \text{ and } \ V \in L^{1}_{\loc}(\Omega_{T}).
\]
\item[(ii)] For any function $\varphi \in {\mathcal{C}}^\infty_c (\overline{\Omega} \times [0,T))$, it holds that 
\[
\iint_{\Omega_{T}} \left\{  \rho \varphi_t - \nabla \rho^m \cdot  \nabla \varphi + \rho V\cdot \nabla \varphi \right\} \,dx\,dt = -\int_{\Omega} \rho_{0} \varphi(\cdot, 0) \,dx.
\]
\end{itemize}
\end{defn}

\subsection{Energy estimates}

Here, we deliver a priori estimates of \eqref{FDE}-\eqref{FDE_bc_ic} categorized by $V$. 

\begin{proposition}\label{P:energy}
 Suppose that $\rho$ is a regular solution of \eqref{FDE}-\eqref{FDE_bc_ic} for $0<m<1$ and $d\geq 2$ with nonnegative initial data $\rho_{0}\in  \mathcal{P}_2(\Omega) \cap \calC^{\alpha}(\overline{\Omega})$. 

\begin{itemize}
\item[(i)] Let $ 1-\frac{1}{d} < m <1$. Suppose that 
\begin{equation}\label{V:Lq-energy}
V \in \mathsf{S}_{m,q}^{(q_1,q_2)}\cap \calC^{\infty} (\overline{\Omega}_{T})  \ \text{ for }  \   0 \leq \frac{1}{q_1} < \frac{1+d(m-1)}{d}, \quad 0 \leq \frac{1}{q_2}\leq \frac{1+d(m-1)}{2+d(q+m-2)}.
\end{equation}
 
\begin{itemize}
\item[$\bullet$] For $q=1$, assume $\int_{\Omega} \rho_0 \log \rho_0 \,dx < \infty$.
Then the following estimate holds:
\begin{equation}\label{L1-energy}
 \sup_{0\leq t \leq T}  \int_{\Omega \times \{t\}} \rho \abs{\log \rho} (\cdot, t)\,dx
 +  \frac{2}{m}\iint_{\Omega_T} \left|\nabla \rho^{\frac{m}{2}}\right|^2 \,dx\,dt \le C ,
\end{equation}
where  $C = C (\|V\|_{\mathsf{S}_{m,1}^{(q_1, q_2)}}, \, \int_{\Omega} \rho_0 \log \rho_0 \,dx)$. 

\item[$\bullet$] For $q>1$, assume $\rho_0 \in L^q (\Omega)$.
Then the following estimate holds:
\begin{equation}\label{Lq-energy}
 \sup_{0\leq t \leq T}  \int_{\Omega \times \{t\}} \rho^q (\cdot, t) \,dx
 +  \frac{2mq(q-1)}{(q+m-1)^2}\iint_{\Omega_T} \left|\nabla \rho^{\frac{q+m-1}{2}}\right|^2 \,dx\,dt \le C ,
\end{equation}
where  $C = C (\|V\|_{\mathsf{S}_{m,q}^{(q_1, q_2)}}, \, \|\rho_{0}\|_{L^{q} (\Omega)})$.
\end{itemize}

\item[(ii)]  Let $(1-\frac{2}{d})_{+} < m <1$. Suppose that 
\begin{equation}\label{V:tilde:Lq-energy}
V\in \tilde{\mathsf{S}}_{m,q}^{(\tilde{q}_1, \tilde{q}_2)} \cap \calC^{\infty} (\overline{\Omega}_{T}) \quad  \text{for} \quad  0 \leq \frac{1}{\tilde{q}_1} < \frac{2+ d(m-1)}{d}, \quad  0 \leq \frac{1}{\tilde{q}_2} \leq 1. 	 
\end{equation}

\begin{itemize}
\item[$\bullet$] For $q=1$, assume $\int_{\Omega} \rho_0 \log \rho_0 \,dx < \infty$. 
Then \eqref{L1-energy} holds with $C = C (\|V\|_{\tilde{\mathsf{S}}_{m,1}^{(\tilde{q}_1, \tilde{q}_2)}}, \, \int_{\Omega} \rho_0 \log \rho_0 \,dx )$.

\item[$\bullet$] For $q>1$, assume $\rho_0 \in L^q (\Omega)$.
Then \eqref{Lq-energy} holds with 
$C = C (\|V\|_{\tilde{\mathsf{S}}_{m,q}^{(\tilde{q}_1, \tilde{q}_2)}}, \, \|\rho_{0}\|_{L^{q} (\Omega)} )$.
\end{itemize}

\item[(iii)] Assume $\nabla\cdot V = 0$ for $V\in\calC^{\infty} (\overline{\Omega}_{T})$ with $V\cdot \textbf{n}=0$ on $\partial \Omega$.

\begin{itemize}
\item[$\bullet$] For $q=1$, assume $\int_{\Omega} \rho_0 \log \rho_0 \,dx < \infty$. Then \eqref{L1-energy} holds with $C = C (\int_{\Omega} \rho_0 \log \rho_0 \,dx )$.

\item[$\bullet$] For $q>1$, assume $\rho_{0}\in  L^{q}(\Omega)$. Then \eqref{Lq-energy}
holds with $C = C ( \|\rho_{0}\|_{L^{q} (\Omega)} )$.
\end{itemize}
\end{itemize}
\end{proposition}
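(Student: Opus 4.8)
The plan is to derive all three cases from a single differential identity obtained by multiplying \eqref{FDE} by a suitable test power of $\rho$, and then to treat the drift contribution according to the structural hypothesis on $V$. Concretely, for $q>1$ I would multiply by $\rho^{q-1}$, integrate over $\Omega$, and integrate by parts using the no-flux condition $(\nabla\rho^m-V\rho)\cdot\mathbf{n}=0$ to kill the boundary term; this yields
\[
\frac1q\ddt\int_\Omega\rho^q\,dx+\frac{4m(q-1)}{(q+m-1)^2}\int_\Omega\bigl|\nabla\rho^{\frac{q+m-1}{2}}\bigr|^2\,dx=\frac{q-1}{q}\int_\Omega\nabla\rho^q\cdot V\,dx .
\]
For $q=1$ the analogue, obtained by testing with $\log\rho$ (equivalently $1+\log\rho$, the extra $1$ dropping out by mass conservation), reads $\ddt\int_\Omega\rho\log\rho\,dx+\frac4m\int_\Omega|\nabla\rho^{m/2}|^2\,dx=\int_\Omega\nabla\rho\cdot V\,dx$. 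Since $\rho\in\calC^\alpha$ is merely H\"older and $\log\rho$ is singular on $\{\rho=0\}$, these manipulations should first be carried out with a regularized weight such as $(\rho+\delta)^{q-1}$ or $\log(\rho+\delta)$ and then passed to the limit $\delta\to0$; the elementary inequality $x|\log x|\le e^{-1}$ on $[0,1]$ then lets me replace $\int_\Omega\rho\log\rho$ by $\int_\Omega\rho|\log\rho|$ on the bounded domain $\Omega$, which is what appears in \eqref{L1-energy}.

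The drift term is then handled case by case. In case (iii), $\nabla\cdot V=0$ together with $V\cdot\mathbf{n}=0$ gives $\int_\Omega\nabla\rho^q\cdot V\,dx=-\int_\Omega\rho^q\,\nabla\cdot V\,dx=0$, so \eqref{L1-energy} and \eqref{Lq-energy} follow at once with a constant independent of $V$. In case (i), I would write $\nabla\rho^q=\frac{2q}{q+m-1}\rho^{\frac{q-m+1}{2}}\nabla\rho^{\frac{q+m-1}{2}}$ and apply Young's inequality to bound the drift by $\epsilon\int_\Omega|\nabla\rho^{(q+m-1)/2}|^2\,dx+C_\epsilon\int_\Omega\rho^{q-m+1}|V|^2\,dx$, the first summand being absorbed into the dissipation on the left (this is why only half of the dissipation coefficient survives in \eqref{Lq-energy}). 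In case (ii) I would instead integrate the drift by parts a second time, using $V\cdot\mathbf{n}=0$, to obtain $-\frac{q-1}{q}\int_\Omega\rho^q\,\nabla\cdot V\,dx$, which I bound by $\int_\Omega\rho^q|\nabla V|\,dx$.

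The main obstacle, and the step requiring the most care, is to absorb the remaining space-time integrals $\iint_{\Omega_T}\rho^{q-m+1}|V|^2\,dx\,dt$ (case i) and $\iint_{\Omega_T}\rho^q|\nabla V|\,dx\,dt$ (case ii) into the dissipation and $\sup_t\int_\Omega\rho^q$. Here I would apply H\"older in $(x,t)$, pairing $|V|^2\in L^{q_1/2,q_2/2}_{x,t}$ (resp. $\nabla V\in L^{\tilde{q}_1,\tilde{q}_2}_{x,t}$) against a mixed norm of a power of $\rho$, and then invoke the parabolic interpolation Lemma~\ref{L:interpolation} with $p=q$ to control that $\rho$-norm by $(\sup_t\int_\Omega\rho^q)^{\theta_1}\|\nabla\rho^{(q+m-1)/2}\|_{L^2_{x,t}}^{2\theta_2}$. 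The exponents defining $\mathsf{S}_{m,q}^{(q_1,q_2)}$ and $\tilde{\mathsf{S}}_{m,q}^{(\tilde{q}_1,\tilde{q}_2)}$ in \eqref{V:Lq-energy}--\eqref{V:tilde:Lq-energy} are chosen precisely so that $2\theta_2<2$ and $\theta_1\le1$, enabling a further Young inequality to absorb the gradient factor and a Gr\"onwall argument to close the bound; this also explains why the hypothesis is stated as an inequality (the subcritical range is admissible) and why the borderline $q_2=\infty$ (point $B$ in Fig.~\ref{F:S}) is excluded, since there the time integration provides no smallness and the absorption fails in general unless the relevant norm of $V$ is small.
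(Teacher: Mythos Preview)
Your starting identities, the treatment of case~(iii), and the overall shape are correct and match the paper. The genuine gap is in cases~(i) and~(ii) for $q>1$: you propose to invoke Lemma~\ref{L:interpolation} with $p=q$ and then close via Gr\"onwall on $\sup_t\int_\Omega\rho^q$, but this closure fails in the fast diffusion range $0<m<1$. Concretely, after H\"older and the $p=q$ interpolation one has
\[
\mathcal{J}_q\le c\,\|V\|_{L^{q_1}_x}\,\|\rho\|_{L^q_x}^{\theta(q-m+1)/2}\,\bigl\|\nabla\rho^{\frac{q+m-1}{2}}\bigr\|_{L^2_x}^{\,1+\frac{(1-\theta)(q-m+1)}{q+m-1}},
\]
and absorbing the gradient by Young leaves $\|\rho\|_{L^q_x}$ raised to the power $\frac{\theta(q-m+1)(q+m-1)}{2(m-1)+\theta(q-m+1)}$. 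A direct computation shows this power is $\le q$ only if $\theta\ge\frac{2q}{q-m+1}$, which exceeds $1$ whenever $m<1$ and $q>1$; hence no admissible $\theta$ closes the Gr\"onwall loop. The paper flags exactly this obstruction in the remark immediately following the proposition.

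The paper's remedy is to interpolate with $p=1$ rather than $p=q$: one bounds $\|\rho\|_{L^{r_1}_x}$ between $\|\nabla\rho^{(q+m-1)/2}\|_{L^2_x}$ and the \emph{conserved} quantity $\|\rho(t)\|_{L^1_x}=\|\rho_0\|_{L^1_x}$, so that after Young only a constant (not a power of $\|\rho\|_{L^q_x}$) remains on the right and no Gr\"onwall is needed. This is precisely why the hypothesis is stated in terms of the class $\mathsf{S}_{m,q}^{(q_1,q_2)}$, whose defining relation $\frac{d}{q_1}+\frac{2+d(q+m-2)}{q_2}\le 1+d(m-1)$ carries the $p=1$ scaling, rather than the scaling-invariant class $\mathcal{S}_{m,q}^{(q_1,q_2)}$ that your $p=q$ scheme would naturally produce. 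Your assertion that the exponents are ``chosen precisely so that $\theta_1\le1$'' under the $p=q$ interpolation is therefore incorrect.
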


\begin{remark}
	\begin{itemize}
	    \item[(i)] For FDE, we are not able to close the energy estimate using the Gr\"{o}nwall's inequality with $L^{q,\infty}_{x,t}(\Omega_T)$-norm, see \cite[Proposition~4.2]{HKK02}. Here we use the $L^1$-mass conservation property which corresponds to the suitable class of $V$ as $\mathsf{S}_{m,q}^{(q_1,q_2)}$.
	    
	    \item[(ii)] The critical case of $V\in \mathsf{S}_{m,q}^{(q_1,q_2)}$ where $q_1 = \frac{d}{1+d(m-1)}$ and $q_2 = \infty$ , \eqref{Energy:V01} becomes 
	    \[
	    \mathcal{J}_1 \leq c(\|\rho_0\|_{L^{1}_{x}}) \left\|V\right\|_{L_{x}^{q_1}} \left\|\nabla \rho^{\frac{q+m-1}{2}}\right\|_{L_x^2}^{2}  + c	    
	    \]
	    that the Young's inequality does not work. The estimate \eqref{L1-energy} is obtained as long as assuming smallness condition on $\left\|V\right\|_{L_{x,t}^{q_{1}, q_{2}}}$.
	    Also, the critical case of $V\in \tilde{\mathsf{S}}_{m,q}^{(\tilde{q}_1,\tilde{q}_2)}$ where $\tilde{q}_1 = \frac{2}{2+d(m-1)}$ and $\tilde{q}_2 = \infty$ corresponds to 
	    \[
	    \mathcal{J}_1 \leq c(\|\rho_0\|_{L^{1}_{x}}) \left\|\nabla V\right\|_{L_{x}^{\tilde{q}_1}} \left\|\nabla \rho^{\frac{q+m-1}{2}}\right\|_{L_x^2}^{2}  + c.	    
	    \]  
	    Hence, the energy estimate is obtained as long as the smallness is assumed for $\left\|\nabla V\right\|_{L_{x,t}^{\tilde{q}_{1}, \tilde{q}_{2}}}$. 
		\end{itemize}
\end{remark}

\begin{proof}
To the equation \eqref{FDE}, it follows by testing either $\log \rho$ for $q=1$ or $q\rho^{q-1}$ for $q>1$: 
	\[
		\begin{cases}
			\frac{d}{dt}\int_{\Omega} \rho \log \rho \,dx + \frac{4}{m}\int_{\Omega} |\nabla \rho^{\frac{m}{2}}|^2 \,dx = \int_{\Omega} V \cdot \nabla \rho \,dx, &\text{ when } \ q=1, \vspace{2 mm}\\
			\frac{d}{dt}\int_{\Omega} \rho^q \,dx + \frac{4mq(q-1)}{(q+m-1)^2}\int_{\Omega} |\nabla \rho^{\frac{q+m-1}{2}}|^2 \,dx = (q-1)\int_{\Omega} V \cdot \nabla \rho^q \,dx, &\text{ when } \ q>1.
		\end{cases}
	\]
	Let 
	\[
	\mathcal{J}_{q} := \int_{\Omega} V \cdot \nabla \rho^q \,dx, \quad q\geq 1.
	\] 
   	First, assume $V \in \mathsf{S}_{m,q}^{(q_1,q_2)}$. Then 
   	\begin{equation}\label{Energy:V01}
	\begin{aligned} 
	\mathcal{J}_{q} &= \frac{2q}{q+m-1} \int_{\Omega} \rho^{\frac{q-m+1}{2}} V \cdot \nabla \rho^{\frac{q+m-1}{2}} \,dx \\
	&\leq c \left\|V\right\|_{L_{x}^{q_1}} \left\|\nabla \rho^{\frac{q+m-1}{2}}\right\|_{L_x^2} \left\|\rho\right\|_{L_{x}^{\frac{q_1 (q-m+1)}{q_1 - 2}}}^{\frac{q-m+1}{2}} \\
	&\leq c \left\|V\right\|_{L_{x}^{q_1}} \left\|\nabla \rho^{\frac{q+m-1}{2}}\right\|_{L_x^2}  \left(\left\|\nabla \rho^{\frac{q+m-1}{2}}\right\|_{L_x^2}^{\frac{(1-\theta)(q-m+1)}{q+m-1}} \left\|\rho\right\|_{L_{x}^{1}}^{\frac{\theta(q-m+1)}{2}} + \left\|\rho \right\|_{L_{x}^{1}}^{\frac{q-m+1}{2}} \right)
	\end{aligned}
	\end{equation}
	by the H\"{o}lder inequality with $\frac{1}{q_1} + \frac{1}{2}+ \frac{q_1 - 2}{2q_1}=1$ for $q_1 >2$ and by applying interpolation inequalities in Lemma~\ref{L:interpolation} with $p=1$ for $\frac{1}{r_1}= \theta + \frac{(1-\theta)(d-2)}{d(q+m-1)}$. 
	Note that $\left\|\rho \right\|_{L_{x}^1} = \left\|\rho_0 \right\|_{L_{x}^1}$ by the mass conservation property. Provided \eqref{V:Lq-energy} for $q\geq 1$ with $\frac{(1-\theta)(q-m+1)}{q+m-1} < 1$ (corresponding to $q_2 < \infty$), it follows that  
	\begin{equation*}\label{Energy:V02}
	\mathcal{J}_{q} \leq \varepsilon \left\|\nabla \rho^{\frac{q+m-1}{2}}\right\|_{L_x^2}^{2} + c(\varepsilon, \left\|\rho_0 \right\|_{L_{x}^1}) \left\|V\right\|_{L_{x}^{q_1}}^{q_2} + c(\varepsilon)
	\end{equation*}
	for any $\varepsilon>0$. Finally, by selecting $\varepsilon = \frac{q}{m+q-1}$, the estimate \eqref{L1-energy} and \eqref{Lq-energy} follow.

	Now assume the case (ii) or (iii). Because $V\cdot \textbf{n}=0$ on $\partial{\Omega}$, it follows by the integration by parts
	\[
	\mathcal{J}_{q} = - \int_{\Omega} \rho^q \nabla \cdot V \,dx.
	\]
	In case (iii), the integration parts yields that $\mathcal{J}_{q} = 0$ because $\nabla \cdot V = 0$. Therefore the energy estimates \eqref{L1-energy} and \eqref{Lq-energy} are obtained independent of $V$.

	Now in case (ii), we apply the H\"{o}lder inequality for $\frac{1}{\tilde{q}_1} + \frac{\tilde{q}_1 - 1}{\tilde{q}_1}=1$ that 
	\[\begin{aligned}
		\mathcal{J}_{q} &\leq \|\nabla V\|_{L_{x}^{\tilde{q}_1}} \|\rho\|_{L^{\frac{q \tilde{q}_1}{\tilde{q}_1 - 1}}_{x}}^{q} 
		\leq \|\nabla V\|_{L_{x}^{\tilde{q}_1}} \left( \left\| \nabla \rho^{\frac{q+m-1}{2}}\right\|_{L^2_x}^{\frac{2q(1-\theta)}{q+m-1}} \left\|\rho\right\|_{L^{1}_{x}}^{q\theta} + \left\|\rho\right\|_{L^{1}_{x}}^{q}\right)
	\end{aligned}\] 
	by the interpolation inequality for $\tilde{r}_1 = \frac{q\tilde{q}_1}{\tilde{q}_1 -1}$ and $\frac{1}{\tilde{r}_1} = \theta + \frac{(1-\theta)(d-2)}{d(q+m-1)}$.
	Provided \eqref{V:tilde:Lq-energy} for $q\geq 1$ with $\frac{q(1-\theta)}{q+m-1} < 1$ (equivalently, $\tilde{q}_2 < \infty$), the Young's inequality yields that 
	\[
	\mathcal{J}_1 \leq \varepsilon \left\| \nabla \rho^{\frac{q+m-1}{2}}\right\|_{L^2_x}^{2} + c(\varepsilon, \|\rho_0\|_{L_{x}^1}) \left\|V\right\|_{L^{\tilde{q}_1}_{x}}^{\tilde{q}_2} + c(\varepsilon) 
	\]
	for any $\varepsilon>0$. Therefore, $\varepsilon = \frac{q}{q+m-1}$ completes the estimates \eqref{L1-energy} and \eqref{Lq-energy}.
\end{proof}

\subsection{Estimates of speed}\label{SS:speed}
 
First, we remark that the equation \eqref{FDE} is also written as 
\[
\partial_t \rho + \nabla \cdot (w \rho)=0, \quad \text{ where } \quad w:= - \frac{\nabla \rho^m}{\rho} + V. 
\]
From Lemma~\ref{representation of AC curves}, the above equation can be regarded as a curve in $\mathcal{P}_{p}(\Omega)$ whose speed at time $t$ is limited by $\|w(\cdot, t)\|_{L^{p}(\rho(t))}$. In the following lemma, we carry out the estimate of $\|w\|_{L^{p}(0,T; L^p(\rho(t)))}$.

\begin{lemma}\label{L:speed}
	Let $d\geq 2$, $0<m<1$ and $q\geq 1$. Also, let $(1-\frac{2q}{d})_{+} \leq m<1$. Suppose that $\rho:\Omega_{T} \mapsto \mathbb{R}$ is a regular solution of \eqref{FDE}-\eqref{FDE_bc_ic} satisfying 
	 \begin{equation}\label{FS}
 \rho \in L^{\infty}(0, T; L^{q}(\Omega)) \quad \text{and} \quad
 \rho^{\frac{q+m-1}{2}} \in L^{2}(0, T; W^{1,2}(\Omega)).
 \end{equation}
 Furthermore, assume that 
  \begin{equation}\label{V:speed}
  \begin{gathered}
  V\in L_{x,t}^{q_1, q_2}  \ \text{ where } \
  \frac{d}{q_1} + \frac{2 + q_{m,d}}{q_2} = 1+\frac{d(q+m-2)}{2q} \vspace{1 mm}\\
\text{ for } \ 
\begin{cases}
0 \leq \frac{1}{q_1} \leq \frac{q-1}{2q}+\frac{2+q_{m,d}}{2d}, \ \frac{m+q-2}{2(m+q-1)} \leq \frac{1}{q_2} \leq \frac{1}{2}, &\text{if } \  d\geq 2  \text{ and }  1\leq q<2-m,
\vspace{1mm}\\
0 \leq \frac{1}{q_1} \leq \frac{q-1}{2q}+\frac{2+q_{m,d}}{2d(m+q-1)},\ \frac{m+q-2}{2(m+q-1)} \leq \frac{1}{q_2} \leq \frac{1}{2}, &\text{if }\  d>2 \text{ and } q \geq 2-m,  \vspace{1 mm} \\
0 \leq \frac{1}{q_1} < \frac{q-1}{2q}+\frac{2+q_{m,d}}{2d(m+q-1)}, \ \frac{m+q-2}{2(m+q-1)} < \frac{1}{q_2} \leq \frac{1}{2}, &\text{if } \  d=2 \text{ and } q \geq 2-m.  
\end{cases}
 \end{gathered}
 \end{equation}
 
 Then the following holds
 \begin{equation*}
 	\iint_{\Omega_{T}} \{ \abs{\frac{\nabla \rho^m}{\rho}}^2 \rho + \abs{V}^2\rho \} \,dxdt \leq C
\end{equation*}
where the constant $C$ depends on $\Omega$, $\rho_0$, and $V$. 
\end{lemma}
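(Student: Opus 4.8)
The plan is to estimate the two contributions
\[
A^2:=\iint_{\Omega_T}\abs{\frac{\nabla\rho^m}{\rho}}^2\rho\,dxdt=\iint_{\Omega_T}\frac{\abs{\nabla\rho^m}^2}{\rho}\,dxdt,\qquad B^2:=\iint_{\Omega_T}\abs{V}^2\rho\,dxdt
\]
separately, using only the a priori information in \eqref{FS} (namely $\sup_t\int_\Omega\rho^q\,dx<\infty$ together with $\nabla\rho^{(q+m-1)/2}\in L^2(\Omega_T)$), and to close the diffusion estimate by absorbing the cross term $\iint_{\Omega_T}V\cdot\nabla\rho^m\,dxdt$ between $A^2$ and $B^2$. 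Notably this requires no condition on $\nabla\cdot V$ beyond \eqref{V:speed}.

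First I would treat the drift term $B^2$. Hölder's inequality in $x$ and then in $t$ gives
\[
B^2\le\norm{V}_{L^{q_1,q_2}_{x,t}}^2\,\norm{\rho}_{L^{r_1,r_2}_{x,t}},\qquad \frac1{r_1}=1-\frac2{q_1},\quad \frac1{r_2}=1-\frac2{q_2}.
\]
The key observation is that, for these Hölder-conjugate exponents, the scaling equality in \eqref{V:speed} is algebraically identical to the interpolation relation $\frac{d}{r_1}+\frac{2+q_{m,d}}{r_2}=\frac{d}{q}$ of Lemma~\ref{L:interpolation} taken with $p=q$; moreover the restrictions on $\frac1{q_1}$ and $\frac1{q_2}$ in \eqref{V:speed} are precisely what places the conjugate pair $(r_1,r_2)$ inside the admissible range $q\le r_1\le\frac{d(q+m-1)}{d-2}$, $m+q-1\le r_2\le\infty$ of that lemma. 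The three regimes in \eqref{V:speed} (the split $q\lessgtr2-m$ and the separate treatment of $d=2$) merely record which endpoint of the $r$-range is active, and that for $d=2$ the borderline point is excluded. Thus Lemma~\ref{L:interpolation} bounds $\norm{\rho}_{L^{r_1,r_2}}$ in terms of $\sup_t\int_\Omega\rho^q$ and $\norm{\nabla\rho^{(q+m-1)/2}}_{L^2_{x,t}}$, whence $B^2\le C(\Omega,\rho_0,V)$.

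Next I would treat the diffusion term $A^2$ by testing \eqref{FDE} against $\rho^{m-1}$. Since $\rho^{m-1}$ is singular on the vacuum set, I would work with the regularization $(\rho+\varepsilon)^{m-1}$: integrating by parts (the boundary terms vanish by the no-flux condition in \eqref{FDE_bc_ic}) and using $\nabla(\rho+\varepsilon)^{m-1}=(m-1)(\rho+\varepsilon)^{m-2}\nabla\rho$, one obtains an identity whose dissipation $D_\varepsilon:=m(1-m)\iint_{\Omega_T}\rho^{m-1}(\rho+\varepsilon)^{m-2}\abs{\nabla\rho}^2$ is finite for each $\varepsilon>0$ and increases, as $\varepsilon\downarrow0$, to $\frac{1-m}{m}A^2$. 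In this identity the time-boundary contribution $\frac1m\bigl[\int_\Omega(\rho+\varepsilon)^m(\cdot,T)-\int_\Omega(\rho_0+\varepsilon)^m\bigr]$ is controlled through $\int_\Omega\rho^m(\cdot,T)\le\abs{\Omega}^{1-m/q}\bigl(\sup_t\int_\Omega\rho^q\bigr)^{m/q}$ since $0<m<1\le q$, while the remaining drift contribution I would estimate by Cauchy--Schwarz with weight $\rho$ followed by Young's inequality, using the pointwise bound $\rho(\rho+\varepsilon)^{2m-4}\le\rho^{m-1}(\rho+\varepsilon)^{m-2}$ to dominate it by $\frac12 D_\varepsilon+cB^2$. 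Absorbing $\frac12 D_\varepsilon$ yields a bound on $D_\varepsilon$ uniform in $\varepsilon$, and monotone convergence then gives $A^2\le\frac{2}{1-m}\int_\Omega\rho^m(\cdot,T)\,dx+B^2$. Combining this with the bound on $B^2$ proves the claim.

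I expect the diffusion term to be the main obstacle, for two reasons. The analytic one is the vacuum: the singular test function $\rho^{m-1}$ must be handled through the $\varepsilon$-regularization, and the absorption of the drift contribution has to be performed at the level of the finite quantity $D_\varepsilon$ before passing to the limit, since the unregularized inequality $(1-m)A^2\le\int_\Omega\rho^m(\cdot,T)+(1-m)AB$ is vacuous if one does not already know $A<\infty$. The bookkeeping one is matching the three cases of \eqref{V:speed} to the precise validity ranges of Lemma~\ref{L:interpolation}, in particular checking that the upper bound on $\frac1{q_1}$ in each regime coincides with $r_1=\frac{d(q+m-1)}{d-2}$ when $d>2$ and with the excluded endpoint when $d=2$.
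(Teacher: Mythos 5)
Your proposal is correct and takes essentially the same route as the paper's proof: test with the regularized singular function $(\rho+\epsilon)^{m-1}$ (the paper uses $-m(\rho+\epsilon)^{m-1}$), absorb the drift contribution by Cauchy--Schwarz/Young into the mixed dissipation term before letting $\epsilon\to 0$, control the time-boundary term by mass conservation, and bound $\iint_{\Omega_T}|V|^{2}\rho \,dx\,dt$ by H\"{o}lder with conjugate exponents followed by Lemma~\ref{L:interpolation} with $p=q$, whose validity ranges produce exactly the three cases in \eqref{V:speed}. The only deviations are presentational --- you make explicit the pointwise domination $\rho(\rho+\varepsilon)^{2m-4}\le\rho^{m-1}(\rho+\varepsilon)^{m-2}$ that the paper uses implicitly, and you bound $\int_\Omega\rho^m(\cdot,T)\,dx$ via the $L^q$ norm instead of the $L^1$ norm --- so there is nothing to correct.
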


\begin{proof}
By testing $-m(\rho+\epsilon)^{m-1}$ for $\epsilon>0$ to a regular solution of \eqref{FDE} as in Definition~\ref{D:regular-sol}, then it yields 
\[
 (1-m)\int_{\Omega} \frac{\nabla \rho^{m}}{\rho} \cdot \frac{\nabla \left(\rho + \epsilon\right)^{m}}{\rho + \epsilon} \cdot \rho \, dx 
 = \frac{d}{dt}\int_{\Omega} \left(\rho+\epsilon\right)^m \,dx + 
(1-m) \int_{\Omega}  \rho V \cdot \frac{\nabla \left(\rho + \epsilon \right)^m}{\rho + \epsilon} \,dx .
\]
Now the integration in terms of $t$ yields the following
\begin{equation}\label{speed01}
\begin{aligned}
 (1-m)\iint_{\Omega_{T}} \frac{\nabla \rho^{m}}{\rho} \cdot \frac{\nabla \left(\rho + \epsilon\right)^{m}}{\rho + \epsilon} \cdot \rho \, dx dt
 &= \int_{\Omega} \left(\rho + \epsilon\right)^m (\cdot, T) \,dx - \int_{\Omega} \left(\rho_{0}+\epsilon\right)^{m} \,dx \\ 
 &\quad + 
(1-m) \iint_{\Omega_T} \rho V \cdot \frac{\nabla \left(\rho + \epsilon \right)^m}{\rho + \epsilon}  \,dxdt. 
\end{aligned}\end{equation}
Moreover, the Young's inequality provides that 
\begin{equation}\label{speed02}
\begin{aligned}
II &:= (1-m)\iint_{\Omega_T}  \rho V \cdot \frac{\nabla \left(\rho + \epsilon \right)^m}{\rho + \epsilon} \,dxdt \\
&\leq  \frac{1-m}{2}\iint_{\Omega_T} \abs{\frac{\nabla \left(\rho + \epsilon\right)^m}{\rho + \epsilon}}^2 \rho \,dxdt + 2(1-m)\iint_{\Omega_T} |V|^2 \rho \,dxdt. 
\end{aligned}\end{equation}
After combining \eqref{speed01} and \eqref{speed02}, let us take $\epsilon \to 0$ which results the following
\begin{equation*}\label{speed03}
 \frac{1-m}{2}\iint_{\Omega_{T}} \abs{\frac{\nabla \rho^{m}}{\rho}}^2  \rho \, dx dt
 \leq  \int_{\Omega} \rho^m (\cdot, T) \,dx - \int_{\Omega} \rho_{0}^{m} \,dx  + 
2(1-m) \iint_{\Omega_T} \abs{V}^2 \rho \,dxdt. 
\end{equation*}

For any $0<m<1$, the H\"{o}lder inequality for $m+(1-m)=1$ provides that 
\[\begin{aligned}
\int_{\Omega} \rho^m (\cdot, T) \,dx - \int_{\Omega} \rho_{0}^{m} \,dx 
  \leq |\Omega|^{1-m}\left\{ \|\rho(\cdot, T)\|_{L^{1}_{x}(\Omega)} + \|\rho_0\|_{L^{1}_{x}(\Omega)}\right\} \leq C(m, |\Omega|, \|\rho_0\|_{L_{x}^{1}(\Omega)})
\end{aligned}\]
because of the mass conservation property, and which leads to
\[
 \iint_{\Omega_{T}} \left|\frac{\nabla \rho^{m}}{\rho}\right|^2 \rho \, dx dt
 \leq C(m,|\Omega|, \|\rho_0\|_{L^{1}_{x}(\Omega)}) + 
4 \iint_{\Omega_T} \abs{V}^2 \rho  \,dxdt.
\]
Now, based on the hypotheses \eqref{FS}, we apply Lemma~\ref{L:interpolation} with $p=q$ that  
\begin{equation}\label{speed04}
\iint_{\Omega_{T}} |V|^2 \rho \,dxdt \leq \|V\|^{2}_{L_{x,t}^{q_1, q_2}} \|\rho\|_{L_{x,t}^{r_1, r_2}}, \quad q_1 = \frac{2r_1}{r_1 -1}, \ q_2 = \frac{2 r_2}{r_2 - 1}
\end{equation}
by taking the H\"{o}lder inequalities $\frac{r_1 -1}{r_1} + \frac{1}{r_1} = 1$ in the spatial variable and $\frac{r_2 -1}{r_2} + \frac{1}{r_2} = 1$ in the temporal variable, respectively, for $r_1$ and $r_2$ satisfying \eqref{L:r1r2} in Lemma~\ref{L:interpolation} with $p=q$. 
 \end{proof}

Now we combine the energy estimates in Proposition~\ref{P:energy} and the speed estimates in \eqref{L:speed}.

\begin{proposition}\label{P:Energy-speed} 
Let $0<m<1$, $d\geq 2$, and $q\geq 1$. Suppose that $\rho$ is a regular solution of \eqref{FDE}-\eqref{FDE_bc_ic} with $\rho_{0}\in  \mathcal{P}_2(\Omega) \cap \calC^{\alpha}(\overline{\Omega})$. 

\begin{itemize}

\item[(i)] Let $1-\frac{1}{d} < m < 1$. Suppose that $V$ satisfies \eqref{V:Lq-energy}.
\begin{itemize}
	\item[$\bullet$] For $q=1$, assume $\int_{\Omega} \rho_0 \log \rho_0 \,dx < \infty$. 
Then the following estimate holds
\begin{equation}\label{L1-energy-speed}
\sup_{0\leq t \leq T} \int_{\Omega} \rho \abs{\log \rho} (\cdot, t)  \,dx
+ \iint_{\Omega_T} \{ \abs{\nabla \rho^{\frac m2}}^2 + \left(\abs{\frac{\nabla
\rho^m}{\rho}}^{2}+|V|^{2}\right ) \rho  \}\,dx\,dt
 \leq  C,
\end{equation}
where $C=C (\|V\|_{\mathsf{S}_{m,1}^{(q_1, q_2)}}, \, \int_{\Omega} \rho_0 \log \rho_0 \,dx )$.

\item[$\bullet$] For $q>1$, assume $\rho_0 \in L^{q}(\Omega)$.
Then the following estimate holds%
 \begin{equation}\label{Lq-energy-speed}
 \sup_{0\leq t \leq T}  \int_{\Omega} \rho^q(\cdot, t) \,dx
 + \iint_{\Omega_T} \{ \abs{\nabla \rho^{\frac{q+m-1}{2}}}^2  + \left(\abs{\frac{\nabla
\rho^m}{\rho}}^{2}+|V|^{2}\right ) \rho  \}\,dx\,dt
\leq C,
\end{equation}
where $C=C (\|V\|_{\mathsf{S}_{m,q}^{(q_1, q_2)}}, \, \|\rho_0 \|_{L^{q}(\Omega)})$.
\end{itemize}

\item[(ii)] Let $ 1-\frac{1}{d} < m<1$. Suppose that 
\begin{equation}\label{Vq:tilde:energy-speed}
V\in \tilde{\mathsf{S}}_{m,q}^{(q_1, q_2)}\cap L_{x,t}^{1,\tilde{q}_2} \cap \calC^\infty(\overline{\Omega}_T) \ \text{ for } \
\frac{1}{d} < \frac{1}{\tilde{q}_1} < \frac{2 + d(m-1)}{d}, \quad 0 \leq \frac{1}{\tilde{q}_2} < \frac{1+d(m-1)}{2+d(q+m-2)}. 
\end{equation}

\begin{itemize}
	\item[$\bullet$] For $q=1$, assume $\int_{\Omega} \rho_0 \log \rho_0 \,dx < \infty$. 
Then \eqref{L1-energy-speed} holds with $ C =C  (\|V\|_{\tilde{\mathsf{S}}_{m,1}^{(\tilde{q}_1, \tilde{q}_2)}}, \, \int_{\Omega} \rho_0 \log \rho_0 \,dx ).$

\item[$\bullet$] For $q>1$, assume $\rho_0 \in L^{q}(\Omega)$. Then \eqref{Lq-energy-speed} holds with $ C =C (\|V\|_{\tilde{\mathsf{S}}_{m,q}^{(\tilde{q}_1, \tilde{q}_2)}}, \, \|\rho_0 \|_{L^{q}(\Omega)}).$
\end{itemize}

\item[(iii)] Let $ (1-\frac{2q}{d})_{+} \leq m<1$.    Suppose that 
 \begin{equation}\label{V:divfree:energy-speed}
  V\in \mathcal{D}_{m,q,s}^{(q_1, q_2)}\cap \calC^\infty(\overline{\Omega}_T)  \text{ for } 
\begin{cases}
0 \leq \frac{1}{q_1} \leq \frac{q-1}{2q} + \frac{2+q_{m,d}}{2d}, \ 0 \leq \frac{1}{q_2} \leq \frac{1}{2},  &  \text{if }  d\geq 2 \text{ and } 1 \leq q < 2-m, \vspace{1mm}\\
	0 \leq \frac{1}{q_1} \leq \frac{q-1}{2q} + \frac{2+q_{m,d}}{2d(m+q-1)}, \ 0 \leq \frac{1}{q_2} \leq \frac{1}{2}, &\text{if } d>2 \text{ and } q\geq 2-m ,  \vspace{1 mm} \\
	0 \leq \frac{1}{q_1} < \frac{q-1}{2q} + \frac{2+q_{m,d}}{2d(m+q-1)}, \ 0 \leq \frac{1}{q_2} \leq \frac{1}{2}, &\text{if }  d=2 \text{ and } q\geq 2-m.
\end{cases}
 \end{equation}

\begin{itemize}
\item[$\bullet$] For $q=1$, assume $\int_{\Omega} \rho_0 \log \rho_0 \,dx < \infty$. Then \eqref{L1-energy-speed} holds with $ C =C ( \|V\|_{\mathcal{D}_{m,1,s}^{(q_1,q_2)}}, \, \int_{\Omega} \rho_0 \log \rho_0 \,dx).$ 

\item[$\bullet$] For $q>1$, assume $\rho_0 \in L^{q}(\Omega)$. Then \eqref{Lq-energy-speed} holds
	 	holds with $ C =C (\|V\|_{\mathcal{D}_{m,q,s}^{(q_1,q_2)}}, \, \|\rho_0\|_{L^q(\Omega)} ).$
\end{itemize}

\end{itemize}
\end{proposition}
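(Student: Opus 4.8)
The plan is to obtain \eqref{L1-energy-speed} and \eqref{Lq-energy-speed} by simply superposing the two a priori bounds that are already at our disposal: the energy bound of Proposition~\ref{P:energy}, which controls the first two terms (the $L^q$-mass or entropy together with the diffusion term $\iint|\nabla\rho^{\frac{q+m-1}{2}}|^2$), and the speed bound of Lemma~\ref{L:speed}, which controls the remaining quantity $\iint(|\frac{\nabla\rho^m}{\rho}|^2+|V|^2)\rho$. The crucial structural point is that Lemma~\ref{L:speed} takes the integrability \eqref{FS} as a hypothesis, and \eqref{FS} is exactly what the energy estimate delivers; so the two results must be chained in this order, energy first and speed second.

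First I would invoke, in each of the three cases, the matching part of Proposition~\ref{P:energy} to produce \eqref{L1-energy} for $q=1$ or \eqref{Lq-energy} for $q>1$. In case (i) the hypothesis \eqref{V:Lq-energy} is identical to that of Proposition~\ref{P:energy}(i); in case (ii) the hypothesis \eqref{Vq:tilde:energy-speed} is a subrange of \eqref{V:tilde:Lq-energy}, and since $1-\frac1d<m<1$ forces $(1-\frac2d)_+<m<1$, Proposition~\ref{P:energy}(ii) applies; in case (iii) the divergence-free structure renders the energy estimate independent of $V$, so Proposition~\ref{P:energy}(iii) applies verbatim. In every case the resulting control of $\sup_t\int\rho^q$ and of $\|\nabla\rho^{\frac{q+m-1}{2}}\|_{L^2_{x,t}}$ is precisely the hypothesis \eqref{FS} required for the next step.

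With \eqref{FS} established, I would then apply Lemma~\ref{L:speed}, for which the only thing to verify is that the hypothesis on $V$ in each case implies the scaling condition \eqref{V:speed}. In case (iii) this is immediate: the admissible range \eqref{V:divfree:energy-speed} is by definition the class $\mathcal{D}_{m,q,s}^{(q_1,q_2)}$, and on the bounded cylinder $\Omega_T$ one may reduce the integrability exponents until the subcritical inequality defining the class becomes the critical equality in \eqref{V:speed}. In case (i) one uses the inclusion $\mathsf{S}_{m,q}^{(q_1,q_2)}\subseteq\mathcal{D}_{m,q,s}^{(q_1,q_2)}$, displayed in Fig.~\ref{F:divfree} and Remark~\ref{R:divfree}, and in case (ii) one first passes from $\nabla V$ to $V$ via the Sobolev embedding \eqref{V_embedding} (this is where the extra assumption $V\in L^{1,\tilde q_2}_{x,t}$ and the restriction $\tilde q_1<d$ are used), placing $V\in\mathsf{S}_{m,q}^{(\tilde q_1^\ast,\tilde q_2)}$, and then reuses the same inclusion. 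I emphasize that Lemma~\ref{L:speed} nowhere requires $\nabla\cdot V=0$, so it is legitimately available in cases (i) and (ii); I would also note that the $m$-range $1-\frac1d<m<1$ of those cases lies inside the range $(1-\frac{2q}{d})_+\le m<1$ of the speed lemma. Adding the speed inequality to the energy inequality then yields \eqref{L1-energy-speed} and \eqref{Lq-energy-speed}.

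The main obstacle is the bookkeeping of these scaling regions. One must check that the weighted exponent sum defining $\mathsf{S}_{m,q}$ (and, after the embedding \eqref{V_embedding}, the sum inherited from $\tilde{\mathsf{S}}_{m,q}\cap L^{1,\tilde q_2}_{x,t}$) indeed lies below the speed threshold $\frac{d}{q_1}+\frac{2+q_{m,d}}{q_2}\le 1+\frac{d(q+m-2)}{2q}$, and that when the sum is strictly subcritical the reduction of integrability on $\Omega_T$ can be carried out so as to land on the critical line \eqref{V:speed} while keeping $1/q_2$ inside the admissible interval prescribed there. This geometric verification is the only delicate point; once it is in place, the estimate follows from a direct combination of Proposition~\ref{P:energy} and Lemma~\ref{L:speed}.
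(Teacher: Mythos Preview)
Your proposal is correct and follows essentially the same approach as the paper: combine the energy estimate of Proposition~\ref{P:energy} with the speed estimate of Lemma~\ref{L:speed}, noting that the former supplies the hypothesis \eqref{FS} for the latter, and verify in each case that the assumed class for $V$ lies within the range \eqref{V:speed}. If anything, you spell out the scaling bookkeeping (the inclusion $\mathsf{S}_{m,q}^{(q_1,q_2)}\subseteq\mathcal{D}_{m,q,s}^{(q_1,q_2)}$, the embedding \eqref{V_embedding} in case~(ii), and the reduction of exponents on the bounded cylinder) more explicitly than the paper's own terse proof does.
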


\begin{proof}
	Here we combine the energy estimates in Proposition~\ref{P:energy} and the speed estimates in Lemma~\ref{L:speed}. Hence the corresponding structures of $V$ is the intersection of conditions from two estimates. 
	In case (i), \eqref{V:Lq-energy} and \eqref{V:Lq-energy} coincide because \eqref{V:Lq-energy} is more restrictive than \eqref{V:speed}. In case (ii), \eqref{Vq:tilde:energy-speed} represents the intersection of \eqref{V:tilde:Lq-energy} and the embedding region where \eqref{V_embedding} holds. In case (iii), since the energy estimate is obtained independently of $V$, the conditions from the speed estimate \eqref{V:speed} determine \eqref{V:divfree:energy-speed}.
\end{proof}

\subsection{Estimates of temporal and spatial derivatives} 

In this section, we study further properties of temporal and spatial derivatives of $\rho$, a weak solution of \eqref{FDE}-\eqref{FDE_bc_ic} with the initial data in $L^q$ for $q\geq 1$ based on the functional spaces from the energy estimates in Proposition~\ref{P:energy}. 

The following proposition is key for ensuring the convergence of approximate solutions to weak solutions in appropriate functional classes.
Moreover, it provides a comprehensive version of earlier compactness results by the same authors (see \cite[Section~4.4]{HKK01} and \cite[Section~4.3]{HKK02}). In the case \(\nabla\cdot V=0\), the condition on \(V\) in \eqref{V:divfree:compact} is precisely the condition under which we construct \(L^q\)-weak solutions of \eqref{FDE}--\eqref{FDE_bc_ic}. Furthermore, the same extended compactness argument also applies to the PME, and the corresponding consequences are explained in Appendix~\ref{Appendix:PME}.

\begin{proposition}\label{Proposition : compactness}
	Suppose that $\{\rho_n\}$ is a sequence of regular solutions of \eqref{FDE}-\eqref{FDE_bc_ic}  with a corresponding sequence of vector fields $\{ V_n \}$ under the same hypotheses as in Proposition~\ref{P:energy}.
	\begin{itemize}
		\item [(i)] Let $1-\frac{1}{d}<m<1$. For $q\geq 1$, suppose that $V_n$ satisfies \eqref{V:Lq-energy} and $\|V_n\|_{\mathsf{S}_{m,q}^{(q_1, q_2)}}$ is bounded. Then the following estimate holds: 
		\begin{equation}\label{E:compact}
 \sup_{0\leq t \leq T}  \int_{\Omega \times \{t\}} \rho_n^q (\cdot, t) \,dx
 +  \iint_{\Omega_T} \left|\nabla \rho_n^{\frac{q+m-1}{2}}\right|^2 \,dx\,dt < \infty.
\end{equation}
Furthermore, $\{\rho_n\}$ is relatively compact in the following space:
\begin{equation}\label{Space:compact}
\begin{gathered}
L^{\alpha, \beta}_{x,t} \quad \text{ where } \ \alpha < r_1 \ \text{ and } \ \beta <r_2, \\
 \frac{d}{r_1} + \frac{2+q_{m,d}}{r_2} = \frac{d}{q}
\ \text{ for } \
\begin{cases}
  q \leq r_1 \leq \frac{d(m+q-1)}{d-2}, \ m+q-1 \leq r_2 \leq \infty, & \text{if } \ d > 2, \\
  q \leq r_1 < \infty, \ m+q-1 < r_2 \leq \infty, &\text{if }\ d = 2.
  \end{cases}
\end{gathered}
\end{equation}

\item[(ii)] Let $1-\frac{1}{d}<m<1$. For $q\geq 1$, suppose that $V_n$ satisfies \eqref{Vq:tilde:energy-speed} and $\|V_n\|_{\tilde{\mathsf{S}}_{m,q}^{(\tilde{q}_1, \tilde{q}_2)}\cap L_{x,t}^{1,\tilde{q}_2}}$ is bounded. 
Then \eqref{E:compact} holds, and that $\{\rho_n\}$ is relatively compact in \eqref{Space:compact}.
\vskip 1 mm

\item[(iii)]  Let $q\geq 1$ and $(1-\frac{2q}{d})_{+} \leq m <1$. Assume 
    \begin{equation}\label{V:divfree:compact}
  V_n\in \mathcal{D}_{m,q}^{(q_1, q_2)}\cap \calC^{\infty} (\overline{\Omega}_{T})  \text{ for } 
\begin{cases}
0 \leq \frac{1}{q_1} \leq \frac{q-1}{q}+\frac{2+q_{m,d}}{d}, \ 0 \leq \frac{1}{q_2} \leq 1, &  \text{if }  d\geq 2  \text{ and }  1\leq q<2-m,
\vspace{1mm}\\
	0 \leq \frac{1}{q_1} \leq \frac{q-1}{q}+\frac{2+q_{m,d}}{d(m+q-1)}, \ 0 \leq \frac{1}{q_2} \leq 1, &\text{if } d>2 \text{ and } q \geq 2-m,  \vspace{1 mm} \\
	 0 \leq \frac{1}{q_1} < \frac{q-1}{q}+\frac{2+q_{m,d}}{d(m+q-1)}, \ 0 \leq  \frac{1}{q_2} \leq 1, &\text{if }  d=2 \text{ and } q \geq 2-m, 
\end{cases}
\end{equation}
and  $\|V_n\|_{\mathcal{D}_{m,q}^{(q_1, q_2)}}$  is bounded. Then \eqref{E:compact} holds, and that $\{\rho_n\}$ is relatively compact in \eqref{Space:compact}.
	\end{itemize}
\end{proposition}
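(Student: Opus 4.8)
The three parts share a common architecture, so the plan is to isolate the one genuinely new ingredient—the bound on the flux term—and treat everything else uniformly. First I would observe that the uniform energy bound \eqref{E:compact} is immediate: in each of the three cases the structural hypotheses imposed on $V_n$ are exactly those of the corresponding item of Proposition~\ref{P:energy}, and since the relevant norm of $V_n$ (namely $\|V_n\|_{\mathsf{S}_{m,q}^{(q_1,q_2)}}$, $\|V_n\|_{\tilde{\mathsf{S}}_{m,q}^{(q_1,q_2)}\cap L_{x,t}^{1,\tilde q_2}}$, or $\|V_n\|_{\mathcal{D}_{m,q}^{(q_1,q_2)}}$) is assumed bounded, the constant produced by Proposition~\ref{P:energy} is independent of $n$. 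This yields uniform bounds of $\{\rho_n\}$ in $L^\infty(0,T;L^q(\Omega))$ and of $\{\nabla \rho_n^{(q+m-1)/2}\}$ in $L^2(\Omega_T)$, which is precisely \eqref{E:compact}.

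Feeding these two bounds into Lemma~\ref{L:interpolation} with $p=q$ gives a uniform bound of $\{\rho_n\}$ in $L^{r_1,r_2}_{x,t}$ for every $(r_1,r_2)$ on the critical line $\frac{d}{r_1}+\frac{2+q_{m,d}}{r_2}=\frac{d}{q}$ appearing in \eqref{Space:compact}; this $L^{r_1,r_2}$ bound will serve as the ceiling for the final interpolation. Next comes the time-derivative estimate, which is the heart of the matter. Rewriting the equation as $\partial_t \rho_n = \nabla\cdot J_n$ with flux $J_n := \nabla \rho_n^{m} - \rho_n V_n$, I would bound $J_n$ in a mixed norm $L^{a,b}_{x,t}$ with $a,b>1$ and read off $\partial_t\rho_n \in L^{b}(0,T;W^{-1,a}(\Omega))$ uniformly. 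For the diffusive part one writes $\nabla\rho_n^m=\tfrac{2m}{q+m-1}\rho_n^{(1+m-q)/2}\nabla\rho_n^{(q+m-1)/2}$ and uses Hölder together with the interpolation bound just obtained. For the drift part one estimates $\|\rho_n V_n\|_{L^{a,b}_{x,t}}$ by Hölder, pairing the uniform $L^{r_1,r_2}$ bound on $\rho_n$ against the bound on $V_n$ in its class; it is exactly here that the exponent inequalities defining $\mathsf{S}_{m,q}^{(q_1,q_2)}$, $\tilde{\mathsf{S}}_{m,q}^{(q_1,q_2)}$, and—most importantly—$\mathcal{D}_{m,q}^{(q_1,q_2)}$ with its ceiling $\frac{d}{q_1}+\frac{2+q_{m,d}}{q_2}\le 2+\frac{d(q+m-2)}{q}$ are used, the latter being what is needed to keep $a,b>1$.

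Finally, I would run Aubin--Lions (Lemma~\ref{AL}). The uniform spatial control comes through the gradient bound on $g_n:=\rho_n^{(q+m-1)/2}$, which by Rellich gives compactness in $L^2(\Omega)$; combined with the time-derivative bound on $\rho_n$ in the negative space $W^{-1,a}$, this produces a subsequence with $\rho_n\to\rho$ strongly in some $L^{p_0}_{x,t}$ and a.e.\ in $\Omega_T$. Interpolating this strong convergence in the low norm against the uniform ceiling bound in $L^{r_1,r_2}$ then upgrades it to convergence in $L^{\alpha,\beta}_{x,t}$ for every $\alpha<r_1$, $\beta<r_2$, which is the relative compactness asserted in \eqref{Space:compact}; the strict inequalities are natural, since the interpolation only reaches the open interior of the critical line.

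I expect the main obstacle to be the supercritical divergence-free case (iii). There the energy estimate is blind to $V_n$ (because $\nabla\cdot V_n=0$), so the only place the hypothesis on $V_n$ can enter is through the flux term $\rho_n V_n$ in the time-derivative estimate, and one must verify that the ceiling condition defining $\mathcal{D}_{m,q}^{(q_1,q_2)}$ is sharp enough to land $\partial_t\rho_n$ in a space for which the Aubin--Lions triple built on the $g_n$-gradient compactness is admissible. A secondary technical nuisance is transferring the spatial compactness from the power $g_n$ back to $\rho_n=g_n^{2/(q+m-1)}$, which requires controlling space translations of $\rho_n$ by those of $g_n$ via the local Hölder or Lipschitz behavior of $s\mapsto s^{2/(q+m-1)}$ on the range dictated by the uniform $L^{r_1,r_2}$ bound.
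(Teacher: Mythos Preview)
Your architecture (energy estimate, time-derivative control, Aubin--Lions) is the right one, but there is a genuine gap in how you place the flux. The ceiling $\frac{d}{q_1}+\frac{2+q_{m,d}}{q_2}\le 2+\frac{d(q+m-2)}{q}$ is \emph{not} what keeps $a,b>1$; with equality it is exactly the H\"older condition for $V_n\rho_n\in L^{1}_{x,t}$ against the interpolation line $\frac{d}{r_1}+\frac{2+q_{m,d}}{r_2}=\frac{d}{q}$ of Lemma~\ref{L:interpolation}. Summing the two relations gives $\frac{d}{a}+\frac{2+q_{m,d}}{b}=d+2+q_{m,d}$ for $\tfrac{1}{a}=\tfrac{1}{q_1}+\tfrac{1}{r_1}$, $\tfrac{1}{b}=\tfrac{1}{q_2}+\tfrac{1}{r_2}$, and on that line the only point with both $\tfrac{1}{a}\le 1$ and $\tfrac{1}{b}\le 1$ is $(1,1)$; moreover the proposition explicitly allows $\tfrac{1}{q_2}=1$, which forces $b\le 1$ outright. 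So at the boundary of $\mathcal{D}_{m,q}^{(q_1,q_2)}$ your $W^{-1,a}$ scheme collapses. The paper's device is to go one order deeper and put $\partial_t\rho_n$ into $L^1\bigl(0,T;W^{-2,(d+1)/d}(\Omega)\bigr)$: since $W^{2,d+1}_0(\Omega)\hookrightarrow W^{1,\infty}(\Omega)$ by Morrey, the pairing $\langle\nabla(V_n\rho_n),\varphi\rangle=-\int V_n\rho_n\cdot\nabla\varphi$ is controlled by $\|V_n\rho_n\|_{L^1_x}\|\varphi\|_{W^{2,d+1}}$, so the bare $L^1_{x,t}$ bound on the drift flux suffices and $W^{-2,(d+1)/d}$ is reflexive.

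There is a second, smaller issue with the diffusive flux: your factor $\rho_n^{(1+m-q)/2}$ carries a \emph{negative} exponent whenever $q>1+m$, which is admissible here since $m<1$, so the subsequent H\"older step is not immediate near the vanishing set of $\rho_n$. The paper avoids this entirely by noting that the $q=1$ energy estimate is also available for these regular solutions (the data lie in $\calC^\alpha(\overline\Omega)$ so the entropy is finite; in cases (i)--(ii) one has $\mathsf{S}_{m,q}^{(q_1,q_2)}\subset\mathsf{S}_{m,1}^{(q_1,q_2)}$, while in case (iii) the estimate is independent of $V$). Lemma~\ref{L:interpolation} with $p=q=1$ then places $\rho_n^m$ directly in $L^{(2+md)/(md)}_{x,t}\subset L^{(d+1)/d}_{x,t}$, whence $\Delta\rho_n^m$ lands in the same $W^{-2,(d+1)/d}$ space with no algebraic splitting of $\nabla\rho_n^m$ required.
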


\begin{remark}
The assumptions on $V$ are determined so that both the energy estimates in Proposition~\ref{P:energy} and the integrability condition \eqref{Vrho:L1}, needed for the compactness argument, hold. In cases (i) and (ii), the assumptions from Proposition~\ref{P:energy} are sufficient. In the divergence-free case (iii), the additional condition \eqref{Vrho:L1} is needed because the energy estimate is obtained independently of $V$.
\end{remark}

\begin{proof}
For simplicity, we write $\rho$ in place of $\rho_n$.
Under the same hypotheses as in Proposition~\ref{P:energy}, we have that 
\begin{equation}\label{FS001}
 \rho \in L^{\infty}(0, T; L^{q}(\Omega)) \quad \text{and} \quad
 \rho^{\frac{q+m-1}{2}} \in L^{2}(0, T; W^{1,2}(\Omega)).
\end{equation}
Then Lemma~\ref{T:pSobolev} yields 
\begin{equation}\label{FS_parabolic}
\rho \in L^{m+q-1 + \frac{2q}{d}}_{x,t}(\Omega_T).
\end{equation}

\medskip 
\noindent

\underline{\emph{Case 1: $1-\frac{1}{d}<m<1$.}} First, we prove that 
\begin{equation}\label{nabla_rho}
\nabla\rho \in L^{\gamma}_{x,t}(\Omega_T), \quad \text{where} \quad  
\gamma = 
\begin{cases}
    2, & \text{if } q \geq 3-m, \vspace{1mm}\\
	\frac{q(d+2) + d(m-1)}{d+q},  & \text{if } 1\leq q <3-m.
\end{cases}
\end{equation}
Note that $\gamma > 1$ for $1-\frac{1}{d}<m<1$. 

In case $1\leq q < 3-m$, it follows that, by H\"{o}lder inequality,  
\[\begin{aligned}
 \iint_{\Omega_T} |\nabla \rho|^{\gamma} \,dxdt 
 & = \iint_{\Omega_{T}} \rho^{\gamma \left( 1 - \frac{m+q-1}{2}\right)} \left(\rho^{\frac{m+q-1}{2} -1} \nabla \rho \right)^{\gamma}   \,dxdt \\
    &= C \left(\iint_{\Omega_{T}} \rho^{m+q-1+\frac{2q}{d}}   \,dxdt \right)^{\frac{2-\gamma}{2}}	
     \left(\iint_{\Omega_{T}}  \left|\nabla \rho^{\frac{m+q-1}{2}} \right|^{2}\,dxdt \right)^{\frac{\gamma}{2}}.
\end{aligned}\]

In case $q=3-m$, it follows that $\nabla \rho \in L^{2}_{x,t}(\Omega_T)$ directly from a priori estimates in Proposition~\ref{P:energy}. 
 
For $q>3-m$, since $\Omega$ is bounded, $\rho_0\in L^q(\Omega)\cap L^1(\Omega)$ implies $\rho_0\in L^{3-m}(\Omega)$. Moreover, the admissible condition on $V$ for $q$ implies the corresponding condition for $q=3-m$. Therefore, the estimates in Proposition~\ref{P:energy} with $q=3-m$ hold.
   
Second, we prove that 
\begin{equation}\label{rho_t_space}
\rho_t \in W^{-2, \frac{d+1}{d}}_{x} L^{1}_{t}. 
\end{equation}
Because $\rho_0 \in \left(L^{1}\cap L^{q}\right)(\Omega)$,   
it follows by \eqref{FS_parabolic} with $q=1$ that $\rho^m \in L_{x,t}^{\frac{2+md}{md}} (\Omega_{T})$. Note that $\frac{2+md}{md} > \frac{d+1}{d}$ for all $m\in (0,2)$. Therefore, it holds that 
\begin{equation}\label{Delta_rho}
\Delta \rho^m \in W^{-2, \frac{d+1}{d}}_{x} L^{\frac{d+1}{d}}_{t},
\end{equation}
where $W^{-2, \frac{d+1}{d}} (\Omega)$ is the dual of $W^{2, d+1}_{o}(\Omega)$.

Moreover, by applying the H\"{o}lder inequalities, it is clear that 
\begin{equation}\label{L1-space-time}
\|V\rho\|_{L^{1}_{x,t}} \leq \|V\|_{L_{x,t}^{q_1,q_2}}\|\rho\|_{L_{x,t}^{r_1,r_2}}, \ \text{ where }  r_1 = \frac{q_1}{q_1 - 1}, \  r_2 = \frac{q_2}{q_2 - 1}. 
\end{equation}
Then Lemma~\ref{L:interpolation} with $p=q$ corresponds to pairs $(q_1, q_2)$ such that 
\begin{equation}\label{Vrho:L1}
\begin{gathered}
\frac{d}{q_1} + \frac{2+q_{m,d}}{q_2} = \frac{d(q-1)}{q} + 2+ q_{m,d} \\ \ \text{ for } \
\begin{cases}
0 \leq \frac{1}{q_1} \leq \frac{q-1}{q}+\frac{2+q_{m,d}}{d}, \ \frac{m+q-2}{m+q-1} \leq \frac{1}{q_2} \leq 1, &  \text{if }  d\geq 2  \text{ and }  1\leq q<2-m,
\vspace{1mm}\\
	0 \leq \frac{1}{q_1} \leq \frac{q-1}{q}+\frac{2+q_{m,d}}{d(m+q-1)}, \ \frac{m+q-2}{m+q-1} \leq \frac{1}{q_2} \leq 1, &\text{if } d>2 \text{ and } q \geq 2-m,  \vspace{1 mm} \\
	 0 \leq \frac{1}{q_1} < \frac{q-1}{q}+\frac{2+q_{m,d}}{d(m+q-1)}, \ \frac{m+q-2}{m+q-1} < \frac{1}{q_2} \leq 1, &\text{if }  d=2 \text{ and } q \geq 2-m.  
	 \end{cases}
\end{gathered}\end{equation}
Because $V\rho \in L^{1}_{x,t} (\Omega_{T})$, now 
it leads that $\nabla \cdot (V\rho) \in W^{-2, \frac{d+1}{d}}_{x}L_{t}^{1}$ by the following:
\[
\int_{\Omega} \nabla \cdot (V\rho) \, \varphi \,dx = - \int_{\Omega} (V\rho) \, \nabla \varphi \,dx \leq \|V \rho\|_{L^1_{x}(\Omega)} \|\nabla \varphi\|_{L^{\infty}_{x}(\Omega)}< \infty
\]
for any $\varphi \in W_{0}^{2,d+1}(\Omega)$ because $\nabla \varphi \in L^{\infty}_{x}(\Omega)$ by the Sobolev embedding. 
Therefore, \eqref{rho_t_space} follows by using \eqref{FDE} with \eqref{Delta_rho} and \eqref{L1-space-time}.

Note that 
\[
W^{1,\gamma}(\Omega) \hookrightarrow \hookrightarrow L^{\sigma}(\Omega) \hookrightarrow W^{-2, \frac{d+1}{d}}(\Omega) , \quad 
\text{for} \quad 
\begin{cases}
	1<\sigma < \frac{d\gamma}{d-\gamma} , \text{ if } \gamma<d, \vspace{1mm}\\
	1\leq \sigma < \infty, \text{ if } \gamma \geq d. 
\end{cases}
\]
We obtain strong convergence in $L^{\sigma, \gamma}_{x,t}(\Omega_T)$ by applying the Aubin-Lions Lemma~\ref{AL} with $X_0 = W^{1,\gamma}(\Omega)$, $X=L^{\sigma}(\Omega)$ and $X_1= W^{-2, \frac{d+1}{d}}(\Omega)$.
Finally, we complete the proof of relative compactness in \eqref{Space:compact} by applying the uniform boundedness of \eqref{E:compact}. 

\medskip 
\noindent
\underline{\emph{Case 2: $(1-\frac{2q}{d})_{+} \leq m \leq 1-\frac{1}{d}$.}}
This case concerns only the divergence-free case for $0<m<1$. If $q\geq 3-m$, then $\nabla\rho\in L^2_{x,t}(\Omega_T)$, as in Case 1, and the same argument applies.

If $1\le q<3-m$, we use a truncation argument inspired by \cites{BDG15, HKKP}, which also covers the range where the preceding Aubin--Lions argument cannot be applied directly. For $k\in\mathbb N$, set
\[
w_n^{(k)}:=T_k(\rho_n):=\min\{\rho_n,k\}.
\]
In the divergence-free case, these truncations satisfy the same a priori estimates as in Proposition~\ref{P:energy}.

First, we observe that 
\[
|\nabla w_n^{(k)}|
=
\frac{2}{m+q-1}\left(w_n^{(k)}\right)^{\frac{3-m-q}{2}}
\left|\nabla \left(w_n^{(k)}\right)^{\frac{m+q-1}{2}}\right|
\le
C(k,m,q)\left|\nabla \rho_n^{\frac{m+q-1}{2}}\right|,
\]
because $3-m-q>0$ and $w_n^{(k)}\le k$. Hence, for every $U\Subset\Omega$ and every $0<t_1<t_2<T$,
\begin{equation}\label{divfree-trunc-grad}
\sup_n \|w_n^{(k)}\|_{L^{2}(t_1,t_2;W^{1,2}(U))}\le C(k,U,t_1,t_2).
\end{equation}
Using \eqref{E:compact} and adapting Lemmas~3.1--3.4 and Section~4.3.1--4.3.2 in \cite{BDG15} to $w_n^{(k)}$, with no measure-data term present here, we obtain 
\begin{equation}\label{divfree-trunc-time}
\sup_n
\|\partial_t w_n^{(k)}\|_{L^1(t_1,t_2;W^{-2,\frac{d+1}{d}}(U))}
\le
C(k,U,W,t_1,t_2, \|V\|_{L^{1}(\Omega_T)})
\end{equation}
for every $U\Subset W\Subset\Omega$ and every $0<t_1<t_2<T$. 
Consequently, for every fixed $k\in\mathbb N$, Lemma~\ref{AL} implies that, after passing to a subsequence,
\[
T_k(\rho_n)\to T_k(\rho)
\quad \text{strongly in }L^{\sigma,2}_{\rm loc}(\Omega_T), \quad \text{for}\quad
\begin{cases}
	1<\sigma < \frac{2d}{d-2} , \text{ if } 2<d, \vspace{1mm}\\
	1\leq \sigma < \infty, \text{ if } 2 \geq d. 
\end{cases}
\]
Repeating the diagonal construction exactly as in Section~4.3.2 of \cite{BDG15}, we conclude that
\[
\rho_n\to \rho
\quad \text{strongly in }L^{\sigma,2}_{\rm loc}(\Omega_T)
\]
and almost everywhere in $\Omega_T$. With the uniform bounds in \eqref{E:compact}, we complete the proof. 
\end{proof}

\section{Existence of regular solutions }\label{splitting method}

In this section, we construct a regular solution of \eqref{FDE}-\eqref{FDE_bc_ic} when the initial data $\rho_0$ and the vector field $V$ are smooth enough.
For this, we exploit a splitting method in the Wasserstein space $\mathcal{P}_2(\Omega)$, and it turns out that our solution is in the class of absolutely continuous curves in $\mathcal{P}_2(\Omega)$.
For carrying the splitting method, it requires a priori estimates, propagation of compact support, and H\"{o}lder continuity of the following form of homogeneous FDE:
\begin{equation}\label{H-FDE}
  \left\{
  \begin{array}{ll}
  \partial_t \varrho =   \Delta (\epsilon +\varrho)^m,  \quad \text{ in } \ \Omega_{T} := \Omega \times (0, T),  \vspace{2mm}\\
  \frac{\partial \varrho}{\partial \textbf{n}} = 0,  \text{ on } \partial \Omega \times (0, T), 
  \quad \text{and} \quad 
  \varrho(\cdot,0)=\varrho_0,  \text{ on } \ \Omega,
  \end{array}
  \right.
\end{equation}
for $\Omega \Subset \bbr^d$, $d\geq 2$, and $0 <T < \infty$, where $\varrho:\Omega_T\mapsto \bbr$ and the normal $\textbf{n}$ to $\partial \Omega$.

First, we deliver a priori estimates of \eqref{H-FDE} in the following lemma.
\begin{lemma}\label{P:H-PME:e}\cite[Lemma~5.1]{HKK01}
Suppose that $\varrho$ is a nonnegative $L^q$-weak solution of \eqref{H-FDE} in Definition~\ref{D:weak-sol}.
\begin{itemize}
  \item[(i)] Let $\int_{\Omega} \varrho_0 (1+\log \varrho_0 ) \,dx < \infty$. Then there exists a positive constant $c=c(m,d,|\Omega|)$ such that
      \begin{equation}\label{P:H-PME:e1}
      \begin{aligned}
      &\int_{\Omega} (\epsilon+\varrho(\cdot, T)) \log  (\epsilon+\varrho(\cdot, T))  \,dx + \iint_{\Omega_{T}} \left|\frac{\nabla (\epsilon + \varrho)^m}{\epsilon + \varrho}\right|^2 (\epsilon + \varrho) \,dxdt \\
      & \qquad \leq \int_{\Omega} (\epsilon+\varrho_0) \log  (\epsilon+\varrho_0) \,dx + c\int_{\Omega} (\epsilon + \varrho_0) \,dx.
        \end{aligned} \end{equation}
      Moreover, there exists $T^\ast = T^\ast (m,d,p)$ such that, for any positive integer $l=1,2,3,\cdots$,
        \begin{equation*}\label{C:H-PME:e1}\begin{aligned}
      &\int_{\Omega}  (\epsilon+\varrho (\cdot, lT^\ast)) \log (\epsilon + \varrho(\cdot, l T^\ast))  \,dx
      + \int_{(l-1)T^\ast}^{lT^\ast}\int_{\Omega} \abs{\frac{\nabla (\epsilon +\varrho)^m}{\epsilon +\varrho}}^ {2} \varrho \,dx\,dt \\
      &\qquad \leq \int_{\Omega} \left[(\epsilon+\varrho_0) \log  (\epsilon+\varrho_0) + (\epsilon + \varrho_0)\right] \,dx.
      \end{aligned}\end{equation*}
  \item[(ii)]Let $\varrho_0 \in L^{q}(\Omega)$ for $q>1$. Then, we have
  \begin{equation*}\label{P:H-PME:e2}
      \int_{\Omega} (\epsilon+\varrho)^q(\cdot, T) \,dx + \frac{4mq(q-1)}{(m+q-1)^2} \iint_{\Omega_T} \left| \nabla (\epsilon+\varrho)^{\frac{q+m-1}{2}} \right|^2  \,dx\,dt = \int_{\Omega} (\epsilon+\varrho_0)^q \,dx.
          \end{equation*}
\end{itemize}
\end{lemma}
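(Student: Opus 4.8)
The plan is to read \eqref{H-FDE} as the equation $\partial_t u = \Delta u^m$ for $u := \epsilon + \varrho$, supplemented with the homogeneous Neumann condition $\partial u/\partial \mathbf{n}=0$ (equivalent to $\partial \varrho/\partial \mathbf{n}=0$). Since $u\ge \epsilon>0$ and $u$ is bounded above, the diffusion coefficient $m u^{m-1}$ lies between two positive constants, so the $\epsilon$-regularized equation is uniformly parabolic and its solution is smooth; this legitimizes testing against nonlinear functions of $u$ and the ensuing integrations by parts, in each of which the boundary term is killed by the Neumann condition. All three estimates are then energy identities, and the one point to watch is that the resulting bounds be \emph{uniform in $\epsilon$}, since this lemma is invoked in the $\epsilon\to0$ construction. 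Throughout I would use mass conservation $\int_\Omega u(\cdot,t)\,dx=\int_\Omega u_0\,dx$ (integrate the equation and use the Neumann condition) and the elementary inequality $s^m\le 1+s$ valid for $s\ge0$, $0<m<1$.

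For part (ii) I would test with $q u^{q-1}$. Integrating by parts and using $\nabla u^{q-1}\cdot\nabla u^m = m(q-1)u^{q+m-3}|\nabla u|^2 = \tfrac{4m(q-1)}{(q+m-1)^2}\bigl|\nabla u^{\frac{q+m-1}{2}}\bigr|^2$ (here $q+m-1>0$ since $q>1$) gives
\[
\frac{d}{dt}\int_\Omega u^q\,dx + \frac{4mq(q-1)}{(q+m-1)^2}\int_\Omega\bigl|\nabla u^{\frac{q+m-1}{2}}\bigr|^2\,dx = 0.
\]
Integrating over $(0,T)$ yields the exact $L^q$-identity asserted in part (ii); no inequality is lost, and $\epsilon$-uniformity is automatic because $u_0\to\varrho_0$ in $L^q$.

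For part (i) I would combine two tests. Testing with $\log u$ (admissible, as $\log u\ge\log\epsilon$ is bounded), the additive constant integrates to zero by the Neumann condition and one obtains the entropy identity
\[
\frac{d}{dt}\int_\Omega u\log u\,dx + \frac{4}{m}\int_\Omega\bigl|\nabla u^{\frac m2}\bigr|^2\,dx = 0,
\]
so the entropy is non-increasing. Testing with $-m u^{m-1}$, which is exactly the $V=0$ instance of the test used in Lemma~\ref{L:speed}, gives
\[
\frac{d}{dt}\int_\Omega u^m\,dx = (1-m)\int_\Omega\Bigl|\frac{\nabla u^m}{u}\Bigr|^2 u\,dx,
\]
and integrating in time, together with $u^m\le 1+u$ and mass conservation, bounds the speed dissipation:
\[
(1-m)\iint_{\Omega_T}\Bigl|\frac{\nabla u^m}{u}\Bigr|^2 u\,dx\,dt = \int_\Omega u^m(\cdot,T)\,dx-\int_\Omega u_0^m\,dx \le |\Omega|+\int_\Omega u_0\,dx.
\]
Adding the entropy bound (after discarding the nonnegative $|\nabla u^{m/2}|^2$ term) to this speed bound produces \eqref{P:H-PME:e1} with $c=c(m,d,|\Omega|)$, the additive $|\Omega|$ being absorbed into $c\int_\Omega u_0$ since the mass dominates $\|\varrho_0\|_{L^1}$ (normalized to $1$).

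Finally, the interval statement follows by reading the same two identities on each slab $[(l-1)T^*,lT^*]$: entropy monotonicity gives $\int_\Omega u(\cdot,lT^*)\log u(\cdot,lT^*)\le\int_\Omega u_0\log u_0$, while the speed identity bounds $\int_{(l-1)T^*}^{lT^*}\!\int_\Omega|\nabla u^m/u|^2\varrho$ (using $\varrho\le u$) by $\tfrac{1}{1-m}$ times the increment $\int_\Omega u^m(\cdot,lT^*)-\int_\Omega u^m(\cdot,(l-1)T^*)$ of a quantity that never exceeds $|\Omega|+\int_\Omega u_0$; here $T^*$ is the local existence time supplied by the uniformly parabolic theory, and uniformity in $l$ holds precisely because the entropy never increases. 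The main obstacle I anticipate is not any single computation but enforcing $\epsilon$-uniformity simultaneously for the entropy term (which requires $\int_\Omega\varrho_0(1+\log\varrho_0)<\infty$ together with $\int_\Omega(\epsilon+\varrho_0)\log(\epsilon+\varrho_0)\to\int_\Omega\varrho_0\log\varrho_0$) and for the degenerate-weight term $|\nabla u^m/u|^2 u$, whose pointwise factor $u^{m-1}$ blows up as $u\to0$ and is controlled only \emph{after} integration via the identity above; the argument must therefore keep this dissipation in its integrated form rather than estimate it pointwise.
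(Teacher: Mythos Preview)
Your proposal is correct and follows essentially the same route as the paper: set $u=\epsilon+\varrho$, test with $\log u$ for the entropy identity and with $-mu^{m-1}$ (exactly the $V=0$ case of Lemma~\ref{L:speed}) for the speed dissipation, then add; test with $qu^{q-1}$ for the $L^q$ identity in (ii). The paper's proof is terser---it simply points to \cite[Lemma~5.1]{HKK01} and notes the two tests---so your write-up actually supplies the computations the paper leaves implicit.
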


\begin{proof}
	Due to the equation \eqref{H-FDE}, we replace $\varrho$ by $\epsilon + \varrho$ in the similar estimates from \cite[Lemma~5.1]{HKK01}. We point out that \eqref{P:H-PME:e1} is a combination of the energy estimate of \eqref{H-FDE} by testing $\log (\epsilon + \varrho)$ and the following estimate
	\[
       \iint_{\Omega_{T}} \left|\frac{\nabla (\epsilon + \varrho)^m}{\epsilon + \varrho}\right|^2 (\epsilon + \varrho) \,dxdt \leq c\int_{\Omega} (\epsilon + \varrho_0) \,dx
       \]  
       which is obtained by testing $-m(\epsilon + \varrho)^{m-1}$ to \eqref{H-FDE} as in the proof of Lemma~\ref{L:speed}.
\end{proof}

The main result in this section is stated as follows, with its proof provided in Appendix~\ref{A:Exist-regular}.
\begin{proposition}\label{proposition : regular existence}
Let $d\geq 2$ and $q\geq 1$. Suppose that 
$$
 {\rho}_0\in\mathcal{P}_2(\Omega)\cap \mathcal{C}^\infty(\overline{\Omega}) \quad \mbox{and} \quad
V \in  \calC^\infty (\overline{\Omega}_{T}),
$$
such that $V\cdot\mathbf{n}=0$ on $\partial \Omega$. 

\begin{itemize}
	\item [(i)] Let $1-\frac{1}{d} < m < 1$. Suppose that
	\begin{equation}\label{V:Lq-regular}
V \in \mathsf{S}_{m,q}^{(q_1,q_2)}  \ \text{ for }  \   0 \leq \frac{1}{q_1} < \frac{1+d(m-1)}{d}, \quad 0<\frac{1}{q_2}\leq \frac{1+d(m-1)}{2+d(q+m-2)}.
\end{equation} 
      
      \begin{itemize}
      \item[$\bullet$] For $q=1$, assume $\int_{\Omega} \rho_0 \log \rho_0 \,dx < \infty$. Then there exists an absolutely continuous curve ${\rho}\in
AC(0,T;\mathcal{P}_2(\Omega))$ which is a regular solution of \eqref{FDE}-\eqref{FDE_bc_ic} such that	
      \begin{equation}\label{regular:L1-energy}
 \sup_{0\leq t \leq T}  \int_{\Omega \times \{t\}} \rho \abs{\log \rho} (\cdot, t)\,dx
 +  \frac{2}{m}\iint_{\Omega_T} \left|\nabla \rho^{\frac{m}{2}}\right|^2 \,dx\,dt \le C ,
\end{equation}
and 
\begin{equation}\label{regular:narrow-distance}
\delta(\rho(s),\rho(t))\leq C(t-s)^{\frac{1}{2}}, \quad \forall \ 0\leq s < t \leq T
\end{equation}
with $C=C (\|V\|_{\mathsf{S}_{m,1}^{(q_1, q_2)}}, \, \int_{\Omega} \rho_0 \log \rho_0 \,dx )$.

\item[$\bullet$] For $q>1$, assume that $\rho_0 \in L^q (\Omega)$. Then there exists an absolutely continuous curve ${\rho}\in
AC(0,T;\mathcal{P}_2(\Omega))$ which is a regular solution of \eqref{FDE}-\eqref{FDE_bc_ic} such that	
 \begin{equation}\label{regular:Lq-energy}
 \sup_{0\leq t \leq T}  \int_{\Omega \times \{t\}} \rho^q (\cdot, t) \,dx
 +  \frac{2mq(q-1)}{(q+m-1)^2}\iint_{\Omega_T} \left|\nabla \rho^{\frac{q+m-1}{2}}\right|^2 \,dx\,dt \le C ,
\end{equation}
and \eqref{regular:narrow-distance} with $C = C (\|V\|_{\mathsf{S}_{m,q}^{(q_1, q_2)}}, \, \|\rho_{0}\|_{L^{q} (\Omega)})$.

      \end{itemize}
	
	\item [(ii)] Let  $1-\frac{1}{d} < m < 1$. Suppose that
	\begin{equation}\label{V:tilde:Lq-regular}
V\in \tilde{\mathsf{S}}_{m,q}^{(\tilde{q}_1, \tilde{q}_2)} \quad  \text{for} \quad  \frac{1}{d} < \frac{1}{\tilde{q}_1} < \frac{2+ d(m-1)}{d}, \quad  0 \leq \frac{1}{\tilde{q}_2} < \frac{1+d(m-1)}{2+d(q+m-2)}. 	 
\end{equation} 
    \begin{itemize}
      \item[$\bullet$] For $q=1$, assume $\int_{\Omega} \rho_0 \log \rho_0 \,dx < \infty$. Then there exists an absolutely continuous curve ${\rho}\in
AC(0,T;\mathcal{P}_2(\Omega))$ which is a regular solution of \eqref{FDE}-\eqref{FDE_bc_ic} and satisfies \eqref{regular:L1-energy} 	
      and \eqref{regular:narrow-distance} with $C=C (\|V\|_{\tilde{\mathsf{S}}_{m,1}^{(q_1, q_2)}}, \, \int_{\Omega} \rho_0 \log \rho_0 \,dx )$.

\item[$\bullet$] For $q>1$, assume that $\rho_0 \in L^q (\Omega)$. Then there exists an absolutely continuous curve ${\rho}\in
AC(0,T;\mathcal{P}_2(\Omega))$ which is a regular solution of \eqref{FDE}-\eqref{FDE_bc_ic} and satisfies \eqref{regular:Lq-energy} 	
      and \eqref{regular:narrow-distance} with $C=C (\|V\|_{\tilde{\mathsf{S}}_{m,q}^{(q_1, q_2)}}, \, \, \|\rho_{0}\|_{L^{q} (\Omega)} )$.
    \end{itemize}

	\item [(iii)] Let $0<m<1$. Suppose that $\nabla \cdot V =0$. 
 \begin{itemize}
      \item[$\bullet$] For $q=1$, assume $\int_{\Omega} \rho_0 \log \rho_0 \,dx < \infty$. Then there exists an absolutely continuous curve ${\rho}\in
AC(0,T;\mathcal{P}_2(\Omega))$ which is a regular solution of \eqref{FDE}-\eqref{FDE_bc_ic} and satisfies \eqref{regular:L1-energy} with  	$C=C(\int_{\Omega} \rho_0 \log \rho_0 \,dx )$.

\item[$\bullet$] For $q>1$, assume that $\rho_0 \in L^q (\Omega)$. Then there exists an absolutely continuous curve ${\rho}\in
AC(0,T;\mathcal{P}_2(\Omega))$ which is a regular solution of \eqref{FDE}-\eqref{FDE_bc_ic} and satisfies \eqref{regular:Lq-energy} 	
     with $C=C (\|\rho_{0}\|_{L^{q} (\Omega)} )$.

\item[$\bullet$] Let $(1-\frac{2q}{d})_{+} \leq m <1$. Furthermore, suppose that 
\begin{equation*}\label{V:divfree:regular:delta}
\begin{gathered}
	V\in \mathcal{D}_{m,q,+}^{(q_1,q_2)} 
	\ \text{ for } \  0 \leq \frac{1}{q_1} < \frac{q-1}{q} + \frac{2+q_{m,d}}{d}, \quad  0 \leq \frac{1}{q_2} < 1.
\end{gathered}
\end{equation*}
Then, $\rho$  satisfies
    \begin{equation}\label{regular:narrow-distance:2}
	\delta(\rho(s),\rho(t))\leq C(t-s)^{a},  \qquad \forall~ 0\leq s<t\leq T
	\end{equation}
   with $a= \min \left\{ \frac{1}{2}, \frac{2+\frac{d(q+m-2)}{q} - \left(\frac{d}{q_1} + \frac{2+q_{m,d}}{q_2}\right)}{2+q_{m,d}} \right\}$ 
and $C= \begin{cases}
	C ( \|V\|_{\mathcal{D}_{m,1,+}^{(q_1,q_2)}}, \int_{\Omega} \rho_0 \log \rho_0 \,dx ), & \text{if } q=1 , \\
	C (\|V\|_{\mathcal{D}_{m,q,+}^{(q_1,q_2)}}, \|\rho_{0}\|_{L^{q} (\Omega)} ), &\text{if } q>1.
\end{cases}$
    \end{itemize}

\end{itemize}

\end{proposition}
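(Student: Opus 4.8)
The plan is to construct the regular solution by an operator-splitting (Trotter--Kato) scheme in $\mathcal{P}_2(\Omega)$ that alternates the pure diffusion flow of the regularized homogeneous equation \eqref{H-FDE} with the transport flow generated by $V$. Fix a step size $h=T/N$ and nodes $t_k=kh$. On each subinterval $[t_k,t_{k+1}]$ I would first transport the current density by the flow $\Psi(t;t_k,\cdot)$ of $V$ introduced in \eqref{Flow on Wasserstein}, which solves the continuity equation $\partial_t\sigma+\nabla\cdot(V\sigma)=0$, and then evolve the resulting measure by \eqref{H-FDE} over the same step using Lemma~\ref{P:H-PME:e}. Concatenating the half-steps and reparametrizing in time produces a curve $\rho^{h}\in AC(0,T;\mathcal{P}_2(\Omega))$; because $V$ and $\rho_0$ are smooth, the flow Lemmas~\ref{Lemma : Lipschitz of Jacobian}--\ref{Lemma : Holder regularity on the flow} and the homogeneous theory keep each half-step well defined, nonnegative, and of unit mass.

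The second step is to obtain uniform-in-$h$ a priori bounds. The diffusion half-steps supply the gradient dissipation: summing the estimates of Lemma~\ref{P:H-PME:e} over the $N$ steps controls $\iint_{\Omega_T}|\nabla(\rho^h)^{\frac{q+m-1}{2}}|^2\,dx\,dt$ together with the running entropy ($q=1$) or $L^q$-mass ($q>1$). The transport half-steps are governed by Lemma~\ref{Lemma : density relation on the flow}: entropy and $L^q$-norms change only through the Jacobian factor $e^{\int\nabla\cdot V}$ of Lemma~\ref{Lemma : Lipschitz of Jacobian}, so their total variation over all steps is dominated by $\int_0^T\|\nabla\cdot V\|_{L^\infty_x}\,dt$, which is finite for smooth $V$. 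In the divergence-free case (iii) this Jacobian is identically $1$, the transport steps preserve entropy and every $L^q$-norm exactly, and the bounds become independent of $V$, mirroring the structure of Proposition~\ref{P:energy}(iii).

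The third step is the equicontinuity needed for compactness. By Lemma~\ref{representation of AC curves}, the transport half-steps have metric speed bounded by $\|V(t)\|_{L^2(\rho^h(t))}$, while the diffusion half-steps have speed bounded by $\|\nabla(\rho^h)^m/\rho^h\|_{L^2(\rho^h(t))}$, whose time integral is already controlled by Lemma~\ref{P:H-PME:e}. For smooth $V$ both are square-integrable in $t$, so Hölder in time gives $W_2(\rho^h(s),\rho^h(t))\le C(t-s)^{1/2}$ uniformly in $h$, whence $\rho^h$ is equicontinuous. Since $\Omega$ is bounded, $\mathcal{P}_2(\Omega)$ is narrowly compact, and the Arzel\`a--Ascoli-type Lemma~\ref{Lemma : Arzela-Ascoli} extracts a subsequence converging narrowly, uniformly in $t$, to a limit curve $\rho$ that inherits absolute continuity. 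The sharp distance bounds \eqref{regular:narrow-distance} and \eqref{regular:narrow-distance:2}, whose constants must depend only on the $\mathsf{S}$-, $\tilde{\mathsf{S}}$-, or $\mathcal{D}$-norms rather than on $\|V\|_\infty$, I would instead read off from the weak formulation by testing against functions $f_k$ with $\|f_k\|_{W^{1,\infty}}\le1$ and applying Hölder in time to $\nabla\rho^m$ and $\rho V$, using the interpolation Lemma~\ref{L:interpolation}; this produces exactly the exponent $a$ in \eqref{regular:narrow-distance:2}.

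Finally I would identify $\rho$ as a regular solution and recover the stated estimates. Upgrading narrow convergence to strong $L^{\alpha,\beta}_{x,t}$-convergence through the Aubin--Lions argument of Proposition~\ref{Proposition : compactness} allows passage to the limit in the nonlinear terms $\nabla(\rho^h)^m$ and $\rho^h V$ and verification of the weak formulation in Definition~\ref{D:regular-sol}, while the H\"older regularity $\rho\in L^\infty(0,T;\mathcal{C}^\alpha(\overline{\Omega}))$ comes from the regularity theory of \eqref{H-FDE} propagated through the diffusion steps together with Lemma~\ref{Lemma : Holder regularity on the flow} on the transport steps. Once $\rho$ is a regular solution, the sharp energy bounds \eqref{regular:L1-energy} and \eqref{regular:Lq-energy} follow from Proposition~\ref{P:energy}, and the full speed content from Lemma~\ref{L:speed} and Proposition~\ref{P:Energy-speed}. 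The hard part will be the limit passage itself: the gradient dissipation accrues only on diffusion sub-intervals while the entropy and $L^q$-mass may drift on the transport sub-intervals, so one must show that the splitting error vanishes as $h\to0$ and simultaneously send the regularization $\epsilon\to0$ in \eqref{H-FDE}, reconciling these two discrete mechanisms into the single coupled equation and its a priori estimates.
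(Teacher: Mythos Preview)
Your overall strategy matches the paper's: an operator-splitting scheme alternating the $\epsilon$-regularized homogeneous flow \eqref{H-FDE} with the transport by $\Psi$, uniform bounds, Arzel\`a--Ascoli via Lemma~\ref{Lemma : Arzela-Ascoli}, then a second limit $\epsilon\to0$. The paper orders the half-steps as diffusion-then-transport (rather than your transport-then-diffusion), but this is immaterial.

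There are, however, two tactical points where your plan diverges from the paper and where your version would need repair. First, you propose to upgrade narrow convergence of the splitting approximants $\rho^h$ to strong $L^{\alpha,\beta}_{x,t}$-convergence via Proposition~\ref{Proposition : compactness}; but that proposition is stated for \emph{regular solutions} of the full equation, which the $\rho^h$ are not. The paper instead obtains pointwise convergence directly from the uniform spatial H\"older bound (Lemma~\ref{Lemma : equi-continuity}) together with the $L^\infty$ bound \eqref{eq29:Lemma:AC-curve}, and this is what lets one identify the weak limit of $\nabla(\epsilon+\rho_n)^m$. Second, you propose to obtain the sharp energy bounds \eqref{regular:L1-energy}--\eqref{regular:Lq-energy} \emph{a posteriori} by applying Proposition~\ref{P:energy} once $\rho$ is known to be regular; the paper instead derives the energy inequality at the level of the splitting scheme (Lemma~\ref{Energy:Splitting}) by testing the approximate weak identity of Lemma~\ref{Lemma:solving ODE} with $\log\rho_n$ or $q(\epsilon+\rho_n)^{q-1}$, and then passes to the limit using lower semicontinuity. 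This matters because the constants in the final estimates must be independent of the smooth norms of $V$ and $\rho_0$, and because the formal testing in Proposition~\ref{P:energy} is only rigorous once one already has enough regularity.

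The step you flag as ``hard'' --- quantifying the splitting error --- is exactly Lemma~\ref{Lemma:solving ODE}, which shows $|E_n|\le C/n$ with $C$ depending on $\|\rho_0\|_{W^{2,\infty}}$ and $\|V\|_{L^\infty_t W^{2,\infty}_x}$; its proof compares $\rho_n$ and $\varrho_n$ (and their gradients) pointwise via the explicit Jacobian formula. Your plan of summing the half-step estimates from Lemma~\ref{P:H-PME:e} would give gradient control of $\varrho_n$ rather than $\rho_n$, so you would still need this comparison lemma to transfer the bound. Your derivation of the $\delta$-distance estimates by testing the weak form with $W^{1,\infty}$ functions and using Lemma~\ref{L:interpolation} is exactly what the paper does in \eqref{eq26 : Theorem : Toy Fokker-Plank}--\eqref{V_rho_est_divfree}.
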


\section{Existence of weak solutions}\label{Exist-weak}
In this section, we show the existence of $L^q$-weak solutions in Definition~\ref{D:weak-sol}. 
 Since the critical case matters most and the strict-sub-critical case is simpler, we only give proof for the critical cases.
 
\subsection{Proof of Theorem \ref{T:ACweakSol}}
{\it Proof of (i).} Let $\rho_0\in  \mathcal{P}_2(\Omega) $ with $\int_{\Omega}\rho_0 \log \rho_0 \,dx < \infty$ and 
$V \in  \mathsf{S}_{m,1}^{(q_1,q_2)} $ be satisfying \eqref{T:ACweakSol:V_q}.
Then there exists a sequence of vector fields $V_n\in C^\infty_c(\overline{\Omega}\times[0,T)) $ with $V_n(t)\cdot \mathbf{n}=0$ on $\partial\Omega$ for all $t\in[0,T)$
such that
\begin{equation}\label{eq7 : Theorem-1}
\lim_{n \rightarrow \infty}\|V_n - V \|_{\mathsf{S}_{m,1}^{(q_1,q_2)} } =0.
\end{equation}
Using truncation, mollification and normalization, we have a sequence of functions
$\rho_{0,n}\in \calC^\infty(\overline{\Omega})\cap \mathcal{P}_2(\Omega)$ satisfying
\begin{equation}\label{eq9 : Theorem-1}
\lim_{n \rightarrow \infty} W_2(\rho_{0,n} , \rho_0) =0,
\quad \text{and} \quad  \lim_{n \rightarrow \infty} \int_{\Omega} \rho_{0,n} \log \rho_{0,n} \,dx = \int_{\Omega} \rho_0\log \rho_{0} \,dx.
\end{equation}
Exploiting Proposition \ref{proposition : regular existence}, we have $\rho_n\in AC(0,T; \mathcal{P}_2(\Omega))$ which is a regular solution of
\begin{equation}\label{eq5 : Theorem-1}
\begin{cases}
   \partial_t \rho_n =   \nabla \cdot( \nabla (\rho_n)^m  - V_n\rho_n),\\
   \rho_n(0)=\rho_{0,n}.
 \end{cases}
\end{equation}
Thus, $\rho_n$ satisfies the energy estimate \eqref{regular:L1-energy} and 
\begin{equation}\label{eq21 : Theorem-1}
\begin{aligned}
\iint_{\Omega_T}  \left [\partial_t\varphi + (V_n \cdot \nabla \varphi )\right ]\rho_n
 - \nabla(\rho_n)^m \cdot \nabla \varphi  \,dx\,dt
 =  - \int_{\Omega}\varphi(0) \rho_{0,n}\,dx,
\end{aligned}
\end{equation}
for any $\varphi\in C_c^\infty( \overline{\Omega} \times [0,T))$. Furthermore, due to Proposition \ref{P:Energy-speed}, $\rho_n$  satisfies not only \eqref{regular:L1-energy}  but also 
\begin{equation}\label{eq11 : Theorem-2-a}
\sup_{0\leq t \leq T}\int_{\Omega \times \{ t\}}\rho_n |\log \rho_n|  \,dx
+ \iint_{\Omega_T} \{ \left|\nabla \rho_{n}^{\frac{m}{2}} \right|^2 + \left (\left | \frac{\nabla \rho_n^m}{\rho_n}\right |^{2} +|V_n|^{2} \right )\rho_n \}\,dx\,dt  \leq C,
\end{equation}
 where the constant $C=C ( \|V_n\|_{\mathsf{S}_{m, 1}^{(q_1,q_2)}}, \int_{\Omega} \rho_{0,n} \log \rho_{0,n} \,dx )$.
 Actually, due to \eqref{eq7 : Theorem-1} and \eqref{eq9 : Theorem-1}, we may choose the constant
$C=C ( \|V\|_{\mathsf{S}_{m, 1}^{(q_1,q_2)}}, \int_{\Omega} \rho_{0} \log \rho_{0} \,dx )$. 

Now, we note that \eqref{eq5 : Theorem-1} can be written as follows
$$\partial_t \rho_n +  \nabla \cdot (w_n \rho_n )=0, \qquad \mbox{where}\quad
w_n:= -\frac{\nabla (\rho_n)^m}{\rho_n}+ V_n.$$
Then, \eqref{eq11 : Theorem-2-a} implies
\begin{equation*}\label{eq25 : Theorem-1}
\int_0^T  |w_{n}(t)|_{L^{2}(\rho_n(t))}^{2} \,dt \leq C,
\end{equation*}
and we exploit Lemma \ref{representation of AC curves} to have $\rho_n \in AC(0,T;\mathcal{P}_{2}(\Omega))$ and
\begin{equation}\label{eq35 : Theorem-a}
\begin{aligned}
W_{2}(\rho_n(s),\rho_n(t)) &\leq \int_s^t  \|w_n(\tau)\|_{L^{2}(\rho_n(\tau))} \,d\tau \leq  C (t-s)^{\frac{1}{2}}, \qquad \forall ~ 0\leq s\leq t\leq T.
\end{aligned}
\end{equation}
%
%
%
%
%
%

Now, we investigate the convergence of $\rho_n$. First of all, from Lemma \ref{Lemma : Arzela-Ascoli} with \eqref{eq35 : Theorem-a}, there exists a curve $\rho : [0, T] \rightarrow \mathcal{P}_2(\Omega)$ such that
\begin{equation}\label{eq38 : Theorem-1}
\rho_n(t) \,\mbox{(up to a subsequence) \,narrowly converges to}\, \rho(t) \,~\mbox{as}~ \, n \rightarrow \infty, \quad \forall \ 0\leq t \leq T.
\end{equation}
Then,
due to the lower semicontinuity of Wasserstein distance $W_{2}(\cdot, \cdot)$ with respect to the narrow convergence, we combine \eqref{eq35 : Theorem-a} and \eqref{eq38 : Theorem-1} to have
\begin{equation*}\label{eq37 : Theorem-1}
 W_{2}(\rho(s),\rho(t)) \leq   C (t-s)^{\frac{1}{2}},
\qquad \forall ~ 0\leq s\leq t\leq T,
\end{equation*}
which concludes \eqref{T:ACweakSol:W_1}.
%
Furthermore, we note that the assumption \eqref{T:ACweakSol:V_q} guarantees Proposition \ref{Proposition : compactness} hold. Hence, we have that $\rho_n$ converges to $\rho$ in $ L_{x,t}^{\alpha, \beta}$ for some $\alpha$ and $\beta$ satisfying \eqref{Space:compact}. 

Let us note that 
\begin{equation}\label{eq40 : Theorem-1}
\|\nabla \rho_n^m \|_{L_{x,t}^{\frac{2}{m+1},2}} = \|\rho_n^{\frac{m}{2}} \nabla \rho_n^{\frac{m}{2}} \|_{L_{x,t}^{\frac{2}{m+1},2}} 
\leq \|\rho_n^{\frac{m}{2}}  \|_{L_{x,t}^{\frac{2}{m},\infty}} \| \nabla \rho_n^{\frac{m}{2}} \|_{L_{x,t}^{2}}  \leq C,
\end{equation}
where we exploit $ \| \nabla \rho_n^{\frac{m}{2}} \|_{L_{x,t}^{2}}  \leq C$  and  $\|\rho_n^{\frac{m}{2}} \|_{L_{x,t}^{\frac{2}{m},\infty}}=1$ from $\|\rho_n \|_{L_{x,t}^{1,\infty}} =1$.  
From \eqref{eq40 : Theorem-1}, we note $ \nabla \rho_n^m$ has a weak limit $\eta$ in $L_{x,t}^{\frac{2}{m+1},2}$. Due to the strong convergence of $\rho_n$ to $\rho$ in $ L_{x,t}^{\alpha, \beta}$ 
 , we have
$\eta= \nabla \rho^m$. We put these together to get
\begin{equation}\label{eq3 : Theorem-1}
\begin{aligned}
\iint_{\Omega_T}  \left[\partial_t\varphi \rho_n  -\nabla \rho_n^m  \cdot \nabla \varphi\right ] \,dx\,dt \longrightarrow
\iint_{\Omega_T}  \left[\partial_t\varphi \rho  - \nabla  \rho^m \cdot \nabla \varphi\right ] \,dx\,dt, ~\mbox{as} \ n\rightarrow \infty.
\end{aligned}
\end{equation}

Next, we investigate the following convergence
\begin{equation}\label{eq32 : Theorem-1}
\begin{aligned}
\iint_{\Omega_T}  (V_n \cdot \nabla \varphi  ) \rho_n \,dx\,dt
\longrightarrow  \iint_{\Omega_T}  (V \cdot \nabla \varphi ) \rho \,dx\,dt, \,~\mbox{as} \ n\rightarrow \infty,
\end{aligned}
\end{equation}
for any $\varphi \in \calC_c^\infty(\overline{\Omega} \times [0,T)) $. Indeed, we have
\begin{equation*}
\begin{aligned}
&\iint_{\Omega_T}  \big[(V_n \cdot \nabla \varphi ) \rho_n
- ( V \cdot \nabla \varphi ) \rho\big ] \,dx\,dt \\
&\quad = \iint_{\Omega_T}  \nabla \varphi \cdot \big (V_n- V\big )\rho_n  \,dx\,dt
+ \iint_{\Omega_T}  \nabla \varphi \cdot  V \bke{\rho_n- \rho}  \,dx\,dt
 = I + II.
\end{aligned}
\end{equation*}
  Let us first estimate $I$.  For doing this, we note that if $\rho_n \in L_{x,t}^{1,\infty}$ and $\nabla \rho_n^{\frac{m+q-1}{2}} \in L_{x,t}^{2}  $ then 
    Lemma \ref{L:interpolation} gives us
  \begin{equation*}
  \rho_n \in L_{x,t}^{r_1,r_2} \ \text{ where } \  \frac{d}{r_1} + \frac{d(m+q-1)}{r_2}=d.
  \end{equation*}
 Therefore, if $V_n \in \mathsf{S}_{m,q}^{(q_1, q_2)}$ that is $V_n \in L_{x,t}^{q_1,q_2}$ with $\frac{d}{q_1} + \frac{2+d(q+m-2)}{q_2}=1+d(m-1)$ then
 \begin{equation}\label{eq41 : Theorem-1}
 \|V_n\rho_n\|_{L_{x,t}^r} \leq \| V_n\|_{L_{x,t}^{q_1,q_2}} \|\rho_n \|_{L_{x,t}^{r_1,r_2}},                                      
 \end{equation}
for $ r= \frac{2+d(m+q-1)}{1+md}$, $r_1=\frac{ r q_1}{q_1-r} $ and $r_2=\frac{ r q_2}{q_2-r} $. 

 We exploit \eqref{eq41 : Theorem-1} with $q=1$ to obtain
 \begin{equation}\label{eq28 : Theorem-1}
\|(V_n-V)\rho_n \|_{L_{x,t}^{\frac{2+md}{1+md}}}
\leq \|V_n-V \|_{{\mathsf{S}}_{m,1}^{({q}_1, {q}_2)}} \| \rho_n\|_{L_{x,t}^{r_1, r_2}}
 \leq C 
\|V_n-V \|_{\mathsf{S}_{m,1}^{(q_1,q_2)}} ,
\end{equation}
where we used the fact that $\| \rho_n\|_{L_{x,t}^{r_1, r_2}} \leq C (\|\rho_n\|_{L_{x,t}^{1,\infty}},~ \|\nabla \rho_n^{\frac{m}{2}}\|_{L_{x,t}^{2}})
=C (\|V\|_{\mathsf{S}_{m, 1}^{(q_1,q_2)}}, \int_{\Omega} \rho_{0} \log \rho_{0} \,dx )$ from \eqref{eq11 : Theorem-2-a}.
Hence, we have
\begin{equation}\label{eq13 : Theorem-1}
| I |\leq \|\nabla \varphi \|_{L_{x,t}^{2+md}} \|(V_n - V)\rho_n \|_{L_{x,t}^{\frac{2+md}{1+md}}}
\leq C \|V_n - V \|_{\mathsf{S}_{m,1}^{(q_1,q_2)}} \longrightarrow 0,  \  \mbox{ as } \   n\rightarrow \infty.
\end{equation}
Also, the weak convergence of $\rho_n$ implies $II$ converges to $0$ as $n \rightarrow \infty$.  Hence, this with \eqref{eq13 : Theorem-1} implies \eqref{eq32 : Theorem-1}.
 We put \eqref{eq3 : Theorem-1}, \eqref{eq32 : Theorem-1} and $W_2(\rho_{0,n}, \rho_0) \rightarrow 0$ into \eqref{eq21 : Theorem-1}, and get
\begin{equation*}\label{eq10 : Theorem-1}
\begin{aligned}
\iint_{\Omega_T}  \left [\partial_t\varphi + (V \cdot \nabla \varphi )\right ]\rho
 - \nabla \rho^m  \cdot \nabla \varphi  \,dx\,dt
 =  - \int_{\Omega}\varphi(0) \rho_{0}\,dx,
\end{aligned}
\end{equation*}
for any $\varphi\in C_c^\infty( \overline{\Omega} \times [0,T))$.

 Next, we prove \eqref{T:weakSol:E_1}.  From \eqref{eq11 : Theorem-2-a}, we have
\begin{equation*}\label{eq24 : Theorem-1}
\| \nabla \rho_n^{\frac{m}{2}}\|_{L_{x,t}^2} \leq C.
\end{equation*}
Using the strong convergence of $\rho_n$ to $\rho$, we have that $\nabla \rho_n^{\frac{m}{2}}$ weakly converges to $\nabla \rho^{\frac{m}{2}}$
 in $L^2(\overline{\Omega}_{T})$. Hence, due to the lower semi-continuity of $L^2-$norm with respect to the weak convergence,  we have
\begin{equation}\label{eq18 : Theorem-1}
\begin{aligned}
\|\nabla \rho^{\frac{m}{2}} \|_{L_{x,t}^{2}} 
& \leq C.
\end{aligned}
\end{equation}
 We remind that the entropy is lower semicontinuous with respect to the narrow convergence (refer \cite{ags:book}). 
  Hence, from \eqref{eq11 : Theorem-2-a}, we have
\begin{equation}\label{eq15 : Theorem-1}
\begin{aligned}
\int_{\Omega\times \{t\}} \rho  \log \rho \,dx
  &\leq \liminf_{n \rightarrow \infty} \int_{\Omega\times \{t\}} \rho_n  \log \rho_n  \,dx
  \leq C \qquad \forall ~ t\in (0,T).
\end{aligned}
\end{equation}
We also remind that there exists a constant $c>0$ (independent of $\rho$) such that
\begin{equation}\label{eq16 : Theorem-1}
\begin{aligned}
\int_{\Omega} | \min \{\rho \log \rho,~0 \} | \,dx \leq c \int_{\Omega}  \rho  \langle x \rangle^2  \, dx \leq c (1 + |\Omega|^2).
\end{aligned}
\end{equation}
Combining \eqref{eq15 : Theorem-1} and \eqref{eq16 : Theorem-1}, we have
\begin{equation}\label{eq20 : Theorem-1}
\begin{aligned}
\sup_{0\leq t \leq T} &\int_{\Omega\times \{t\}} \rho  \abs{\log \rho}  \,dx \leq  C.
\end{aligned}
\end{equation}
Furthermore, from \eqref{eq11 : Theorem-2-a}, we have 
$$ \iint_{\Omega_T}  \left | \frac{\nabla \rho_n^m}{\rho_n}\right |^{2} \rho_n \,dx\,dt =\|\nabla \rho_n^{m-\frac{1}{2}}\|^2_{L^2_{x,t}} \leq C.$$ 
Exploiting the strong convergence of $\rho_n$ to $\rho$, we have 
\begin{equation}\label{eq22 : Theorem-1}
 \iint_{\Omega_T} \left | \frac{\nabla \rho^m}{\rho}\right |^{2} \rho \,dx\,dt  \leq C.
\end{equation}\label{eq23 : Theorem-1}
Next, we note that $V$ satisfies \eqref{T:ACweakSol:V_q} and hence it satisfies \eqref{V:speed}. Hence, as in \eqref{speed04}, we have 
\begin{equation*}
 \iint_{\Omega_T} |V|^{2}\rho \,dx\,dt  \leq C.
\end{equation*}
We combine \eqref{eq18 : Theorem-1}, \eqref{eq20 : Theorem-1}, \eqref{eq22 : Theorem-1} and \eqref{eq23 : Theorem-1} to get \eqref{T:ACweakSol:E_1} which completes the proof for the case {\it (i)}.

{\it Proof of (ii)} :
Suppose $\rho_0\in  \mathcal{P}_2(\Omega) \cap L^{q}(\Omega)$ and $V \in  \mathsf{S}_{m,q}^{(q_1,q_2)} $ satisfies  \eqref{T:ACweakSol:V_q}.
Similar to the proof of (i), there exist  $\rho_{0,n}\in \calC^\infty(\overline{\Omega})\cap \mathcal{P}_2(\Omega)$ and  $V_n\in C^\infty_c(\overline{\Omega}\times[0,T)) $ with $V_n(t)\cdot \mathbf{n}=0$ on $\partial\Omega$ for all $t\in[0,T)$ such that
\begin{equation}\label{eq7 : Theorem-2-a}
\lim_{n \rightarrow \infty}\|\rho_{0,n} - \rho_0 \|_{L^q(\Omega)} =0, \qquad \lim_{n \rightarrow \infty}\|V_n - V \|_{\mathsf{S}_{m,q}^{(q_1,q_2)} } =0.
\end{equation}

Exploiting Proposition \ref{proposition : regular existence},  we have $\rho_n\in AC(0,T; \mathcal{P}_2(\Omega))$  satisfying \eqref{eq21 : Theorem-1} and
\begin{equation*}
\begin{aligned}
\sup_{0\leq t \leq T} &\int_{\Omega\times \{t\}}  (\rho_n)^q   \,dx
 + \frac{2qm(q-1)}{(q+m-1)^2} \iint_{\Omega_T} \left |\nabla \rho_n^{\frac{q+m-1}{2}}\right |^2  \,dx\,dt \leq C,
\end{aligned}
\end{equation*}
where $C= C( \|V_n\|_{\mathsf{S}_{m, q}^{(q_1,q_2)}}, \|\rho_{0,n}\|_{L^q(\Omega)} )$.
Due to \eqref{eq7 : Theorem-2-a}, we may choose
$C= C( \|V\|_{\mathsf{S}_{m, q}^{(q_1,q_2)}}, \|\rho_{0}\|_{L^q(\Omega)} )$.

Furthermore, Proposition \ref{P:Energy-speed} gives us
 \begin{equation*}\label{eq12 : Theorem-2-a}
 \sup_{0\leq t \leq T}  \int_{\Omega} \rho_n^q(\cdot, t) \,dx
 + \iint_{\Omega_T} \{ \abs{\nabla \rho_n^{\frac{q+m-1}{2}}}^2  + \left(\abs{\frac{\nabla
\rho_n^m}{\rho_n}}^{2}+|V|^{2}\right ) \rho_n  \}\,dx\,dt \leq C,
\end{equation*}
and \eqref{eq35 : Theorem-a}. Including the convergence of $\rho_n$, the rest of the proof is exactly same as the proof of (i). 

The only difference is that, instead of \eqref{eq28 : Theorem-1} and \eqref{eq13 : Theorem-1}, we exploit \eqref{eq41 : Theorem-1}
 and have
\begin{equation}\label{eq34 : Theorem-1}
\begin{aligned}
| I | \leq \| \nabla \varphi \|_{L_{x,t}^{\frac{r}{r-1}}} \| (V_n-V)\rho_n\|_{L_{x,t}^r} 
 \leq  \| \nabla \varphi \|_{L_{x,t}^{\frac{r}{r-1}}} \|V_n-V\|_{\mathsf{S}_{m,q}^{(q_1,q_2)}} \|\rho_n\|_{L^{r_1, r_2}_{x,t}}  
 \longrightarrow 0, \ \mbox{ as } \  n\rightarrow \infty,
\end{aligned}
\end{equation}
where $r= \frac{2+d(m+q-1)}{1+md}$, $r_1=\frac{r q_1}{q_1-r}$, and $r_2=\frac{r q_2}{q_2-r} $. 
\qed

\subsection{Proof of Theorem \ref{T:ACweakSol_tilde}}
We note that the assumption \eqref{T:ACweakSol_tilde:V_q} on $V$ guarantees Proposition \ref{P:Energy-speed},  Proposition \ref{Proposition : compactness} and Proposition \ref{proposition : regular existence} hold. Let $V_n\in C^\infty_c(\overline{\Omega}\times[0,T)) $ with $V_n(t)\cdot \mathbf{n}=0$ on $\partial\Omega$ for all $t\in[0,T)$ be
such that
\begin{equation*}\label{eq7 : T:weakSol_tilde}
\lim_{n \rightarrow \infty} \left (\|V_n - V \|_{\tilde{\mathsf{S}}_{m,q}^{(\tilde{q}_1,\tilde{q}_2)}}  +  \|V_n - V \|_{L_{x,t}^{1,\tilde{q}_2} }\right)=0,
\end{equation*}
 where $(\tilde{q}_1, \tilde{q}_2)$ satisfies \eqref{T:ACweakSol_tilde:V_q}. 

{\it For the case (i)} : Let  $\rho_n$ be the same as in the proof of (i) of Theorem \ref{T:ACweakSol}.
 We exploit Proposition \ref{proposition : regular existence} to have regular solutions $\rho_n$ 
satisfying  \eqref{eq21 : Theorem-1}, and \eqref{regular:L1-energy} with the constant $C=C(  \|V\|_{\tilde{\mathsf{S}}_{m, 1}^{(\tilde{q}_1,\tilde{q}_2)}}, \int_{\Omega}\rho_0 \log \rho_0 \,dx )$.
Since $V$ also satisfies \eqref{Vq:tilde:energy-speed} for $q=1$, as in the proof of Theorem \ref{T:ACweakSol} (i), Proposition \ref{P:Energy-speed} gives 
\begin{equation*}\label{eq11 : Theorem-2-c}
\sup_{0\leq t \leq T}\int_{\Omega \times \{ t\}}\rho_n |\log \rho_n|  \,dx
+ \iint_{\Omega_T} \{ \left|\nabla \rho_{n}^{\frac{m}{2}} \right|^2 + \left (\left | \frac{\nabla \rho_n^m}{\rho_n}\right |^{2} +|V_n|^{2} \right )\rho_n \}\,dx\,dt \leq C,
\end{equation*}
where the constant $C=C \big(\|V\|_{\tilde{\mathsf{S}}_{m, 1}^{(q_1,q_2)}}, \int_{\Omega} \rho_0 \log \rho_0 \,dx\big)$, and hence we also have \eqref{eq35 : Theorem-a}.

We follow  same steps as in the proof of Theorem \ref{T:ACweakSol}  to complete the proof. The only difference is that we have
 \begin{equation}\label{eq42 : Theorem-1}
\begin{aligned}
| I | &\leq  \| \nabla \varphi \|_{L_{x,t}^{\frac{r}{r-1}}} \|V_n-V\|_{\tilde{\mathsf{S}}_{m,1}^{(\tilde{q}_1,\tilde{q}_2)}} \|\rho_n\|_{L^{r_1, r_2}_{x,t}} \longrightarrow 0, \ \mbox{ as } \ n\rightarrow \infty,
\end{aligned}
\end{equation}
where   $r_1=\frac{ r q_1}{q_1-r} $,  $r_2=\frac{ r q_2}{q_2-r} $ and $ r= \frac{2+md}{1+md} $.  

Indeed, by embedding as in \eqref{V_embedding} for $1-\frac{1}{d}< m < 1$, we have
\begin{equation}\label{eq43 : Theorem-1}
\begin{aligned}
V \in \tilde{\mathsf{S}}_{m,q}^{(\tilde{q}_1,\tilde{q}_2)} \cap L_{x,t}^{1, \tilde{q}_2}, \quad \frac{1}{d}<\frac{1}{\tilde{q}_1} <\frac{2+d(m-1)}{d}  ~ &\Longrightarrow ~
V \in \mathsf{S}_{m,q}^{(\tilde{q}_1^*, \tilde{q}_2)}, ~~ \tilde{q}_1^*=\frac{d\tilde{q}_1}{d-\tilde{q}_1} \\
~ &\Longrightarrow ~ V \in \mathsf{S}_{m,q}^{(\tilde{q}_1^*, \tilde{q}_2)},  \quad 0<\frac{1}{\tilde{q}_1^*} <\frac{1+d(m-1)}{d} 
\end{aligned}
\end{equation}
 with $\|V \|_{\mathsf{S}_{m,q}^{(\tilde{q}_1^*, \tilde{q}_2)}} \leq \|V \|_{\tilde{\mathsf{S}}_{m,q}^{(\tilde{q}_1, \tilde{q}_2)}}$.  
 Therefore, we combine \eqref{eq41 : Theorem-1} and \eqref{eq43 : Theorem-1} for $q=1$ to conclude \eqref{eq42 : Theorem-1}.

\noindent{\it For the case (ii)} : Let $\rho_{0,n}$ and $\rho_n$ be as in the proof of (ii) of Theorem \ref{T:ACweakSol}. Similar to the proof of the case (i), we can complete the proof.
 Similar to the case (i), we can complete the proof.
The only difference is that we have
 \begin{equation*}\label{eq44 : Theorem-1}
\begin{aligned}
| I | &\leq  \| \nabla \varphi \|_{L_{x,t}^{\frac{r}{r-1}}} \|V_n-V\|_{\tilde{\mathsf{S}}_{m,q}^{(\tilde{q}_1,\tilde{q}_2)}} \|\rho_n\|_{L^{r_1, r_2}_{x,t}} \longrightarrow 0, \ \mbox{ as } \ n\rightarrow \infty,
\end{aligned}
\end{equation*}
where $ r= \frac{2+d(m+q-1)}{1+md}$,  $r_1=\frac{ r q_1}{q_1-r} $,   and $r_2=\frac{ r q_2}{q_2-r}$.  
\qed

\subsection{Proof of Theorem \ref{T:weakSol:divfree}}
We note that the assumption \eqref{T:weakSol:divfree:Vq} on $V$ guarantees Proposition \ref{proposition : regular existence} and Proposition \ref{Proposition : compactness} hold.

{\it Proof of (i).}  As in the proof of (i) of Theorem \ref{T:ACweakSol}, there exist $\rho_{0,n}$ and $V_n\in C^\infty_c(\overline{\Omega}\times[0,T)) $ satisfying  $\nabla \cdot V_n(t)=0$, $V_n(t)\cdot \mathbf{n}=0$ on $\partial\Omega$ for all $t\in[0,T)$ and
\begin{equation*}\label{eq45 : Theorem-1}
\lim_{n \rightarrow \infty}\|V_n - V \|_{\mathcal{D}_{m,1}^{(q_1,q_2)} } =0.
\end{equation*}
Exploiting Proposition \ref{proposition : regular existence}, we have a regular solution $\rho_n$ satisfying \eqref{eq21 : Theorem-1} and   
\begin{equation}\label{eq46 : Theorem-1}
\sup_{0\leq t \leq T}\int_{\Omega \times \{ t\}}\rho_n |\log \rho_n|  \,dx
+ \iint_{\Omega_T} \left|\nabla \rho_{n}^{\frac{m}{2}} \right|^2 \,dx\,dt  \leq C,
\end{equation}
with the constant $C=C ( \int_{\Omega} \rho_{0} \log \rho_{0} \,dx )$. 

Note that \eqref{T:weakSol:divfree:Vq} implies \eqref{V:divfree:compact}. Hence, due to the Proposition \ref{Proposition : compactness},  we have that $\rho_n$ (up to a subsequence) converges to $\rho$ in $ L_{x,t}^{\alpha, \beta}$ for some $\alpha$ and $\beta$ satisfying \eqref{Space:compact}. This gives us 
 \begin{equation*}
\rho_n(t) \,\mbox{(up to a subsequence) \,narrowly converges to}\, \rho(t) \,~\mbox{as}~ \, n \rightarrow \infty, \quad a.e ~ t \in (0,T).
\end{equation*}
Combining this with \eqref{eq46 : Theorem-1}, we have 
 \begin{equation*}
\esssup_{0\leq t \leq T}\int_{\Omega \times \{ t\}}\rho |\log \rho|  \,dx \leq C.
\end{equation*}

The rest of the proof is similar to the proof of Theorem \ref{T:ACweakSol} (i). The only difference is that, instead of  \eqref{eq13 : Theorem-1}, we have
\begin{equation}\label{eq48 : Theorem-1}
| I |\leq \|\nabla \varphi \|_{L_{x,t}^\infty} \|(V_n - V)\rho_n \|_{L_{x,t}^{1}}
\leq C \|V_n - V \|_{\mathcal{D}_{m,1}^{(q_1,q_2)}} \longrightarrow 0,  \  \mbox{ as } \   n\rightarrow \infty.
\end{equation}
 Indeed, we exploit $\rho_n \in L_{x,t}^{1,\infty}$ and $\nabla \rho_n^{\frac{m+q-1}{2}} \in L_{x,t}^{2}$ then 
    Lemma \ref{L:interpolation} gives us
  \begin{equation}
  \rho_n \in L_{x,t}^{r_1,r_2} \ \text{ where } \ \frac{d}{r_1} + \frac{2 + q_{m,d}}{r_2}=\frac{d}{q}.
  \end{equation}
 Therefore, if  $V_n \in L_{x,t}^{q_1,q_2}$ with $\frac{1}{r_1}+\frac{1}{q_1}=1, ~~  \frac{1}{r_2}+\frac{1}{q_2}=1$ that is   
\begin{equation}
\begin{aligned}
\frac{d}{r_1} + \frac{2 + q_{m,d}}{r_2}=\frac{d}{q} ~~ &\Longleftrightarrow  ~~ d\left(1-\frac{1}{q_1} \right) + (2+q_{m,d})\left(1-\frac{1}{q_2} \right)=\frac{d}{q}\\
 &\Longleftrightarrow  ~~ \frac{d}{q_1} + \frac{2+q_{m,d}}{q_2} = \frac{d(q-1)}{q} + 2+q_{m,d},
\end{aligned}
\end{equation}
which means $V_n \in \mathcal{D}_{m,q}^{(q_1, q_2)}$ then
 \begin{equation}\label{eq49 : Theorem-1}
 \|V_n\rho_n\|_{L_{x,t}^1} \leq \| V_n\|_{\mathcal{D}_{m,q}^{(q_1,q_2)}} \|\rho_n \|_{L_{x,t}^{r_1,r_2}}.                                     
 \end{equation}
 We exploit \eqref{eq49 : Theorem-1} with $q=1$ to obtain
 \begin{equation*}\label{eq50 : Theorem-1}
\|(V_n-V)\rho_n \|_{L_{x,t}^{1}}
\leq \|V_n-V \|_{{\mathcal{D}}_{m,1}^{({q}_1, {q}_2)}} \| \rho_n\|_{L_{x,t}^{r_1, r_2}}
 \leq C \|V_n-V \|_{\mathcal{D}_{m,1}^{(q_1,q_2)}} ,
\end{equation*}
where we used the fact that $\| \rho_n\|_{L_{x,t}^{r_1, r_2}} \leq C (\|\rho_n\|_{L_{x,t}^{1,\infty}},~ \|\nabla \rho_n^{\frac{m}{2}}\|_{L_{x,t}^{2}} )
=C ( \int_{\Omega} \rho_{0} \log \rho_{0} \,dx )$
from \eqref{eq46 : Theorem-1}.
Hence, we have \eqref{eq48 : Theorem-1} and complete the proof of (i).

{\it Proof of (iii) for $q=1$.}  Besides the assumption on $V$ for the case (i), if $V$ further satisfies $V\in L_{x,t}^{(q_1, q_2)}$ for $\frac{d}{q_1}+\frac{2+ d(m-1)}{q_2}<  2+ d(m-1) $
 then \eqref{regular:narrow-distance:2} gives us 
\begin{equation}\label{eq47 : Theorem-1}
\delta(\rho_n(s), \rho_n(t)) \leq C(t-s)^{a} , \qquad  \forall ~0 \leq s < t \leq T,
\end{equation}
for some constant $a=a(q_1, q_2)$. 

From Lemma \ref{Lemma : Arzela-Ascoli} with \eqref{eq47 : Theorem-1}, there exists a curve $\rho : [0, T] \rightarrow \mathcal{P}_2(\Omega)$ such that
\begin{equation}\label{eq55 : Theorem-1}
\rho_n(t) \,\mbox{(up to a subsequence) \,narrowly converges to}\, \rho(t) \,~\mbox{as}~ \, n \rightarrow \infty, \quad \forall \ 0\leq t \leq T,
\end{equation}
which gives us 
\begin{equation}\label{eq56 : Theorem-1}
\delta(\rho(s), \rho(t)) \leq C(t-s)^{a} , \qquad  \forall ~0 \leq s < t \leq T,
\end{equation}
and
 \begin{equation*}
\sup_{0\leq t \leq T}\int_{\Omega \times \{ t\}}\rho |\log \rho|  \,dx \leq C.
\end{equation*}
The rest of the proof is same as the proof of (i).

{\it Proof of (ii).}
There exists a sequence of vector fields $V_n\in C^\infty_c(\overline{\Omega}\times[0,T)) $ with $\nabla \cdot V_n(t)=0$ and $V_n(t)\cdot \mathbf{n}=0$ on $\partial\Omega$ for all $t\in[0,T)$
such that
\begin{equation*}\label{eq51 : Theorem-1}
\lim_{n \rightarrow \infty}\|V_n - V \|_{\mathcal{D}_{m,q}^{(q_1,q_2)} } =0.
\end{equation*}
We also have $\rho_{0,n}$ satisfying \eqref{eq7 : Theorem-2-a} as in the proof of (ii) of Theorem \ref{T:ACweakSol}. 

Exploiting Proposition \ref{proposition : regular existence}, we have $\rho_n$ which is a regular solution of
\eqref{eq5 : Theorem-1} satisfying \eqref{eq21 : Theorem-1} and 
\begin{equation*}
\begin{aligned}
\sup_{0\leq t \leq T} &\int_{\Omega\times \{t\}}  (\rho_n)^q   \,dx
 +  \iint_{\Omega_T} \left |\nabla \rho_n^{\frac{q+m-1}{2}}\right |^2  \,dx\,dt \leq C,
\end{aligned}
\end{equation*}
with the constant $C=C (\|\rho_0\|_{L^{q}(\Omega)})$. 

As in the proof of (i),  due to the Proposition \ref{Proposition : compactness},  we have that $\rho_n$ (up to a subsequence) converges to $\rho$ in $ L_{x,t}^{\alpha, \beta}$ for some $\alpha$ and $\beta$, and  
 \begin{equation*}\label{eq52 : Theorem-1}
\rho_n(t) \,\mbox{(up to a subsequence) \,narrowly converges to}\, \rho(t) \,~\mbox{as}~ \, n \rightarrow \infty, \quad a.e ~ t \in (0,T).
\end{equation*}
 This gives us
 \begin{equation*}
\begin{aligned}
\esssup_{0\leq t \leq T} &\int_{\Omega\times \{t\}}  \rho^q   \,dx \leq C.
\end{aligned}
\end{equation*}
As in the proof of (i), the rest of the proof is similar to the proof of Theorem \ref{T:ACweakSol} (ii) except for exploiting  \eqref{eq49 : Theorem-1} to have
\begin{equation*}
| I |\leq \|\nabla \varphi \|_{L_{x,t}^\infty} \|(V_n - V)\rho_n \|_{L_{x,t}^{1}}
\leq C \|V_n - V \|_{\mathcal{D}_{m,q}^{(q_1,q_2)}} \longrightarrow 0,  \  \mbox{ as } \   n\rightarrow \infty,
\end{equation*}
instead of  \eqref{eq34 : Theorem-1}.

{\it Proof of (iii) for $q>1$.}   Besides the assumption on $V$ for the case (ii), if  $V$ further satisfies $V\in L_{x,t}^{(q_1, q_2)}$ for $\frac{d}{q_1}+\frac{2+q_{m,d}}{q_2}< \frac{d(q-1)}{q} + 2+ q_{m,d} $ then we have \eqref{eq47 : Theorem-1}. From this, we have \eqref{eq55 : Theorem-1}, \eqref{eq56 : Theorem-1}, and
\begin{equation*}
\begin{aligned}
\sup_{0\leq t \leq T} &\int_{\Omega\times \{t\}}  \rho^q   \,dx \leq C.
\end{aligned}
\end{equation*}
The rest of the proof is same as the proof of (ii).
\qed

\subsection{Proof of Theorem \ref{T:ACweakSol:divfree}}
We note $V \in \mathcal{D}_{m,q,s}^{(q_1, q_2)}$ implies $V \in \mathcal{D}_{m,q}^{(q_1^*, q_2^*)}$ where $q_i^*=q_i/2$ for $i=1, 2$. We also can check that if 
$V \in \mathcal{D}_{m,q,s}^{(q_1, q_2)}$ with $(q_1, q_2)$ satisfying  \eqref{T:ACweakSol:divfree:Vq} then 
$V \in \mathcal{D}_{m,q}^{(q_1^*, q_2^*)}$ with $(q_1^*, q_2^*)$ satisfying \eqref{T:weakSol:divfree:Vq}.

Hence, as in the proof of Theorem \ref{T:weakSol:divfree}, we have  $\rho_n\in AC(0,T; \mathcal{P}_2(\Omega))$ which is a regular solution of
\eqref{eq5 : Theorem-1}. 
Furthermore, since $V$ satisfies the assumptions in Proposition \ref{P:Energy-speed}, we have
\begin{equation*}\label{eq53 : Theorem-1}
\sup_{0\leq t \leq T}\int_{\Omega \times \{ t\}}\rho_n |\log \rho_n|  \,dx
\iint_{\Omega_T} \{ \abs{\nabla \rho_n^{\frac m2}}^2 + \left(\abs{\frac{\nabla
\rho_n^m}{\rho_n}}^{2}+|V_n|^{2}\right ) \rho_n  \}\,dx\,dt
 \leq  C,
\end{equation*}
which gives \eqref{eq35 : Theorem-a} with the constant $C=C (\|V\|_{\mathcal{D}_{m,1,s}^{(q_1, q_2)}}, \, \int_{\Omega} \rho_0 \log \rho_0 \,dx )$. 
We also have
 \begin{equation*}\label{eq54 : Theorem-1}
\sup_{0\leq t \leq T}\int_{\Omega \times \{ t\}}(\rho_n)^q   \,dx
+ \iint_{\Omega_T} \{ \abs{\nabla \rho_n^{\frac{q+m-1}{2}}}^2 + \left(\abs{\frac{\nabla
\rho_n^m}{\rho_n}}^{2}+|V_n|^{2}\right ) \rho_n  \}\,dx\,dt
 \leq  C,
\end{equation*}
and \eqref{eq35 : Theorem-a} with the constant $C=C (\|V\|_{\mathcal{D}_{m,q,s}^{(q_1, q_2)}}, \, \|\rho_0 \|_{L^{q}(\Omega)} )$ for  $q>1$.
 
Now, including the convergence of $\rho_n$, the rest of the proof can be completed exactly same as the proof of Theorem \ref{T:ACweakSol}.

\qed

\appendix  
\section{Existence of a regular solution}\label{A:Exist-regular}
\subsection{Splitting method}
 In this subsection, we introduce the splitting method and construct two sequences of curves in
$\mathcal{P}_{2}(\Omega)$ which are approximate solutions of \eqref{FDE}-\eqref{FDE_bc_ic} where $\rho_0$ and $V$ satisfy  the assumptions in Proposition \ref{proposition : regular existence}. 

For each $n\in \mathbb{N},$ we define approximated solutions
$\varrho_n,~ \rho_n:[0,T)\mapsto \mathcal{P}_{2}(\Omega)$ as follows;
\begin{itemize}

\item
 For $t\in [0,\frac{T}{n}]$, we define $\varrho_n:[0,\frac{T}{n}]\mapsto \mathcal{P}_{2}(\Omega)$ as the solution of
\begin{equation*}\label{eq 1 : splitting}
  \left\{
  \begin{array}{ll}
  \partial_t \varrho_n =   \Delta (\epsilon +\varrho_n)^m, & \text{ in } \ \Omega_{T},  \vspace{1mm}\\
  \frac{\partial\varrho_n}{\partial\mathbf{n}} = 0, & \text{ on } \partial \Omega \times (0, \frac{T}{n}],  \vspace{1mm}\\
  \varrho_n(\cdot,0)=\varrho_0, & \text{ on } \ \Omega.
  \end{array}
  \right.
\end{equation*}
We also define  $\rho_n:[0,\frac{T}{n}]\mapsto
\mathcal{P}_{2}(\Omega)$ as follows
\begin{equation*}
\rho_n(t)= \Psi(t;0, \varrho_n(t)), \quad \forall ~ t \in [0, \frac
T n].
\end{equation*}
Here, we recall \eqref{Flow on Wasserstein} for the definition of $\Psi$. 


\item In general, for each $i=1, \ldots, n-1$ and $t\in (\frac{iT}{n} ,\frac{(i+1)T}{n}]$, we define $\varrho_n:(\frac{iT}{n} ,\frac{(i+1)T}{n}]\mapsto \mathcal{P}_{2}(\Omega)$ as the solution of
\begin{equation*}\label{eq 3 : splitting}
  \left\{
  \begin{array}{ll}
  \partial_t \varrho_n =   \Delta (\epsilon +\varrho_n)^m, & \text{ in } \ \Omega_{T},  \vspace{1mm}\\
  \frac{\partial \varrho_n}{\partial \mathbf{n}} = 0, & \text{ on } \partial \Omega \times (\frac{iT}{n}, \frac{(i+1)T}{n}],  \vspace{1mm}\\
 \varrho_n(\cdot, \frac{iT}{n})=\rho_n( \frac{iT}{n}), & \text{ on } \ \Omega.
  \end{array}
  \right.
\end{equation*}
We also define  $\rho_n:(\frac{iT}{n} ,\frac{(i+1)T}{n}]\mapsto
\mathcal{P}_{2}(\Omega)$ as follows
\begin{equation*}
\rho_n(t)= \Psi(t; iT/n, \varrho_n(t)), \quad \forall ~ t \in
(\frac{iT}{n} ,\frac{(i+1)T}{n}].
\end{equation*}
\end{itemize}

 Now, we investigate some properties useful for the proof of Proposition \ref{proposition : regular existence}.
\begin{lemma}\label{Lemma:AC-curve}\cite[Lemma 5.4]{HKK01}
Let $V \in \mathcal{C}_c^\infty (\overline{\Omega} \times [0, T))$ with
$V\cdot\mathbf{n}=0$ on $\partial \Omega$  and $\rho_0 \in \mathcal{P}_{2}(\Omega)$. 
Suppose that $\rho_n,\varrho_n:[0,T)\mapsto
\mathcal{P}_{2}(\Omega)$ are curves defined as above with the initial
data $\varrho_n(0),\rho_n(0):=\rho_0 $. Then, these curves satisfy
the following properties;
\begin{itemize}
\item[(i)] For all $t\in [0,T),$ we have
\begin{equation}\label{eq29:Lemma:AC-curve}
\|\varrho_{n}(t)\|_{L^q_x}, \  \|\rho_{n}(t)\|_{L^q_x} \leq
\|\rho_0\|_{L^q(\Omega)} e^{\frac{q-1}{q}\int_0^T \|\nabla \cdot
V\|_{L^\infty_x} \,d\tau}, \qquad \forall ~  q \geq 1.
\end{equation}

\item[(ii)] For all $s,t \in [0,T),$ we have
\begin{equation*}\label{eq20:Lemma:AC-curve}
\begin{aligned}
W_{2}(\rho_{n}(s), \rho_{n}(t)) &\leq C |t-s|^{\frac{1}{2}}+ \int_s^t
\|V(\tau)\|_{L^\infty_x} \,d\tau  ,
\end{aligned}
\end{equation*}
where $C= C(\int_0^T \|\nabla V\|_{L^\infty_x}  \,d\tau , \int_\Omega \rho_0 \log \rho_0 \, dx)$

\item[(iii)] For all $t \in [0,T),$ we have
\begin{equation*}\label{eq21:Lemma:AC-curve}
\begin{aligned}
W_{2}(\rho_{n}(t), {\varrho}_{n}(t)) \leq
\max_{\{i=0,1,\dots,n\}}\int_{\frac{iT}{n}}^{\frac{(i+1)T}{n}}
\|V(\tau)\|_{L^\infty_x} \,d\tau.
\end{aligned}
\end{equation*}

\end{itemize}
\end{lemma}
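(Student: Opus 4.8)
The plan is to treat the two building blocks of the splitting scheme separately---the pure diffusion steps, on which $\varrho_n$ solves the homogeneous equation \eqref{H-FDE}, and the transport steps, on which $\rho_n$ is the push-forward $\Psi(t;iT/n,\varrho_n(t))$ of $\varrho_n$ along the flow of $V$---and then to concatenate the resulting bounds over the $n$ blocks $I_i=[iT/n,(i+1)T/n]$. I would first record that $\rho_n$ is continuous across the grid points, since $\Psi(iT/n;iT/n,\cdot)=\mathrm{id}$ forces $\rho_n(iT/n)=\varrho_n(iT/n)$ at the start of each block, so $\rho_n$ is a genuine curve on all of $[0,T)$. For (i) I would combine two monotonicity facts: on each diffusion block, testing \eqref{H-FDE} with $q(\epsilon+\varrho_n)^{q-1}$ (resp.\ $\log(\epsilon+\varrho_n)$ when $q=1$) shows the dissipation is nonnegative, so $t\mapsto\|\varrho_n(t)\|_{L^q}$ is nonincreasing (this is the content of Lemma~\ref{P:H-PME:e}); on each transport block, Lemma~\ref{Lemma : density relation on the flow} gives $\|\rho_n\|_{L^q}\le\|\varrho_n\|_{L^q}\,e^{\frac{q-1}{q}\int_{I_i}\|\nabla\cdot V\|_{L^\infty_x}\,d\tau}$. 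Since the norm grows only on transport steps, iterating over $i=0,\dots,n-1$ and using additivity of the time integral produces the single exponential factor in \eqref{eq29:Lemma:AC-curve}.

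Part (iii) is the most direct. On the block containing $t$, $\rho_n(t)$ is the push-forward of $\varrho_n(t)$ by $\psi(t;iT/n,\cdot)$, so coupling $\varrho_n(t)$ with itself through this map gives $W_2(\rho_n(t),\varrho_n(t))^2\le\int_\Omega|\psi(t;iT/n,x)-x|^2\,d\varrho_n(t)$. Because $\frac{d}{d\tau}\psi=V(\psi,\tau)$ with $\psi(iT/n;iT/n,x)=x$, one has $|\psi(t;iT/n,x)-x|\le\int_{iT/n}^t\|V(\tau)\|_{L^\infty_x}\,d\tau$ pointwise, and the bound of (iii) follows upon taking the maximum over $i$.

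For (ii) the strategy is a within-block triangle inequality followed by a global summation. For $s<t$ in the same block $I_i$ I would write $W_2(\rho_n(s),\rho_n(t))\le W_2(\Psi(s;iT/n,\varrho_n(s)),\Psi(t;iT/n,\varrho_n(s)))+W_2(\Psi(t;iT/n,\varrho_n(s)),\Psi(t;iT/n,\varrho_n(t)))$. The first term is controlled as in (iii) by $\int_s^t\|V\|_{L^\infty_x}\,d\tau$; the second uses that $\psi(t;iT/n,\cdot)$ is Lipschitz with constant $e^{\int_{I_i}\|\nabla V\|_{L^\infty_x}}$ (Lemma~\ref{Lemma : Lipschitz of Jacobian}), so push-forward contracts $W_2$ up to that factor and leaves $W_2(\varrho_n(s),\varrho_n(t))$. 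The latter is estimated by viewing $\varrho_n$ as the continuity-equation curve with velocity $w_n:=-\nabla(\epsilon+\varrho_n)^m/\varrho_n$ and invoking Lemma~\ref{representation of AC curves}, which gives $W_2(\varrho_n(s),\varrho_n(t))\le(\int_s^t\|w_n\|_{L^2(\varrho_n)}^2\,d\tau)^{1/2}(t-s)^{1/2}$. The transport contributions sum by additivity, while the diffusion contributions over consecutive blocks collapse, via the discrete Cauchy--Schwarz inequality, from $\sum_j(\int_{I_j}\|w_n\|^2)^{1/2}|I_j\cap[s,t]|^{1/2}$ into $(\int_s^t\|w_n\|^2)^{1/2}(t-s)^{1/2}$, yielding the claimed form $C(t-s)^{1/2}+\int_s^t\|V\|_{L^\infty_x}\,d\tau$ of \eqref{eq20:Lemma:AC-curve}.

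The main obstacle is showing that the cumulative diffusion speed $\int_0^T\|w_n\|_{L^2(\varrho_n)}^2\,d\tau$ is bounded \emph{uniformly in $n$}, which is exactly what forces $C$ to be independent of the number of splitting steps. The per-block speed estimate obtained by testing $-m(\epsilon+\varrho_n)^{m-1}$ only bounds $\int_{I_i}\|w_n\|^2$ by a quantity of order one, so naive summation over the $n$ blocks diverges. Instead I would track the Lyapunov functional $\frac{1}{1-m}\int_\Omega(\epsilon+\varrho_n)^m\,dx$, whose growth across a diffusion block reproduces $\int_{I_i}\|w_n\|_{L^2(\varrho_n)}^2$ (up to the $\epsilon$-regularization) and whose multiplicative jump across a transport block satisfies $\int(\epsilon+\rho_n)^m\le e^{(1-m)\int_{I_i}\|\nabla\cdot V\|_{L^\infty_x}}\int(\epsilon+\varrho_n)^m$ by the density relation of Lemmas~\ref{Lemma : Lipschitz of Jacobian} and \ref{Lemma : density relation on the flow}. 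Telescoping these two effects along all $2n$ half-steps, the diffusion increments cancel against the functional's boundary values while the transport increments sum to at most a multiple of $\int_0^T\|\nabla\cdot V\|_{L^\infty_x}\,d\tau$; combined with the uniform bound $\int(\epsilon+\varrho_n)^m\le|\Omega|^{1-m}(\epsilon|\Omega|+\|\rho_0\|_{L^1})^m$ from mass conservation, this gives the $n$-independent estimate. The finiteness of $\int_\Omega\rho_0\log\rho_0$ enters to keep the diffusion curves well-defined and feeds the constant in \eqref{eq20:Lemma:AC-curve}; this telescoping, and the delicate interplay between the $\epsilon$-regularized diffusion and the $W_2$-speed functional, is the technical heart of the lemma.
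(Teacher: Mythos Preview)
The paper does not give its own proof of this lemma; it simply cites \cite[Lemma 5.4]{HKK01}. Your strategy is exactly the one used there: parts~(i) and~(iii) are handled by the monotonicity of the $L^q$-norm under diffusion (Lemma~\ref{P:H-PME:e}) combined with Lemma~\ref{Lemma : density relation on the flow} for the push-forward step, and by the pointwise displacement bound for the flow map; part~(ii) is obtained by concatenating the block-wise $W_2$-estimates, the key uniform-in-$n$ control of the cumulative diffusion speed coming from telescoping a Lyapunov functional whose per-block increment is the speed and whose jump at the transport step is bounded by $\int_{I_i}\|\nabla\cdot V\|_{L^\infty_x}$. So the architecture of your argument matches the cited reference.

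There is, however, one genuine point that your parenthetical ``up to the $\epsilon$-regularization'' conceals rather than resolves. On a diffusion block the increment of $\tfrac{1}{1-m}\int_\Omega(\epsilon+\varrho_n)^m\,dx$ equals
\[
\int_{I_i}\int_\Omega \frac{|\nabla(\epsilon+\varrho_n)^m|^2}{\epsilon+\varrho_n}\,dx\,d\tau,
\]
which is the $L^2$-speed of the \emph{shifted} curve $\tau\mapsto(\epsilon+\varrho_n(\tau))/(1+\epsilon|\Omega|)$ in $\mathcal{P}_2(\Omega)$. The metric speed of $\varrho_n$ itself, needed for Lemma~\ref{representation of AC curves}, is $\int|\nabla(\epsilon+\varrho_n)^m|^2/\varrho_n$, which \emph{dominates} the former quantity because $\varrho_n\le\epsilon+\varrho_n$. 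So the inequality goes the wrong way, and your telescoping bound controls the $W_2$-increments of the shifted curve, not of $\varrho_n$. In the PME setting of \cite{HKK01} there is no $\epsilon$ and the two coincide, which is why the issue does not arise there. To close the argument here you must either carry the $W_2$-estimate for the shifted curve through the whole proof (the push-forward step acts on $\varrho_n$, not on $\epsilon+\varrho_n$, so this requires care) or accept an $\epsilon$-dependent constant in~(ii), which is in tension with the lemma's stated dependence $C=C(\int_0^T\|\nabla V\|_{L^\infty_x},\int_\Omega\rho_0\log\rho_0)$ and with its later use at the $\epsilon\to0$ stage of Proposition~\ref{proposition : regular existence}.
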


\begin{lemma}\label{Lemma : equi-continuity}\cite[Lemma 5.7]{HKK02}
There exists $\tilde{\alpha} \in (0,1)$ such that, for any $0<\alpha \leq\tilde{\alpha}$, suppose 
\begin{equation*}
\rho_0 \in \mathcal{P}_{2}(\Omega) \cap \calC^\alpha (\overline{\Omega})  \quad
\mbox{and} \quad V \in  L^1(0,T ; \calC^{1, \alpha}(\Omega)),
\end{equation*}
with  $V\cdot\mathbf{n}=0$ on $\partial \Omega$ and let $\rho_n,\varrho_n:[0,T)\mapsto \mathcal{P}_{2}(\Omega)$ be curves
defined as in Lemma \ref{Lemma:AC-curve}.
Then, we have
 \begin{equation*}\label{eq 2 : Holder of splitting}
|\varrho_n(x,t)-\varrho_n(y,t)| + |\rho_n(x,t)-\rho_n(y,t)|\leq
C|x-y|^\alpha,  \qquad \forall ~ x, y \in \Omega , ~ t \in [0, T],
 \end{equation*}
  where $C(\|\rho_0\|_{L^\infty(\Omega)},  \int_0^T \|\nabla V\|_{\calC^\alpha (\Omega)}  \,d\tau)$.
\end{lemma}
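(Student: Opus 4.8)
The plan is to induct over the $n$ splitting steps and show that the spatial $\calC^\alpha(\overline{\Omega})$-seminorms of $\rho_n(\cdot,t)$ and $\varrho_n(\cdot,t)$ stay bounded uniformly in $n$, with all growth coming from the flow map rather than from the diffusive half-steps. Two ingredients drive this: an intrinsic Hölder estimate for the homogeneous equation solved by $\varrho_n$ on each interval $I_i:=(\tfrac{iT}{n},\tfrac{(i+1)T}{n}]$, and the transfer of Hölder regularity along the flow $\Psi$ recorded in Lemma~\ref{Lemma : Holder regularity on the flow}(i). Before either, I would fix a uniform $L^\infty$ bound. This is immediate from Lemma~\ref{Lemma:AC-curve}(i): read at $q=\infty$ it gives $\sup_t\|\varrho_n(\cdot,t)\|_{L^\infty},\ \sup_t\|\rho_n(\cdot,t)\|_{L^\infty}\le \|\rho_0\|_{L^\infty}\,e^{\int_0^T\|\nabla\cdot V\|_{L^\infty_x}\,d\tau}=:M$, independently of $n$. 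This bound is precisely what keeps the singular coefficient of the diffusion under control and makes the regularity theory applicable.

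For the first ingredient I would write $u_n:=\epsilon+\varrho_n$, so that $u_n$ solves the genuine homogeneous fast diffusion equation $\partial_t u_n=\Delta u_n^m$ with $\epsilon\le u_n\le \epsilon+M$, and $\varrho_n$, $u_n$ share the same spatial Hölder seminorm. The intrinsic Hölder theory for fast diffusion (\cite{Chen_DiBenedetto,DiBenedetto_Kwong,DiBenedetto_Kwong_Vespri}) supplies an exponent $\tilde\alpha\in(0,1)$ depending only on $m,d$, together with a regularizing estimate whose level is governed by $\sup u_n\le \epsilon+M$ at positive times; since the datum $\varrho_n(\cdot,\tfrac{iT}{n})=\rho_n(\cdot,\tfrac{iT}{n})$ of each interval is already Hölder (from the previous flow step), the estimate extends up to the initial slice of $I_i$ without loss as $t\downarrow \tfrac{iT}{n}$. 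The decisive feature I must extract, and the crux of the whole argument, is that the resulting control of $\|\varrho_n(\cdot,t)\|_{\calC^\alpha(\overline{\Omega})}$ for $t\in I_i$ and every $0<\alpha\le\tilde\alpha$ does not inflate the incoming seminorm, uniformly in the shrinking length $T/n$: comparing $u_n$ with its spatial translates and reconciling this with the Neumann condition shows the modulus of continuity at time $t$ is dominated by the larger of the incoming modulus and a smoothing ceiling depending only on $M$.

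The second ingredient is then a direct application of Lemma~\ref{Lemma : Holder regularity on the flow}(i): on $I_i$ one has $\rho_n(t)=\Psi(t;\tfrac{iT}{n},\varrho_n(t))$, whence $\|\rho_n(\cdot,t)\|_{\calC^\alpha(\overline{\Omega})}\le C\bke{\|\varrho_n(\cdot,t)\|_{\calC^\alpha(\overline{\Omega})},\ \int_{iT/n}^{t}\|\nabla V\|_{\calC^\alpha(\Omega)}\,d\tau}$, with exponential dependence on the second slot. Chaining this with the non-inflation above and using $\varrho_n(\cdot,\tfrac{iT}{n})=\rho_n(\cdot,\tfrac{iT}{n})$ produces the one-step recursion $\|\rho_n(\cdot,\tfrac{(i+1)T}{n})\|_{\calC^\alpha}\le e^{C\int_{I_i}\|\nabla V\|_{\calC^\alpha(\Omega)}\,d\tau}\,\|\rho_n(\cdot,\tfrac{iT}{n})\|_{\calC^\alpha}$; telescoping over $i=0,\dots,n-1$ gives a bound by $e^{C\int_0^T\|\nabla V\|_{\calC^\alpha(\Omega)}\,d\tau}$ times a level governed by $\|\rho_0\|_{L^\infty}$ (the fast-diffusion smoothing absorbing the initial Hölder seminorm into the $M$-ceiling), uniformly in $n$, which is the asserted constant, and the same chain controls $\varrho_n$ on each $I_i$. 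I expect the main obstacle to be exactly the non-inflation claim: a naive interior parabolic Hölder estimate degrades like $(T/n)^{-\alpha}$ and would accumulate a factor $C(M)^n\to\infty$ over the steps, so one genuinely needs the translation/comparison mechanism of the homogeneous equation—carefully matched to the Neumann boundary condition on $\partial\Omega$—mirroring \cite[Lemma~5.7]{HKK02}.
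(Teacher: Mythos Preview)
The paper does not supply its own proof of this lemma; it is quoted directly from \cite[Lemma~5.7]{HKK02} with no argument reproduced here, so there is nothing in the present paper to compare your proposal against beyond the citation itself. That said, your outline---uniform $L^\infty$ control via Lemma~\ref{Lemma:AC-curve}(i) at $q=\infty$, H\"older stability of the homogeneous diffusion half-step on each $I_i$, H\"older transfer through the transport half-step via Lemma~\ref{Lemma : Holder regularity on the flow}(i), and telescoping the product $\prod_i e^{C\int_{I_i}\|\nabla V\|_{\calC^\alpha}\,d\tau}=e^{C\int_0^T\|\nabla V\|_{\calC^\alpha}\,d\tau}$---is exactly the scaffold one expects the cited proof in \cite{HKK02} to follow, and you have correctly isolated the one non-routine point: the diffusion half-step must not inflate the spatial $\calC^\alpha$-seminorm by a fixed factor strictly larger than $1$, since that would accumulate as $C^n$.

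One caution worth making explicit. The constant in the statement is asserted to depend only on $\|\rho_0\|_{L^\infty(\Omega)}$ and $\int_0^T\|\nabla V\|_{\calC^\alpha(\Omega)}\,d\tau$, in particular \emph{not} on $\epsilon$, and this uniformity is genuinely used downstream in the proof of Proposition~\ref{proposition : regular existence} when the paper passes $\epsilon\to 0$ after $n\to\infty$. Your sentence ``this bound is precisely what keeps the singular coefficient of the diffusion under control'' is slightly misleading: the $L^\infty$ upper bound $M$ controls the degeneracy of $m u_n^{m-1}$ at large values of $u_n$, but it is the lower bound $u_n\ge\epsilon$ that tames the singularity at $u_n\to 0$, and you must \emph{not} lean on that lower bound or your constants will carry a factor $\epsilon^{m-1}$ that blows up. The non-inflation step therefore really has to come from the intrinsic singular theory you cite (\cite{Chen_DiBenedetto,DiBenedetto_Kwong,DiBenedetto_Kwong_Vespri}), with estimates driven only by $\sup u_n\le \epsilon+M$, rather than from a uniformly parabolic argument; your later appeal to those references is essential, not merely convenient.
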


\begin{lemma}\label{Lemma:solving ODE}
Let $\rho_n,\varrho_n:[0,T]\mapsto \mathcal{P}_{2}(\Omega)$ be the curves defined as in Lemma \ref{Lemma:AC-curve}. 
Suppose $\varphi \in \calC_c(\overline{\Omega} \times [0,T))$ is such that   $\varphi \in \calC^{2,1}_{x,t}(\overline{\Omega} \times (iT/n, (i+1)T/n))$ for all $i=0, 1, 2, \dots, n-1$. 
For any $s,~t \in [0,T]$, we define 
\begin{equation}\label{eq 15 : solving ODE}
\begin{aligned}
E_n &:= \int_{\Omega}\varphi(x,t) ~\rho_n(x,t)\,dx - \int_{\Omega}\varphi(x,s) ~\rho_n(x,s)\,dx\\
  &\quad  - \int_{s}^{t} \int_{\Omega}  ( \nabla   \varphi \cdot V) ~\rho_n \,dx \,d\tau
  - \int_{s}^{t} \int_{\Omega}  \{ \partial_\tau \varphi ~ \rho_n - \nabla \varphi
\cdot \nabla (\epsilon +\rho_n)^m \}\,dx \,d\tau.
\end{aligned}
\end{equation}

Then,
\begin{equation}\label{eq 14 : solving ODE}
\begin{aligned}
|E_n| \leq  \frac{C}{n},
\end{aligned}
\end{equation}
for some constant  $C:=C (\|\rho_0\|_{W^{2,\infty}(\Omega)},  ~\|V\|_{L^\infty(0,T: W^{2,\infty}(\Omega))}, ~\max_{0\leq i\leq n-1} \|\varphi\|_{C^{2,1}_{x,t}(\bar{\Omega} \times [\frac{iT}{n}, \frac{(i+1)T}{n}])} )$.
\end{lemma}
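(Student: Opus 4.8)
\emph{Proof strategy.} The plan is to exploit the continuity of $\rho_n$ across the splitting nodes $s_i:=iT/n$ to telescope $E_n$ into a sum of local errors, and then to show that each local error is of order $(T/n)^2$ by comparing the exact evolution of $\rho_n$ on a subinterval with the target weak formulation. First I would note that at each node $\rho_n(s_i)=\Psi(s_i;s_i,\varrho_n(s_i))=\varrho_n(s_i)$, and that this value coincides with the terminal datum produced on the preceding subinterval, so $\tau\mapsto\int_\Omega\varphi\,\rho_n\,dx$ is continuous and the boundary contribution in \eqref{eq 15 : solving ODE} telescopes over the subintervals $I_i:=[s_i,s_{i+1}]\cap[s,t]$. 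It therefore suffices to estimate, for each $i$, the local quantity obtained by restricting \eqref{eq 15 : solving ODE} to $I_i$.

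On a single subinterval I would set $g(\tau):=\int_\Omega\varphi(x,\tau)\rho_n(x,\tau)\,dx$ and use the push-forward representation $\rho_n(\tau)=\psi(\tau;s_i,\cdot)_\#\varrho_n(\tau)$ to write $g(\tau)=\int_\Omega\varphi(\psi(\tau;s_i,y),\tau)\,\varrho_n(y,\tau)\,dy$. Differentiating in $\tau$ and inserting the flow ODE \eqref{ODE}, $\partial_\tau\psi=V(\psi,\tau)$, together with the diffusion equation $\partial_\tau\varrho_n=\Delta_y(\epsilon+\varrho_n)^m$, produces exactly three terms: the drift term $\int(\nabla\varphi\cdot V)(x,\tau)\rho_n\,dx$ and the time-derivative term $\int\partial_\tau\varphi(x,\tau)\rho_n\,dx$, which match the corresponding integrands of \eqref{eq 15 : solving ODE} identically after undoing the push-forward, and the diffusion term $\int_\Omega\varphi(\psi(\tau;s_i,y),\tau)\,\Delta_y(\epsilon+\varrho_n)^m\,dy$. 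Integrating the last term by parts twice in $y$ (the boundary terms vanish because $\partial\varrho_n/\partial\mathbf{n}=0$ and, since $V\cdot\mathbf{n}=0$, the flow $\psi(\tau;s_i,\cdot)$ maps $\partial\Omega$ into itself) turns it into $\int_\Omega\Delta_y[\varphi(\psi(\tau;s_i,y),\tau)]\,(\epsilon+\varrho_n)^m\,dy$, which is to be compared with the target integrand, written after one integration by parts as $\int_\Omega\Delta_x\varphi(x,\tau)\,(\epsilon+\rho_n)^m\,dx$.

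The comparison is where the near-identity structure of the flow enters. Since $V\in L^\infty(0,T;W^{2,\infty})$, on $I_i$ one has $\psi(\tau;s_i,\cdot)=\mathrm{id}+O(\tau-s_i)$ and Jacobian $J_{s_i,\tau}=1+O(\tau-s_i)$ in $C^1$, with constants controlled by $\|V\|_{L^\infty(0,T;W^{2,\infty})}$; this is the content of Lemmas~\ref{Lemma : Lipschitz of Jacobian} and~\ref{Lemma : density relation on the flow}. At $\tau=s_i$ the flow is the identity and the two integrals above coincide exactly, because $\varrho_n(s_i)=\rho_n(s_i)$. Replacing $\Delta_y[\varphi\circ\psi]$ by $(\Delta_x\varphi)\circ\psi$, using $(\epsilon+\varrho_n)^m=(\epsilon+\rho_n(\psi)\,J_{s_i,\tau})^m=(\epsilon+\rho_n(\psi))^m+O(\tau-s_i)$, and undoing the change of variables $x=\psi(\tau;s_i,y)$ via the density relation $\rho_n(\psi)\,J_{s_i,\tau}=\varrho_n$, every discrepancy carries a factor $(\tau-s_i)$ and is bounded by $(\tau-s_i)$ times a product of $\|\varphi\|_{C^{2,1}}$, $\|V\|_{W^{2,\infty}}$ and uniform-in-$n$ bounds on $(\epsilon+\varrho_n)^m$ and $\nabla(\epsilon+\varrho_n)^m$; the latter follow from Lemma~\ref{Lemma : Holder regularity on the flow}(ii) and the $W^{2,\infty}$-regularity of $\rho_0$, and remain bounded on $[0,T]$ since the exponential factors there depend only on $\int_0^T\|\nabla V\|$. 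Integrating over $I_i$ gives a local bound $C(T/n)^2$, and summing over the at most $n$ subintervals yields $|E_n|\le C\,n\,(T/n)^2=CT^2/n$, which is \eqref{eq 14 : solving ODE}.

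The main obstacle is the diffusion term analyzed in the previous paragraph: one must show that the double integration by parts against the pulled-back test function $\varphi\circ\psi$, combined with the nonlinear change of variables in $(\epsilon+\varrho_n)^m$, produces only errors of order $(\tau-s_i)$ with a constant uniform in both $i$ and $n$. The uniformity is essential—each of the $n$ local errors must be $O((T/n)^2)$ rather than $O(T/n)$—and it hinges on the quadratic-in-$(\tau-s_i)$ cancellation coming from the exactness at $\tau=s_i$, together with $n$-independent control of the flow map and of the nonlinear diffusion quantities over the whole interval $[0,T]$.
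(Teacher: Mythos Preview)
Your telescoping strategy is sound and genuinely different from the paper's route. The paper does \emph{not} telescope or compute $g'(\tau)$ directly; instead it introduces an intermediate error $\tilde E_n$, defined exactly as $E_n$ but with $\varrho_n$ replacing $\rho_n$ in the two terms $\partial_\tau\varphi\,\rho_n$ and $\nabla\varphi\cdot\nabla(\epsilon+\rho_n)^m$, invokes earlier work (\cite[Lemma~4.2]{KK-SIMA}, \cite[Lemma~5.7]{HKK01}) for $|\tilde E_n|\le C/n$, and then bounds $|E_n-\tilde E_n|$ by proving the uniform-in-$\tau$ pointwise estimates $\|\rho_n-\varrho_n\|_{L^\infty_x}\le C/n$ and $\|\nabla(\epsilon+\rho_n)^m-\nabla(\epsilon+\varrho_n)^m\|_{L^\infty_x}\le C/n$. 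Your approach trades that modularity for a self-contained argument, which is fine.

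There is, however, a concrete gap in your treatment of the diffusion term. After your first integration by parts the boundary term does vanish by the Neumann condition on $\varrho_n$, but the \emph{second} boundary term $\int_{\partial\Omega}(\epsilon+\varrho_n)^m\,\partial_{\mathbf n_y}[\varphi\circ\psi]\,dS_y$ does not: that $\psi$ preserves $\partial\Omega$ says nothing about $D_y\psi$ mapping the outward normal to the outward normal, so $\partial_{\mathbf n_y}[\varphi\circ\psi]$ need not vanish. The same issue appears on the target side, where rewriting $-\int\nabla\varphi\cdot\nabla(\epsilon+\rho_n)^m\,dx$ as $\int\Delta\varphi\,(\epsilon+\rho_n)^m\,dx$ produces the boundary term $-\int_{\partial\Omega}(\epsilon+\rho_n)^m\,\partial_{\mathbf n}\varphi\,dS$, and $\partial_{\mathbf n}\varphi$ is not assumed to vanish. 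You can repair this in two ways: either keep both boundary terms and show their \emph{difference} is $O(\tau-s_i)$ (using $D_y\psi=I+O(\tau-s_i)$, $J=1+O(\tau-s_i)$, and $\|\rho_n-\varrho_n\|_{L^\infty}=O(\tau-s_i)$), or---more simply---stop after one integration by parts and compare $-\int\nabla_y[\varphi\circ\psi]\cdot\nabla_y(\epsilon+\varrho_n)^m\,dy$ directly with $-\int\nabla_x\varphi\cdot\nabla_x(\epsilon+\rho_n)^m\,dx$ via change of variables and the gradient bounds you already invoke from Lemma~\ref{Lemma : Holder regularity on the flow}(ii). The second fix is precisely the gradient-level comparison the paper carries out in its Claim, so once repaired your argument and the paper's converge at that step; the genuine difference is that you would be proving the $\tilde E_n$ part from scratch rather than citing it.
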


\begin{proof}
Let $\varphi \in \calC(\overline{\Omega} \times [0,T))$ be such that   $\varphi \in \calC^{2,1}_{x,t}(\overline{\Omega} \times [\frac{iT}{n}, \frac{(i+1)T}{n}])$ for all $i=0, 1, 2, \dots, n-1$.  
For any $s,~t \in [0,T)$, we define
\begin{equation*}
\begin{aligned}
\tilde{E}_n &:= \int_{\Omega}\varphi(x,t) ~\rho_n(x,t)\,dx - \int_{\Omega}\varphi(x,s) ~\rho_n(x,s)\,dx\\
  &\quad  - \int_{s}^{t} \int_{\Omega}  ( \nabla   \varphi \cdot V) ~\rho_n \,dx \,d\tau
  - \int_{s}^{t}
\int_{\Omega}  \{ \partial_\tau \varphi ~ \varrho_n - \nabla \varphi
\cdot \nabla (\epsilon +\varrho_n)^m \}\,dx \,d\tau.
\end{aligned}
\end{equation*}
Then, revisiting the proof of  \cite[Lemma 4.2]{KK-SIMA} or \cite[Lemma 5.7]{HKK01}, we have   
\begin{equation}\label{eq 12 : solving ODE}
\begin{aligned}
|\tilde{E}_n| \leq  \frac{C}{n}
\end{aligned}
\end{equation}
for some constant $C:=C (\int_\Omega \rho_0 \log \rho_0,  ~\|V\|_{L^1(0,T: W^{1,\infty}(\Omega))}, ~\max_{0\leq i\leq n-1} \|\varphi\|_{C^{2,1}_{x,t}(\bar{\Omega} \times [\frac{iT}{n}, \frac{(i+1)T}{n}])} )$.
 
 Note that 
 \begin{equation*}\label{eq 13 : solving ODE}
 E_n=\tilde{E}_n - \int_{s}^{t} \int_{\Omega}  \{ \partial_\tau \varphi ~ (\rho_n-\varrho_n) - \nabla \varphi
\cdot \left [\nabla (\epsilon +\rho_n)^m  -\nabla (\epsilon +\varrho_n)^m \right ] \} \,dx \,d\tau.
 \end{equation*}
{\bf Claim.} We have
\begin{equation}\label{eq 11 : solving ODE}
\left |\int_{s}^{t} \int_{\Omega}  \{ \partial_\tau \varphi ~ (\rho_n-\varrho_n) - \nabla \varphi
\cdot \left [\nabla (\epsilon +\rho_n)^m  -\nabla (\epsilon +\varrho_n)^m \right ] \} \,dx \,d\tau \right | \leq \frac{C}{n} ,
\end{equation}
for some constant $C:=C (\|\rho_0\|_{W^{2,\infty}(\Omega)},  ~\|V\|_{L^\infty(0,T: W^{2,\infty}(\Omega))}, ~\max_{0\leq i\leq n-1} \|\varphi\|_{C^{2,1}_{x,t}(\bar{\Omega} \times [\frac{iT}{n}, \frac{(i+1)T}{n}])})$.
\begin{proof}
For notational convenience, we use notation $\rho$ and $ \varrho$ instead of $\rho_n$ and $ \varrho_n$. 
 For each $\tau \in [s, t]$, there exists $i$ such that $\tau \in  (\frac{iT}{n}, \frac{(i+1)T}{n}]$ and  we have
$$\varrho(x,\tau)= \rho(\psi(\tau; iT/n, x), \tau)) J_{\frac{iT}{n}, \tau}(x),$$
where $J_{a,b}(x)=e^{\int_a^b\nabla \cdot V(\psi(\theta; a,x), \theta) \,d\theta}$. 
This gives us
\begin{equation}\label{eq 1 : solving ODE}
\begin{aligned}
|\rho(x,\tau)-\varrho(x,\tau)| &\leq  |\rho(x,\tau)- \rho(\psi(\tau; iT/n, x), \tau))| +  |\rho(\psi(\tau; iT/n, x), \tau)) -  \rho(\psi(\tau; iT/n, x), \tau))J_{\frac{iT}{n}, \tau}(x)|\\
 &\leq \|\nabla\rho(\cdot, \tau) \|_{L^{\infty}_{x}} |\psi(\tau; iT/n, x)-x | + \|\rho(\cdot, \tau)\|_{L^{\infty}_{x}} |1- J_{\frac{iT}{n}, \tau}(x) |.
\end{aligned}
\end{equation}
%
Note that 
\begin{equation*}\label{eq 2 : solving ODE}
|\psi(\tau; iT/n, x)-x| \leq  \int_{\frac{iT}{n}}^\tau \|V(\cdot,\theta )\|_{L^{\infty}_{x}}\,d\theta \leq (\tau-iT/n)\| V\|_{L^\infty(0,T; L^\infty(\Omega))}
\end{equation*}
  and
\begin{equation*}\label{eq 3 : solving ODE}
\begin{aligned}
\left|J_{\frac{iT}{n}, \tau}(x) -1 \right|&={\left |e^{\int^\tau_{\frac{iT}{n}} \nabla \cdot V\,d\theta}-1\right |} \\
&\leq {e^{\int^\tau_{\frac{iT}{n}} \|\nabla\cdot V\|_{L^{\infty}_{x}}\, d\theta}} \int^\tau_{\frac{iT}{n}} \|\nabla \cdot V\|_{L^{\infty}_{x}}\,d\theta\\
&\leq C \left(\tau-\frac{iT}{n}\right) \leq \frac{C}{n}
\end{aligned}
\end{equation*}
for some constant $C:=C(\|V\|_{L^\infty(0,T, W^{1,\infty}(\Omega)})$.  From Lemma \ref{Lemma : Holder regularity on the flow} and \ref{Lemma:AC-curve}, we have
\begin{equation}\label{eq 4 : solving ODE}
\|\rho\|_{L^{\infty}_{x}},~\|\nabla \rho\|_{L^{\infty}_{x}} \leq C 
\end{equation}
for some constant $C:=C(\|\rho_0\|_{W^{1,\infty}}, \|V\|_{L^1(0,T; W^{1,\infty}(\Omega)})$.  Hence, we combine \eqref{eq 1 : solving ODE}-\eqref{eq 4 : solving ODE} to conclude
\begin{equation}\label{eq 5 : solving ODE}
\|\rho(\cdot,\tau)-\varrho(\cdot,\tau)\|_{L^{\infty}_{x}} \leq  \frac{C}{n}
\end{equation}
for some constant $C:=C(\|\rho_0\|_{W^{1, \infty}(\Omega)}, \|V\|_{L^\infty (0,T; W^{1,\infty}(\Omega)})$.

Next, we estimate 
\begin{equation}\label{eq 8 : solving ODE}
\begin{aligned}
&\nabla (\epsilon+\rho)^m(x)-\nabla  (\epsilon+\varrho)^m(x)\\
&=m\left(\epsilon+\rho(x) \right)^{m-1}\left(\nabla \rho(x)-\nabla \varrho(x) \right) 
+ m \left((\epsilon +\rho(x))^{m-1} -(\epsilon +\varrho(x))^{m-1} \right) \nabla \varrho (x).
\end{aligned}
\end{equation}
We recall
 $$\varrho(x,\tau)=\rho(\psi(\tau;iT/n,x),\tau) e^{\int_{\frac{iT}{n}}^\tau \nabla \cdot V(\psi(\theta;iT/n,x),\theta)\,d\theta}, $$
and hence
 \begin{equation}
\begin{aligned}
\nabla \varrho(x,\tau) &=\nabla\rho(\psi(\tau;iT/n,x),\tau)\cdot \nabla \psi(\tau;iT/n,x) e^{\int_{\frac{iT}{n}}^\tau \nabla \cdot V(\psi,\theta)\,d\theta} \\
&\quad +\rho(\psi(\tau;iT/n,x),\tau)  e^{\int_{\frac{iT}{n}}^\tau \nabla \cdot V(\psi,\theta)\,d\theta} \left(\int_{\frac{iT}{n}}^\tau \nabla(\nabla \cdot V)(\psi,\theta)\cdot \nabla \psi(\theta;iT/n,x) \,d\theta \right),
\end{aligned}
\end{equation}
which gives us
\begin{equation*}
\begin{aligned}
 \nabla \rho(x,\tau)-\nabla\varrho(x,\tau) &=\nabla \rho(x,\tau) -\nabla \rho(\psi,\tau) + \nabla \rho(\psi,\tau) -\nabla\rho(\psi,\tau)\cdot \nabla \psi(\tau;iT/n,x) \\
 &+ \nabla\rho(\psi,\tau)\cdot \nabla \psi(\tau;iT/n,x) \left(1- e^{\int_{\frac{iT}{n}}^\tau \nabla \cdot V} \right)-\rho(\psi,\tau)e^{\int \nabla \cdot V} \left(\int_{\frac{iT}{n}}^\tau \nabla^2 V\cdot \nabla \psi ~d\tau \right).
 \end{aligned}
\end{equation*}
Hence, we have
\begin{equation*}\label{eq 6 : solving ODE}
\begin{aligned}
| \nabla \rho(x,\tau)-\nabla\varrho(x,\tau)| 
&\leq \|\nabla^2 \rho(\cdot,\tau)\|_{L^{\infty}_{x}} \|\psi(\tau;iT/n,\cdot)- Id\|_{L^{\infty}_{x}} +\|\nabla \rho(\cdot,\tau)\|_{L^{\infty}_{x}} \|\nabla (\psi(\tau;iT/n,\cdot) -Id)\|_{L^{\infty}_{x}}  \\
& + \|\nabla\rho(\cdot,\tau)\|_{L^{\infty}_{x}} \|\nabla \psi(\tau;iT/n,\cdot)\|_{L^{\infty}_{x}}  e^{\int_{\frac{iT}{n}}^\tau \| \nabla \cdot V \|_{L^{\infty}_{x}}}  \|\nabla V\|_{L^\infty (0,T;L^\infty(\Omega))} (\tau-iT/n)   \\
&+\|\rho(\cdot,\tau)\|_{L^{\infty}_{x}} e^{\int_{\frac{iT}{n}}^\tau \|\nabla V\|_{L^{\infty}_{x}} \,d\theta}\|\nabla^2 V\|_{L^\infty(0,T; L^\infty(\Omega))} \|\nabla \psi\|_{L^\infty(iT/n, \tau; L^\infty(\Omega))}(\tau-iT/n).
\end{aligned}
\end{equation*}
We recall
$$\|\nabla \psi\|_{L^\infty(iT/n, \tau ; L^\infty(\Omega))} \leq e^{\int_{\frac{iT}{n}}^\tau \|\nabla V\|_{L^{\infty}_{x}}}, \qquad  \|\psi(\tau; iT/n,\cdot) -Id\|_{L^{\infty}_{x}} \leq \| V\|_{L^\infty(0,T;L^\infty(\Omega))}(\tau-iT/n) ,$$
and
$$\|\nabla(\psi(\tau; iT/n, \cdot)-Id)\|_{L^{\infty}_{x}} \leq \|\nabla V\|_{L^\infty(iT/n, \tau ; L^\infty(\Omega))} \|\nabla \psi\|_{L^\infty(iT/n, \tau ; L^\infty(\Omega))}(\tau-iT/n),$$
to get 
\begin{equation}\label{eq 9 : solving ODE}
\|\nabla \rho(\cdot,\tau) -\nabla\varrho(\cdot,\tau)\|_{L^{\infty}_{x}} \leq \frac{C}{n}, 
\end{equation}
with the constant $C:=C(\|V\|_{L^\infty(0,T ;W_x^{2, \infty}(\Omega)},~ \|\rho_0\|_{W^{2, \infty}(\Omega)})$. 
And, we can check
\begin{equation}\label{eq 7 : solving ODE}
\begin{aligned}
\|(\epsilon +\rho(\cdot,\tau))^{m-1} -(\epsilon +\varrho(\cdot,\tau))^{m-1}\|_{L^{\infty}_{x}} &\leq (1-m)\epsilon^{\alpha-2}\|\rho(\cdot,\tau)-\varrho(\cdot,\tau)\|_{L^{\infty}_{x}}.
\end{aligned}
\end{equation}
Combining \eqref{eq 5 : solving ODE}, \eqref{eq 9 : solving ODE} and \eqref{eq 7 : solving ODE} with \eqref{eq 8 : solving ODE}, we conclude
\begin{equation}\label{eq 10 : solving ODE}
\| \nabla (\epsilon+\rho)^m(\cdot,\tau)-\nabla  (\epsilon+\varrho)^m(\cdot,\tau)\|_{L^{\infty}_{x}} \leq \frac{C}{n},
\end{equation}
for some constant $C:=C(\|V\|_{L^\infty(0,T ;W_x^{2, \infty}(\Omega)}),~ \|\rho_0\|_{W^{2, \infty}(\Omega)})$. Finally, we combine \eqref{eq 5 : solving ODE} and \eqref{eq 10 : solving ODE} to get \eqref{eq 11 : solving ODE} which concludes the proof of the Claim.
\end{proof}
Combining \eqref{eq 12 : solving ODE} and \eqref{eq 11 : solving ODE} to have \eqref{eq 14 : solving ODE} which completes the proof of the Lemma.
\end{proof}

\begin{lemma}[Energy estimate in splitting scheme]\label{Energy:Splitting}
Let $\rho_n :[0,T]\mapsto \mathcal{P}_{2}(\Omega)$ be the curves defined as in Lemma \ref{Lemma:AC-curve}. For $0<m<1$ and $d\geq 2$ with nonnegative initial data $\rho_{0}\in \mathcal{P}_2(\Omega)$,

\begin{itemize}
\item[(i)] Let $ 1-\frac{1}{d} < m <1$. Suppose that $V$ satisfies \eqref{V:Lq-energy}.

\begin{itemize}
\item[$\bullet$] For $q=1$, 
the following estimate holds: For any $0 \leq s< t\leq T$,
\begin{equation}\label{L1-energy:splitting}
\int_{\Omega}\left(\rho_n\log \rho_n\right)(\cdot,t)\,dx
 +  \frac{2}{m}\int_{s}^{t}\int_{\Omega} \left|\nabla (\epsilon+\rho_n)^{\frac{m}{2}}\right|^2 \,dx\,dt \le \int_{\Omega}\left(\rho_n\log \rho_n\right)(\cdot,s)\,dx +C+ E_n,
\end{equation}
where  $C = C ( \|V\|_{\mathsf{S}_{m,1}^{(q_1, q_2)}}, \, \int_{\Omega} \rho_0 \log \rho_0 \,dx )$. 
\smallskip 
\item[$\bullet$] For $q>1$, 
 the following estimate holds: : For any $0 \leq s< t\leq T$,
\begin{equation}\label{Lq-energy:splitting}
  \int_{\Omega } (\epsilon+\rho_n)^q (\cdot, t) \,dx
 +  \frac{2mq(q-1)}{(q+m-1)^2}\int_{s}^{t}\int_{\Omega} \left|\nabla (\epsilon+\rho_n)^{\frac{q+m-1}{2}}\right|^2 \,dx\,dt \le  \int_{\Omega} (\epsilon+\rho_n)^q (\cdot, s) \,dx +C + E_n ,
\end{equation}
where  $C = C (\|V\|_{\mathsf{S}_{m,q}^{(q_1, q_2)}}, \, \|\rho_{0}\|_{L^{q} (\Omega)} )$.
\end{itemize}

\item[(ii)]  Let $(1-\frac{2}{d})_{+} < m <1$. Suppose that $V$ satisfies \eqref{V:tilde:Lq-energy}.

\begin{itemize}
\item[$\bullet$] For $q=1$, 
\eqref{L1-energy:splitting} holds with $C = C (\|V\|_{\tilde{\mathsf{S}}_{m,1}^{(\tilde{q}_1, \tilde{q}_2)}}, \, \int_{\Omega} \rho_0 \log \rho_0 \,dx )$.
\smallskip 
\item[$\bullet$] For $q>1$, 
\eqref{Lq-energy:splitting} holds with 
$C = C (\|V\|_{\tilde{\mathsf{S}}_{m,q}^{(\tilde{q}_1, \tilde{q}_2)}}, \, \|\rho_{0}\|_{L^{q} (\Omega)} )$.
\end{itemize}

\smallskip 

\item[(iii)] Assume $\nabla\cdot V = 0$ for $V\in\calC^{\infty} (\overline{\Omega}_{T})$ with $V\cdot \textbf{n}=0$ on $\partial \Omega$.

\smallskip 
\begin{itemize}
\item[$\bullet$] For $q=1$, 
\eqref{L1-energy:splitting} holds with $C = C (\int_{\Omega} \rho_0 \log \rho_0 \,dx )$.
\smallskip 
\item[$\bullet$] For $q>1$, 
\eqref{Lq-energy:splitting}
holds with $C = C ( \|\rho_{0}\|_{L^{q} (\Omega)} )$.
\end{itemize}
\end{itemize}
\end{lemma}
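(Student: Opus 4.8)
The plan is to run the same entropy and $L^q$ computation as in Proposition~\ref{P:energy}, but adapted to the two-step splitting structure, carrying the $O(1/n)$ discrepancy as the error term $E_n$. On each diffusion subinterval $(\frac{iT}{n},\frac{(i+1)T}{n}]$ the curve $\varrho_n$ solves the homogeneous equation \eqref{H-FDE} exactly, so Lemma~\ref{P:H-PME:e} supplies the dissipation: testing with $\log(\epsilon+\varrho_n)$ for $q=1$ (resp. $q(\epsilon+\varrho_n)^{q-1}$ for $q>1$) gives the monotone decrease of $\int_\Omega(\epsilon+\varrho_n)\log(\epsilon+\varrho_n)$ (resp. $\int_\Omega(\epsilon+\varrho_n)^q$) together with the gradient term $\frac{2}{m}\int_\Omega|\nabla(\epsilon+\varrho_n)^{m/2}|^2$ (resp. $\frac{2mq(q-1)}{(q+m-1)^2}\int_\Omega|\nabla(\epsilon+\varrho_n)^{(q+m-1)/2}|^2$). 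On each transport subinterval the push-forward $\rho_n=\Psi(\cdot;\frac{iT}{n},\varrho_n)$ alters these functionals by a Jacobian correction governed by Lemma~\ref{Lemma : density relation on the flow}, namely $\int_\Omega\rho_n\log\rho_n=\int_\Omega\varrho_n\log\varrho_n-\int_\Omega\varrho_n\log J$ and $\|\rho_n\|_{L^q}\le\|\varrho_n\|_{L^q}\,e^{\frac{q-1}{q}\int\|\nabla\cdot V\|_{L^\infty_x}}$, both involving only $\nabla\cdot V$.

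Summing these two contributions over the subintervals between $s$ and $t$ telescopes the endpoint functionals and collects the transport corrections; the essential observation, which is exactly the content of Lemma~\ref{Lemma:solving ODE}, is that the accumulated Lagrangian Jacobian correction coincides with the Eulerian drift contribution $\int_s^t\int_\Omega(\nabla\varphi\cdot V)\rho_n$, where $\varphi$ is the relevant nonlinearity, up to the error $|E_n|\le C/n$. Once the drift term is in Eulerian form, I would estimate it verbatim as in Proposition~\ref{P:energy}: in case (i) write $\int_\Omega V\cdot\nabla\rho_n^q$ and apply H\"older together with the interpolation inequality of Lemma~\ref{L:interpolation} (with $p=1$ or $p=q$) and mass conservation $\|\rho_n\|_{L^1_x}=\|\rho_0\|_{L^1_x}$, absorbing the dissipation by Young's inequality under the range \eqref{V:Lq-energy}; in case (ii) integrate by parts to pass to $\nabla V$ and repeat under \eqref{V:tilde:Lq-energy}; in case (iii) the divergence-free condition $\nabla\cdot V=0$ makes the correction vanish, so the estimate is independent of $V$.

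Two technical points must be handled. First, the nonlinearities $\log(\epsilon+\rho_n)$ and $q(\epsilon+\rho_n)^{q-1}$ are not admissible in Lemma~\ref{Lemma:solving ODE} (stated for $\varphi\in\calC^{2,1}_{x,t}$ on each subinterval), so I would approximate them by smooth truncations, apply the lemma, and pass to the limit using the uniform bounds of Lemma~\ref{Lemma:AC-curve}; the $\epsilon$-regularization keeps every quantity finite and is removed only at the end. Second, Lemma~\ref{P:H-PME:e} produces the dissipation in terms of $\varrho_n$, whereas \eqref{L1-energy:splitting}--\eqref{Lq-energy:splitting} are phrased for $\rho_n$; the discrepancy $\|\nabla(\epsilon+\rho_n)^{a}-\nabla(\epsilon+\varrho_n)^a\|_{L^\infty_x}\le C/n$ established inside the Claim of Lemma~\ref{Lemma:solving ODE} (compare \eqref{eq 9 : solving ODE} and \eqref{eq 10 : solving ODE}), combined with the flow regularity of Lemma~\ref{Lemma : Holder regularity on the flow}, lets me exchange the two at the cost of an additional $O(1/n)$ term folded into $E_n$.

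The main obstacle is precisely the bridge in the second paragraph: the transport step lives in Lagrangian variables, so the entropy change $\int_\Omega\varrho_n\log J$ with $\log J=\int\nabla\cdot V(\psi(\tau),\tau)\,d\tau$ is not literally the Eulerian drift term required to invoke the interpolation estimates defining the $\mathsf{S}_{m,q}^{(q_1,q_2)}$- and $\tilde{\mathsf{S}}_{m,q}^{(\tilde q_1,\tilde q_2)}$-classes. Making this identification uniformly in $n$, and certifying that the residual is genuinely $O(1/n)$ rather than merely bounded, is what forces the $\calC^2$-type control on $V$ and $\rho_0$ in Lemma~\ref{Lemma:solving ODE}, and it is the step where the smoothness hypotheses of Proposition~\ref{proposition : regular existence} are consumed. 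Everything else is a faithful repetition of the continuous computation in Proposition~\ref{P:energy}.
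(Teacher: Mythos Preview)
Your proposal is correct in outline, but it takes a noticeably more indirect route than the paper.

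The paper does not separate the diffusion and transport contributions and then reconcile them. Instead it applies Lemma~\ref{Lemma:solving ODE} as a ready-made \emph{weak formulation with error} for $\rho_n$ itself: plugging the nonlinear test function $\varphi=\log\rho_n$ (for $q=1$) or $\varphi=q(\epsilon+\rho_n)^{q-1}$ (for $q>1$) directly into \eqref{eq 15 : solving ODE} produces, in one stroke, the endpoint functionals, the Eulerian drift $\int_s^t\int_\Omega(\nabla\varphi\cdot V)\rho_n$, the dissipation term already written in $\rho_n$ (not $\varrho_n$), and the error $E_n$. After a short computation of each piece (e.g.\ $\int\nabla\log\rho_n\cdot\nabla(\epsilon+\rho_n)^m\ge\frac{4}{m}\int|\nabla(\epsilon+\rho_n)^{m/2}|^2$ and $\int(\nabla\log\rho_n\cdot V)\rho_n=\int V\cdot\nabla(\epsilon+\rho_n)$), the drift is estimated \emph{verbatim} as in Proposition~\ref{P:energy}.

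Your route---apply Lemma~\ref{P:H-PME:e} on each subinterval to get dissipation in $\varrho_n$, add the Jacobian correction from Lemma~\ref{Lemma : density relation on the flow}, telescope, then invoke Lemma~\ref{Lemma:solving ODE} to translate the Lagrangian correction into the Eulerian drift, and finally swap $\varrho_n\leftrightarrow\rho_n$ in the gradient term via the $O(1/n)$ bounds of the Claim---reaches the same destination and correctly identifies the only delicate step (the Lagrangian--Eulerian bridge). What the paper's approach buys is that both of your ``technical points'' simply disappear: because \eqref{eq 15 : solving ODE} is already phrased for $\rho_n$ with the drift in Eulerian form and the dissipation against $\nabla(\epsilon+\rho_n)^m$, there is no need to telescope, no Jacobian bookkeeping, and no $\varrho_n\to\rho_n$ exchange in the gradient. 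Your approach, on the other hand, makes the two-scale splitting structure explicit and would be the natural path if Lemma~\ref{Lemma:solving ODE} were not already available in its strong form.
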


\begin{proof}
From \eqref{eq 15 : solving ODE}, we have
\begin{equation}\label{eq 1 : Energy : splitting}
\begin{aligned}
&\int_{\Omega}\varphi(x,t) ~\rho_n(x,t)\,dx - \int_{\Omega}\varphi(x,s) ~\rho_n(x,s)\,dx\\
  &=  \int_{s}^{t} \int_{\Omega}  ( \nabla   \varphi \cdot V) ~\rho_n \,dx \,d\tau
 + \int_{s}^{t} \int_{\Omega}  \{ \partial_\tau \varphi ~ \rho_n - \nabla \varphi
\cdot \nabla (\epsilon +\rho_n)^m \}\,dx \,d\tau + E_n.
\end{aligned}
\end{equation}
First, we take $\varphi=\log \rho_n$ in \eqref{eq 1 : Energy : splitting}. Then, 
\begin{equation*}
\int_s^t\int_\Omega \partial_\tau \varphi \rho_n \,dx \,d\tau =\int_s^t\int_\Omega \partial_\tau \rho_n\,dx \,d\tau =0
\end{equation*}
due to the mass conservation. Next, we have
\begin{equation*}
\begin{aligned}
\int_{s}^{t}\int_{\Omega} \nabla \varphi \cdot \nabla (\epsilon+\rho_n)^m &=\int_{s}^{t}\int_{\Omega} \frac{\nabla \rho_n}{\rho_n} \cdot \nabla(\epsilon+\rho_n)^m=\int_{s}^{t}\int_{\Omega} \frac{\nabla(\epsilon+\rho_n)}{\rho_n}\cdot \nabla (\epsilon+\rho_n)^m\\
&= \frac{4}{m}\int_{s}^{t}\int_{\Omega} \frac{\epsilon+\rho_n}{\rho_n}\left | \nabla(\epsilon+\rho_n)^{\frac{m}{2}}\right |^2 \\
&\geq \frac{4}{m}\int_{s}^{t}\int_{\Omega} \left | \nabla(\epsilon+\rho_n)^{\frac{m}{2}}\right |^2.
\end{aligned}
\end{equation*}
We also have
\begin{equation*}
\begin{aligned}
\int_{s}^{t}\int_{\Omega} \left(\nabla \varphi \cdot V \right)\rho_n& =\int_{s}^{t}\int_{\Omega} V \cdot \nabla \rho_n =\int_{s}^{t}\int_{\Omega} V\cdot \nabla (\epsilon+\rho_n).\\
\end{aligned}
\end{equation*}
Hence, 
\begin{equation*}\label{eq 2 : Energy : splitting}
\begin{aligned}
&\int_{\Omega}\left(\rho_n\log \rho_n\right)(x,t)\,dx + \frac{4}{m}\int_{s}^{t}  \int_{\Omega}  \left | \nabla(\epsilon+\rho_n)^{\frac{m}{2}}\right |^2\,dx \,d\tau  \\
& \leq \int_{\Omega}\left(\rho_n\log \rho_n\right)(x,s)\,dx + \int_{s}^{t}  \int_{\Omega}  V\cdot \nabla (\epsilon+\rho_n) \,dx \,d\tau  + E_n.
\end{aligned}
\end{equation*}
Let 
\begin{equation*}
\mathcal{J}_1 :=\int_\Omega V\cdot \nabla(\epsilon +\rho_n) \,dx.
\end{equation*}
Then, by the exactly same argument as in the proof of Proposition \ref{P:energy}, we conclude \eqref{L1-energy:splitting} for cases  (i), (ii) and (iii).

Secondly, we take $\varphi :=q (\epsilon+\rho_n)^{q-1}~(q>1)$  in \eqref{eq 1 : Energy : splitting} . Then
\begin{equation}\label{eq 6 : Energy : splitting}
\begin{aligned}
&\int_{\Omega}\varphi(x,t) ~\rho_n(x,t)\,dx - \int_{\Omega}\varphi(x,s) ~\rho_n(x,s)\,dx\\
  &=  q\int_{\Omega}(\epsilon+ \rho_n)^{q-1}(x,t) ~\rho_n(x,t)\,dx - q\int_{\Omega}(\epsilon+ \rho_n)^{q-1}(x,s) ~\rho_n(x,s)\,dx\\
  &=q \left ( \int_\Omega (\epsilon+ \rho_n)^{q}(x,t) \,dx- \int_\Omega (\epsilon + \rho_n)^{q}(x,s) \,dx\right ) \\
& ~~- \epsilon q \left ( \int_\Omega (\epsilon + \rho_n)^{q-1}(x,t) \,dx- \int_\Omega (\epsilon + \rho_n)^{q-1}(x,s) \,dx\right ),
\end{aligned}
\end{equation}
and
\begin{equation}\label{eq 3 : Energy : splitting}
\begin{aligned}
\int_s^t \int_\Omega \partial_\tau \varphi \rho_n \,dx\,d\tau &=q\int_s^t \int_\Omega [\partial_\tau (\epsilon+\rho_n)^{q-1}] \rho_n \,dx\,d\tau\\
&=q\int_s^t \int_\Omega [\partial_\tau (\epsilon+\rho_n)^{q-1}][ (\epsilon+\rho_n ) -\epsilon ]\,dx\,d\tau\\
&=(q-1) \int_s^t \int_\Omega\partial_\tau\left(\epsilon+ \rho_n \right)^{q}  \,dx\,d\tau -q\epsilon \int_s^t \int_\Omega \partial_\tau\left(\epsilon+ \rho_n \right)^{q-1}  \,dx\,d\tau \\
&=(q-1) \left ( \int_\Omega (\epsilon+ \rho_n)^{q}(x,t) \,dx- \int_\Omega (\epsilon + \rho_n)^{q}(x,s) \,dx\right ) \\
& ~~- \epsilon q\left ( \int_\Omega (\epsilon + \rho_n)^{q-1}(x,t) \,dx- \int_\Omega (\epsilon + \rho_n)^{q-1}(x,s) \,dx\right ).
\end{aligned}
\end{equation}
Next
\begin{equation}\label{eq 4 : Energy : splitting}
\begin{aligned}
\int_s^t \int_\Omega\nabla \varphi \cdot\nabla (\epsilon+\rho_n)^m &= q\int_s^t \int_\Omega \nabla(\epsilon + \rho_n)^{q-1} \cdot \nabla (\epsilon+\rho_n)^m \,dx \,d\tau \\
&=\frac{4mq(q-1)}{(m+q-1)^2}\int_s^t \int_\Omega  \left | \nabla (\epsilon+\rho_n)^{\frac{m+q-1}{2}} \right |^2 \,dx\,d\tau.
\end{aligned}
\end{equation}
Combining \eqref{eq 6 : Energy : splitting}, \eqref{eq 3 : Energy : splitting}, \eqref{eq 4 : Energy : splitting} with \eqref{eq 1 : Energy : splitting}, we have
\begin{equation*}\label{eq 7 : Energy : splitting}
\begin{aligned}
 \int_\Omega (\epsilon+ \rho_n)^{q}(x,t) \,dx + \frac{4mq(q-1)}{(m+q-1)^2}\int_s^t \int_\Omega  \left | \nabla (\epsilon+\rho_n)^{\frac{m+q-1}{2}} \right |^2 \,dx\,d\tau  \\
 = \int_\Omega (\epsilon + \rho_n)^{q}(x,s) \,dx +  q \int_s^t \int_\Omega \nabla (\epsilon+\rho_n)^{q-1} \cdot V \rho_n \,dx \,d\tau + E_n.
\end{aligned}
\end{equation*}
Let 
\begin{equation*}
\mathcal{J}_q :=\int_\Omega \left | V\cdot \nabla(\epsilon +\rho_n)^q \right |\,dx, \qquad q >1.
\end{equation*}
We note
\begin{equation*}
 q \int_\Omega \nabla (\epsilon+\rho_n)^{q-1} \cdot V \rho_n \,dx \leq (q-1) \mathcal{J}_q. 
\end{equation*}
Again, by the exactly same argument as in the proof of Proposition \ref{P:energy}, we conclude \eqref{Lq-energy:splitting} for cases  (i), (ii) and (iii).
\end{proof}

\subsection{Proof of Proposition \ref{proposition : regular existence}}
(1) Assume $\epsilon>0$ and let $\rho_n:[0,T]\mapsto \mathcal{P}_2(\Omega)$ be defined as in the splitting method. Then, from Lemma \ref{Lemma:solving
ODE}, we have
for any $\varphi \in \calC_c^\infty(\overline{\Omega} \times [0,T))$
and $s,~t \in [0,T)$,
\begin{equation}\label{eq3 : Theorem : Toy Fokker-Plank}
\begin{aligned}
& \int_{\Omega}\varphi(x,t) \rho_n(x,t)\,dx - \int_{\Omega}\varphi(x,s) \rho_n(x,s)\,dx\\
&= \int_{s}^{t} \int_{\Omega}   (\nabla   \varphi \cdot V) \rho_n
\,dx  \,d\tau + \int_{s}^{t} \int_{\Omega} \left\{
\partial_\tau\varphi  \rho_n -  \nabla \varphi \cdot
\nabla(\epsilon + \rho_n)^m\right\}\,dx \,d\tau
 + E_n,
\end{aligned}
\end{equation}
with $|E_n| \longrightarrow 0 $ as $n \rightarrow \infty$.

 Recalling Lemma \ref{Lemma:AC-curve}, we know%
\begin{equation}\label{eq5 : Theorem : Toy Fokker-Plank}
W_{2}(\rho_n(s),\rho_n(t))\leq C |t-s|^{\frac{1}{2}} + \int_s^t
\|V(\tau)\|_{L^\infty_x}  \,d\tau,
\end{equation}
where $C=C\left(  \int_\Omega \rho_0 \log \rho_0 \, dx,  \,  \,\|V\|_{L^1(0,T;
W^{1,\infty}(\Omega))} \right) $. Combining the estimates \eqref{eq5
: Theorem : Toy Fokker-Plank} and Lemma \ref{Lemma : Arzela-Ascoli},
there exist a subsequence (by abusing notation) $\rho_n$ and a limit
curve  $\rho_\epsilon :[0,T]\mapsto \mathcal{P}_{2}(\Omega)$ such that
\begin{equation*}\label{eq11 : Theorem : Toy Fokker-Plank}
\rho_{n}(t) ~~ \mbox{ converges to} ~~\rho_\epsilon (t) ~~ \mbox{ in} ~
\mathcal{P}_{2}({\Omega}), \qquad \text{for all} ~~ t\in[0,T].
\end{equation*}
 This implies
\begin{equation}\label{eq 1 : proposition : regular existence}
\begin{aligned}
\int_{\Omega}\varphi(x,t) \rho_n(x,t)\,dx -
\int_{\Omega}\varphi(x,s) \rho_n(x,s)\,dx \longrightarrow
\int_{\Omega}\varphi(x,t) \rho_\epsilon(x,t)\,dx - \int_{\Omega}\varphi(x,s)
\rho_\epsilon (x,s)\,dx,
\end{aligned}
\end{equation}
and
\begin{equation}\label{eq 2 : proposition : regular existence}
\begin{aligned}
\int_{s}^{t} \int_{\Omega}  \left[ (\nabla   \varphi \cdot V) \rho_n
+ \partial_\tau\varphi  \rho_n \right] \,dx  \,d\tau
\longrightarrow \int_{s}^{t} \int_{\Omega}  \left[ (\nabla
\varphi\cdot V) + \partial_\tau\varphi  \right]\rho_\epsilon \,dx  \,d\tau,
\end{aligned}
\end{equation}
as $n \rightarrow \infty$. Furthermore,  from Lemma \ref{Lemma :
equi-continuity}, $\rho_n$ are equi-continuous with
respect to the space variable. We combine this equi-continuity with
the uniform bound \eqref{eq29:Lemma:AC-curve} to claim that $\rho_n$ actually pointwise converge to $\rho_\epsilon$.
Furthermore, \eqref{Lq-energy:splitting} with $q=m+1$ implies that
$\nabla(\epsilon + \rho_n)^{m}$ is bounded in $L^2([0,T]\times\Omega)$ and hence
it (up to a subsequence) has a weak limit in $L^2([0,T]\times\Omega)$.
Combining this with $q=m+1$ and the pointwise convergence of $\rho_n$ to
$\rho_\epsilon$, we have
\begin{equation}\label{eq 3 : proposition : regular existence}
\int_{s}^{t} \int_{\Omega}  \nabla \varphi \cdot \nabla(\epsilon + \rho_n
)^m    \,dx \,d\tau \longrightarrow \int_{s}^{t} \int_{\Omega}
\nabla \varphi \cdot \nabla (\epsilon + \rho_\epsilon)^m  \,dx \,d\tau.
\end{equation}
Finally, we put \eqref{eq 1 : proposition : regular existence},
\eqref{eq 2 : proposition : regular existence} and \eqref{eq 3 :
proposition : regular existence} into \eqref{eq3 : Theorem : Toy
Fokker-Plank} to get
\begin{equation*}\label{eq 4 : proposition : regular existence}
 \int_{\Omega}\varphi(x,t) \rho_\epsilon(x,t) \,dx - \int_{\Omega}\varphi(x,s) \rho_\epsilon(x,s)\,dx
=\int_{s}^{t} \int_{\Omega}  \left [ \left(\partial_\tau\varphi  + V
\cdot \nabla \varphi \right){\rho_\epsilon} -\nabla  \varphi \cdot \nabla (\epsilon + \rho_\epsilon)^m   \right ] \,dx\,d\tau.
\end{equation*}

Next, from Lemma \ref{Lemma : equi-continuity} and the pointwise
convergence of $\rho_n$, we have
\begin{equation}\label{eq18 : Theorem : Toy Fokker-Plank}
\left |\rho_\epsilon (x,t)-\rho_\epsilon(y,t)\right | \leq C |x-y|^\alpha, \quad
\forall ~x,~y \in \Omega, \, t \in [0,T),
\end{equation}
and \eqref{eq5 : Theorem : Toy Fokker-Plank} gives us
\begin{equation}\label{eq20 : Theorem : Toy Fokker-Plank}
W_{2}(\rho_\epsilon(s),\rho_\epsilon(t))\leq C |t-s|^{\frac{1}{2}} + \int_s^t
\|V(\tau)\|_{L^\infty_x}  \,d\tau,
\end{equation}
where $C=C\left(  \int_\Omega \rho_0 \log \rho_0 \, dx,  \,  \,\|V\|_{L^1(0,T; W^{1,\infty}(\Omega))} \right) $.

Furthermore, 
Lemma \ref{Energy:Splitting} gives us
\begin{equation}\label{eq21 : Theorem : Toy Fokker-Plank}
\sup_{0\leq t \leq T}\int_{\Omega \times \{ t\}} \rho_n\log \rho_n\,dx
 +  \frac{2}{m}\iint_{\Omega_T} \left|\nabla (\epsilon+\rho_n)^{\frac{m}{2}}\right|^2 \,dx\,dt \le C+ E_n,
\end{equation}
where  
\begin{equation}\label{eq24 : Theorem : Toy Fokker-Plank}
C= \
 \begin{cases}
  C (\|V\|_{\mathsf{S}_{m,1}^{(q_1, q_2)}}, \, \int_{\Omega} \rho_0 \log \rho_0 \,dx), & \text{if } \ 1-\frac{1}{d}<m<1 \text{ and } 
  (\ref{V:Lq-energy}) \text{ hold},  \vspace{1mm}\\
  C (\|V\|_{\tilde{\mathsf{S}}_{m,1}^{(\tilde{q}_1, \tilde{q}_2)}}, \, \int_{\Omega} \rho_0 \log \rho_0 \,dx), & \text{if } \ \left(1-\frac{2}{d} \right)_+<m<1 \text{ and } 
   (\ref{V:tilde:Lq-energy}) \text{ hold}, \vspace{1mm}\\
  C (\int_{\Omega} \rho_0 \log \rho_0 \,dx), & \text{if } \ \nabla\cdot V = 0.
 \end{cases}
\end{equation} 
 Similarly, 
 from Lemma \ref{Energy:Splitting}, we also have 
 \begin{equation}\label{eq22 : Theorem : Toy Fokker-Plank}
 \sup_{0\leq t \leq T}\int_{\Omega \times \{ t\}} (\epsilon+\rho_n)^q (\cdot, t) \,dx
 +  \frac{2mq(q-1)}{(q+m-1)^2}\iint_{\Omega_T} \left|\nabla (\epsilon+\rho_n)^{\frac{q+m-1}{2}}\right|^2 \,dx\,dt \le  C + E_n ,
\end{equation}
where  
\begin{equation}\label{eq25 : Theorem : Toy Fokker-Plank}
C=  \left\{
  \begin{array}{ll}
  C (\|V\|_{\mathsf{S}_{m,q}^{(q_1, q_2)}}, \, \|\rho_{0}\|_{L^{q} (\Omega)}), & \text{if } \ 1-\frac{1}{d}<m<1 \text{ and } 
 (\ref{V:Lq-energy}) \text{ hold},  \vspace{1mm}\\
  C (\|V\|_{\tilde{\mathsf{S}}_{m,q}^{(\tilde{q}_1, \tilde{q}_2)}}, \, \|\rho_{0}\|_{L^{q} (\Omega)}), & \text{if } \ \left(1-\frac{2}{d} \right)_+<m<1 \text{ and } 
  (\ref{V:tilde:Lq-energy}) \text{ hold}, \vspace{1mm}\\
  C (\|\rho_{0}\|_{L^{q} (\Omega)} ), & \text{if } \ \nabla\cdot V = 0.
  \end{array}
  \right.
\end{equation} 
Now, we exploit the lower semicontinuity of the entropy functional with respect to the narrow convergence and let $n \rightarrow \infty$ in \eqref{eq21 : Theorem : Toy Fokker-Plank} to get
\begin{equation*}\label{eq23 : Theorem : Toy Fokker-Plank}
\sup_{0\leq t \leq T}\int_{\Omega \times \{ t\}} \rho_\epsilon\log \rho_\epsilon\,dx
 +  \frac{2}{m}\iint_{\Omega_T} \left|\nabla (\epsilon+\rho_\epsilon)^{\frac{m}{2}}\right|^2 \,dx\,dt \le C,
\end{equation*}
where $C$ is same as in \eqref{eq24 : Theorem : Toy Fokker-Plank}. 
Similarly, we let $n \rightarrow \infty$ in \eqref{eq22 : Theorem : Toy Fokker-Plank} to get
\begin{equation*}\label{eq27 : Theorem : Toy Fokker-Plank}
 \sup_{0\leq t \leq T}\int_{\Omega \times \{ t\}} (\epsilon+\rho_\epsilon)^q (\cdot, t) \,dx
 +  \frac{2mq(q-1)}{(q+m-1)^2}\iint_{\Omega_T} \left|\nabla (\epsilon+\rho_\epsilon)^{\frac{q+m-1}{2}}\right|^2 \,dx\,dt \le  C ,
\end{equation*}
where $C$ is same as in \eqref{eq25 : Theorem : Toy Fokker-Plank}.

(2) Now we consider the set of curves $\{\rho_\epsilon \}_\epsilon$. As $\epsilon \rightarrow 0$, exactly the same way we did in (1), there exist a limit
curve  $\rho :[0,T]\mapsto \mathcal{P}_{2}(\Omega)$ such that 
\begin{equation*}
\rho_{\epsilon}(t) ~~ \mbox{ (up to a sequence) converges to} ~~\rho (t) ~~ \mbox{ in} ~
\mathcal{P}_{2}({\Omega}), \qquad \text{for all} ~~ t\in[0,T).
\end{equation*}
Furthermore, we have
\begin{equation}\label{eq13 : Theorem : Toy Fokker-Plank}
 \int_{\Omega}\varphi(x,t) \rho(x,t) \,dx - \int_{\Omega}\varphi(x,s) \rho(x,s)\,dx
=\int_{s}^{t} \int_{\Omega}  \left [ \left(\partial_\tau\varphi  + V
\cdot \nabla \varphi \right){\rho} -\nabla  \varphi \cdot \nabla  \rho^m   \right ] \,dx\,d\tau,
\end{equation}

\begin{equation*}\label{eq14 : Theorem : Toy Fokker-Plank}
W_{2}(\rho(s),\rho(t))\leq C |t-s|^{\frac{1}{2}} + \int_s^t
\|V(\tau)\|_{L^\infty_x}  \,d\tau,
\end{equation*}
and
\begin{equation*}
\left |\rho (x,t)-\rho(y,t)\right | \leq C |x-y|^\alpha, \quad
\forall ~x,~y \in \Omega, \, t \in [0,T),
\end{equation*}
from \eqref{eq 4 : Energy : splitting}, \eqref{eq20 : Theorem : Toy Fokker-Plank} and \eqref{eq18 : Theorem : Toy Fokker-Plank}, respectively. We also have
\begin{equation}\label{eq28 : Theorem : Toy Fokker-Plank}
\sup_{0\leq t \leq T}\int_{\Omega} \rho (\cdot, t) \log \rho (\cdot, t)\,dx
 +  \frac{2}{m}\iint_{\Omega_T} \left|\nabla \rho^{\frac{m}{2}}\right|^2 \,dx\,dt \le C,
\end{equation}
where $C$ is same as in \eqref{eq24 : Theorem : Toy Fokker-Plank}, and  
\begin{equation}\label{eq29 : Theorem : Toy Fokker-Plank}
 \sup_{0\leq t \leq T}\int_{\Omega} \rho^q (\cdot, t) \,dx
 +  \frac{2mq(q-1)}{(q+m-1)^2}\iint_{\Omega_T} \left|\nabla \rho^{\frac{q+m-1}{2}}\right|^2 \,dx\,dt \le  C ,
\end{equation}
where $C$ is same as in \eqref{eq25 : Theorem : Toy Fokker-Plank}. This completes the proof of energy estimates \eqref{regular:L1-energy} and \eqref{regular:Lq-energy}.

To prove \eqref{regular:narrow-distance} or \eqref{regular:narrow-distance:2}, we take $\varphi \in W^{1,\infty}(\Omega)$ such that $\|\varphi\|_{W^{1,\infty}(\Omega)} \leq 1$ in \eqref{eq13 : Theorem : Toy Fokker-Plank}. Then, we have
\begin{equation}\label{eq26 : Theorem : Toy Fokker-Plank}
\begin{aligned}
 \left | \int_{\Omega}\varphi(x) \rho(x,t) \,dx - \int_{\Omega}\varphi(x) \rho(x,s)\,dx \right |
&\leq \|\nabla \varphi\|_{L^\infty(\Omega)} \int_{s}^{t} \int_{\Omega}  \left (| V |\rho + | \nabla  \rho^m | \right) \,dx\,d\tau.
\end{aligned}
\end{equation}
We first note that, for any $q\geq 1$, it holds
\begin{equation*}
\|\nabla \rho^{\frac{m}{2}}\|_{L^{2}_{x,t}(\Omega_T)} \leq C
\end{equation*}
where $C$ is the constant from the energy estimates \eqref{eq28 : Theorem : Toy Fokker-Plank} or \eqref{eq29 : Theorem : Toy Fokker-Plank}.
Therefore, for any $0 \leq s<t \leq T$, it follows that 
\begin{equation}\label{eq30 : Theorem : Toy Fokker-Plank}
\begin{aligned}
  \int_{s}^{t} \int_{\Omega}   | \nabla  \rho^m |  \,dx\,d\tau 
&= 2\int_{s}^{t} \int_{\Omega}   \rho^{\frac{m}{2}} \left| \nabla  \rho^{\frac{m}{2}} \right| \,dx\,d\tau\\
&\leq \left( \int_{s}^{t} \int_{\Omega}   \left| \nabla  \rho^{\frac{m}{2}} \right|^2 \,dx\,d\tau \right)^{\frac{1}{2}}\left( \int_{s}^{t} \int_{\Omega}   \rho^m \,dx\,d\tau \right)^{\frac{1}{2}}\\
&\leq \left( \int_{s}^{t} \int_{\Omega}   \left| \nabla  \rho^{\frac{m}{2}} \right|^2 \,dx\,d\tau \right)^{\frac{1}{2}} \left( \int_{s}^{t} \left (\int_{\Omega}   \rho \,dx \right)^m \,d\tau \right)^{\frac{1}{2}}\\
&\leq C |t-s|^{\frac{1}{2}},
\end{aligned}
\end{equation}
because $0<m<1$ and $L^1$-mass conservation property. 

Next, we observe the following, for $\hat{r}_1 = \frac{q_1}{q_1 -1}$ and $\hat{r}_2 = \frac{q_2}{q_2 -1}$,
\[
 \int_{s}^{t} \int_{\Omega}  | V |\rho  \,dx\,d\tau \leq \| V\|_{L_{x,t}^{q_1,q_2}} \|\rho\|_{L_{x,t}^{\hat{r}_1, \hat{r}_2}}.
\]

$\bullet$ Let $V\in \mathsf{S}_{m,q}^{(q_1,q_2)}$ in \eqref{V:Lq-regular}. Then by the energy estimates, 
\[
\rho \in L_{x,t}^{r_1,r_2}  \ \text{ for }  \  \frac{d}{r_1} + \frac{2+d(m+q-2)}{r_2} = d
\]
as in Lemma~\ref{L:interpolation}. For $V\in \mathsf{S}_{m,q}^{(q_1,q_2)}$, the pair $(\hat{r}_1, \hat{r}_2)$ corresponds to 
\[
\frac{d}{\hat{r}_1} + \frac{2+d(q+m-2)}{\hat{r}_2} = 1+dq.
\]
Let  
\[
r_1 = \hat{r}_1 \quad \text{ and } \quad [2+d(q+m-2)] \left( \frac{1}{\hat{r}_2} - \frac{1}{r_2}\right)= 1+d(q-1). 
\]
Then it follows that 
\begin{equation}\label{eq31 : Theorem : Toy Fokker-Plank}
\begin{aligned}
 \|\rho \|_{L_{x,t}^{\hat{r}_1,\hat{r}_2}} 
& \leq \|\rho\|_{L_{x,t}^{r_1,r_2}} |t-s|^{\frac{1}{\hat{r}_2} -\frac{1}{r_2}}\\
&= \|\rho\|_{L_{x,t}^{r_1,r_2}} |t-s|^{\frac{1+d(q-1)}{2+d(q+m-2)}}\\
&\leq C |t-s|^{\frac{1}{2}} \quad \mbox{for} \quad |t-s| <1, 
\end{aligned} 
\end{equation}
where $C$ is the constant from energy estimates \eqref{eq28 : Theorem : Toy Fokker-Plank} or \eqref{eq29 : Theorem : Toy Fokker-Plank}. 
Therefore, it holds that 
\begin{equation}\label{V_rho_est}
\int_{s}^{t} \int_{\Omega}  | V |\rho  \,dx\,d\tau \leq C\|V\|_{\mathsf{S}_{m,q}^{(q_1,q_2)}} \,|t-s|^{\frac{1}{2}} \quad \mbox{for} \quad |t-s| <1,
\end{equation}
for $V$ in \eqref{V:Lq-regular}. We plug \eqref{eq30 : Theorem : Toy Fokker-Plank} and \eqref{V_rho_est} into \eqref{eq26 : Theorem : Toy Fokker-Plank} to get \eqref{regular:narrow-distance}. 

\noindent $\bullet$ Let $V\in \tilde{\mathsf{S}}_{m,q}^{(\tilde{q}_1, \tilde{q}_2)} $ in \eqref{V:tilde:Lq-regular}. 
Then because of the embedding relation in \eqref{V_embedding} for $1-\frac{1}{d}< m<1$, now \eqref{V_rho_est} yields
 \begin{equation*}\label{V_rho_est_tilde}
\int_{s}^{t} \int_{\Omega}  | V |\rho  \,dx\,d\tau \leq C\|V\|_{\tilde{\mathsf{S}}_{m,q}^{(q_1,q_2)}} \,|t-s|^{\frac{1}{2}} \quad \mbox{for} \quad |t-s| <1 .
\end{equation*}
$\bullet$ Let divergence-free $V\in \mathcal{D}_{m,q}^{(q_1,q_2)}$ satisfying \eqref{T:weakSol:divfree:Vq} for $q\geq 1$. Because $\nabla \cdot V=0$, energy estimates \eqref{eq28 : Theorem : Toy Fokker-Plank} or \eqref{eq29 : Theorem : Toy Fokker-Plank}, 
and Lemma~\ref{L:interpolation} with $p=q$ implies that 
\[
\rho \in L_{x,t}^{r_1,r_2}  \ \text{ for }  \  \frac{d}{r_1} + \frac{2+q_{m,d}}{r_2} = \frac{d}{q},
\]
independent of $V$. For $q\geq 1$ and $ (1-\frac{2q}{d})_{+} \leq m < 1$, suppose that $V$ satisfies $\nabla \cdot V =0$ and \eqref{T:V:divfree_delta}.
Then the corresponding pairs $(\hat{r}_1, \hat{r}_2)$ satisfy
\[
\frac{d}{\hat{r}_1} + \frac{2+q_{m,d}}{\hat{r}_2} > \frac{d}{q}.
\]
Let us fix $r_1 = \hat{r}_1$. Then, we have $\frac{1}{\hat{r}_2} - \frac{1}{r_2} > 0$ and
 \begin{equation}\label{V_rho_est_divfree}
\int_{s}^{t} \int_{\Omega}  | V |\rho  \,dx\,d\tau \leq C\|V\|_{L_{x,t}^{q_1,q_2}} \,|t-s|^{\frac{1}{\hat{r}_2} - \frac{1}{r_2}},
\end{equation}
where $\frac{1}{\hat{r}_2} - \frac{1}{r_2}= \frac{2+\frac{d(q+m-2)}{q} - \left(\frac{d}{q_1} + \frac{2+q_{m,d}}{q_2}\right)}{2+q_{m,d}}>0$.
We plug \eqref{eq30 : Theorem : Toy Fokker-Plank} and \eqref{V_rho_est_divfree}  into \eqref{eq26 : Theorem : Toy Fokker-Plank} to get \eqref{regular:narrow-distance:2} with $a:=\min \{\frac{1}{\hat{r}_2} - \frac{1}{r_2}, \frac{1}{2} \}$.
This completes the proof. \qed

\section{Porous medium equation in case $\nabla \cdot V = 0$}\label{Appendix:PME}

Now we consider the porous medium equation (PME) with the divergence form of drifts in the form of 
\begin{equation}\label{E:PME}
\begin{cases}
\partial_t \rho =   \nabla \cdot( \nabla \rho^m  - V\rho), \ m>1,  & \text{ in } \  \Omega_{T}:= \Omega \times (0, T), \vspace{1 mm}\\
 \left( \nabla \rho^m - V\rho \right)\cdot \textbf{n} = 0 , &\text{ on } \ \partial \Omega \times (0, T) \vspace{1mm} \\
 \rho(\cdot,0)=\rho_0 \in L^q (\Omega), \ q\geq 1, & \text{ on } \ \Omega,
\end{cases}
\end{equation}
where $\Omega \Subset \bbr^d$ with smooth boundaries, $d\geq 2$, $0 <T < \infty$, and $V: \Omega_{T} \to \mathbb{R}^d$ is a given vector field to be specified later, and the vector $\textbf{n}$ is normal to the boundary $\partial \Omega$. 

In particular, assuming $\nabla \cdot V = 0$, we improve the existence results compare to results in \cite[Section~2.1.2]{HKK02} by adopting extended version of compactness arguments in Proposition~\ref{Proposition : compactness} and speed estimates in Lemma~\ref{L:speed} which includes super-critical regime of the scaling invariant classes. First, we rewrite Lemma~\ref{L:speed} for PME.   

\begin{lemma}\label{L:speed:PME} (Speed estimate of PME)
	Let $d\geq 2$, $m>1$ and $q \geq m$.  Suppose that $\rho:\Omega_{T} \mapsto \mathbb{R}$ is nonnegative measurable function satisfying 
	 \begin{equation}\label{FS:PME}
 \rho \in L^{\infty}(0, T; L^{q}(\Omega)) \quad \text{and} \quad
 \rho^{\frac{q+m-1}{2}} \in L^{2}(0, T; W^{1,2}(\Omega)).
 \end{equation}
 Furthermore, assume that 
 \begin{equation}\label{V:speed:PME}
 \begin{gathered}
  V \in L_{x,t}^{q_1, q_2}\cap \calC^{\infty} (\overline{\Omega}_{T})
  \ \text{ where }\ \frac{d}{q_1} + \frac{2 + q_{m,d}}{q_2} = 1+ \frac{d(q+m-2)}{2q} \vspace{1 mm}\\
   \ \text{ for }  \  
\begin{cases}
\frac{q-1}{2q}\leq \frac{1}{q_1} \leq \frac{q-1}{2q} + \frac{2+q_{m,d}}{2d(m+q-1)}, \ \frac{m+q-2}{2(m+q-1)}	\leq \frac{1}{q_2} \leq \frac{1}{2}, & \text{if } d>2, \vspace{1mm}\\
\frac{q-1}{2q}\leq \frac{1}{q_1} < \frac{q-1}{2q} + \frac{2+q_{m,d}}{2d(m+q-1)}, \ \frac{m+q-2}{2(m+q-1)}	< \frac{1}{q_2} \leq \frac{1}{2}, & \text{if } d=2.
\end{cases} 
 \end{gathered}
 \end{equation}
 Then the following holds
 \begin{equation*}
 	\iint_{\Omega_{T}} \{ \abs{\frac{\nabla \rho^m}{\rho}}^2 \rho + \abs{V}^2\rho \} \,dxdt \leq C
\end{equation*}
where the constant $C$ depends on $\Omega$, $\rho_0$, and $V$. 
\end{lemma}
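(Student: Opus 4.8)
The plan is to follow the proof of Lemma~\ref{L:speed} verbatim up to the testing identity, and then exploit the sign reversal caused by $m>1$. First I would test \eqref{E:PME} against $-m(\rho+\epsilon)^{m-1}$ for $\epsilon>0$, integrate by parts using the no-flux condition, and integrate in time over $(0,T)$; exactly as in \eqref{speed01} this yields, for every $m$,
\[
(1-m)\iint_{\Omega_T}\frac{\nabla\rho^m}{\rho}\cdot\frac{\nabla(\rho+\epsilon)^m}{\rho+\epsilon}\,\rho\,dx\,dt
=\int_\Omega(\rho+\epsilon)^m(\cdot,T)\,dx-\int_\Omega(\rho_0+\epsilon)^m\,dx
+(1-m)\iint_{\Omega_T}\rho\,V\cdot\frac{\nabla(\rho+\epsilon)^m}{\rho+\epsilon}\,dx\,dt.
\]

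The decisive difference from the fast-diffusion regime appears upon rearranging for $m>1$. Multiplying by $-1$ turns the coefficient into $m-1>0$ and, crucially, makes the endpoint contribution $-\int_\Omega(\rho+\epsilon)^m(\cdot,T)\,dx\le 0$, which may simply be discarded. Applying Young's inequality to the drift term as in \eqref{speed02} (absorbing half of the diffusion term and retaining a multiple of $\iint|V|^2\rho$) and letting $\epsilon\to0$, I obtain
\[
\frac{m-1}{2}\iint_{\Omega_T}\abs{\frac{\nabla\rho^m}{\rho}}^2\rho\,dx\,dt
\le\int_\Omega\rho_0^m\,dx+2(m-1)\iint_{\Omega_T}\abs{V}^2\rho\,dx\,dt.
\]
This is precisely where the hypothesis $q\ge m$ is used: since $\Omega$ is bounded, $L^q(\Omega)\hookrightarrow L^m(\Omega)$ gives $\int_\Omega\rho_0^m\,dx\le|\Omega|^{1-m/q}\|\rho_0\|_{L^q(\Omega)}^m<\infty$. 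Note that, unlike in Lemma~\ref{L:speed}, no H\"older interpolation at time $T$ and no mass conservation are needed, because for $m>1$ the endpoint term carries the favorable sign.

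It remains to bound $\iint_{\Omega_T}|V|^2\rho\,dx\,dt$, which proceeds exactly as in \eqref{speed04}. Using \eqref{FS:PME} I would apply H\"older's inequality in space and time to write $\iint|V|^2\rho\le\|V\|_{L^{q_1,q_2}_{x,t}}^2\|\rho\|_{L^{r_1,r_2}_{x,t}}$ with $q_1=\frac{2r_1}{r_1-1}$, $q_2=\frac{2r_2}{r_2-1}$, and then invoke the parabolic interpolation of Lemma~\ref{L:interpolation} with $p=q$ (whose proof extends unchanged to $m>1$) to control $\|\rho\|_{L^{r_1,r_2}_{x,t}}$ by the quantities in \eqref{FS:PME}. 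A direct dualization shows that the interpolation identity $\frac{d}{r_1}+\frac{2+q_{m,d}}{r_2}=\frac{d}{q}$ becomes $\frac{d}{q_1}+\frac{2+q_{m,d}}{q_2}=1+\frac{d(q+m-2)}{2q}$, and that the admissible ranges of $(r_1,r_2)$ translate into the ranges stated in \eqref{V:speed:PME}; in particular the lower bound $\frac{1}{q_1}\ge\frac{q-1}{2q}$ corresponds to the smallest admissible exponent $r_1=q$ (forced by $p=q$), while $\frac{1}{q_2}\ge\frac{m+q-2}{2(m+q-1)}$ corresponds to $r_2\ge m+q-1$. Combining the two displays yields the claimed bound. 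The only genuinely delicate point is the sign bookkeeping of the testing identity for $m>1$ together with the observation that $q\ge m$ is exactly what renders the initial term finite; the remaining estimates are routine and parallel to Lemma~\ref{L:speed}.
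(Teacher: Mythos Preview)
Your proposal is correct and follows essentially the same approach as the paper: test against (a power of) $\rho^{m-1}$, exploit that for $m>1$ the endpoint term $\int_\Omega\rho^m(\cdot,T)$ carries the favorable sign and may be dropped, use $q\ge m$ to control $\int_\Omega\rho_0^m$, apply Young's inequality to absorb the cross term, and finish with H\"older plus the interpolation Lemma~\ref{L:interpolation}. The only cosmetic difference is that you carry the $\epsilon$-regularization from Lemma~\ref{L:speed}, whereas the paper tests directly with $m\rho^{m-1}$---for $m>1$ this test function is nonsingular at $\rho=0$, so the regularization is unnecessary (though harmless).
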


\begin{proof}
First, by testing $m\rho^{m-1}$ to \eqref{E:PME}, we obtain the following:  
\[
 \frac{d}{dt}\int_{\Omega} \rho^m \,dx + (m-1)\int_{\Omega} \left|\frac{\nabla \rho^{m}}{\rho}\right|^2 \rho \, dx 
 = (m-1) \int_{\Omega}  \rho V \cdot \frac{\nabla \rho^m}{\rho} \,dx .
\]
Now the integration in terms of $t$ yields the following
\begin{equation}\label{speed01:PME}
 \int_{\Omega} \rho^m (\cdot, T) \,dx + (m-1)\iint_{\Omega_{T}} \left|\frac{\nabla \rho^{m}}{\rho}\right|^2 \rho \, dx dt
 =  \int_{\Omega} \rho_{0}^{m} \,dx + 
(m-1) \iint_{\Omega_T}  \rho V \cdot \frac{\nabla \rho^m}{\rho} \,dxdt. 
\end{equation}
Because $\rho$ is nonnegative, we ignore the first term on the left. Moreover, as long as $q\geq m$, it holds 
\[
\int_{\Omega} \rho_{0}^{m} \,dx \leq \abs{\Omega}^{1-\frac{m}{q}} \|\rho_0\|_{L^q_x (\Omega)}.
\]
The Young's inequality provides that 
\[
(m-1)\iint_{\Omega_T}  \rho V \cdot \frac{\nabla \rho^m}{\rho} \,dxdt 
\leq  \frac{m-1}{2}\iint_{\Omega_T} \abs{\frac{\nabla \rho^m}{\rho}}^2 \rho \,dxdt + 2(m-1)\iint_{\Omega_T} |V|^2 \rho \,dxdt,
\]
which the first term on the right is absorbed to the left of \eqref{speed01:PME}. Therefore, it leads that
\begin{equation*}\label{speed02:PME}
 \iint_{\Omega_{T}} \left|\frac{\nabla \rho^{m}}{\rho}\right|^2 \rho \, dx dt
 \leq C(m,|\Omega|, \|\rho_0\|_{L^{1}_{x}(\Omega)}) + 
4 \iint_{\Omega_T} \abs{V}^2 \rho  \,dxdt.
\end{equation*}

Now, based on the hypotheses \eqref{FS:PME}, we apply the interpolation inequality in \cite[Lemma~3.2]{HKK02} that  
\[
\iint_{\Omega_{T}} |V|^2 \rho \,dxdt \leq \|V\|^{2}_{L_{x,t}^{q_1, q_2}} \|\rho\|_{L_{x,t}^{r_1, r_2}}, \quad r_1 = \frac{q_1}{q_1 - 2}, \ r_2 = \frac{q_2}{q_2 - 2}
\]
by taking the H\"{o}lder inequalities $\frac{2}{q_1} + \frac{q_1 - 2}{q_1} = 1$ in the spatial variable and $\frac{2}{q_2} + \frac{q_2 - 2}{q_2} = 1$ in the temporal variable, respectively, for $q_1, q_2 > 2$. Then the suitable range of $(r_1, r_2)$ is satisfying \eqref{L:r1r2} with $p=q$ which corresponds to the range of $(q_1, q_2)$ satisfying \eqref{V:speed:PME}.
\end{proof}

Next is a general version of compactness arguments for PME (cf. Proposition~\ref{Proposition : compactness}).
\begin{proposition}\label{P:AL:PME} (Compactness argument of PME)
 Let $q\geq 1$ and $m>1$.
Suppose that $\rho$ is a regular solution of \eqref{E:PME} with $\rho_{0}\in  L^{q}(\Omega) \cap \calC^{\alpha}(\overline{\Omega})$ such that 
\begin{equation*}\label{FS01:PME}
\rho \in L^{\infty} \left(0, T; L^q (\Omega)\right) \quad \text{ and } \quad 
\rho^{\frac{q+m-1}{2}} \in L^2 \left(0, T; W^{1,2}(\Omega)\right).
\end{equation*}
Assume that 
\begin{equation}\label{V:compact:PME}
\begin{gathered}
	V \in L_{x,t}^{q_1, q_2}\cap \calC^{\infty} (\overline{\Omega}_{T})
	\ \text{ where } \  \frac{d}{q_1} + \frac{2 + q_{m,d}}{q_2} = 2+\frac{d(q+m-2)}{q} \vspace{1 mm}\\
	\text{ for } \ 
	\begin{cases}
		\frac{q-1}{q}\leq \frac{1}{q_1} \leq \frac{q-1}{2q} + \frac{2+q_{m,d}}{d(m+q-1)}, \ \frac{m+q-2}{q+m-1} \leq \frac{1}{q_2} \leq 1, & \text{if } d > 2, \vspace{1 mm}\\
		\frac{q-1}{q} \leq \frac{1}{q_1} < \frac{q-1}{2q} + \frac{2+q_{m,d}}{d(m+q-1)}, \ \frac{m+q-2}{q+m-1} < \frac{1}{q_2} \leq 1, & \text{if } d = 2.
	\end{cases} 
\end{gathered} 
\end{equation}
Then $\rho$ is relatively compact in the following space
\begin{equation*}\label{compact:PME}
\rho \in L^{\alpha, \beta}_{x,t} \quad \text{ where } \ \alpha < r_1 \ \text{ and } \ \beta <r_2 
\end{equation*}
for $(r_1, r_2)$ satisfying \eqref{L:r1r2} for $p=q$. 
\end{proposition}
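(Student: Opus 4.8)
The plan is to transcribe the proof of Proposition~\ref{Proposition : compactness} to the regime $m>1$, using the porous-medium interpolation inequality \cite[Lemma~3.2]{HKK02} (the analogue of Lemma~\ref{L:interpolation}) wherever the latter is invoked. By hypothesis $\rho$ already lies in
\[
X_0 := \left\{ \rho \in L^\infty(0,T;L^q(\Omega)),\ \rho^{\frac{q+m-1}{2}} \in L^2(0,T;W^{1,2}(\Omega)) \right\};
\]
the role of the divergence-free assumption operative throughout this appendix is precisely that this $X_0$-membership is furnished by the \emph{$V$-independent} energy estimate (testing \eqref{E:PME} by $q\rho^{q-1}$, or by $\log\rho$ when $q=1$, annihilates the drift because $\nabla\cdot V=0$ and $V\cdot\mathbf{n}=0$ give $(q-1)\int V\cdot\nabla\rho^q=-(q-1)\int\rho^q\,\nabla\cdot V=0$), which is what permits $V$ to be taken in the super-critical class \eqref{V:compact:PME}. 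Hence the entire content reduces to identifying a Banach space carrying $\rho_t$ and invoking Aubin--Lions.

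\emph{Identifying $X_1$.} First I would use the interpolation inequality with $p=q$ to place $\rho$ on the line $\frac{d}{r_1}+\frac{2+q_{m,d}}{r_2}=\frac{d}{q}$ of \eqref{L:r1r2}, and in particular to obtain $\rho^m\in L^s_{x,t}(\Omega_T)$ for some exponent $s\in(1,\tfrac{d}{d-1})$; such an $s$ exists for every $m>1$, $q\ge1$, since even in the borderline case $q=1$ one has $\rho^m\in L^{1+\frac{2}{dm}}_{x,t}$ with $1<1+\frac{2}{dm}<\frac{d}{d-1}$. Consequently $\Delta\rho^m\in W^{-2,s}_x L^{1}_t$. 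For the drift I would estimate, by H\"older's inequality,
\[
\|V\rho\|_{L^1_{x,t}} \le \|V\|_{L^{q_1,q_2}_{x,t}}\,\|\rho\|_{L^{r_1,r_2}_{x,t}}, \qquad r_1=\tfrac{q_1}{q_1-1},\ r_2=\tfrac{q_2}{q_2-1},
\]
and read off from Lemma~\ref{L:interpolation} (PME version) that such a conjugate pair $(r_1,r_2)$ sits on the admissible line exactly when $(q_1,q_2)$ obeys \eqref{V:compact:PME}; thus $V\rho\in L^1_{x,t}$. Testing $\nabla\cdot(V\rho)$ against $\varphi\in W^{2,s'}_0(\Omega)$ with $s'>d$, whence $\nabla\varphi\in L^\infty_x$ by Sobolev embedding, gives $\nabla\cdot(V\rho)\in W^{-2,s}_x L^1_t$. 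Therefore \eqref{E:PME} yields
\[
\rho_t = \Delta\rho^m - \nabla\cdot(V\rho) \in W^{-2,s}_x L^1_t =: X_1.
\]

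\emph{Conclusion and main obstacle.} With $\rho$ bounded in $X_0$ and $\rho_t$ bounded in $X_1$, the Aubin--Lions Lemma~\ref{AL} --- whose compact intermediate embedding is supplied through the parabolic interpolation built into $X_0$ --- gives that the embedding $W\hookrightarrow L^{\alpha,\beta}_{x,t}$ is compact, so $\rho$ (along any sequence with uniform $X_0$- and $X_1$-bounds) is relatively compact in $L^{\alpha,\beta}_{x,t}$ for every $\alpha<r_1$, $\beta<r_2$ with $(r_1,r_2)$ on the line \eqref{L:r1r2} for $p=q$. I expect the genuine work to be arithmetic rather than conceptual: one must verify that \eqref{V:compact:PME} is \emph{exactly} the H\"older-conjugate of the interpolation constraint, i.e.\ that substituting $\frac1{r_i}=1-\frac1{q_i}$ into $\frac{d}{r_1}+\frac{2+q_{m,d}}{r_2}=\frac{d}{q}$ reproduces $\frac{d}{q_1}+\frac{2+q_{m,d}}{q_2}=2+\frac{d(q+m-2)}{q}$ (using $q_{m,d}=\frac{d(m-1)}{q}$), and one must track the endpoint behaviour that forces the strict inequalities in the $d=2$ line of \eqref{V:compact:PME} against the non-strict ones for $d>2$. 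This is the same bookkeeping as in the FDE condition \eqref{Vrho:L1}, which indeed coincides algebraically with \eqref{V:compact:PME}.
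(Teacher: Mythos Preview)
Your proposal is correct and follows essentially the same Aubin--Lions scheme as the paper, which likewise places $\Delta\rho^m$ and $\nabla\cdot(V\rho)$ in $W^{-2,s}_xL^1_t$ (the paper fixes $s=\tfrac{d+1}{d}$ and enlarges $X_0$ to include the $L^1$-level estimate $\nabla\rho^{m/2}\in L^2$, but the structure is identical). One arithmetic slip: the inequality $1+\tfrac{2}{dm}<\tfrac{d}{d-1}$ fails for $d\ge3$ and $1<m\le 2-\tfrac{2}{d}$, but this is harmless since $\Omega$ is bounded and you can simply embed down to any $s\in(1,\tfrac{d}{d-1})$.
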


\begin{proof}
The proof follows the compactness argument in Proposition~\ref{Proposition : compactness}; we only indicate the point where the PME case requires a different truncation as in \cite[Section~6.3.2]{BDG13}, \cite{HKKP}.

If $1\le q\le 3-m$, which can occur only when $1\le m\le 3-q$, then the same estimate as in \eqref{nabla_rho} gives $\nabla \rho \in L^\gamma_{x,t}(\Omega_T)$ 
for some $\gamma>1$. Hence the Aubin--Lions argument in Case 1 of Proposition~\ref{Proposition : compactness} applies.

It remains to consider $q>3-m$. In this range the exponent $(3-m-q)/2$ is negative. Therefore, instead of the truncation from above used in the fast-diffusion case, we use the truncation from below
\[
w_n^{(k)}:=T_k^{-}(\rho_n):=\max\{\rho_n,1/k\}.
\]
Since $w_n^{(k)}\ge 1/k$, we have
\[
|\nabla w_n^{(k)}|
=
\frac{2}{m+q-1}
\left(w_n^{(k)}\right)^{\frac{3-m-q}{2}}
\left|
\nabla \left(w_n^{(k)}\right)^{\frac{m+q-1}{2}}
\right|
\le
C(k,m,q)
\left|
\nabla \rho_n^{\frac{m+q-1}{2}}
\right|.
\]
Thus the localized compactness and diagonal argument used in Case 2 of Proposition~\ref{Proposition : compactness} applies to $w_n^{(k)}$. Passing $k\to\infty$, we obtain the desired relative compactness of $\rho_n$.
\end{proof}

Now we deliver updated existence results of PME in case $\nabla \cdot V = 0$, see \cite[Theorem~2.8 \& 2.10]{HKK02}. 

\begin{theorem}\label{T:weakSol:divfree:PME}
Let $d\geq 2$, $m>1$ and $q\geq 1$.  
Suppose that 
\begin{equation}\label{T:weakSol:divfree:V:PME}
	V \in \mathcal{D}_{m,q}^{(q_1, q_2)} 
	\ \text{ for } \
	\begin{cases}
		0 \leq \frac{1}{q_1} \leq \frac{q-1}{q} + \frac{2+q_{m,d}}{d(m+q-1)}, \ 0 \leq \frac{1}{q_2} \leq 1, & \text{if } d > 2, \vspace{1 mm}\\
		0 \leq \frac{1}{q_1} < \frac{q-1}{q} + \frac{2+q_{m,d}}{d(m+q-1)}, \ 0 \leq \frac{1}{q_2} \leq 1, & \text{if } d = 2.
	\end{cases}
\end{equation}

\begin{itemize}
	\item[(i)] For $q=1$, assume that $\rho_0 \in \mathcal{P}(\Omega)$ and $\int_{\Omega} \rho_0 \log \rho_0 \,dx < \infty$. 
Then, there exists a nonnegative $L^1$-weak solution of \eqref{E:PME} in Definition~\ref{D:weak-sol} that holds \eqref{T:weakSol:E_1} with $C=C (\int_{\Omega} \rho_0 \log \rho_0 \,dx )$.

	\item[(ii)] For $q>1$, assume that $\rho_0 \in  \mathcal{P}(\Omega) \cap L^{q}(\Omega)$. Then, there exists a nonnegative $L^q$-weak solution of \eqref{E:PME} in Definition~\ref{D:weak-sol} that holds \eqref{T:weakSol:E_q} with $C=C (\|\rho_{0}\|_{L^{q} (\Omega)})$.
	
	\item[(iii)] Furthermore, suppose that 
	\begin{equation*}\label{T:weakSol:divfree:V:delta:PME}
	V \in \mathcal{D}_{m,q,+}^{(q_1, q_2)} 
	\ \text{ for } \
		0 \leq \frac{1}{q_1} < \frac{q-1}{q} + \frac{2+q_{m,d}}{d(m+q-1)}, \ 0 \leq \frac{1}{q_2} < 1.
	\end{equation*}
    Then, we have 
\begin{equation*}\label{T:weakSol:Narrow_2:PME}
\delta(\rho(t),\rho(s))\leq C(t-s)^{a},   \quad  \forall ~ 0\leq s<t\leq T 
\end{equation*}
with $a= \min \left\{ \frac{2q+d(q-1)}{2[2q+d(m+q-1)]}, \frac{2+\frac{d(q+m-2)}{q} - \left(\frac{d}{q_1} + \frac{2+q_{m,d}}{q_2}\right)}{2+q_{m,d}} \right\}$ 
and $C= \begin{cases}
	C ( \|V\|_{\mathcal{D}_{m,1,+}^{(q_1,q_2)}}, \int_{\Omega} \rho_0 \log \rho_0 \,dx ), & \text{if } q=1 , \\
	C (\|V\|_{\mathcal{D}_{m,q,+}^{(q_1,q_2)}}, \|\rho_{0}\|_{L^{q} (\Omega)} ), &\text{if } q>1.
\end{cases}$	
	
\end{itemize}

\end{theorem}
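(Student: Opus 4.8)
The plan is to follow the scheme used for the fast diffusion case in the proof of Theorem~\ref{T:weakSol:divfree}, substituting the porous-medium compactness tool, Proposition~\ref{P:AL:PME}, for its FDE counterpart. First I would approximate the data: pick $V_n\in C_c^\infty(\overline{\Omega}\times[0,T))$ with $\nabla\cdot V_n=0$ and $V_n\cdot\mathbf{n}=0$ on $\partial\Omega$ such that $\|V_n-V\|_{\mathcal{D}_{m,q}^{(q_1,q_2)}}\to0$, and smooth $\rho_{0,n}$ approximating $\rho_0$ (in $L^q$ for $q>1$; in entropy and $W_2$ for $q=1$). For such smooth divergence-free data the splitting-method construction of \cite{HKK02} yields regular solutions $\rho_n$ of \eqref{E:PME}. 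The decisive simplification afforded by $\nabla\cdot V_n=0$ is that, testing \eqref{E:PME} by $\log\rho_n$ (for $q=1$) or $q\rho_n^{q-1}$ (for $q>1$), the drift contributes $\int_\Omega V_n\cdot\nabla\rho_n^q\,dx=-\int_\Omega\rho_n^q\,\nabla\cdot V_n\,dx=0$; hence the energy estimates \eqref{T:weakSol:E_1} and \eqref{T:weakSol:E_q} hold uniformly in $n$, independently of $V$, exactly by the computation underlying Proposition~\ref{P:energy}(iii).

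Next I would pass to the limit. The range \eqref{T:weakSol:divfree:V:PME} is precisely where Proposition~\ref{P:AL:PME} applies, so $\{\rho_n\}$ is relatively compact in some $L^{\alpha,\beta}_{x,t}$, and along a subsequence $\rho_n\to\rho$ strongly there while $\rho_n(t)$ converges narrowly for a.e.\ $t$. In the weak formulation of Definition~\ref{D:weak-sol} the only delicate term is $\iint V_n\rho_n\cdot\nabla\varphi$, which I would split as $\iint(V_n-V)\rho_n\cdot\nabla\varphi+\iint V(\rho_n-\rho)\cdot\nabla\varphi$. For the first piece, Lemma~\ref{L:interpolation} with $p=q$ places $\rho_n$ in $L^{r_1,r_2}_{x,t}$ along $\tfrac{d}{r_1}+\tfrac{2+q_{m,d}}{r_2}=\tfrac{d}{q}$, so with the Hölder-dual exponents $\tfrac1{r_i}+\tfrac1{q_i}=1$ one gets $\|(V_n-V)\rho_n\|_{L^1_{x,t}}\le\|V_n-V\|_{\mathcal{D}_{m,q}^{(q_1,q_2)}}\|\rho_n\|_{L^{r_1,r_2}_{x,t}}\to0$ by the uniform energy bound; the second piece vanishes by weak convergence. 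The gradient term is identified as in Theorem~\ref{T:ACweakSol}: writing $\nabla\rho_n^m=\tfrac{2m}{q+m-1}\rho_n^{(m-q+1)/2}\nabla\rho_n^{(q+m-1)/2}$ shows $\nabla\rho_n^m$ is bounded in a suitable $L^{p_1,p_2}_{x,t}$, and strong convergence of $\rho_n$ identifies its weak limit as $\nabla\rho^m$. Lower semicontinuity of the entropy (resp.\ the $L^q$-norm) and weak lower semicontinuity of the Dirichlet integral then give \eqref{T:weakSol:E_1} (resp.\ \eqref{T:weakSol:E_q}) for $\rho$, proving (i) and (ii).

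For part (iii), following Appendix~\ref{A:Exist-regular}, I would test the weak identity with $\varphi\in W^{1,\infty}(\Omega)$, $\|\varphi\|_{W^{1,\infty}(\Omega)}\le1$, to obtain
\[
\left|\int_\Omega\varphi\,\bigl(\rho(\cdot,t)-\rho(\cdot,s)\bigr)\,dx\right|\le\int_s^t\!\!\int_\Omega\bigl(|V|\rho+|\nabla\rho^m|\bigr)\,dx\,d\tau,
\]
and then bound each term by a power of $(t-s)$. For the drift term, with $\tfrac1{\hat r_i}+\tfrac1{q_i}=1$, the strict inequality defining $\mathcal{D}_{m,q,+}^{(q_1,q_2)}$ forces $\tfrac{d}{\hat r_1}+\tfrac{2+q_{m,d}}{\hat r_2}>\tfrac{d}{q}$; fixing $r_1=\hat r_1$ in Lemma~\ref{L:interpolation} leaves a gap $\tfrac1{\hat r_2}-\tfrac1{r_2}>0$, and Hölder in time gives a factor $(t-s)^{1/\hat r_2-1/r_2}$, where $\tfrac1{\hat r_2}-\tfrac1{r_2}$ is exactly the second entry of $a$. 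For the gradient term, Cauchy--Schwarz together with the decomposition of $\nabla\rho^m$ above yields $\int_s^t\!\int_\Omega|\nabla\rho^m|\le C\|\nabla\rho^{(q+m-1)/2}\|_{L^2_{x,t}}\bigl(\int_s^t\!\int_\Omega\rho^{m-q+1}\,dx\,d\tau\bigr)^{1/2}$, and a parabolic interpolation of the power of $\rho$ contributes the first entry of $a$. Taking $a$ to be the minimum of the two exponents and transferring the estimate from $\rho_n$ to $\rho$ via the Arzelà--Ascoli Lemma~\ref{Lemma : Arzela-Ascoli} and lower semicontinuity of $\delta$ completes (iii).

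The main obstacle I anticipate is the gradient term in (iii). In the fast diffusion regime $0<m<1$ the concavity bound $\int_\Omega\rho^m\le(\int_\Omega\rho)^m$ makes $\int_s^t\!\int_\Omega|\nabla\rho^m|\lesssim(t-s)^{1/2}$ immediate, which is why the first entry of $a$ is simply $\tfrac12$ in Theorem~\ref{T:weakSol:divfree}; for $m>1$ this shortcut is unavailable and one must instead control $\int_s^t\!\int_\Omega\rho^{m-q+1}$ by parabolic interpolation against the energy bounds. Carrying this out uniformly across the whole range $q\ge1$, $m>1$, checking integrability of the emerging power of $\rho$, and verifying that the resulting time exponent is exactly $\tfrac{2q+d(q-1)}{2[2q+d(m+q-1)]}$ (as well as the $d=2$ endpoint exclusions) is the technical heart of the argument.
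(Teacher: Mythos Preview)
Your scheme for parts (i) and (ii), and for the drift contribution in (iii), is correct and follows the paper: approximate, exploit that the divergence-free energy estimate is independent of $V$, invoke the PME compactness Proposition~\ref{P:AL:PME}, and pass to the limit as in the proof of Theorem~\ref{T:weakSol:divfree}.

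The gap is in the gradient term of (iii), precisely where you anticipated trouble. Your decomposition $\nabla\rho^m=\tfrac{2m}{q+m-1}\,\rho^{(m-q+1)/2}\,\nabla\rho^{(q+m-1)/2}$ followed by Cauchy--Schwarz leaves $\int_s^t\!\int_\Omega\rho^{m-q+1}\,dx\,d\tau$ to be controlled, but for $q>m+1$ this is a \emph{negative} power of $\rho$; since PME solutions can vanish on sets of positive measure, the integral is generally infinite and the bound is vacuous. (Even in the range $1\le q\le m+1$ where it works, your interpolation produces the exponent $\tfrac{q+d(q-1)}{2q+d(m+q-1)}$, which is larger than---and hence implies, but does not equal---the stated first entry of $a$.)

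The paper's resolution, indicated in the remark immediately following the theorem, is to mix levels: use the $q=1$ decomposition $\nabla\rho^m=2\rho^{m/2}\nabla\rho^{m/2}$ instead. Because $\nabla\cdot V=0$ and $\rho_0\in\mathcal{P}(\Omega)\cap L^q(\Omega)$ with $q>1$ forces $\int_\Omega\rho_0\log\rho_0<\infty$, Proposition~\ref{P:energy}(iii) at the $q=1$ level furnishes $\nabla\rho^{m/2}\in L^2_{x,t}$ regardless of $q$. Cauchy--Schwarz then leaves the harmless positive power $\bigl(\int_s^t\!\int_\Omega\rho^m\,dx\,d\tau\bigr)^{1/2}$, and the $q$-level parabolic embedding $\rho\in L^{p}_{x,t}$ with $p=m+q-1+\tfrac{2q}{d}$ gives $\int_s^t\!\int_\Omega\rho^m\le C(t-s)^{1-m/p}$. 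A direct computation shows $\tfrac12\bigl(1-\tfrac{m}{p}\bigr)=\tfrac{2q+d(q-1)}{2[2q+d(m+q-1)]}$, exactly the stated first entry of $a$.
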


\begin{remark}
In \cite[Theorem~2.4 \& 2.12]{HKK02}, \( L^q \)-weak solutions for the PME that satisfy energy estimates are introduced when \( V \) or \( \nabla V \) belongs to scaling-invariant classes (non-divergence-free case). When \( m > 1 \) and \( V \in \mathcal{S}_{m,q}^{(q_1,q_2)} \), an \( L^q \)-weak solution \( \rho \) satisfies \( \rho \in L^{q,\infty}_{x,t} \) and \( \nabla \rho^{\frac{m+q-1}{2}} \in L^{2}_{x,t} \), which implies that \( \rho \in L_{x,t}^{m+q-1+\frac{2q}{d}} \) by parabolic embedding. Therefore, it follows that 
    \[
     \int_{s}^{t} \int_{\Omega}   | \nabla  \rho^m |  \,dx\,d\tau \leq C |t-s|^{\frac{2q+d(q-1)}{2[2q+d(m+q-1)]}},
    \]
    by following \eqref{eq30 : Theorem : Toy Fokker-Plank} with $\rho \in L_{x,t}^{m+q-1+\frac{2q}{d}}$ instead of $\rho \in L^{1,\infty}_{x,t}$.
    Since $V\in \mathcal{S}_{m,q}^{(q_1,q_2)}$ in \eqref{Scaling_invariant_class}, this corresponds to 
    \[
    \rho \in L_{x,t}^{r_1,r_2}  \ \text{ for }  \  \frac{d}{r_1} + \frac{2+q_{m,d}}{r_2} = \frac{d}{q}, 
    \ \text{ and } \ 
    \frac{d}{\hat{r}_1} + \frac{2+q_{m,d}}{\hat{r}_2} = 1+d.
    \]
    Therefore, as in \eqref{eq31 : Theorem : Toy Fokker-Plank} and \eqref{V_rho_est}, this yields that 
    \[
     \int_{s}^{t} \int_{\Omega}  | V |\rho  \,dx\,d\tau \leq C\|V\|_{\mathcal{S}_{m,q}^{(q_1,q_2)}}|t-s|^{\frac{q+d(q-1)}{2q+d(m-1)}}.
    \]
    Hence, we expect the $L^q$-weak solution to hold 
	 \begin{equation*}\label{T:weakSol:Narrow_1:PME}
\delta(\rho(t),\rho(s))\leq C(t-s)^{a},   \quad  \forall ~ 0\leq s<t\leq T, 
\end{equation*}
where 
    \[
    a=\min\left\{\frac{2q+d(q-1)}{2[2q+d(m+q-1)]}, \ \frac{q+d(q-1)}{2q+d(m-1)}\right\}.
    \]
\end{remark}

The following theorem is for an absolutely continuous $L^q$-weak solution when $q\geq m$, see \cite[Theorem~2.12]{HKK02}. 
\begin{theorem}\label{T:ACweakSol:divfree:PME}
Let $d\geq 2$, $m>1$, and $q\geq m$.  
Suppose that  $\rho_0 \in  \mathcal{P}(\Omega) \cap L^{q}(\Omega)$ and 
\begin{equation}\label{T:ACweakSol:divfree:V:PME}
\begin{aligned}
		 V \in 
			\mathcal{D}_{m,q,s}^{(q_1,q_2)}  
		\  \text{ for } \ 
	\begin{cases}
0 \leq \frac{1}{q_1} \leq \frac{q-1}{2q} + \frac{2+q_{m,d}}{2d(m+q-1)},  \ 0 \leq \frac{1}{q_2} \leq \frac{1}{2}, & \text{if } d>2, \vspace{1mm}\\
0 \leq \frac{1}{q_1} < \frac{q-1}{2q} + \frac{2+q_{m,d}}{2d(m+q-1)}, \ 0 \leq \frac{1}{q_2} \leq \frac{1}{2}, & \text{if } d=2.
\end{cases} 
\end{aligned}
\end{equation}
Then, there exists a nonnegative $L^q$-weak solution of \eqref{E:PME} in Definition~\ref{D:weak-sol} such that  $\rho \in AC(0,T; \mathcal{P} (\Omega))$ with $\rho(\cdot, 0)=\rho_0$.
Furthermore, $\rho$ satisfies 
\[
 \sup_{0\leq t \leq T} \int_{\mathbb{R}^d}  \rho^q  \,dx
 + \iint_{\Omega_T} \{ \left| \nabla \rho^{\frac{q+m-1}{2}}\right |^2 + \left(\abs{\frac{\nabla \rho^m}{\rho}}^{2}+|V|^{2}\right ) \rho \} \,dx\,dt \leq C,
\]
and 
\[
 W_{2}(\rho(t),\rho(s))\leq C (t-s)^{\frac{1}{2}},\qquad
\forall ~~0\leq s\leq t\leq T,
\]
with $C = C (\|V\|_{\mathcal{D}_{m,q,s}^{(q_1,q_2)}}, \,\|\rho_{0}\|_{L^{q} (\Omega)})$.
\end{theorem}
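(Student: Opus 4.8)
The plan is to construct the solution by the same approximation-and-compactness scheme used for Theorem~\ref{T:ACweakSol:divfree}, now driven by the PME speed estimate (Lemma~\ref{L:speed:PME}) and the PME compactness argument (Proposition~\ref{P:AL:PME}) in place of their FDE counterparts. Since the strictly sub-critical case is simpler, I would treat the critical case where $\frac{d}{q_1}+\frac{2+q_{m,d}}{q_2}=1+\frac{d(q+m-2)}{2q}$. First I would use the definition of $\mathcal{D}_{m,q,s}^{(q_1,q_2)}$ to select smooth divergence-free vector fields $V_n\in C_c^\infty(\overline{\Omega}\times[0,T))$ with $V_n\cdot\mathbf{n}=0$ and $\|V_n-V\|_{\mathcal{D}_{m,q,s}^{(q_1,q_2)}}\to0$, together with $\rho_{0,n}\in\calC^\infty(\overline{\Omega})\cap\mathcal{P}(\Omega)$ approximating $\rho_0$ in $L^q$. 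For such smooth data the existence of regular solutions $\rho_n\in AC(0,T;\mathcal{P}(\Omega))$ of \eqref{E:PME} is available from the splitting-method construction of \cite{HKK02} (the PME analogue of Proposition~\ref{proposition : regular existence}).

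Next I would record the two a priori bounds. Because $\nabla\cdot V_n=0$ and $V_n\cdot\mathbf{n}=0$, the drift contribution to the energy identity vanishes after integration by parts, so the energy estimate is $V$-independent and yields $\sup_{t}\int_\Omega\rho_n^q\,dx+\iint_{\Omega_T}|\nabla\rho_n^{(q+m-1)/2}|^2\,dx\,dt\le C(\|\rho_0\|_{L^q})$. With this bound in hand, Lemma~\ref{L:speed:PME} applies under \eqref{T:ACweakSol:divfree:V:PME}; here the hypothesis $q\ge m$ is exactly what makes $\int_\Omega\rho_{0,n}^m\,dx\le|\Omega|^{1-m/q}\|\rho_{0,n}\|_{L^q}$ finite, while the term $\int_\Omega\rho_n^m(\cdot,T)\,dx$ is dropped by nonnegativity. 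This produces the uniform speed bound $\iint_{\Omega_T}(|\nabla\rho_n^m/\rho_n|^2\rho_n+|V_n|^2\rho_n)\,dx\,dt\le C$. Writing \eqref{E:PME} as the continuity equation $\partial_t\rho_n+\nabla\cdot(w_n\rho_n)=0$ with $w_n:=-\nabla\rho_n^m/\rho_n+V_n$, Lemma~\ref{representation of AC curves} then gives $\rho_n\in AC(0,T;\mathcal{P}(\Omega))$ and $W_2(\rho_n(s),\rho_n(t))\le\int_s^t\|w_n(\tau)\|_{L^2(\rho_n(\tau))}\,d\tau\le C(t-s)^{1/2}$.

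To extract a limit I would first observe that $V\in\mathcal{D}_{m,q,s}^{(q_1,q_2)}$ implies $V\in\mathcal{D}_{m,q}^{(q_1/2,q_2/2)}$, since halving the exponents doubles the left-hand side of the defining inequality, turning the speed threshold $1+\frac{d(q+m-2)}{2q}$ into the compactness threshold $2+\frac{d(q+m-2)}{q}$; one checks that the halved exponents still lie in the admissible range \eqref{V:compact:PME}. Thus Proposition~\ref{P:AL:PME} gives relative compactness of $\{\rho_n\}$ in some $L^{\alpha,\beta}_{x,t}$, and the uniform Wasserstein modulus of continuity together with Lemma~\ref{Lemma : Arzela-Ascoli} yields a subsequence narrowly converging to a limit curve $\rho$. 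Passing to the limit in the weak formulation, lower semicontinuity of $W_2$ and of the $L^2$-type energy and speed functionals with respect to the strong and narrow convergence transfers both the energy-speed estimate and the bound $W_2(\rho(t),\rho(s))\le C(t-s)^{1/2}$ to $\rho$.

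The main obstacle is the passage to the limit in the drift term, namely $\iint_{\Omega_T}(V_n\cdot\nabla\varphi)\rho_n\to\iint_{\Omega_T}(V\cdot\nabla\varphi)\rho$. I would split it as in the treatment of the term $I$ in the proof of Theorem~\ref{T:weakSol:divfree}: the difference is controlled by $\|(V_n-V)\rho_n\|_{L^1_{x,t}}\le\|V_n-V\|_{\mathcal{D}_{m,q}^{(q_1/2,q_2/2)}}\|\rho_n\|_{L^{r_1,r_2}_{x,t}}$ via Lemma~\ref{L:interpolation}, and the remaining piece uses the strong $L^{\alpha,\beta}_{x,t}$ convergence of $\rho_n$. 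The genuinely delicate bookkeeping is ensuring that a single pair $(q_1,q_2)$ simultaneously validates Lemma~\ref{L:speed:PME} (on the speed scaling line) and Proposition~\ref{P:AL:PME} (on the compactness scaling line); this is precisely what the halving reconciliation above accomplishes, and the constraint $q\ge m$ is the structural price paid for the PME nonlinearity in the speed estimate. The remaining verifications—narrow continuity, recovery of the initial data $\rho(\cdot,0)=\rho_0$, and the various lower-semicontinuity passages—follow verbatim from the FDE arguments.
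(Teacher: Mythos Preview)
Your proposal is correct and follows essentially the same route the paper indicates: the paper does not write out a separate proof of Theorem~\ref{T:ACweakSol:divfree:PME} but presents Lemma~\ref{L:speed:PME} and Proposition~\ref{P:AL:PME} precisely so that the argument of Theorem~\ref{T:ACweakSol:divfree} carries over verbatim, including the observation that $V\in\mathcal{D}_{m,q,s}^{(q_1,q_2)}$ implies $V\in\mathcal{D}_{m,q}^{(q_1^*,q_2^*)}$ with $q_i^*=q_i/2$, the $V$-independent energy estimate from $\nabla\cdot V=0$, and the role of $q\ge m$ in bounding $\int_\Omega\rho_0^m\,dx$ in the speed estimate. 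Your handling of the drift-term limit via $\|(V_n-V)\rho_n\|_{L^1_{x,t}}$ and the Hölder pairing against $\|\rho_n\|_{L^{r_1,r_2}_{x,t}}$ matches the paper's treatment of the term $I$ in the proof of Theorem~\ref{T:weakSol:divfree}.
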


\begin{bibdiv}
\begin{biblist}

\bib{ags:book}{book}{
    AUTHOR = {Ambrosio, L.}, 
    author = {Gigli, N.},
    author = {Savar\'{e}, G.},
     TITLE = {Gradient flows in metric spaces and in the space of
              probability measures},
    SERIES = {Lectures in Mathematics ETH Z\"{u}rich},
 PUBLISHER = {Birkh\"{a}user Verlag, Basel},
      YEAR = {2005},
     PAGES = {viii+333},
      ISBN = {978-3-7643-2428-5; 3-7643-2428-7},
   MRCLASS = {49-02 (28A33 35K55 35K90 49Q20 60B10)},
  MRNUMBER = {2129498},
}

\bib{BDG13}{article}{
      author={B\"{o}gelein, V.},
      author={Duzaar, F.},
      author={Gianazza, U.},
       title={Porous medium type equations with measure data and potential
  estimates},
        date={2013},
        ISSN={0036-1410},
     journal={SIAM J. Math. Anal.},
      volume={45},
      number={6},
       pages={3283\ndash 3330},
         url={https://doi.org/10.1137/130925323},
      review={\MR{3124895}},
}

\bib{BDG15}{article}{
      author={B\"{o}gelein, V.},
      author={Duzaar, F.},
      author={Gianazza, U.},
       title={Very weak solutions of singular porous medium equations with
  measure data},
        date={2015},
        ISSN={1534-0392},
     journal={Commun. Pure Appl. Anal.},
      volume={14},
      number={1},
       pages={23\ndash 49},
         url={https://doi.org/10.3934/cpaa.2015.14.23},
      review={\MR{3299023}},
}

\bib{BonFig2024}{article}{
    AUTHOR = {Bonforte, M.}, 
    author = {Figalli, A.},
     TITLE = {The {C}auchy-{D}irichlet problem for the fast diffusion
              equation on bounded domains},
   JOURNAL = {Nonlinear Anal.},
  FJOURNAL = {Nonlinear Analysis. Theory, Methods \& Applications. An
              International Multidisciplinary Journal},
    VOLUME = {239},
      YEAR = {2024},
     PAGES = {Paper No. 113394, 55},
      ISSN = {0362-546X,1873-5215},
   MRCLASS = {35K55 (35B40 35J20 35K67 35P30)},
  MRNUMBER = {4658530},
MRREVIEWER = {Si\ Ning\ Zheng},
       DOI = {10.1016/j.na.2023.113394},
       URL = {https://doi.org/10.1016/j.na.2023.113394},
}

\bib{Bonforte_Vazquez_2006}{article}{
    AUTHOR = {Bonforte, M.}, 
    author = {Vazquez, J. L.},
     TITLE = {Global positivity estimates and {H}arnack inequalities for the
              fast diffusion equation},
   JOURNAL = {J. Funct. Anal.},
  FJOURNAL = {Journal of Functional Analysis},
    VOLUME = {240},
      YEAR = {2006},
    NUMBER = {2},
     PAGES = {399--428},
      ISSN = {0022-1236,1096-0783},
   MRCLASS = {35K55 (35B45 35B65)},
  MRNUMBER = {2261689},
MRREVIEWER = {Yoshio\ Yamada},
       DOI = {10.1016/j.jfa.2006.07.009},
       URL = {https://doi.org/10.1016/j.jfa.2006.07.009},
}

\bib{Bonforte_Vazquez_2007}{article}{
    AUTHOR = {Bonforte, M.}, 
    author = {Vazquez, J. L.},
     TITLE = {Reverse smoothing effects, fine asymptotics, and {H}arnack
              inequalities for fast diffusion equations},
   JOURNAL = {Bound. Value Probl.},
  FJOURNAL = {Boundary Value Problems},
      YEAR = {2007},
     PAGES = {Art. ID 21425, 31},
      ISSN = {1687-2762,1687-2770},
   MRCLASS = {35K55 (35B45 35K15)},
  MRNUMBER = {2291926},
       DOI = {10.1155/2007/21425},
       URL = {https://doi.org/10.1155/2007/21425},
}

\bib{Cannon_DiBenedetto}{article}{
    AUTHOR = {Cannon, J. R.},
    author = {DiBenedetto, E.},
     TITLE = {The initial value problem for the {B}oussinesq equations with
              data in {$L\sp{p}$}},
 BOOKTITLE = {Approximation methods for {N}avier-{S}tokes problems ({P}roc.
              {S}ympos., {U}niv. {P}aderborn, {P}aderborn, 1979)},
    SERIES = {Lecture Notes in Math.},
    VOLUME = {771},
     PAGES = {129--144},
 PUBLISHER = {Springer, Berlin},
      YEAR = {1980},
      ISBN = {3-540-09734-1},
   MRCLASS = {35Q20 (76D05 76R05)},
  MRNUMBER = {565993},
MRREVIEWER = {Alberto\ Valli},
}

\bib{CCY2019}{article}{
    AUTHOR = {Carrillo, J. A.},
    author = {Craig, K.},
    author = {Yao, Y.},
     TITLE = {Aggregation-diffusion equations: dynamics, asymptotics, and
              singular limits},
 BOOKTITLE = {Active particles. {V}ol. 2. {A}dvances in theory, models, and
              applications},
    SERIES = {Model. Simul. Sci. Eng. Technol.},
     PAGES = {65--108},
 PUBLISHER = {Birkh\"auser/Springer, Cham},
      YEAR = {2019},
      ISBN = {978-3-030-20296-5; 978-3-030-20297-2},
   MRCLASS = {82C24 (35K57 35Q82 82C22)},
  MRNUMBER = {3932458},
}		

\bib{CarDFL2022}{article}{
    AUTHOR = {Carrillo, J. A.}, 
    author = {Delgadino, M. G.},
    author = {Frank, R. L.},
    author = {Lewin, M.},
     TITLE = {Fast diffusion leads to partial mass concentration in
              {K}eller-{S}egel type stationary solutions},
   JOURNAL = {Math. Models Methods Appl. Sci.},
  FJOURNAL = {Mathematical Models and Methods in Applied Sciences},
    VOLUME = {32},
      YEAR = {2022},
    NUMBER = {4},
     PAGES = {831--850},
      ISSN = {0218-2025,1793-6314},
   MRCLASS = {35A23 (26D15 35K55 46E35 49J40)},
  MRNUMBER = {4421218},
       DOI = {10.1142/S021820252250018X},
       URL = {https://doi.org/10.1142/S021820252250018X},
}

\bib{CarGG2024}{article}{
   author={Carrillo, J. A.},
   author={Fern\'andez-Jim\'enez, A.},
   author={G\'omez-Castro, D.},
   title={Partial mass concentration for fast-diffusions with non-local
   aggregation terms},
   journal={J. Differential Equations},
   volume={409},
   date={2024},
   pages={700--773},
   issn={0022-0396},
   review={\MR{4784796}},
   doi={10.1016/j.jde.2024.08.013},
}

\bib{CarGVaz2022}{article}{
    AUTHOR = {Carrillo, J. A.}, 
    author = {G\'omez-Castro, D.},
    author = {V\'azquez, J. L.},
     TITLE = {Infinite-time concentration in aggregation-diffusion equations
              with a given potential},
   JOURNAL = {J. Math. Pures Appl. (9)},
  FJOURNAL = {Journal de Math\'ematiques Pures et Appliqu\'ees. Neuvi\`eme
              S\'erie},
    VOLUME = {157},
      YEAR = {2022},
     PAGES = {346--398},
      ISSN = {0021-7824,1776-3371},
   MRCLASS = {35K55 (35B40 35D40 35K65 35Q84)},
  MRNUMBER = {4351080},
       DOI = {10.1016/j.matpur.2021.11.002},
       URL = {https://doi.org/10.1016/j.matpur.2021.11.002},
}

\bib{Chen_DiBenedetto}{article}{
    AUTHOR = {Chen, Y. Z.}, 
    author = {DiBenedetto, E.},
     TITLE = {H\"older estimates of solutions of singular parabolic
              equations with measurable coefficients},
   JOURNAL = {Arch. Rational Mech. Anal.},
  FJOURNAL = {Archive for Rational Mechanics and Analysis},
    VOLUME = {118},
      YEAR = {1992},
    NUMBER = {3},
     PAGES = {257--271},
      ISSN = {0003-9527},
   MRCLASS = {35K65 (35D10 35K55 35K60)},
  MRNUMBER = {1158938},
MRREVIEWER = {Jes\'us\ Hern\'andez},
       DOI = {10.1007/BF00387898},
       URL = {https://doi.org/10.1007/BF00387898},
}

\bib{DB93}{book}{
    AUTHOR = {DiBenedetto, E.},
     TITLE = {Degenerate parabolic equations},
    SERIES = {Universitext},
 PUBLISHER = {Springer-Verlag, New York},
      YEAR = {1993},
     PAGES = {xvi+387},
      ISBN = {0-387-94020-0},
   MRCLASS = {35K65 (35-02)},
  MRNUMBER = {1230384},
MRREVIEWER = {Ya Zhe Chen},
       DOI = {10.1007/978-1-4612-0895-2},
       URL = {https://doi.org/10.1007/978-1-4612-0895-2},
}

\bib{DGV12}{book}{
    AUTHOR = {DiBenedetto, E.},
    author = {Gianazza, U.}, 
    author = {Vespri, V.},
     TITLE = {Harnack's inequality for degenerate and singular parabolic
              equations},
    SERIES = {Springer Monographs in Mathematics},
 PUBLISHER = {Springer, New York},
      YEAR = {2012},
     PAGES = {xiv+278},
      ISBN = {978-1-4614-1583-1},
   MRCLASS = {35-02 (35B45 35B65 35K59 35K65 35K67)},
  MRNUMBER = {2865434},
MRREVIEWER = {Alain Brillard},
       DOI = {10.1007/978-1-4614-1584-8},
       URL = {https://doi.org/10.1007/978-1-4614-1584-8},
}

\bib{DiBenedetto_Kwong}{article}{
    AUTHOR = {DiBenedetto, E.}, 
    author = {Kwong, Y. C.},
     TITLE = {Harnack estimates and extinction profile for weak solutions of
              certain singular parabolic equations},
   JOURNAL = {Trans. Amer. Math. Soc.},
  FJOURNAL = {Transactions of the American Mathematical Society},
    VOLUME = {330},
      YEAR = {1992},
    NUMBER = {2},
     PAGES = {783--811},
      ISSN = {0002-9947,1088-6850},
   MRCLASS = {35B45 (35B05 35K55 35K65)},
  MRNUMBER = {1076615},
MRREVIEWER = {H.\ J.\ Kuiper},
       DOI = {10.2307/2153935},
       URL = {https://doi.org/10.2307/2153935},
}

\bib{DiBenedetto_Kwong_Vespri}{article}{
    AUTHOR = {DiBenedetto, E.},
    author = {Kwong, Y.}, 
    author = {Vespri, V.},
     TITLE = {Local space-analyticity of solutions of certain singular
              parabolic equations},
   JOURNAL = {Indiana Univ. Math. J.},
  FJOURNAL = {Indiana University Mathematics Journal},
    VOLUME = {40},
      YEAR = {1991},
    NUMBER = {2},
     PAGES = {741--765},
      ISSN = {0022-2518,1943-5258},
   MRCLASS = {35B65 (35B50 35K55)},
  MRNUMBER = {1119195},
MRREVIEWER = {R.\ Schumann},
       DOI = {10.1512/iumj.1991.40.40033},
       URL = {https://doi.org/10.1512/iumj.1991.40.40033},
}

\bib{Freitag}{article}{
    AUTHOR = {Freitag, M.},
     TITLE = {Global existence and boundedness in a chemorepulsion system
              with superlinear diffusion},
   JOURNAL = {Discrete Contin. Dyn. Syst.},
  FJOURNAL = {Discrete and Continuous Dynamical Systems. Series A},
    VOLUME = {38},
      YEAR = {2018},
    NUMBER = {11},
     PAGES = {5943--5961},
      ISSN = {1078-0947},
   MRCLASS = {35K51 (35D30 92C17)},
  MRNUMBER = {3917794},
       DOI = {10.3934/dcds.2018258},
       URL = {https://doi.org/10.3934/dcds.2018258},
}

\bib{HKK01}{article}{
    AUTHOR = {Hwang, S.}, 
    author = {Kang, K.}, 
    author = {Kim, H. K.},
     TITLE = {Existence of weak solutions for porous medium equation with a
              divergence type of drift term},
   JOURNAL = {Calc. Var. Partial Differential Equations},
  FJOURNAL = {Calculus of Variations and Partial Differential Equations},
    VOLUME = {62},
      YEAR = {2023},
    NUMBER = {4},
     PAGES = {Paper No. 126},
      ISSN = {0944-2669},
   MRCLASS = {35A01 (35K55 35Q84 92B05)},
  MRNUMBER = {4568176},
       DOI = {10.1007/s00526-023-02451-4},
       URL = {https://doi.org/10.1007/s00526-023-02451-4},
}

\bib{HKK02}{article}{
    AUTHOR = {Hwang, S.}, 
    author = {Kang, K.}, 
    author = {Kim, H. K.},
     TITLE = {Existence of weak solutions for porous medium equation with a
              divergence type of drift term in a bounded domain},
   JOURNAL = {J. Differential Equations},
  FJOURNAL = {Journal of Differential Equations},
    VOLUME = {389},
      YEAR = {2024},
     PAGES = {361--414},
      ISSN = {0022-0396,1090-2732},
   MRCLASS = {35D30 (35K55 35Q84 92C17)},
  MRNUMBER = {4697984},
       DOI = {10.1016/j.jde.2024.01.028},
       URL = {https://doi.org/10.1016/j.jde.2024.01.028},
}

\bib{HKKP}{article}{
    AUTHOR = {Hwang, S.}, 
    author = {Kang, K.}, 
    author = {Kim, H. K.},
	AUTHOR = {Park, J. T.},
     TITLE = {Existence of weak solutions for nonlinear drift-diffusion equations with measure data},
   JOURNAL = {arXiv:2501.07847 [math.AP]},
       URL = {https://doi.org/10.48550/arXiv.2501.07847},
}

\bib{HZ21}{article}{
    AUTHOR = {Hwang, S.}, 
    author = {Zhang, Y. P.},
     TITLE = {Continuity results for degenerate diffusion equations with
              {$L_t^p L_x^q$} drifts},
   JOURNAL = {Nonlinear Anal.},
  FJOURNAL = {Nonlinear Analysis. Theory, Methods \& Applications. An
              International Multidisciplinary Journal},
    VOLUME = {211},
      YEAR = {2021},
     PAGES = {Paper No. 112413, 37},
      ISSN = {0362-546X},
   MRCLASS = {35B65 (35K55 35K65)},
  MRNUMBER = {4265723},
MRREVIEWER = {Eurica Henriques},
       DOI = {10.1016/j.na.2021.112413},
       URL = {https://doi.org/10.1016/j.na.2021.112413},
}

\bib{KK-SIMA}{article}{
    AUTHOR = {Kang, K.}, 
    author = {Kim, H. K.},
     TITLE = {Existence of weak solutions in {W}asserstein space for a
              chemotaxis model coupled to fluid equations},
   JOURNAL = {SIAM J. Math. Anal.},
  FJOURNAL = {SIAM Journal on Mathematical Analysis},
    VOLUME = {49},
      YEAR = {2017},
    NUMBER = {4},
     PAGES = {2965--3004},
      ISSN = {0036-1410},
   MRCLASS = {92B05 (35K55 35Q35 35Q84)},
  MRNUMBER = {3682182},
       DOI = {10.1137/16M1083232},
       URL = {https://doi.org/10.1137/16M1083232},
}

\bib{KZ18}{article}{
    AUTHOR = {Kim, I.}, 
    author = {Zhang, Y. P.},
     TITLE = {Regularity properties of degenerate diffusion equations with
              drifts},
   JOURNAL = {SIAM J. Math. Anal.},
  FJOURNAL = {SIAM Journal on Mathematical Analysis},
    VOLUME = {50},
      YEAR = {2018},
    NUMBER = {4},
     PAGES = {4371--4406},
      ISSN = {0036-1410},
   MRCLASS = {35K59 (35B65 35K15 35K65)},
  MRNUMBER = {3842921},
MRREVIEWER = {Patrick Guidotti},
       DOI = {10.1137/17M1159749},
       URL = {https://doi.org/10.1137/17M1159749},
}

\bib{Santambrosio15}{book}{
    AUTHOR = {Santambrogio, F.},
     TITLE = {Optimal transport for applied mathematicians},
    SERIES = {Progress in Nonlinear Differential Equations and their
              Applications},
    VOLUME = {87},
      NOTE = {Calculus of variations, PDEs, and modeling},
 PUBLISHER = {Birkh\"{a}user/Springer, Cham},
      YEAR = {2015},
     PAGES = {xxvii+353},
      ISBN = {978-3-319-20827-5; 978-3-319-20828-2},
   MRCLASS = {49-02 (35J96 49J45 49M29 58E50 90C05 90C48 91B02)},
  MRNUMBER = {3409718},
MRREVIEWER = {Luigi De Pascale},
       DOI = {10.1007/978-3-319-20828-2},
       URL = {https://doi.org/10.1007/978-3-319-20828-2},
}
		
\bib{SSSZ}{article}{
    AUTHOR = {Seregin, G.},
    author = {Silvestre, L.},
    author = {\v{S}ver\'{a}k, V.}, 
    author = {Zlato\v{s}, A.},
     TITLE = {On divergence-free drifts},
   JOURNAL = {J. Differential Equations},
  FJOURNAL = {Journal of Differential Equations},
    VOLUME = {252},
      YEAR = {2012},
    NUMBER = {1},
     PAGES = {505--540},
      ISSN = {0022-0396},
   MRCLASS = {35Q35 (35A09 35B45 35B53 35B65 35J15 35K10)},
  MRNUMBER = {2852216},
MRREVIEWER = {Alain Brillard},
       DOI = {10.1016/j.jde.2011.08.039},
       URL = {https://doi.org/10.1016/j.jde.2011.08.039},
}

\bib{Sim87}{article}{
    AUTHOR = {Simon, J.},
     TITLE = {Compact sets in the space {$L^p(0,T;B)$}},
   JOURNAL = {Ann. Mat. Pura Appl. (4)},
  FJOURNAL = {Annali di Matematica Pura ed Applicata. Serie Quarta},
    VOLUME = {146},
      YEAR = {1987},
     PAGES = {65--96},
      ISSN = {0003-4622},
   MRCLASS = {46E40 (46E30)},
  MRNUMBER = {916688},
MRREVIEWER = {James Bell Cooper},
       DOI = {10.1007/BF01762360},
       URL = {https://doi.org/10.1007/BF01762360},
}

\bib{Vaz06}{book}{
    AUTHOR = {V\'{a}zquez, J. L.},
     TITLE = {Smoothing and decay estimates for nonlinear diffusion
              equations},
    SERIES = {Oxford Lecture Series in Mathematics and its Applications},
    VOLUME = {33},
      NOTE = {Equations of porous medium type},
 PUBLISHER = {Oxford University Press, Oxford},
      YEAR = {2006},
     PAGES = {xiv+234},
      ISBN = {978-0-19-920297-3; 0-19-920297-4},
   MRCLASS = {35-02 (35B40 35B65 35K57 47H20)},
  MRNUMBER = {2282669},
MRREVIEWER = {Vicen\c{t}iu D. R\u{a}dulescu},
       DOI = {10.1093/acprof:oso/9780199202973.001.0001},
       URL = {https://doi.org/10.1093/acprof:oso/9780199202973.001.0001},
}

\bib{Vaz07}{book}{
    AUTHOR = {V\'{a}zquez, J. L.},
     TITLE = {The porous medium equation},
    SERIES = {Oxford Mathematical Monographs},
      NOTE = {Mathematical theory},
 PUBLISHER = {The Clarendon Press, Oxford University Press, Oxford},
      YEAR = {2007},
     PAGES = {xxii+624},
      ISBN = {978-0-19-856903-9; 0-19-856903-3},
   MRCLASS = {35-02 (35B05 35B40 35K57 76S05)},
  MRNUMBER = {2286292},
}

\bib{V}{book}{
    AUTHOR = {Villani, C.},
     TITLE = {Optimal transport},
    SERIES = {Grundlehren der mathematischen Wissenschaften [Fundamental
              Principles of Mathematical Sciences]},
    VOLUME = {338},
      NOTE = {Old and new},
 PUBLISHER = {Springer-Verlag, Berlin},
      YEAR = {2009},
     PAGES = {xxii+973},
      ISBN = {978-3-540-71049-3},
   MRCLASS = {49-02 (28A75 37J50 49Q20 53C23 58E30)},
  MRNUMBER = {2459454},
MRREVIEWER = {Dario Cordero-Erausquin},
       DOI = {10.1007/978-3-540-71050-9},
       URL = {https://doi.org/10.1007/978-3-540-71050-9},
}

\bib{W18}{article}{
    AUTHOR = {Winkler, M.},
     TITLE = {Global existence and stabilization in a degenerate
              chemotaxis-{S}tokes system with mildly strong diffusion
              enhancement},
   JOURNAL = {J. Differential Equations},
  FJOURNAL = {Journal of Differential Equations},
    VOLUME = {264},
      YEAR = {2018},
    NUMBER = {10},
     PAGES = {6109--6151},
      ISSN = {0022-0396},
   MRCLASS = {92C17 (35B40 35K55 35Q35 35Q92)},
  MRNUMBER = {3770046},
MRREVIEWER = {Tomasz Cie\'{s}lak},
       DOI = {10.1016/j.jde.2018.01.027},
       URL = {https://doi.org/10.1016/j.jde.2018.01.027},
}
		
\bib{Z11}{article}{
    AUTHOR = {Zlato\v{s}, A.},
     TITLE = {Reaction-diffusion front speed enhancement by flows},
   JOURNAL = {Ann. Inst. H. Poincar\'{e} C Anal. Non Lin\'{e}aire},
  FJOURNAL = {Annales de l'Institut Henri Poincar\'{e} C. Analyse Non Lin\'{e}aire},
    VOLUME = {28},
      YEAR = {2011},
    NUMBER = {5},
     PAGES = {711--726},
      ISSN = {0294-1449},
   MRCLASS = {35K57 (35B40 35C07 35R60 80A25)},
  MRNUMBER = {2838397},
MRREVIEWER = {Violaine Roussier-Michon},
       DOI = {10.1016/j.anihpc.2011.05.004},
       URL = {https://doi.org/10.1016/j.anihpc.2011.05.004},
}
		
\end{biblist}
\end{bibdiv}

\end{document}